\newcommand{\R}{\mathbb{R}}
\newcommand{\Z}{\mathbb{Z}}
\newcommand{\h}{\mathbb{H}}
\newcommand{\E}{\mathbb{E}}
\newtheorem{theorem}{Theorem}[subsection]
\renewcommand{\thetheorem}{
	\ifnum\value{subsection}>0
	\thesubsection
	\else
	\thesection
	\fi
	.\arabic{theorem}
}
\newtheorem{lemma}[theorem]{Lemma}
\newtheorem{corollary}[theorem]{Corollary}
\newtheorem{proposition}[theorem]{Proposition}
\theoremstyle{definition}
\newtheorem{definition}[theorem]{Definition}
\theoremstyle{remark}
\newtheorem{remark}[theorem]{Remark}
\begin{document}
    
	\title{Finite-State Machines for Horospheres in Hyperbolic Right-Angled Coxeter Groups
	}

	\author{Noah Jillson \and Daniel Levitin \and Pramana Saldin \and Katerina Stuopis \and Qianruixi Wang \and Kaicheng Xue}

	
	\maketitle
	\thanks{Daniel Levitin, Noah Jillson, Pramana Saldin, and Katerina Stuopis were supported by National Sciences Foundation grant 2230900 during this project. Daniel Levitin was further supported by National Sciences Foundation grant 1552234.}
	
	\begin{abstract}
		Relatively little is known about the discrete horospheres in hyperbolic groups, even in simple settings. In this paper we work with hyperbolic one-ended right-angled Coxeter groups and describe two graph structures that mimic the intrinsic metric on a classical horosphere: the Rips graph and the divergence graph (the latter due to Cohen, Goodman-Strauss, and Rieck (Ergodic Theory and Dynamical Systems 42(9):2740-2783, 2022)). We develop, analyze, and implement algorithms based on finite-state machines that draw large finite portions of these graphs, and deduce various geometric corollaries about the path metrics induced by these graph structures.
	\end{abstract}
	
	\section{Introduction}
	\label{sec:intro}
	
	\subsection*{Horospheres}
    
    If $(X,d)$ is an unbounded geodesic metric space, and $\gamma$ is a geodesic ray, (i.e. $\gamma:[0,\infty)\to X$ is an isometric embedding), the \textit{Busemann Function} associated to $\gamma$, $b_\gamma:X\to \R$ is defined to be $$\lim_{t\to\infty} d(x, \gamma(t))-d(\gamma(t), x_0),$$ where $x_0$ is a fixed base point in $X$. While this function depends on $x_0$, the level sets, called  \textit{horospheres}, do not. In the case where $(X,d)$ is the hyperbolic space $\h^n$, then regardless of the choice of ray $\gamma$ or value for the Busemann function, the space $b_\gamma^{-1}(r)$ with its intrinsic metric is isometric to the Euclidean space $\E^{n-1}$. These horospheres are exponentially distorted relative to their extrinsic metric. One views the ray $\gamma$ as encoding a point at infinity on the \textit{boundary sphere} $S^{n-1}=\partial \h^n$. The points on this sphere are directions in $\h^n$, and the Euclidean space $\E^{n-1}$ arises naturally as the stereographic projection of the sphere.
	
	In this paper, we study horospheres in the Cayley graphs of certain \textit{Right-Angled Coxeter Groups} (RACGs) with respect to their standard generating sets. We will be interested in graph structures on these horospheres that provide some notion of an intrinsic metric on a non-smooth space. It is a straightforward calculation that unequal words on a horosphere are at distance at least $2$ from one another, and therefore do not span an edge (see Lemma \ref{CalculatingBusemannFunctions}). Therefore, we will treat two notions of an intrinsic graph structure, neither of which are the induced subgraph. First we discuss the \textit{k-Rips graph}, which connects two vertices exactly when they are at distance at most $k$ in the ambient graph metric. One imagines this graph allowing them to traverse a short distance along the horosphere as a coarse analogue of requiring a path to be everywhere tangent to the horosphere in the smooth case. Second, we treat (under an additional assumption) the \textit{divergence graph} due to Cohen, Goodman-Strauss, and Rieck \cite{CGSR}. Edges in this graph have a more complicated description, and encode the existence of certain parallel lines. See Definition \ref{DivergenceGraphDef}. In both cases, we are interested in the induced path metric in the resulting graph. These graphs are infinite and difficult to study in general, but under minor additional assumptions, we give algorithms to draw large finite pieces of both of these graphs in asymptotically optimal time. We restrict to considering \textit{simple alternating rays}, i.e. $\gamma=a_ia_ja_ia_j ...$, where $a_i$ and $a_j$ are generators. As we will discuss further on, this assumption is more for convenience than any major limitation on our work.
	
	\begin{theorem} \label{IntroMainThm}
		
		Let $\Gamma$ be a graph satisfying the assumptions in Definition \ref{StandingAssumptions}, and let $W_\Gamma$ be the associated RACG. Let $\gamma$ be a simple alternating ray, and consider the set $H_k=b_\gamma^{-1}(k)$ for any integer $k$. A subgraph of either the $2$-Rips graph or the divergence graph on $H_k$ with $n$ vertices can be generated in runtime $O(n\log(n))$. This time complexity is optimal.
		
	\end{theorem}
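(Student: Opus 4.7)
The plan is to combine a finite-state machine with a breadth-first exploration of $H_k$. First, using hyperbolicity and one-endedness from the standing assumptions, I would construct a finite-state machine $\mathcal{M}$ whose states encode bounded combinatorial data near the alternating ray $\gamma$ (e.g., a shortlex-normal-form suffix together with a "local profile" recording which generators currently act near the horoball boundary). The accepted labeled paths from the start state should biject with the vertices of $H_k$, with each vertex emitted uniquely via its shortlex representative. Finiteness of the state set relies on the fact that in a hyperbolic RACG shortlex normal forms form a regular language, and that by Lemma~\ref{CalculatingBusemannFunctions} the condition $b_\gamma(w)=k$ can be detected from a bounded window of $w$ relative to $\gamma$.

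Next, I would perform a BFS from a chosen base vertex of $H_k$, using $\mathcal{M}$ to generate successor vertices in $O(1)$ amortized work per step. To output the adjacencies, I would add a separate finite-state witness for each edge type: for the $2$-Rips graph, adjacency of $w,w'\in H_k$ is determined by a bounded suffix comparison (since $w^{-1}w'$ has length $\le 2$), so the possible neighbors of $w$ can be read off directly from its FSM state in $O(1)$ time; for the divergence graph, I would build an auxiliary finite-state check certifying the existence of the requisite pair of parallel lines, again depending only on a bounded neighborhood of $w$ inside $H_k$. The genuinely hard step is verifying this last claim --- that divergence-graph adjacencies are a finite-state property of the local picture near $\gamma$ --- and rigorously identifying the state set from the combinatorics of $\Gamma$.

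To connect neighbors as they are discovered and to suppress duplicate emissions, I would index vertices in a balanced BST (or hash keyed on shortlex normal forms). Since $H_k$ has exponential growth, enumerating $n$ vertices requires words of length $O(\log n)$, and $n$ compare/insert operations then cost $O(n\log n)$ in total, matching the claim. For optimality, I would give an information-theoretic lower bound: the output itself lists $n$ distinct vertex labels, each requiring $\Omega(\log n)$ bits to name, so $\Omega(n\log n)$ is the size of the output and hence a lower bound on runtime in any reasonable machine model; alternatively, a comparison-based reduction from sorting on the labels of depth-$O(\log n)$ BFS levels gives the same bound.
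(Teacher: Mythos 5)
Your overall architecture --- generate vertices by a regular language of normal forms, detect edges by a bounded-window/finite-state criterion, and get optimality from the $\Omega(n\log n)$ output size --- is the same as the paper's, and your optimality argument is exactly the one used. But there are two genuine gaps.

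First, your claim that ``the condition $b_\gamma(w)=k$ can be detected from a bounded window of $w$ relative to $\gamma$'' is false as stated: by Lemma \ref{CalculatingBusemannFunctions}, $b_\gamma(w)=\pm|w_{pref}|+|w_{suff}|$, which depends on the full lengths of both pieces, so $H_k$ itself is not recognized by any bounded-memory device reading $w$. The paper circumvents this with the prefix--suffix decomposition: a point of $H_k$ is determined by its suffix alone (the prefix is then forced by $k$), and the set of suffixes \emph{is} a regular language ($M_{Suff}$, resp.\ the horocyclic-suffix machines). Your FSM should therefore enumerate suffixes, not elements of $H_k$; relatedly, your $O(1)$-per-vertex claims cannot be right, since each label has length $\Theta(\log n)$ --- the correct accounting is $O(\log n)$ per vertex for both emission and edge-finding, which still yields $O(n\log n)$. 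You also need to handle Rips edges between words whose suffix lengths differ (so the prefix changes), which is not a ``bounded suffix comparison'' in your sense and is treated separately in Lemma \ref{RipsEdgesDifferentSuffixLength}.

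Second, for the divergence graph you explicitly defer the key point --- that adjacency is a finite-state property --- and that deferral is where essentially all of Section \ref{sec:DivergenceGraph} lives: one needs the horocyclically shortlex normal form, the unlinked-cancellation lemmas, the FSM $M_K$ computing the uncancelable clique, the adjacency criterion of Propositions \ref{DivergenceEdgeCriterion} and \ref{CliqueDifferenceAdjacency}, and the a priori bound $d(w,v)\le 2\,Clique(\Gamma)-2$ that makes the candidate-neighbor set finite. You also omit the extra hypothesis of Proposition \ref{EveryStateLarge} (every vertex of $\Gamma$ has another at distance at least $3$), without which the divergence graph is not even defined on all of $H_k$. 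As written, the proposal is a correct outline of the strategy but does not constitute a proof.
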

	
	See Theorems \ref{RipsGraphGenerationSpeed} and \ref{DivergenceGraphGenerationSpeed} for further details, and Proposition \ref{EveryStateLarge} for the additional assumption in the divergence graph case.
	
	Optimality is unsurprising: we have $n$ vertices, and a positive proportion of their labels have length that is linear in $\log(n)$. So just listing each vertex already requires $\Omega(n\log(n))$ steps.
	
	For the Rips graph case, our consideration of just the $2$-Rips graph is justified by the following result, which is proved in Proposition \ref{RipsGraphBiLipschitzEquivalence}. Recall that two metric spaces $(X, d_X)$ and $(Y, d_Y)$ are \textit{bi-Lipschitz equivalent} if there is a bijection $f:X\to Y$ between them and a constant $L>1$ so that $\frac{1}{L} d_X\le d_Y\circ f\le Ld_X$.
	
	\begin{proposition}
		
		For any $k_1, k_2\ge 2$, the induced path metrics on the $k_i$-Rips graphs on any horosphere are bi-Lipschitz equivalent.
		
	\end{proposition}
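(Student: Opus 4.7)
Without loss of generality, assume $k_1 \leq k_2$. Since every edge of the $k_1$-Rips graph is also an edge of the $k_2$-Rips graph, we obtain $d_{k_2}(u,v) \leq d_{k_1}(u,v)$ for all $u,v\in H_k$ for free. For the opposite bi-Lipschitz inequality, the plan is to exhibit a uniform constant $L=L(k_1,k_2)$ such that $d_{k_1}(u,v)\leq L$ whenever $u,v\in H_k$ satisfy $d_{\text{Cay}}(u,v)\leq k_2$. Given such an $L$, refining each edge of an optimal $d_{k_2}$-geodesic produces $d_{k_1}(u,v)\leq L\cdot d_{k_2}(u,v)$, which is the remaining inequality.

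To establish the uniform bound, I would invoke the finite-state description of $H_k$ that underlies the rest of the paper. The finite-state machine assigns each vertex $u\in H_k$ to one of a bounded number of states, and the state of $u$ determines the combinatorial structure of the ambient ball $B(u,k_2)\cap H_k$ up to isomorphism. Together with connectedness of the $k_1$-Rips graph on $H_k$ (itself a consequence of the finite-state structure, once $k_1\geq 2$), this yields only finitely many isomorphism types of pairs $(u,v)$ with $d_{\text{Cay}}(u,v)\leq k_2$; the maximum of the finite quantities $d_{k_1}(u,v)$ over these finitely many types gives the required $L$.

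The hard part will be justifying that the $d_{k_1}$-distance, which is a global quantity along $H_k$, is really controlled by the local data captured by the machine: we need the FSM to record enough information to reconstruct short $k_1$-paths, not merely to identify the $k_1$-neighbors of a vertex. A naive alternative---take an ambient geodesic from $u$ to $v$, project each of its vertices onto $H_k$ along an ``upward'' path of increasing Busemann value, and iterate on the resulting shorter pairs---does not terminate, because a geodesic between horosphere points in a $\delta$-hyperbolic space descends into the horoball to depth \emph{linear} in the ambient distance (as is already visible in $\mathbb{H}^2$), so the consecutive projections fail to strictly shorten. Thus the FSM-based argument, or an analogous quantitative exponential-distortion estimate giving $d_2(u,v) \leq C \cdot e^{d_{\text{Cay}}(u,v)/2}$ directly, appears to be the right route.
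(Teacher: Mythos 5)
Your reduction is the same as the paper's: the inequality $d_{k_2}\le d_{k_1}$ is free, and the content is a uniform bound $L$ on the $d_{k_1}$-length (equivalently, on the $2$-Rips length) of a single $k_2$-Rips edge. But the mechanism you propose for obtaining $L$ --- finitely many ``isomorphism types'' of pairs $(u,v)$ with $d(u,v)\le k_2$, determined by the FSM state and the ball $B(u,k_2)\cap H_k$, with $L$ the maximum of $d_{k_1}(u,v)$ over these types --- has a genuine gap, and it is exactly the one you flag in your last paragraph without resolving. The distance $d_{k_1}(u,v)$ is not a function of the isomorphism type of any bounded ball around the pair: the paper's eventual bound on the $2$-Rips length of a $k$-Rips edge is of order $(D+1)^{k/2}$, and the connecting paths it constructs wander through the entire set of suffixes sharing a long common beginning with $u$ and $v$, far outside $B(u,k_2)$. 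So even granting finitely many local types, you have not shown that two pairs of the same type have the same (or even comparable) $d_{k_1}$-distance, nor that these distances are finite; asserting ``the maximum of the finite quantities $d_{k_1}(u,v)$'' presupposes precisely what must be proved. (There is also a latent circularity in invoking connectedness of the $k_1$-Rips graph as an input: in the paper, global connectivity is \emph{deduced from} this bi-Lipschitz equivalence.)

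The missing ingredient is the paper's Lemma \ref{RipsGraphPartialConnectivity} and Corollary \ref{RipsSubgraphDiameterBound}: if $p=a_{i_1}\cdots a_{i_l}$ is an initial segment of a geodesic suffix with $M_{GeoSuff}(p)$ a clique, then the set of horosphere words of suffix length $m$ whose suffixes begin with $p$ is connected in the $2$-Rips graph with diameter at most $(D+1)^{m-l-1}$ --- a bound depending only on the number $m-l$ of remaining letters, proved by induction on $l$ using the traversability of $\Gamma\setminus K'$ (Lemma \ref{RipsGraphTraversability}) and the absence of dead ends in $M_{GeoSuff}$. With that in hand, a $k_2$-Rips edge between same-suffix-length words forces the two suffixes to agree except in their last $k_2/2$ letters, so both lie in such a set with $m-l=k_2/2$ and the uniform bound follows; the cases where the common beginning is shorter than $clique(\Gamma)$, where the suffix lengths differ by one, and where they differ by at least two (only finitely many such edges exist) are handled separately. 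Your closing suggestion that one could instead prove the exponential distortion upper bound $d_2(u,v)\le C(D+1)^{d(u,v)}$ directly points at the right statement, but in the paper that estimate (Proposition \ref{HorosphericalDistortionUpperBound}) is itself built on the same diameter corollary, so it does not offer an independent route around the gap.
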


        This says that the large-scale geometry of the $k_1$ and $k_2$ Rips graphs should be comparable. So we may as well study the simplest one.
    
    \subsection*{Hyperbolic groups and their boundaries}
    
        The most important of the assumptions on the graph $\Gamma$ for Theorem 1.1 is that there are no induced subgraphs that are $4$-cycles. This is equivalent to the Cayley graph of $W_\Gamma$ being $\delta$-\textit{hyperbolic}, i.e. displaying large-scale features similar to that of the hyperbolic space \cite{GromovHyperbolic,Moussong}. This assumption underlies all the algorithms we write, allowing computations in RACGs to be carried out quickly and with minimal memory usage in relatively simple finite-state machines (FSMs). See Definition \ref{FSMDef}. All hyperbolic groups (i.e. those with $\delta$-hyperbolic Cayley graphs) admit FSMs that perform computations, but they are very difficult to describe or construct in practice \cite{WordProcessing}. Therefore, we use machines due to Brink-Howlett and Antolin-Ciobanu \cite{BrinkHowlett, AntolinCiobanu}, which are much easier to construct. These latter FSMs exist for RACGs in general, but it is only for the hyperbolic RACGs that they are useful for performing computations in horospheres. The bulk of our technical task consists of reducing geometric questions to properties of (pairs of) words, and then checking these properties algorithmically using modifications of these two FSMs.
	
	While Theorem \ref{IntroMainThm} is phrased for a fixed generating graph $\Gamma$ of the group $W_\Gamma$, a caveat is in order. The complexity of these algorithms is exponential in terms of the largest clique in the graph $\Gamma$. This fact, though inconvenient, is expected. It is a straightforward consequence of Papasoglu's theorem on thin bigons that the groups $W_\Gamma$ become less hyperbolic as the clique size of $\Gamma$ increases \cite{Papasoglu1995}. Since all of our algorithms require hyperbolicity to work, it is unsurprising that they take longer as $W_\Gamma$ becomes less hyperbolic.
	
	Besides their algorithmic properties, it is a matter of mathematical folklore that some properties of the horospheres in $\h^n$ should carry over to hyperbolic groups. The Cayley graphs of these groups have boundaries $\partial W_\Gamma$ consisting of (equivalence classes of) rays, equipped with a suitable metric, but these boundaries can be topologically and metrically much more complicated than the round spheres at the boundary of $\h^n$. Nevertheless, it is expected that there should be a topological and geometric analogy between these discrete horospheres and the stereographic projections of the boundary.
	
	The geometry of the boundary of a $\delta$-hyperbolic group is of particular interest, for instance, because of its value to Gromov's program of classifying finitely-generated groups up to quasi-isometry \cite{GromovAsymptotic}. See chapter III.H.3 of \cite{BridsonHaefliger} for more details. Nevertheless, to our knowledge there has been little work to understand what an appropriate intrinsic metric on these discrete horospheres might be, and what it might encode about the metric structure of the boundary.
    
    \subsection*{Geometry of horospheres in RACGs}
    
    For the groups $G$ with the strongest analogy to $\h^n$, the fundamental groups of closed hyperbolic $n$-manifolds (and more generally cocompact lattices in semisimple Lie Groups), there is a bi-Lipschitz model of $G$ in which the horospheres (with respect to the Lie Group or symmetric space metric) receive a cocompact action by a group $\Z^{n-1}$ (or more generally a discrete nilpotent group), with exponentially distorted orbits \cite{PengitoreMcReynolds}. This action is \textit{translation-like}, i.e. geometrically similar to right multiplication by the elements of a subgroup. However, it is unclear how much of this translates to horospheres with respect to the metric on the Cayley graph.
	
	On this topic, we show connectivity and exponential distortion for the k-Rips graph with $k\ge 2$. We note also that the connectivity of the Divergence Graph was already known \cite{CGSR}.
	
	\begin{proposition}
		
		Let $W_\Gamma$ be a hyperbolic right-angled Coxeter group whose boundary, $\partial W_\Gamma$, is connected. The $2$-Rips graph on the horosphere $b_\gamma^{-1}(k$) is connected, for $\gamma$ a repeating ray.
		
	\end{proposition}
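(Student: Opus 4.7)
The plan is to descend via geodesic rays to $\gamma(\infty)$, use fellow-traveling to bridge two such rays on a deep horosphere, and lift the bridge back to $H_k$ through a clique structure on parent fibers.

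First I would record two structural observations. (a) Left multiplication by $T := a_i a_j$ is an isometry of the Cayley graph satisfying $b_\gamma \circ T = b_\gamma - 2$, so $T$ induces an isomorphism of $2$-Rips graphs $H_k \to H_{k-2}$; connectedness therefore depends only on the parity of $k$, and it suffices to handle two specific horospheres. (b) For each $v \in H_{k-1}$, the set $\pi^{-1}(v) \subset H_k$ of ambient neighbors of $v$ lying on $H_k$ is a $2$-Rips clique, since any two such elements are at ambient distance at most $2$ via $v$.

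For arbitrary $g, h \in H_k$, pick geodesic rays $\alpha_g, \alpha_h$ from $g$ and $h$ to $\gamma(\infty)$, each decreasing $b_\gamma$ by one at every step. By $\delta$-hyperbolicity there exist $D = D(\delta)$ and $t_0 = t_0(g, h)$ so that $d(\alpha_g(t_0), \alpha_h(t_0)) \le D$; both points lie on $H_{k-t_0}$ and so form an edge in the $D$-Rips graph there. I would then lift this edge up one horosphere at a time using (b): a pair at ambient distance $r$ on $H_{k-t-1}$ has parent-to-children pairs at ambient distance at most $r + 2$ on $H_{k-t}$. After $t_0$ lifts we obtain a $(D + 2t_0)$-Rips path on $H_k$ from $g$ to $h$, and the preceding proposition's bi-Lipschitz equivalence of $k$-Rips graphs converts this to a $2$-Rips path.

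The main obstacle is controlling the Rips-radius inflation in the lift, which grows by $2$ per level and so depends on $t_0$. Passing back to the $2$-Rips requires the bi-Lipschitz equivalence, which I would take as given but whose applicability is tied to exactly our connected-boundary hypothesis: in a virtually free example one checks directly that the $2$-Rips and $4$-Rips graphs on a horosphere have distinct connected components, so some non-trivial property of $W_\Gamma$ is doing work here. If a sharper argument is needed, one could instead produce intermediate $2$-Rips hops \emph{within} each horosphere level as one lifts, rather than deferring to bi-Lipschitz equivalence at the end, plausibly using one-endedness of $W_\Gamma$ to rule out a finite ``bottleneck'' separating the relevant clique pieces.
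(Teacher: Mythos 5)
Your argument is correct and is essentially the paper's own proof: both reduce to the observation that two points of $H_k$ at ambient distance $m$ span an edge in the $m$-Rips graph and then invoke the bi-Lipschitz equivalence of the $m$- and $2$-Rips graphs (Proposition \ref{RipsGraphBiLipschitzEquivalence}), which, as you correctly note, is where the one-endedness/connected-boundary hypothesis actually does the work. The detour through geodesic rays to $\gamma(\infty)$ and the clique lift only re-derives the finite ambient distance between $g$ and $h$ and could be omitted.
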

	
	See Proposition \ref{RipsGraphOriginConnectivity}. Following the analogy between horospheres and stereographic projections of the boundary, this proof breaks into cases based on the connectivity of \newline $\partial W_\Gamma\setminus \gamma(\pm\infty)$, where $\gamma(\pm\infty)$ refers to the endpoints of the defining line $(...a_ia_ja_ia_j...)$ on the boundary. See Proposition \ref{HorosphericalDistortionUpperBound} and its proof for further details. Along the way, in Remark 4.5.3, we give a description of the set of path components of $\partial W_\Gamma\setminus\{\gamma(\pm\infty)\}$, which to our knowledge has not yet appeared in the literature without additional assumptions (for RACGs with no cliques larger than edges,\cite{DaniThomas,ThetaGraphs,Edletzberger}).
	
	Putting the connectivity statement together with some general facts about the divergence functions of a hyperbolic group then yields the conclusion that the Rips graph is exponentially distorted.
	
	\begin{theorem}
		
		Let $W_\Gamma$ be a hyperbolic right-angled Coxeter group with connected boundary. There exist exponential functions $f_1$ and $f_2$ such that the following holds. If $\gamma=(a_ia_j)^\infty$ is any periodic ray, $w$ and $v$ are in $b_\gamma^{-1}(k)$, and if $d$ denotes the metric on $W_\Gamma$ while $d_H$ denotes the path metric on the $2$-Rips graph, then 
		
		$$ f_1(d(w, v))\le d_H(w, v)\le f_2(d(w, v)).$$
	\end{theorem}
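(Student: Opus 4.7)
The plan is to bound $d_H(w,v)$ above and below by exponential functions of $d(w,v)$, using the divergence function of $W_\Gamma$ as a bridge. A hyperbolic group with connected boundary is one-ended, and so has divergence function bounded on both sides by exponentials: there exist constants $0 < c \le C$ and $A, B > 0$ such that, at scale $r$, the divergence lies in $[A e^{cr}, B e^{Cr}]$. Each of the two inequalities in the theorem will come from one of these two divergence bounds, transported onto the horosphere $H_k$ using the Busemann inequality $|b_\gamma(x) - b_\gamma(y)| \le d(x,y)$.

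For the lower bound, I would begin with a $2$-Rips path of length $m$ from $w$ to $v$ on $H_k$. Concatenating generators realizing each Rips edge produces a Cayley-graph path $\eta$ of length at most $2m$, every vertex of which lies on $H_k$. Writing $n = d(w, v)$ and taking $p = \gamma(n-k)$, the Busemann inequality forces $d(x, p) \ge n$ for every $x \in H_k$, while a short computation using $b_\gamma(w) = b_\gamma(v) = k$ shows $d(w, p), d(v, p) \approx n$ up to bounded error. Thus $\eta$ is a path of length at most $2m$ from $w$ to $v$ avoiding a ball of radius proportional to $n$ about $p$, in a divergence-at-scale-$n$ configuration. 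The lower bound on divergence then yields $2m \ge A e^{cn}$, and hence $d_H(w, v) \ge f_1(d(w, v))$ for a suitable exponential $f_1$.

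For the upper bound, I would make the connectivity proof of Proposition~\ref{RipsGraphOriginConnectivity} quantitative. That proof produces a $2$-Rips path from $w$ to $v$ by routing through $H_k$, effectively going around the ideal point $\gamma(\infty)$; in the model case of $\mathbb{H}^n$ such a route follows a horocycle and has length of order $e^n$, and the RACG case should behave analogously. Tracking explicit length bounds through the case analysis of that proof, and using the general upper bound $B e^{Cn}$ on divergence in any hyperbolic group to control the amount of "going around" required, should yield $d_H(w, v) \le f_2(d(w, v))$ for a suitable exponential $f_2$.

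The main obstacle will be the upper bound. The lower bound is essentially a one-step application of divergence together with the Busemann inequality. The upper bound, by contrast, requires extracting an explicit exponential length estimate from a qualitative connectivity argument: one must revisit each case of the proof of Proposition~\ref{RipsGraphOriginConnectivity} and show that the $2$-Rips path it produces has length at most exponential in $d(w,v)$. The delicate point will be showing that each local correction needed to keep the path on $H_k$ contributes only a bounded amount to the total length, so that the cumulative path remains within the exponential budget supplied by the divergence upper bound.
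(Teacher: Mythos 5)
Your overall architecture (lower bound from exponential divergence plus the $1$-Lipschitz property of $b_\gamma$; upper bound by quantifying the connectivity construction) matches the paper's, but both halves have a concrete gap. For the lower bound, the center of the avoided ball cannot be taken on the ray: with $p=\gamma(n-k)$ you do get $d(x,p)\ge n$ for all $x\in H_k$, but the claim $d(w,p),d(v,p)\approx n$ is false in general. If $w_{suff}$ and $v_{suff}$ are both very long but differ only in their last letters, then $d(w,v)=n$ is small while $d(w,\gamma(n-k))$ is on the order of $|w_{suff}|$, so $w$ and $v$ are nowhere near the sphere of radius $n$ about $p$ and no divergence estimate applies. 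The correct center is the midpoint of a geodesic $[w,v]$: using the prefix--suffix description of $w^{-1}v$ (Lemma \ref{CalculatingBusemannFunctions}), that geodesic dips to Busemann value $k-\tfrac{1}{2}d(w,v)$, so its midpoint is at distance at least $\tfrac{1}{2}d(w,v)-1$ from any Rips path (whose realization in the Cayley graph stays within Busemann distance $1$ of $H_k$), and then the quantitative thin-path estimate (Proposition III.H.1.6 of Bridson--Haefliger) gives the exponential lower bound. This is Corollary \ref{HorosphericalDistortionLowerBound}.

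For the upper bound, quantifying Proposition \ref{RipsGraphOriginConnectivity} alone is not enough: that construction routes every path through the word $w_0$ of empty suffix, so the length bound it yields is exponential in $|w_{suff}|+|v_{suff}|$, which is not controlled by $d(w,v)$ (again, take two words at Cayley distance $2$ with suffixes of length $1000$). The essential missing ingredient is the dichotomy in the proof of Proposition \ref{HorosphericalDistortionUpperBound}: either $w$ and $v$ determine different components of $\Gamma\setminus(\{a_i,a_j\}\cup K)$ (or different clique data), in which case almost nothing cancels in $w^{-1}v$, so $d(w,v)$ is itself comparable to $|w_{suff}|+|v_{suff}|$ and routing through $w_0$ is affordable; or they determine the same component, in which case one must instead build a \emph{local} path as in Proposition \ref{RipsGraphNoCutPtConnectivity} that never shortens the suffix below the common cancelling part, whose length is exponential in $d(w,v)$ only. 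Your appeal to a general divergence upper bound for hyperbolic groups does not substitute for this: such a bound produces a path in the Cayley graph avoiding a ball, with no reason to remain within bounded distance of the horosphere, so it cannot be converted into a $2$-Rips path on $H_k$. The upper bound really does require the RACG-specific combinatorial construction.
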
 
	
	See Theorem \ref{RipsGraphDistortion} for further details. It is an immediate corollary that the divergence graph inherits an exponential lower bound on distortion, due to a result in \cite{CGSR} that says that the divergence graph is a subgraph of a $k$-Rips graph for a sufficiently large $k$, though we obtain a better bound in the setting of RACGs. It is not obvious how to prove an upper bound for divergence graphs in general. However if $\Gamma$ satisfies a technical condition that holds, e.g., for any triangulation of a closed manifold, then an upper bound can be deduced without trouble. See Proposition \ref{DivergenceDistortionUpperBound}.
	
	As a consequence of the lower bound, we further conclude that each Rips and divergence graph is a graph of polynomial growth.
	
	\begin{corollary}
		
		There is a polynomial $P$ depending on $\Gamma$ and $\{a_i, a_j\}$ so that a certain ball in the $2$-Rips graph grows with rate $|B(w_0, r)|\le P(r)$. The same holds (with a possibly-different polynomial) in the divergence graph.
		
	\end{corollary}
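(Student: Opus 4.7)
The plan is to combine the exponential lower bound on distortion from Theorem~\ref{RipsGraphDistortion} with the fact that $W_\Gamma$, being a finitely generated group, has at most exponential volume growth in its Cayley graph. The key observation is that if $v$ lies in the ball $B_H(w_0, r)$ in the $2$-Rips graph on the horosphere, then by the exponential lower bound we have $f_1(d(w_0, v)) \le d_H(w_0, v) \le r$, so the ambient distance satisfies $d(w_0, v) \le f_1^{-1}(r)$, which is $O(\log r)$ because $f_1$ is exponential. Therefore $B_H(w_0, r)$ is contained in an ambient Cayley ball of radius $O(\log r)$.

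Next I would bound the size of this ambient ball. Because the Cayley graph of $W_\Gamma$ is locally finite with a fixed finite generating set, there exists a constant $C>1$ with $|B(w_0, n)| \le C^n$ for all $n$. Stacking the two inequalities yields
\[
|B_H(w_0, r)| \;\le\; |B(w_0, f_1^{-1}(r))| \;\le\; C^{f_1^{-1}(r)} \;\le\; P(r),
\]
where $P$ is a polynomial whose degree is determined by the exponential rate of $f_1$ and the growth rate $C$: schematically, $C^{A\log r + B} = C^{B} r^{A \log C}$. This settles the $2$-Rips case.

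For the divergence graph, the same argument applies verbatim once we know that divergence graph distances satisfy an exponential lower bound in terms of the ambient metric $d$. This was already flagged in the discussion preceding the corollary: by \cite{CGSR} the divergence graph is a subgraph of a $k$-Rips graph for sufficiently large $k$, so divergence graph distances dominate $k$-Rips distances (fewer edges, longer paths); by the bi-Lipschitz equivalence of $k_1$- and $k_2$-Rips metrics proved earlier, these in turn dominate a constant multiple of the $2$-Rips distances. Substituting this weaker exponential lower bound into the same containment in an ambient logarithmic-radius ball yields a (possibly different) polynomial bound.

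I do not foresee any genuine obstacle: the corollary is essentially a bookkeeping consequence of the exponential distortion theorem combined with the classical fact that finitely generated groups have at most exponential growth. The only care required is to spell out the polynomial $P$ explicitly from the exponential rate of $f_1$ and the growth constant $C$, and, for the divergence case, to confirm that passing from the Rips lower bound to the divergence lower bound only changes the exponential constants (and hence the polynomial's degree), not the qualitative polynomial-versus-exponential dichotomy.
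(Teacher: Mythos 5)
Your proposal is correct and follows essentially the same route as the paper: invert the exponential lower bound on distortion to trap $B_H(w_0,r)$ inside an ambient Cayley ball of radius $O(\log r)$, bound that ball's cardinality exponentially, and compose to get a polynomial, then transfer to the divergence graph via its containment in a $k$-Rips graph. The only cosmetic differences are that the paper invokes Coornaert's precise growth asymptotic where you use the generic exponential upper bound for finitely generated groups, and your direct inversion of $f_1$ sidesteps the paper's extra interpolation step between the discrete set of radii where its bound is first established and arbitrary $r$.
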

	
	See Corollaries \ref{RipsGraphGrowth} and \ref{DivergenceGrowth} for more information. Note that by Proposition 1.2, this then holds with a different polynomial for each other Rips graph. Also, the triangle inequality shows that $|B(w, r)|\le P(r+d(w,w_0))$. So every ball grows at a potentially-different polynomial rate.
	
	While we cannot presently give any nilpotent structure to the horospheres as in \cite{PengitoreMcReynolds}, polynomial growth can be seen a first step toward such a structure. More precisely, if we could find a bi-Lipschitz equivalent graph structure that is coarsely vertex-transitive, then a theorem of Trofimov would tell us that the horospheres are approximately Cayley graphs of nilpotent groups \cite{Trofimov}. This would be a major step toward understanding the geometry of hyperbolic RACGs. A discrete space foliated by a $\Z$-indexed family of exponentially-distorted (approximate) nilpotent Cayley graphs would also match with Heintze's description of homogeneous negatively-curved Riemannian manifolds as semidirect products $N\rtimes \R$ where $N$ is a simply-connected nilpotent Lie group and $\R$ acts on $N$ by an expansion, so that each coset of $N$ is exponentially distorted \cite{Heinze}.

    \subsection*{Divergence graphs}

    The divergence graph was first defined by \cite{CGSR} as part of the authors' construction of strongly aperiodic subshifts of finite type on any one-ended hyperbolic group. Informally, these are finite collections of local rules so that, if a coloring of the Cayley graph satisfies these rules everywhere, then it has no global symmetries. The divergence graph plays a key role in their construction by showing that, with a finite collection of rules, one can place two incompatible structures on the horospheres of a group, which the authors use to rule out infinite-cyclic symmetry groups.

    The divergence graph is of interest for several reasons beyond dynamics. First of all, it is a finer object than the Rips graph. It encodes information about the extrinsic geometry of the horosphere in question as well as each subsequent horosphere, as well as information about the combinatorial structure of the hyperbolic group and a chosen order on the generators. The divergence graph is also more clearly analogous to the boundary at infinity because of their shared connection to geodesic rays. If one imagines a point in the Cayley graph as having some diameter, and placing a light source at one end of a ray, then the divergence graph encodes the pattern that the shadows of points make on the boundary. As we shall see in Section \ref{sec:Pictures}, the divergence often provides a clearer picture of the topology of the boundary than the Rips graph. The cost to pay for this extra information is in added technical difficulties. While our main results show that the two graphs are geometrically comparable, the proofs for the divergence graph are considerably harder. One may think of the added difficulty as coming in part from the fact that the divergence depends on an ordering on the generating set, but the geometric properties we wish to deduce do not.
	
    \subsection*{Further remarks}
    
    One difficulty with studying horospheres in the Cayley graph is that they are, even as sets, not particularly invariant objects. A choice of a different generating set for the group $G$ may render the ray $\gamma$ no longer geodesic. Even if this does not happen, changes of generating set may scramble horospheres. For instance, consider the group $F_2=\langle a, b\rangle$ with generating sets $S_1=\{a,b\}$, $S_2=\{a, b, b^3\}$. It is a straightforward exercise to compute that if we take $\gamma(n)=a^n$ and $x_0$ to be the identity, then with respect to $S_1$, $b_\gamma(a^mb^m)=0$, while with respect to $S_2$, $b_{\gamma'}(a^mb^m)\approx \frac{-2m}{3}$. Another advantage of studying RACGs, then, is that they come equipped with a preferred generating set, and thus a preferred Cayley graph and metric.
	
	With minor modifications, the algorithms should all work for Rips and divergence graphs defined with respect to any repeating ray, and yield similar geometric properties. Without the assumption that the ray is an infinite power, there is no hope of obtaining FSM-based algorithms to draw horospheres. The geometric statements should still be true without the repeating assumption, because they are mostly consequences of the algorithms in abstract rather than the FSM implementations. However, all of the statements would become considerably more technical and less enlightening. Therefore we have chosen to focus on the simple case for this paper.
	
	In the case of the 2-Rips graph and the divergence graph, the code is available on Daniel Levitin's github, at \url{https://github.com/dnlevitin/horospheres}. The Main branch has the code used to generate the figures in this paper.
	
    \subsection*{Outline}

    The outline of the paper is as follows. In Section \ref{sec:preliminaries}, we define the basic objects of study. Section \ref{sec:FSMs} lays out the finite-state machines that we will use and combine repeatedly throughout the remainder of the paper. Sections \ref{sec:RipsGraph} and \ref{sec:DivergenceGraph} treat the $2$-Rips graph and the divergence graph, and follow a similar structure. First we introduce a normal form on the vertices that we will use for our computations. We then describe an algorithm to find edges between words whose normal forms have the same lengths, and then an algorithm to find edges between words whose normal forms have different lengths. Both sections conclude by proving the geometric corollaries of our description of the edge set, including distortion estimates and polynomial growth. In Section \ref{sec:Pictures}, we give graphical examples that demonstrate the outputs of the algorithms we have described.
	
	\section{Preliminaries}
	\label{sec:preliminaries}

	In this section, we will lay out background definitions from geometric group theory.
	
	\begin{definition}
		
		Let $G$ be a group with generating set $S$. They \textit{Cayley Graph} of the $G$ with generating set $S$, denoted $Cay(G,S)$, is the graph whose vertex set is $G$ and with edges $(g,gs)$ for each $g\in G$ and $s\in S$. We endow $Cay(G,S)$ with a metric $d_S$ by giving each edge length $1$ and taking the path metric. We refer to the restriction of this metric to the vertex set (i.e. to $G$) again as $d_S$. Note that on $G$ this metric is given by the formula $d_S(g_1,g_2)=|g_1^{-1}g_2|_S$, where $|\cdot|_S$ denotes the word length in terms of $S$. We will call $d_S$ the \textit{word metric (with respect to S)}.
		
	\end{definition}
	
	This allows us to associate a metric space to a group, and indeed a family of metric spaces. Without going into great detail, there is a natural notion of equivalence among the various metrics $d_S$ arising from different choices of generating sets. The large-scale geometric properties of the Cayley graph will usually turn out to be independent of $S$
    
	When describing segments and rays in Cayley graphs, we will consider either edge paths in the Cayley graph or sequences in the group. It will usually not matter which we consider. 
	
	\begin{definition}
		
		We will say that a word in the generating set is \textit{geodesic} if the associated edge path starting at the identity in the Cayley graph is distance-minimizing. If the generating set $S$ carries an alphabetical order, then a word is a \textit{shortlex geodesic} if it is a geodesic and is alphabetically first among all geodesics with the same start and endpoint.
		
	\end{definition}
	
	One example of a crucial geometric property of a group that does not depend on choice of generating set is that of negative curvature, which is formalized in the following definition.	
	
	\begin{definition}
		
		Let $G$ be a group and $S$ a generating set. $G$ is said to be $\delta$-hyperbolic for a non-negative real number $\delta$ if for any 3 points $x_1$, $x_2$, and $x_3$ in the associated Cayley graph of $G$, and any geodesic segments $\gamma_1=[x_1,x_2]$, $\gamma_2=[x_2, x_3]$, and $\gamma_3=[x_3,x_1]$, then any $\gamma_i$ is in the $\delta$-neighborhood of the union of the other two.
		
	\end{definition}
	
	We will say that $G$ is \textit{hyperbolic} if it is $\delta$-hyperbolic for some $\delta$ and some choice of generating set. It is well-known that a change of generating set can change the value of $\delta$, but that if $G$ is $\delta$-hyperbolic with respect to some finite generating set $S$, it will be hyperbolic with respect to any other finite generating set \cite{BridsonHaefliger}.

	In this paper, we will study right-angled Coxeter Groups and their Cayley graphs with respect to a special generating set. 
	
	\begin{definition} \label{RACGDef}
		
		Let $\Gamma$ be a graph, with vertex set $V=\{a_i\}$ and edge set $E$. The \textit{right-angled Coxeter group} (RACG) determined by $\Gamma$, denoted $W_\Gamma$, is the group with presentation $\langle V| R\rangle$ where $R$ is the set of words $\{a_i^2, [a_i,a_j]: (a_i,a_j)\in E\}$.
		
	\end{definition}
	
	We think about these relations as allowing us to cancel any two adjacent copies of the same letter, and otherwise to reverse the order of an adjacent pair of letters exactly when the associated vertices span an edge. When we perform a sequence of such swaps, we will say that this is a \textit{rearrangement} of the word.
	
	We will be interested in the Cayley graphs for the given generating set, and we will order the generators by subscripts, i.e. $a_1<a_2<...$. When we use the symbol $d$ for distance in a RACG, we always mean with respect to the generating set $V$. Similarly, the expression $|w|$ means the length of any geodesic word equivalent $w$ in the generating set $V$.
	
	We fix some notation in the graph $\Gamma$ We will denote $Star(a_i)$ to be $a_i$ together with the vertices adjacent to $a_i$. The sets $Star_<(a_i)$ and $Star_>(a_i)$ are those adjacent vertices earlier and later than $a_i$ respectively, and $Star_\le(a_i)$ and $Star_\ge(a_i)$ are $Star_<(a_i)$ and $Star_>(a_i)$ together with $a_i$. $Link(a_i)$ means the set of vertices adjacent to $a_i$, and may be decorated with a subscript as before. Finally, $Clique(\Gamma)$ is the size of the largest clique in $\Gamma$.
	
	For a RACG, a great deal is known about its properties just in terms of the defining graph. For example hyperbolicity can be read off from the defining graph.
	
	\begin{theorem} \cite{Moussong}
		
		Let $\Gamma$ be a graph without induced square subgraphs. Then $W_\Gamma$ is hyperbolic.
		
	\end{theorem}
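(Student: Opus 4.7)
The plan is to realize $W_\Gamma$ as acting properly and cocompactly by isometries on the Davis complex $\Sigma_\Gamma$, which in the right-angled case is a finite-dimensional CAT(0) cube complex of dimension $Clique(\Gamma)$ whose 1-skeleton is (a subdivision of) the Cayley graph. By the \v{S}varc--Milnor lemma, $W_\Gamma$ and $\Sigma_\Gamma$ are quasi-isometric, so hyperbolicity transfers between them. I would then invoke the Flat Plane Theorem of Bridson--Haefliger (Theorem III.H.1.5): a proper, cocompact CAT(0) space is Gromov hyperbolic if and only if it contains no isometrically embedded copy of $\E^2$. The task therefore reduces to ruling out flat planes in $\Sigma_\Gamma$ under the hypothesis on $\Gamma$.

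The main step is a combinatorial analysis of flats via the hyperplane structure of $\Sigma_\Gamma$. Each hyperplane $H$ is stabilized by a $W_\Gamma$-conjugate of a standard reflection $a_i$, and two hyperplanes cross transversely in $\Sigma_\Gamma$ if and only if the associated generators commute, i.e.\ span an edge in $\Gamma$. A standard fact about flats in CAT(0) cube complexes (following Sageev's disk diagram machinery) is that an isometrically embedded Euclidean plane must cross two infinite families $\{H_n\}$ and $\{K_n\}$ of hyperplanes, with the hyperplanes in each family pairwise disjoint and every $H_n$ crossing every $K_m$. Picking two hyperplanes $H_0, H_1$ from the first family and $K_0, K_1$ from the second produces a 4-cycle of crossings with no diagonal crossings (since parallel hyperplanes in a CAT(0) cube complex do not cross). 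Translating through the bijection with generators yields four elements $a_i, a_j, a_k, a_l$ such that $\{a_i,a_j\}, \{a_j,a_k\}, \{a_k,a_l\}, \{a_l,a_i\}$ commute while $\{a_i,a_k\}$ and $\{a_j,a_l\}$ do not, giving an induced 4-cycle in $\Gamma$ and the desired contradiction.

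The main obstacle is the bookkeeping needed to ensure that the four extracted hyperplanes correspond to four genuinely distinct vertices of $\Gamma$, rather than to fewer generators appearing multiple times along the parallel family (which happens, for example, if $H_0$ and $H_1$ lie in the same $W_\Gamma$-orbit). One needs to argue that even in this degenerate case, a generator commuting with a nontrivial conjugate of itself through a chain of commuting generators again forces an induced 4-cycle in $\Gamma$; this is a short additional combinatorial argument specific to the right-angled setting. This final reduction is essentially Moussong's original analysis, and it simplifies considerably from the general Coxeter case because the Davis complex is already a cube complex rather than a piecewise Euclidean complex modeled on Coxeter zonotopes.
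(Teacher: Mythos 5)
The paper does not prove this statement; it is imported verbatim with a citation to Moussong, so there is no internal argument to compare yours against. Your route --- the Davis complex as a CAT(0) cube complex, \v{S}varc--Milnor, the Flat Plane Theorem, and then hyperplane combinatorics --- is the standard modern way to establish the right-angled case, and the reduction to ``no isometrically embedded $\E^2$ in $\Sigma_\Gamma$'' is correct.

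The gap is in the final extraction of an induced square, and your proposed patch for the degenerate case does not work as stated. Two hyperplanes of $\Sigma_\Gamma$ carrying the same label never cross (every square of the Davis complex has two \emph{distinct} commuting labels), so if $H_0,H_1$ both carry $a_i$ and $K_0,K_1$ both carry $a_j$, the four crossings only tell you that $a_i$ and $a_j$ are adjacent --- a single edge of $\Gamma$, not a $4$-cycle --- and no statement about ``a generator commuting with a nontrivial conjugate of itself'' recovers a fourth vertex from this data. The correct completion uses the \emph{infinitude} of the two families rather than two members of each: a combinatorial geodesic running transverse to $\{H_n\}$ while staying near a fixed $K_m$ spells a long geodesic word $s_1\cdots s_p$, one transverse to $\{K_m\}$ spells $t_1\cdots t_q$, and every $s_\alpha$ is adjacent in $\Gamma$ to every $t_\beta$ (crossing hyperplanes have distinct commuting labels). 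Since a geodesic word whose letters all lie in a single clique has length at most $Clique(\Gamma)$, for $p,q$ large each of $\{s_\alpha\}$ and $\{t_\beta\}$ contains two distinct non-adjacent letters, and those four letters span an induced square. You should also either cite or prove the ``standard fact'' that an isometric flat yields two infinite transverse families of pairwise disjoint hyperplanes: it is true, but the flat need not be a subcomplex, so neither the disjointness within each family nor the totality of the cross-crossings is immediate.
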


	\begin{definition}[Standing Assumptions] \label{StandingAssumptions}
		
		We make the following standing assumptions of all of our defining graphs $\Gamma$.		
		\begin{itemize}
			\item $\Gamma$ has no induced square subgraphs, i.e. $W_\Gamma$ is hyperbolic.
			\item $\Gamma$ is not a complete graph.
			\item $\Gamma$ has no separating cliques, i.e., no (possibly empty) complete subgraphs $K$ so that $\Gamma\setminus K$ is disconnected. In particular $\Gamma$ is connected.
		\end{itemize}	
	\end{definition}

	These assumptions are necessary and sufficient for the group $W_\Gamma$ to be one-ended and hyperbolic \cite{Davis}.
	
	\begin{definition}
		
		Let $\gamma$ be a geodesic ray in a hyperbolic RACG. The \textit{Busemann function} determined by $\gamma$ is the function $b_\gamma:W_\Gamma\to \Z$ defined by $b_\gamma(w)=\lim_{n\to\infty} d(\gamma(n), w)-d(\gamma(n), e)$ where $e$ denotes the identity.
		
		A \textit{horosphere} for $\gamma$ is a level set of $b_\gamma$. We will refer to the $k$-\textit{horosphere} to mean $b_\gamma^{-1}(k)$.
		
	\end{definition}
	
	We will restrict our consideration for convenience to horospheres in hyperbolic RACGs about the rays that we will colloquially refer to as $\gamma = (a_ia_j)^\infty$. That is, for any two vertices $a_i$ and $a_j$ not spanning an edge in the defining graph, this geodesic starts at the origin in the Cayley Graph, then traverses the edge to $a_i$, then to $a_ia_j$, and so on forever. It is straightforward to verify that this really is a geodesic.
	
	We will be interested in putting graph structures on these horospheres. The first such graph structure we define here. The second one is more complicated and will be defined later.
	
	\begin{definition}
		
		Let $(X,d)$ be a metric space. The $k$-\textit{Rips graph} on $X$ is the graph whose vertex set is $X$ and where edges connect points at $d$-distance at most $k$.
		
	\end{definition}
	
	For us, $X$ will be a horosphere, and $d$ will be the restriction of the word metric.

        \subsection*{Finite-state machines and formal language theory}
	
	For our computations, we will use a formalism called a Finite-State Machine (FSM).
	
	\begin{definition} \label{FSMDef}
		
		A \textit{Finite-State Machine} $M$ consists of the following
		
		\begin{itemize}
			\item A finite, labeled, directed graph $G=(V,E)$ (whose vertices are called \textit{states}), whose edges are labeled in the alphabet $\mathscr{A}$ such that every vertex has exactly one directed edge exiting it labeled by each letter in $\mathscr{A}$.
			\item A special vertex $v\in V$ called the \textit{starting state}	
			\item A subset $A$ of the vertices of $V$ called the \textit{accepted states}.
		\end{itemize}
		
		We write $v_1\to_{a} v_2$ if there is an edge labeled $a$ from $v_1$ to $v_2$.
		
		Given such a machine $M$, the associated \textit{regular language} $\mathscr{L}(M)$ consists of all strings of letters in $\mathscr{A}$ labeling paths from the starting state to an accepted state.
		
	\end{definition}
	
	We will usually not describe FSMs in this level of detail because it is somewhat cumbersome. The following remarks provide some simplifications we will usually use tacitly.
	
	\begin{remark}\label{PrefixClosedLanguages}
		
		Many of the languages $\mathscr{L}(M)$ we consider will be \textit{prefix-closed}, i.e. if $w=a_{i_1}a_{i_2}...a_{i_n}$ is an accepted word, then so are all words $a_{i_1}a_{i_2}...a_{i_k}$ for $k<n$. In such a case, every non-accepted state leads only to non-accepted states. Therefore, we can remove all non-accepted states and the edges to and from them, and get a graph where the words labeling directed paths are again $\mathscr{L}(M)$. When we describe an FSM without specifying accepted states or where not every vertex has an outgoing edge with every label, we mean that we are describing the accepted subgraph of a machine $M$ associated to a prefix-closed language. The full graph $G$ can be recovered by adding a single rejected vertex into the graph, and directing each missing edge to go to this vertex.
		
	\end{remark}
	
	Sometimes, we will describe a larger set of of vertices and edges than strictly necessary.
	
	\begin{remark}\label{DescribingFSMs}
		
		When we describe a finite-state machine, especially if it is derived from another finite-state machine, we will specify a vertex set, labeled edge set, and starting state. It will often be the case that from the starting state, many vertices cannot be reached by a directed path. For instance, when combining machines $M_1$ and $M_2$ that perform similar functions, we may take a vertex set to be $M_1\times M_2$, even though there may be pairs of vertices which contain mutually incompatible data. When this happens, we implicitly intend to consider only the subgraph consisting of accessible vertices and the edges between them. 
		
	\end{remark}
	
	A note on notation: the states in an FSM $M$ will often be defined in terms of data that we want to use later. By a slight abuse of notation, if $w$ is in $\mathscr{L}(M)$, then $M(w)$ will mean the final state that the word $w$ ends on in the FSM $M$. 
	
	The motivation for defining regular languages is that we will often prefer to think of a machine in terms of the language it accepts. For instance, the following proposition is convenient to phrase as a combination theorem for regular languages rather than one about finite-state machines.
	
	\begin{proposition}\label{CombiningFSMs}
		
		Suppose $\mathscr{L}_1$ and $\mathscr{L}_2$ are regular languages. Then the languages \newline $\mathscr{L}_1\cap \mathscr{L}_2$, $\mathscr{L}_1\cup \mathscr{L}_2$, and $\mathscr{L}_1\mathscr{L}_2$ are all regular, where the last expression refers to the set of concatenations of words in $\mathscr{L}_1$ with those in $\mathscr{L}_2$.
		
		Furthermore, if $\mathscr{L}$ is a language with alphabet $\mathscr{A}$ and $\mathscr{B}$ is a subset $\mathscr{A}$, the language $\mathscr{L}|_{\mathscr{B}}$ consisting of words in $\mathscr{L}$ containing only letters in $\mathscr{B}$, is regular.
		
	\end{proposition}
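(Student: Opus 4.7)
The plan is to give direct constructions of finite-state machines realizing each operation. Let $M_1=(V_1,E_1,\mathscr{A},s_1,A_1)$ and $M_2=(V_2,E_2,\mathscr{A},s_2,A_2)$ be FSMs accepting $\mathscr{L}_1$ and $\mathscr{L}_2$; by expanding each alphabet to the union and routing any unused letter to a rejected sink state, I may assume both machines are defined over the same alphabet $\mathscr{A}$.

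For intersection and union I use the product construction. The new state set is $V_1\times V_2$, the starting state is $(s_1,s_2)$, and a transition $(v_1,v_2)\to_a (v_1',v_2')$ is declared whenever $v_1\to_a v_1'$ in $M_1$ and $v_2\to_a v_2'$ in $M_2$. Determinism is immediate, and a short induction on word length shows that reading $w$ terminates at the state $(M_1(w),M_2(w))$. Taking the accepted states to be $A_1\times A_2$ produces $\mathscr{L}_1\cap\mathscr{L}_2$; taking $(A_1\times V_2)\cup(V_1\times A_2)$ produces $\mathscr{L}_1\cup\mathscr{L}_2$.

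For the alphabet restriction $\mathscr{L}|_{\mathscr{B}}$, I take the FSM with alphabet $\mathscr{B}$ whose underlying graph is obtained from that of $M$ by deleting every edge labeled by a letter not in $\mathscr{B}$, keeping the same start and accepted states. This is again a deterministic FSM (now over $\mathscr{B}$), and a directed path from start to accepted exists in it exactly when it exists in $M$ and uses only letters from $\mathscr{B}$.

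The concatenation case is the main obstacle, since no deterministic product construction can know when to transition from reading a word of $\mathscr{L}_1$ to reading one of $\mathscr{L}_2$. My plan is to pass through a nondeterministic intermediate: form the disjoint union of $M_1$ and $M_2$, add $\epsilon$-transitions from every state in $A_1$ to $s_2$, and declare the accepted set to be $A_2$. By construction this nondeterministic machine accepts exactly $\mathscr{L}_1\mathscr{L}_2$. I then apply the standard subset construction, whose states are subsets of $V_1\sqcup V_2$ and whose transition on $a$ sends a subset $S$ to the $\epsilon$-closure of $\{v' : v\to_a v'\text{ for some }v\in S\}$; the resulting deterministic FSM has the same language. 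The state count may blow up exponentially, but this is immaterial since the proposition only asserts regularity, not efficiency.
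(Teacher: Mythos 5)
Your constructions are the standard textbook ones (the product automaton for intersection and union, deletion of edges labeled outside $\mathscr{B}$ for the alphabet restriction, and an $\epsilon$-NFA followed by the subset construction for concatenation), and they are all correct, including the handling of the case where the empty word lies in $\mathscr{L}_1$ or $\mathscr{L}_2$. The paper does not prove this proposition itself---it defers to a standard reference on formal language theory---so your argument is precisely the one being invoked.
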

	
	For a proof, see any standard text on formal language theory, such as \cite{WordProcessing}.
	
	Here is one standard language we will use in these combinations.
	
	\begin{lemma}
		
		Let $\mathscr{B}$ be a subset of $\mathscr{A}$. Then the language $F_\mathscr{B}$, consisting of words in $\mathscr{A}$ that cannot be rearranged to begin with a letter of $\mathscr{B}$, is regular\footnote{More precisely, this is the set of words in $\mathscr{A}$ that cannot be made to begin with a letter of $\mathscr{B}$ without cancellation. We will always combine these languages with others that guarantee no cancellation occurs, so that the resulting words cannot begin with letters of $\mathscr{B}$.}.
		
	\end{lemma}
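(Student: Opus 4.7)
The plan is to construct an explicit FSM $M_\mathscr{B}$ whose accepted language is $F_\mathscr{B}$. The combinatorial input, a standard trace-monoid fact, is that a word $w=a_{i_1}\cdots a_{i_n}$ admits a rearrangement (using only the commutation relations, with no $a_i^2=1$ cancellations) beginning with its $k$-th token $a_{i_k}$ if and only if $a_{i_k}\in Link(a_{i_j})$ for every $j<k$. The ``if'' direction is witnessed by sliding $a_{i_k}$ leftward past $a_{i_{k-1}},a_{i_{k-2}},\ldots,a_{i_1}$ one swap at a time; the ``only if'' direction follows by tracking the labelled tokens at positions $j$ and $k$ through the sequence of commutations and noting that the former ends up to the right of the latter, so the two tokens must swap at least once, and in particular they must be distinct and span an edge of $\Gamma$.

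I would then take the state set of $M_\mathscr{B}$ to be the power set $2^\mathscr{B}$, with start state $\mathscr{B}$. The transition rule out of state $T$ on input $a\in\mathscr{A}$ is: if $a\in T$, direct the edge to a rejecting sink; otherwise, transition to $T\cap Link(a)$. A one-line induction establishes the invariant that after reading the prefix $p=a_{i_1}\cdots a_{i_j}$ the machine sits in state
\[
T_j \;=\; \mathscr{B}\cap\bigcap_{\ell\le j}Link(a_{i_\ell}),
\]
namely the set of letters of $\mathscr{B}$ commuting with every letter of $p$. Combined with the combinatorial fact above, this identifies the rejection condition $a_{i_k}\in T_{k-1}$ with the existence of a rearrangement of $w$ beginning with a letter of $\mathscr{B}$, so the machine accepts exactly $F_\mathscr{B}$.

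No step is particularly deep; the only real subtlety is the interplay with the self-loop convention $a\notin Link(a)$, which is what prevents a token from commuting past another copy of itself and thereby encodes the no-cancellation stipulation of the footnote. Since $F_\mathscr{B}$ is prefix-closed, the rejecting sink may be suppressed in the style of Remark \ref{PrefixClosedLanguages}, yielding a finite-state machine whose existence proves the lemma.
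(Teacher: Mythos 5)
Your proof is correct and constructs essentially the same machine as the paper: states are the subsets of $\mathscr{B}$ with start state $\mathscr{B}$, reading a letter currently in the state sends the word to a rejecting sink, and otherwise the state is intersected with the neighborhood of the letter just read, with the state invariant justified by the same induction. The only cosmetic difference is that you intersect with $Link(a)$ where the paper intersects with $Star(a)$; since that transition is taken only when $a$ is not in the current state, the two rules agree on every reachable transition.
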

	
	\begin{proof}
		
		We define the relevant machine $M_\mathscr{B}$ whose vertices are subsets of $\mathscr{B}$ and whose starting state is $\mathscr{B}$. From each state $S$, we define edges leaving $S$ labeled by each letter not in $S$ by $S\to_{a_i} S\cap Star(a_i)$. As in Remark \ref{PrefixClosedLanguages} these states are all accepted, and every other edge leads to an inescapable rejected state.
		
		One sees by induction that for a word $w$, $M_{\mathscr{B}}(w)$ is the set of letters that, if written after $w$, could commute to the beginning. Therefore, the fact that every state has edges exiting it labeled by its complement means that the associated language is $F_{\mathscr{B}}$ as desired.
	\end{proof}
	
	From time to time we will need to restrict to words of odd or even length.
	
	\begin{lemma}
		
		The languages $Odd(\mathscr{A})$ of odd-length words and $Even(\mathscr{A})$ of even-length words are regular\footnote{Again, these are without canceling letters, and we will always combine this language with others to guarantee no cancellation. As it happens, in a RACG, letters cancel one pair at a time and therefore preserve the parity, see \cite{Davis}}.
		
	\end{lemma}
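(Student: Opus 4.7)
The plan is to exhibit an explicit two-state finite-state machine that tracks the parity of the length of the word read so far, and then read off the two languages from it by choosing different accepted states.

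Concretely, I would define a machine $M_{parity}$ with state set $\{e,o\}$, starting state $e$, and for each letter $a\in\mathscr{A}$ the transitions $e\to_a o$ and $o\to_a e$. A straightforward induction on the length of the input word $w$ shows that $M_{parity}(w)=e$ if and only if $w$ has even length, and $M_{parity}(w)=o$ otherwise. Declaring $A=\{e\}$ yields an FSM whose associated language is $Even(\mathscr{A})$, while declaring $A=\{o\}$ yields an FSM whose associated language is $Odd(\mathscr{A})$. Both languages are therefore regular in the sense of Definition \ref{FSMDef}.

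Since $Even(\mathscr{A})$ contains the empty word, neither language is prefix-closed, so in this case the simplification in Remark \ref{PrefixClosedLanguages} does not apply; one must genuinely specify the accepted state rather than suppress non-accepted ones, but the machine above already comes with every vertex having outgoing edges labeled by each letter of $\mathscr{A}$, so no completion is required. There is no real obstacle here; the only thing to notice is the footnote's caveat that in a RACG a reduction of a word cancels letters in pairs and hence preserves parity \cite{Davis}, so once this machine is later intersected with machines guaranteeing no cancellation, the recognized words will truly be those that reduce to something of the desired parity.
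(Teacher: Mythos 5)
Your construction is exactly the paper's proof: a two-state machine toggling between even and odd parity on each letter, with the accepted state chosen according to which language is desired. The extra observation about prefix-closure is correct but not needed; the argument matches the paper's.
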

	
	\begin{proof}
		The associated machines have two vertices and every edge goes from the other. In one, the start state is accepted and the other is rejected, and in the other it's the other way around.
	\end{proof}
	
	\section{Finite State Machines for Hyperbolic Right-Angled Coxeter Groups}
	\label{sec:FSMs}
	
	As mentioned in the introduction, one advantage to working with RACGs is that they admit convenient computations in FSMs. For instance, RACGs, hyperbolic or otherwise, have a finite-state machine that accepts the set of all geodesics, and a separate finite-state machine that accepts the set of all shortlex geodesics. We make the following convention: when applied to words, the symbol $``="$ means that the expressions on both sides of the symbol are identical words, while the symbol $``=_{Geo}"$ means that the expression on the right of the symbol is a geodesic word equivalent to the expression on the left.

    The geodesic machine is due to Antolin-Ciobanu \cite{AntolinCiobanu}, though it is also an almost immediate generalization of Brink-Howlett in the RACG case. We include the proof for completeness.
    
	\begin{proposition} [\cite{AntolinCiobanu} Proposition 4.1]
		
		Let $\Gamma=(V,E)$ be a graph. Then $Geo(W_\Gamma)$, the set of geodesics in the alphabet $V$ subject to the commutation and cancellations in $W_\Gamma$, is a regular language. The associated FSM $M_{Geo}$ can be taken to have states given by sets $S\subset V$ where \newline $M_{Geo}(w)=\{a_i: |wa_i|=|w|-1\}=\{a_i: w=va_i \text{ and } |v|=|w|-1\}$, and edges leaving state $S$ labeled by all the elements of $S^c$. The starting state is the empty set. As the geodesic language is prefix-closed, all vertices are accepted states and unmentioned edges lead to a single rejected state as in Remark \ref{PrefixClosedLanguages}.
		
	\end{proposition}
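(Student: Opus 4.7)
The plan is to prove this by induction on word length, verifying along the way that the set $M_{Geo}(w)$ transforms in the prescribed way under right-multiplication by a single letter. Throughout I treat $M_{Geo}(w)$ as a \emph{defined} quantity attached to a geodesic word $w$, namely either of the two sets in the statement, and check that transitions between these states determine whether appending a letter preserves the geodesic property.

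First I would verify that the two descriptions of $M_{Geo}(w)$ agree. If $|wa_i|=|w|-1$, pick a geodesic $v$ representing $wa_i$; then $w =_{Geo} va_i$ and $|v|=|w|-1$ since $a_i^2=1$. The reverse implication is immediate from the same cancellation. Next, I would set up the transition rule: if $S = M_{Geo}(w)$ and we try to append $a_j$, then $a_j \in S$ exactly when $|wa_j|=|w|-1$, which is precisely when $wa_j$ fails to be geodesic. So all ``geodesic-breaking'' letters at state $S$ are the elements of $S$, and these edges should lead to the rejected state.

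The heart of the proof is showing that when $a_j \notin S$, we have
\[ M_{Geo}(wa_j) = \{a_j\} \cup (S \cap Star(a_j)). \]
The inclusion $\supseteq$ is routine: clearly $a_j$ is a last letter of $wa_j$, and if $a_i \in S \cap Star(a_j)$, then $w =_{Geo} v a_i$ with $|v|=|w|-1$ gives $wa_j =_{Geo} v a_i a_j = v a_j a_i$, witnessing $a_i$ as a last letter. For $\subseteq$, suppose $a_i$ is a geodesic last letter of $wa_j$ with $a_i \neq a_j$; the nontrivial step invokes Tits' solution to the word problem for Coxeter groups in the RACG case, which says that any two geodesic expressions for the same element are related by commutation moves alone (no cancellations can occur without changing length). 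Writing $wa_j =_{Geo} u a_i$ thus requires commuting $a_i$ past the terminal $a_j$, forcing $a_i \in Star(a_j)$; the resulting intermediate expression then exhibits $a_i$ as a geodesic last letter of $w$, so $a_i \in S$.

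With the transition rule established, I would verify the base case $M_{Geo}(\epsilon) = \emptyset$ and conclude by induction that the state reached by reading any geodesic word $w$ really is the set $\{a_i : |wa_i|=|w|-1\}$. Finally, since prefixes of geodesics are geodesics, the language is prefix-closed and Remark \ref{PrefixClosedLanguages} lets me take every state other than the single sink to be accepted, with the unlabeled edges directed into the sink. The main obstacle is the $\subseteq$ direction of the state-update formula: without Tits' theorem for RACGs, it is not obvious that a ``possible last letter'' $a_i$ must literally commute past the actual last letter $a_j$, rather than arising through some longer chain of replacements. Once this rewriting fact is in hand, the rest is bookkeeping.
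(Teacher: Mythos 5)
Your proof is correct and follows essentially the same route as the paper's: induction on word length with states given by the set of possible last letters and the transition formula $M_{Geo}(wa_j)=\bigl(M_{Geo}(w)\cap Star(a_j)\bigr)\cup\{a_j\}$, the only substantive difference being that you justify the inclusion $\subseteq$ via Tits' solution to the word problem (reduced words for the same element differ by commutations alone), where the paper is terser and instead cites the Exchange Condition of \cite{Davis}. One small point worth making explicit: your assertion that $wa_j$ fails to be geodesic exactly when $|wa_j|=|w|-1$ silently uses that $|wa_j|\neq|w|$, i.e.\ the standard parity fact for Coxeter groups, which is precisely the step the paper delegates to the Exchange Condition.
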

	
	In more down-to-earth terms, the finite-state machine takes states which keep track of, for a word $w$, which letters $a_i$ can commute to the end of $w$.
	
	\begin{proof}
		
		First of all, we show that such a finite-state machine is well-defined, that is, that we can determine $M_{Geo}(wa_i)$ entirely from $M_{Geo}(w)$. This is clear for the trivial word. If $M_{Geo}(w)$ is the collection of letters that $w$ could end with after rearrangement for all words of length at most $n$, and if $wa_i$ is geodesic, then $wa_i$ can end with $a_i$, or any last letter of $w$ that also commutes with $a_i$. In particular, $M_{Geo}(wa_i)=\bigl(M_{Geo}(w)\cap star(a_i)\bigr)\cup\{a_i\}$. This shows that a machine exists whose states are subsets of $V$ as required.
		
		We now must show that the language of this machine is $Geo(W_\Gamma)$. Suppose that $w$ is a geodesic word, but that $wa_i$ is not geodesic. By the triangle inequality, $|wa_i|=|w|$ or $|wa_i|=|w|-1$. By a variant of the Exchange Condition described in \cite{Davis} we determine that $|wa_i|=|w|-1$. In particular, $wa_i=_{Geo}v$ where $|v|=|w|-1$. Then by right multiplication, $w=_{Geo}va_i$, so that $a_i$ is a last letter of $w$. Hence if $a_i\not\in M_{Geo}(w)$, then $wa_i$ is geodesic. Therefore, if a word $w=a_{i_1}a_{i_2}...a_{i_n}$ is accepted by $M_{Geo}$ and is not geodesic, it must be because $v=a_{i_1}a_{i_2}...a_{i_{n-1}}$ was not geodesic. Iterating, one sees a contradiction, showing that that all accepted words are geodesics. 
		
		One shows by induction on length that  $\mathscr{L}(M_{Geo})$ contains all the geodesics in $W_\Gamma$. Certainly $\mathscr{L}(M_{Geo})$ contains the empty word and all words of length $1$. If $w=a_{i_1}a_{i_2}...a_{i_n}$ is a geodesic word of length $n$, and $\mathscr{L}(M_{Geo})$ contains all geodesic words of length at most $n-1$, then $v=a_{i_1}a_{i_2}...a_{i_{n-1}}$ is an accepted word for $M_{Geo}$, and since $w$ is geodesic, $a_{i_n}$ is not in $M_{Geo}(v)$, so that there is an edge labeled by $a_{i_n}$ exiting $M_{Geo}(v)$. Therefore, $w$ is in $\mathscr{L}(M_{Geo})$. 
	 \end{proof}
	
	The statement for the shortlex language is very similar. This is a specialization of Brink and Howlett's shortlex machine to the RACG case \cite{BrinkHowlett}. Again, we include the proof for completeness.
	
	\begin{proposition} [\cite{BrinkHowlett} Proposition 3.3]
		
		Let $\Gamma$ be a graph, and fix an order $<$ on the $V$. The language $ShortLex(W_\Gamma)$, consisting of all shortlex geodesics, is regular. An FSM $M_{lex}$ for this language can be taken to have states labeled by subsets $S\subset V$ where $S(w)=\{l_i: wl_i \text{is not shortlex}\}$, and edges leaving $S(w)$ corresponding to $S(w)^c$.
		
	\end{proposition}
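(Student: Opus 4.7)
The plan is to parallel the proof of the geodesic machine, with the extra task of tracking when a single-letter extension preserves lex-minimality. First I would observe that the shortlex language is prefix-closed: if $w_1 w_2$ is shortlex and $w_1'$ were a strictly lex-smaller geodesic representative of $w_1$, then $w_1' w_2$ would be a strictly lex-smaller geodesic representative of $w_1 w_2$. So Remark \ref{PrefixClosedLanguages} applies and I need only describe, at each state, the subset of next letters $l_i$ whose concatenation preserves shortlex.

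Next I would characterize when $wl_i$ is shortlex for $w$ already shortlex. Failure happens for one of two reasons: either (a) $wl_i$ is not geodesic, equivalently $l_i \in M_{Geo}(w)$; or (b) $wl_i$ is geodesic but some commutation rearrangement is strictly lex-smaller. For case (b), since $w$ is shortlex, no internal rearrangement of $w$ alone improves it, so any lex-improvement must come from moving $l_i$ leftward past a suffix of $w$. Writing $w = s_1 \cdots s_n$, if $l_i$ commutes with each of $s_{m+1},\ldots,s_n$, then the resulting rearrangement places $l_i$ at position $m+1$ and first disagrees with $wl_i$ there, with $l_i$ in place of $s_{m+1}$; so it is strictly smaller precisely when $l_i < s_{m+1}$. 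Hence $wl_i$ fails to be shortlex via (b) iff $w$ contains a letter $l_k > l_i$ such that $l_i$ commutes with $l_k$ and with every letter of $w$ appearing after $l_k$.

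Using this characterization, $M_{lex}$ is built by augmenting the state of the geodesic machine with just enough bookkeeping to detect the condition in case (b). The set of legal next letters at a state is by construction $V \setminus S(w)$, and the successor state is determined by the update rule for $M_{Geo}$ together with the suffix-commutation data, so the resulting machine can be coherently labeled by the subsets $S(w)$ as in the statement. Acceptance $\mathscr{L}(M_{lex}) = ShortLex(W_\Gamma)$ then follows by induction on length exactly as for $M_{Geo}$: every prefix of a shortlex word is shortlex and thus accepted, and every accepted word is shortlex because each single-letter extension is legal by the characterization above.

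The hardest part is the case analysis in the characterization of $S(w)$: one must rule out exotic rearrangements that combine the movement of $l_i$ with nontrivial internal reshuffling of $w$. The saving observation is that $w$ being shortlex means any internal reshuffle of $w$ only increases its lex order, so composing such a reshuffle with a leftward movement of $l_i$ cannot beat the clean case where $w$ is left unchanged. This lets the analysis focus on the movements of $l_i$ through the fixed string $w$, which is exactly what the criterion above captures.
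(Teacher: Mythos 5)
Your proposal is correct and follows essentially the same route as the paper: both arguments classify the failures of $wl_i$ to be shortlex into a cancellation case and a favorable-rearrangement case, and both rest on the observation that, because $w$ is already shortlex, any improving rearrangement of $wl_i$ reduces to sliding $l_i$ leftward through a terminal segment of $w$ with which it commutes. The one step you assert rather than prove is precisely the computation the paper's proof is devoted to. The proposition claims not just regularity but that the states can be taken to be the sets $S(w)$ themselves, with outgoing transitions labeled by $S(w)^c$; for that you must check that $S(wa)$ is a function of $S(w)$ and $a$ alone. Saying that your product of the geodesic machine with commutation bookkeeping ``can be coherently labeled by the subsets $S(w)$'' does not establish this, since a priori two words with the same $S$-value could occupy different product states with different transition behavior. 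Fortunately your non-recursive characterization of $S(w)$ yields the needed recursion in two lines: $l_i\in S(wa)$ iff it cancels in $wa$ (i.e.\ $l_i=a$, or $l_i\in M_{Geo}(w)\cap star(a)$), or the witnessing letter $l_k>l_i$ is $a$ itself (i.e.\ $l_i\in star_<(a)$), or the witness lies in $w$ and $l_i$ also commutes with $a$ (i.e.\ $l_i\in S(w)\cap star(a)$ via your condition (b)); since $M_{Geo}(w)\subseteq S(w)$, this collapses to $S(wa)=\bigl(S(w)\cap star(a)\bigr)\cup\{a\}\cup star_<(a)$, which is exactly the update rule the paper derives. Adding that verification closes the argument.
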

	
	\begin{proof}
		
		Again we show that we can determine $M_{lex}(wa)$ from $M_{lex}(w)$. Suppose $wa$ is shortlex and $waa''$ is not. Then either $|waa'|\le |wa|$, which again by \cite{Davis} implies that $|waa'|=|w|$, or $waa'$ can be reordered to an alphabetically earlier word.
		
		In the first case, we claim it suffices to consider when $a'=a$. Suppose instead that $a'$ cancels with a different letter in $wa$. Then since $wa$ is assumed to be shortlex, we can write $wa=v_1a'v_2a$, where $a'$ commutes with each letter in $v_2a$, and $a'$ is alphabetically earlier than the first letter of $v_2$. But then the substring $v_2aa'$ can already be reordered to an alphabetically earlier word.
		
		So by induction, we assume that $M_{lex}(w)$ consists of the last letter of $w$ together with any letters $a_i$ so that $wa_i$ can be rearranged to be alphabetically earlier. Then $M_{lex}(wa)$ can be taken to be $\bigl(M_{lex}(w)\cap star(a)\bigr)\cup \{a\}\cup star_<(a)$. That is, $M_{lex}(wa)$ consists of $a$, letters $a'$ so that $aa'$ should be rearranged to $a'a$, as well as letters $a'$ so that $waa'=_{Geo}wa'a$ and $wa'$ itself can be rearranged to a lexicographically earlier word. This shows that $M_{lex}(wa)$ can be computed entirely from $M_{lex}(w)$ and $a$.
		
		An accepted word of this FSM is by definition a shortlex word.
	\end{proof}
	
	\begin{remark}
		
		Based on these two propositions, we can generate these two FSMs by a breadth-first search. More precisely, we describe the algorithm as follows:
		
		Start at the empty string. Since any letter should be allowed as a shortlex geodesic on its own, compute $M_{Geo}(a_i)$ or $M_{lex}(a_i)$ for each $i$ and connect the start vertex to each such state by the edge $a_i$. Add the start vertex into the set $C$ of ``completed vertices", i.e. those vertices whose outgoing edges have been added.
		
		For each vertex $S$ in the FSM but not in $C$, add outgoing edges labeled by $S^c$, using the above calculation to determine which state such an edge should reach. Add in new states as necessary if such a state has not yet been reached in the algorithm. When this process is completed on a given vertex, add that vertex to $C$.
		
		Repeat until every vertex is in $C$.
		
	\end{remark}

	\section{The Rips Graph on a Horosphere}
	\label{sec:RipsGraph}
	
	\subsection{A normal form for vertices}
	\label{subsec:PrefixSuffix}
	In this subsection, we will describe a process to generate a large set of points on the same horosphere. This will serve as the vertex set for the graphs we construct later on.
	
	From now on, we assume that $a_i$ and $a_j$ are non-commuting letters, and study the level sets $b_\gamma$ where $\gamma$ is the geodesic ray which we will colloquially refer to as $(a_ia_j)^\infty$, by which is meant the ray starting at the origin and alternating $a_i$ and $a_j$ indefinitely. We first prove a formula for the Busemann Functions $b_\gamma$. To obtain this formula, some terminology is required.
	
	\begin{definition}
		
		Let $w$ be a shortlex word. Then a \textit{prefix-suffix decomposition of} $w$ is an equality $w=_{Geo}w_{pref}w_{suff}$ so that $w_{pref}$ (the \textit{prefix}) consists entirely of the letters $a_i$ and $a_j$, and $w_{suff}$ (the \textit{suffix}) is shortlex and cannot be rearranged to begin with either $a_i$ or $a_j$. Note that either $w_{pref}$ or $w_{suff}$ may be empty.
		
	\end{definition}
	
	\begin{lemma}
		
		Every shortlex word has a unique prefix-suffix decomposition.
		
	\end{lemma}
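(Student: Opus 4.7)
The plan is to establish existence via induction on the word length $|w|$, and then deduce uniqueness by showing that both the first letter and the total length of the prefix are uniquely determined by $w$. The critical input from the standing hypotheses is that $a_i$ and $a_j$ do not span an edge of $\Gamma$, so they do not commute in $W_\Gamma$. This has two consequences I will use repeatedly: any geodesic subword using only the alphabet $\{a_i, a_j\}$ must be strictly alternating in $a_i$ and $a_j$, and no geodesic word can simultaneously be rearranged to begin with $a_i$ and with $a_j$.

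For existence, the base case $|w|=0$ is handled by taking both parts empty. For the inductive step, if $w$ cannot be rearranged to begin with either $a_i$ or $a_j$, set $w_{pref}$ empty and $w_{suff}=w$. Otherwise write $w =_{Geo} a\cdot w_1$ for some $a \in \{a_i, a_j\}$; let $v$ be the shortlex form of $w_1$ and apply the inductive hypothesis to obtain $v = v_{pref}v_{suff}$. Because $av$ is a geodesic expression of length $|w|$ for $w$, no cancellation can occur at the junction, which forces $v_{pref}$ either to be empty or to begin with the other letter of $\{a_i, a_j\}$. Hence $a \cdot v_{pref}$ is alternating, and we may set $w_{pref} = a\cdot v_{pref}$ and $w_{suff} = v_{suff}$. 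The suffix condition on $v_{suff}$ transfers directly to $w_{suff}$.

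For uniqueness, suppose $w_{pref}w_{suff} = w'_{pref}w'_{suff}$ are two decompositions. The first letters of the two prefixes must agree (or both prefixes are empty): if one began with $a_i$ and the other with $a_j$, then $w$ could be rearranged to begin with either, but any candidate leftmost $a_i$ in a geodesic form of $w$ would need to commute past every earlier letter, in particular past any earlier $a_j$, which is impossible. Next, the lengths must agree: if $|w_{pref}| < |w'_{pref}|$, then writing $w'_{pref} = w_{pref}\cdot u$ with $u$ a nonempty alternating word in $\{a_i, a_j\}$, we would get $w_{suff} =_{Geo} u\cdot w'_{suff}$, forcing $w_{suff}$ to be rearrangeable to begin with the first letter of $u$ and contradicting the suffix condition. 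With identical first letters and lengths, the alternation pins $w_{pref} = w'_{pref}$, whence $w_{suff}$ and $w'_{suff}$ represent the same group element and coincide as shortlex words. The main subtlety, and where I expect the bulk of the bookkeeping to lie, is in rigorously tracking geodesic equalities versus equalities as strings when reasoning about cancellation and rearrangement; once that distinction is pinned down, each step above is short.
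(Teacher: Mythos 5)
Your proof is correct and follows essentially the same route as the paper: an induction on word length for existence (the paper scans $w$ left to right and greedily assigns each letter to the prefix or the growing suffix, while you peel prefix letters off the front and recurse, which amounts to the same bookkeeping), with everything ultimately resting on the non-commutativity of $a_i$ and $a_j$. Your uniqueness argument --- first pinning down the initial letter of the prefix, then its length --- is a clean way of supplying the ``straightforward induction'' that the paper leaves to the reader.
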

	
	\begin{proof}
		
		To obtain the prefix-suffix decomposition, we read $w$ from left to right. Each time we read a letter other than $a_i$ or $a_j$, we append it to $w_{suff}'$. When we read a letter $a_i$ or $a_j$, if every letter so far in $w_{suff}'$ commutes with the $a_i$ or $a_j$, we append it to $w_{pref}$, and otherwise we append it to $w_{suff}'$.
		
		It is a simple induction on the length of $w$ to show that $w=_{Geo}w_{pref}w_{suff}'$. For $a_i\ne a_k\ne a_j$, $wa_k=_{Geo}w_{pref}(w_{suff}'a_k)$ and for $a_k=a_i$ or $a_j$, $wa_k=_{Geo}(w_{pref}a_k)w_{suff}'$ exactly when the new copy of $a_i$ or $a_j$ commutes with all of $w_{suff}'$, and otherwise \newline $wa_k=_{Geo}w_{pref}(w_{suff}'a_k)$.
		
		The word $w_{pref}$ is now shortlex because $a_i$ and $a_j$ do not commute, while any cancellation could already have happened in $w$. Similarly, any cancellation in $w_{suff}'$ could already have happened in $w$, so that $w_{suff}'$ is geodesic but not necessarily shortlex. We therefore replace $w_{suff}'$ with the shortlex rearrangement of equal length, and term this word $w_{suff}$.
		
		Uniqueness is then shown by a straightforward induction on length.		
	\end{proof}
	
	It is immediate that a shortlex word is a suffix (e.g. for itself) if and only if it cannot be rearranged to begin with either $a_i$ or $a_j$. When we say that a word is a suffix, we mean it in this sense, because this notion is independent of any attached prefix.
	
	The existence and uniqueness of the prefix-suffix decomposition can also be shown as follows: re-order the alphabet so that $a_i$ and $a_j$ are the first two letters (the order between the two does not matter since they do not commute). Then $w$ remains geodesic but is not necessarily shortlex. Rearranging $w$ to be shortlex in this new letter order requires moving each copy of $a_i$ and $a_j$ as early in the word as possible, so that $w$ begins with a word consisting of an alternation of $a_i$ and $a_j$. The maximal such word is the prefix, and the remainder of the word is the suffix. In particular, if $\{i,j\}=\{1,2\}$, the prefix and suffix can be read off directly from the shortlex word $w$. 
	
	\begin{lemma} \label{CalculatingBusemannFunctions}
		
		Let $w$ be shortlex and $w_{pref}w_{suff}$ be its prefix-suffix decomposition. Then $b_\gamma(w)=|w_{pref}|+|w_{suff}|$ if the first letter of $w$ is $a_j$, or $b_\gamma(w)=-|w_{pref}|+|w_{suff}|$ if the first letter of $w$ is $a_i$.
	\end{lemma}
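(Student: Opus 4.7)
The plan is to fix a large even $n \geq |w_{pref}|$ and compute $d(\gamma(n), w) - n$ directly. The defining limit exists because $d(\gamma(n), w) - n$ is a weakly decreasing sequence of integers --- via the triangle inequality $d(\gamma(n+1), w) \leq 1 + d(\gamma(n), w)$ --- bounded below by $-|w|$, hence eventually constant. With $n$ even, $\gamma(n)^{-1} = (a_j a_i)^{n/2}$, and $d(\gamma(n), w) = |\gamma(n)^{-1} w_{pref} w_{suff}|$.

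First, I simplify $u := \gamma(n)^{-1} w_{pref}$ to a single alternating word in $\{a_i, a_j\}$ beginning with $a_j$. If the first letter of $w$ is $a_j$, then $w_{pref}$ starts with $a_j$ and the concatenation $\gamma(n)^{-1} w_{pref}$ introduces no cancellations, giving an alternating word of length $n + |w_{pref}|$. If the first letter of $w$ is $a_i$, then $w_{pref}$ starts with $a_i$, so the interior $a_i a_i$ at the boundary cancels, exposing $a_j a_j$, which cancels again, and so on; a total of $|w_{pref}|$ pair-cancellations eliminate $2|w_{pref}|$ letters (valid since $n \geq |w_{pref}|$), leaving an alternating word of length $n - |w_{pref}|$. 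In either case, $u$ is alternating, starts with $a_j$, and has length $n \pm |w_{pref}|$.

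The key step is to show that appending $w_{suff}$ does not shorten the word any further, i.e., $|u \cdot w_{suff}| = |u| + |w_{suff}|$. I invoke the standard characterization of non-geodesic words in a RACG (see \cite{Davis}): a word fails to be geodesic iff some letter $a_k$ appears at positions $p_1 < p_2$ with every letter strictly between them commuting with $a_k$. Any such pair in $u \cdot w_{suff}$ cannot lie entirely in $u$ or entirely in $w_{suff}$, each being geodesic, so $p_1$ lies in $u$ and $p_2$ in $w_{suff}$, forcing $a_k \in \{a_i, a_j\}$. Because $u$ is alternating and $a_i, a_j$ do not commute, any letter of $u$ strictly after $p_1$ fails to commute with $a_k$; therefore $p_1$ must be the final letter of $u$, and the letters of $w_{suff}$ before $p_2$ all commute with $a_k$. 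But then $w_{suff}$ can be rearranged to begin with $a_k \in \{a_i, a_j\}$, contradicting the defining property of a suffix. Combining, $|\gamma(n)^{-1} w| = n \pm |w_{pref}| + |w_{suff}|$, and subtracting $n$ yields the claimed formula.

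The main obstacle is this no-cancellation verification across the $u$/$w_{suff}$ boundary; the other ingredients are routine bookkeeping. A minor technicality is the parity of $|w_{pref}|$, which determines whether $u$ ends in $a_i$ or $a_j$, but the argument above is insensitive to this choice.
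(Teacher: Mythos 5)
Your proof is correct and follows essentially the same route as the paper's: reduce to computing $|\gamma(n)^{-1}w_{pref}w_{suff}|$ for large even $n$, observe that the prefix either concatenates with or fully cancels against $(a_ja_i)^{n/2}$ according to its first letter, and use the defining property of the suffix to rule out further cancellation. The only difference is that you supply details the paper asserts without proof --- the monotonicity argument for the existence of the limit and the deletion-condition argument showing no letter of $w_{suff}$ can cancel across the boundary --- both of which are correct.
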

	
	\begin{proof}
		
		To compute the Busemann Function, we only need to compute word lengths. Therefore, $w$ need not be in shortlex order, so we may as well work with $w_{pref}w_{suff}$.	Since $a_i$ and $a_j$ do not commute, $w_{pref}$ can be of the form $a_ia_ja_ia_j...$ or $a_ja_ia_ja_i...$.
		
		Now, $(a_ia_j)^{-1}=a_ja_i$, so that in the first case, for $n$ sufficiently large, the entirety of $w_{pref}$ cancels out of the expression $(a_ia_j)^{-n}w_{pref}w_{suff}$. The expression therefore has length \newline $2n-|w_{pref}|+|w_{suff}|$ at most. It cannot have shorter length since any further cancellation would come between a letter $a_i$ or $a_j$ in $(a_ia_j)^{-n}$ and a letter at the start of $w_{suff}$, which is impossible by assumption. Therefore, $b_\gamma(w)=-|w_{pref}|+|w_{suff}|$.
				
		In the case where $w_{pref}$ begins with $a_j$, 
		
		$$b_\gamma(w)=\lim_{n\to\infty} |(a_ia_j)^{-n}w_{pref}w_{suff}|_{\phantom{ }}-2n$$
		
		and no cancellation is possible at all in this expression. It is immediate that $b_\gamma(w)$ is the limit of a sequence whose value is constantly $|w|=|w_{pref}|+|w_{suff}|$. 		
	\end{proof}
	
	Following this lemma, a prefix beginning with $a_i$ is termed a $\textit{negative}$ prefix while a prefix beginning with $a_j$ is a positive prefix. Given a positive prefix, we can make it $\textit{more positive}$ by adding letters on the end or $\textit{more negative}$ by deleting its last letters, and vice versa for negative prefixes. Every negative prefix is more negative than the empty prefix, and every positive prefix is more positive than the empty prefix.
	
	As a consequence of this description, it is usually better to keep track of suffixes. If we know a value for $b_\gamma$, then every word on the horosphere of this value is determined uniquely by its suffix. It will follow from the next lemma that we can generate suffixes as the accepted language of an FSM derived from the shortlex machine. When possible, we will try to perform the computations in this paper by taking only the suffix as an input because it is faster and requires less memory to generate them.
	
	\begin{lemma}\label{ShortlexSuffixMachine}
		
		There is a finite-state machine $M_{Suff}$ whose accepted language consists of shortlex words $w$ not equal to any geodesic word beginning with $a_i$ or $a_j$.
		
	\end{lemma}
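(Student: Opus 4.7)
The plan is to construct $M_{Suff}$ as the product of two finite-state machines we already have in hand. By the previous proposition on shortlex words, the language $ShortLex(W_\Gamma)$ is accepted by a machine $M_{lex}$. By the earlier lemma producing $M_{\mathscr{B}}$ for any subalphabet $\mathscr{B}$, taking $\mathscr{B}=\{a_i,a_j\}$ gives a machine whose language $F_{\{a_i,a_j\}}$ consists of words that cannot be rearranged, without cancellation, to begin with $a_i$ or $a_j$. By the combination theorem (Proposition \ref{CombiningFSMs}), the intersection $ShortLex(W_\Gamma)\cap F_{\{a_i,a_j\}}$ is regular, and I would define $M_{Suff}$ to be the standard product-FSM realizing this intersection (states are pairs, edges component-wise, accepted states those accepted in both factors).

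The only thing to verify is that the accepted language of this product machine is exactly what the statement demands: shortlex words $w$ that are not equal in $W_\Gamma$ to any geodesic beginning with $a_i$ or $a_j$. One inclusion is immediate since $F_{\{a_i,a_j\}}$ already forbids rearrangement to start with $a_i$ or $a_j$. For the other, suppose $w$ is shortlex and equals a geodesic $w'$ starting with $a_i$ or $a_j$. Since $w$ is itself geodesic, $|w|=|w'|$, so the equality $w=w'$ in $W_\Gamma$ can be realized purely by a sequence of commutation moves of the defining relations, with no cancellation (any cancellation would strictly decrease length and contradict geodesicity along the way, as in the discussion preceding Lemma \ref{CalculatingBusemannFunctions}). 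Thus $w$ can be rearranged without cancellation to begin with $a_i$ or $a_j$, and hence $w\notin F_{\{a_i,a_j\}}$.

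I do not expect any real obstacle here: the "no cancellation" caveat in the footnote to the lemma defining $F_{\mathscr{B}}$ is automatically resolved by intersecting with $M_{lex}$, since in the shortlex (equivalently geodesic) regime all rearrangements between equal geodesic words are pure commutations. The only mildly delicate point is the last sentence of the previous paragraph, where one has to note that a sequence of defining-relation moves between two geodesics of the same length cannot introduce a pair $a_k^2$ to cancel; this is a standard consequence of the fact, cited from \cite{Davis} in the proof of the geodesic FSM, that insertion of a squared letter strictly increases length and cannot be undone without further cancellation. After that observation, the product construction and Proposition \ref{CombiningFSMs} finish the argument.
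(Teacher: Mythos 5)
Your proposal is correct and is essentially the paper's own proof: the paper likewise defines $M_{Suff}$ via the intersection $ShortLex(W_\Gamma)\cap F_{\{a_i,a_j\}}$ and invokes Proposition \ref{CombiningFSMs}. Your extra verification that equal-length geodesic words are related by pure commutation moves (so the "no cancellation" caveat on $F_{\{a_i,a_j\}}$ is harmless) is a detail the paper leaves to a footnote, and you handle it correctly.
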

	
	\begin{proof}
		
		This language is the intersection of the shortlex language with the language $F_{\{a_i, a_j\}}$. As it is the intersection of two regular languages, it is regular by Proposition \ref{CombiningFSMs}. There is therefore a finite-state machine that recognizes it. 
               
		
		
		
	\end{proof}
	
	It is an inconvenient fact that, in general, it takes quadratic time to convert between a shortlex word and its prefix-suffix form. Therefore, $M_{suff}$ rather than $M_{lex}$ is the correct machine to generate the vertex set of Rips graph.
	
	As a result of this lemma, we can generate a large set of points on a horosphere of a fixed $b_\gamma$ value by taking the set of all suffixes of length at most $k$, and then adding appropriate positive or negative prefixes to achieve the desired $b_\gamma$ value. Next, we must compute the edge set in the Rips graph for each point. While it is straightforward to check the distance between each pair of vertices, this will be extremely time-consuming. It will be more efficient to generate the edges from the ground up.	
	
	Recall that the $k$-Rips graph on a metric space $(X,d)$ consists of the space $X$ with edges between points at distance at most $k$. We think of this graph as describing what it means to move along a subspace of a metric space. Our space $X$ will be a horosphere, which is a discrete set of points. Therefore, a continuous nonconstant path cannot stay in a horosphere. However, we can demand that a path make only short jumps. One might imagine zooming out from a path that makes short jumps along a horosphere, and seeing it limit to a continuous path tangent to the horosphere. Paths in the $k$-Rips graph encode this notion of approximate tangency.
	
	We will focus primarily on the $2$-Rips graph. Note that it is not at all obvious that this graph should be connected.
	
	Given a word $w$, the set of words $v$ at distance at most $2$ from $w$ is $\{wa_ka_l\}$, but most of these words are not on the same horosphere as $w$ because typically multiplying on the right by two letters lengthens the suffix by $2$ while preserving the prefix. To generate these edges more efficiently, it will be helpful to consider two separate cases: edges between words $w$ and $v$ with the same prefix (and therefore the same length), edges between words $w$ and $v$ with suffixes of different length.  
	
	\subsection{Edges between words of the same suffix length} 
	\label{subsec:RipsEdgesSameLength}
        
	Let $w$ and $v$ be words on a horosphere about a simple alternating ray $(a_ia_j)^\infty$. If $w$ and $v$ have the same length of suffix, then they necessarily have the same prefix. So in order for $(w,v)$ to span an edge in the $2$-Rips graph, $w^{-1}v=w_{suff}^{-1}v_{suff}$ must be a word of length 2, and $|w_{suff}|=|v_{suff}|$. That is, given $w_{suff}$, to find all the edges in the $2$-Rips graph to words of the same suffix length, we need to delete one letter from $w_{suff}$ and then add one on. We first address the deletion step.
	
	\begin{lemma} \label{DeletionPreservesShortlexSuffixes}
		
		Let $w$ be a shortlex word, and let $a_k$ be a last letter in a geodesic representative of $w$. Then the word $w_k$ obtained from $w$ by deleting the last copy of $a_k$ is shortlex. Moreover, if $w$ is a suffix, then so is $w_k$.
		
	\end{lemma}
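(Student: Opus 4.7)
The plan is to argue by contradiction, working with the decomposition $w = v_1 a_k v_2$, where the displayed $a_k$ is the rightmost copy of $a_k$ in the shortlex expression $w$ and every letter of $v_2$ commutes with $a_k$; this decomposition is exactly the content of $a_k$ being a last letter, since among all copies of $a_k$ in $w$, the rightmost is the one that can be commuted to the end. With this setup $w_k = v_1 v_2$, and $w = w_k a_k$ as group elements. A key preliminary subclaim I would establish is that $v_2$ itself is shortlex: any lex-earlier commutation rearrangement of $v_2$, performed in place inside $w$, would produce a lex-earlier rearrangement of $w$, contradicting that $w$ is shortlex. This uses the standard RACG fact (see \cite{Davis}) that all geodesic representatives of a fixed element are related by commutation swaps.

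Now suppose for contradiction that $w_k$ is not shortlex; let $u$ be its shortlex form, so $|u| = |w_k|$ and $u < w_k$ lexicographically. Let $p$ be the first position at which $u$ and $w_k$ differ, so $u_p < (w_k)_p$. If $p \le |v_1|$, then $u$ and $w$ also first differ at position $p$ (since $w$ and $w_k$ agree there), and appending $a_k$ yields the geodesic rearrangement $u a_k$ of $w$ with $u a_k < w$ lex, contradicting shortlexness of $w$. If instead $p > |v_1|$, then $u$ begins with $v_1$, so its tail (positions $|v_1|+1$ onward) is a geodesic word $\widetilde{v_2}$ satisfying $\widetilde{v_2} = v_2$ as group elements. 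The inequality $u < w_k$ at position $p$ then becomes $\widetilde{v_2} < v_2$ at position $p - |v_1|$, contradicting the preliminary subclaim that $v_2$ is shortlex.

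For the ``moreover'' statement, if $w$ is a suffix but $w_k$ could be rearranged as $w_k =_{Geo} a_i v'$ (or $a_j v'$), then from $w = w_k a_k$ as group elements we obtain $w =_{Geo} a_i v' a_k$, which is a length-$|w|$ geodesic expression for $w$ beginning with $a_i$ and contradicts $w$ being a suffix. The main obstacle is the $p > |v_1|$ case in the shortlex argument; it is handled cleanly only after one recognizes that $v_2$ must be globally shortlex, and without this subclaim it is easy to get bogged down comparing $ua_k$ with $w = v_1 a_k v_2$ position-by-position, since the extra letter $a_k$ in $w$ shifts indices and scrambles the direct comparison.
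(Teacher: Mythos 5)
Your proof is correct, and while it shares the paper's overall strategy of lifting a witness of failure from $w_k$ back to $w$, the implementation is genuinely different. The paper argues locally: if $w_k$ admits a favorable move (a cancellation or lex-improving commutation) involving some letter $a_l$, then because $a_k$ is a last letter, either $a_k$ commutes with $a_l$ or that copy of $a_l$ precedes the deleted $a_k$, so the very same move was already available in $w$; the suffix claim is handled by the identical dichotomy. You instead work globally with the definition of shortlex as the lex-least geodesic representative: you fix the decomposition $w=v_1a_kv_2$ with $v_2$ commuting with $a_k$, prove the auxiliary fact that $v_2$ is itself shortlex, and then compare the shortlex form $u$ of $w_k$ against $v_1v_2$ at the first differing position, splitting on whether that position lands in $v_1$ (where $ua_k<w$ contradicts shortlexness of $w$) or in $v_2$ (where you contradict the subclaim). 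Your route is longer but avoids having to formalize what a ``favorable rearrangement'' is, at the cost of invoking the Tits/Davis fact that geodesic representatives are related by commutations (which the paper also uses implicitly); your treatment of the ``moreover'' clause, reading $w=_{Geo}a_iv'a_k$ directly, is if anything cleaner than the paper's. The one point worth making explicit in your writeup is that $w_k$ is geodesic (so that $|u|=|w_k|$): this follows since $w=w_ka_k$ in the group and $|w|=|w_k|+1$, so the string $w_k$ cannot be shortened.
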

	
	\begin{proof}
		
		Suppose $w_k$ is not shortlex. Then there is some $a_l$ so that, in $w_k$, this copy of $a_l$ either cancels or commutes earlier in a favorable way. But then since $a_k$ is a last letter, either $a_k$ and $a_l$ commute, or this copy of $a_l$ precedes the deleted copy of $a_k$. In both cases, this cancellation or rearrangement is possible in $w$ so that $w$ was not shortlex.
		
		Suppose $w_k$ is shortlex but not a suffix. The argument is the same: there is a copy of $a_i$ or $a_j$ that commutes to the beginning of the word. Since $a_k$ is a last letter, either $a_k$ commutes with this copy of $a_i$ or $a_j$, or the copy of $a_i$ or $a_j$ appears before $a_k$, and in either case this rearrangement is possible in $w$.
	\end{proof}
	
	Next, we address the step of adding a letter. A priori, it suffices to write a letter $a_l$ after $w_k$ permitted by $M_{GeoSuff}(w_k)$, and then perform a shortlex reordering. However, shortlex sorting a string of length $n$ takes, a priori, $O(n^2)$ steps. Here we give an operation that uses the existing order on $w_k$ to alphabetize $w_ka_l$ in linear time.
	
	\begin{lemma}\label{ShortlexInsertion}
		
		Suppose $w$ is a shortlex suffix and $a_l$ is permitted by $M_{GeoSuff}(w)$. The shortlex suffix equivalent to $wa_l$ can be computed in $O(|w|)$ steps.
		
	\end{lemma}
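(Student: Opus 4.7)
The plan is to find the correct shortlex position for $a_l$ via a single right-to-left scan over $w$, thereby avoiding a quadratic-time re-sort. Write $w = q_1 q_2 \cdots q_n$, and initialize a candidate insertion index $p^\ast = n+1$. For $j = n, n-1, \ldots, 1$ in turn, I first check whether $a_l$ commutes with $q_j$: if not, break out of the loop; otherwise, if $q_j > a_l$ in the alphabetical order, set $p^\ast \leftarrow j$. The algorithm then returns $u := q_1 \cdots q_{p^\ast-1}\, a_l\, q_{p^\ast} \cdots q_n$. The loop visits at most $|w|$ letters with constant work each, and constructing $u$ is linear, so the total runtime is $O(|w|)$.

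Correctness requires showing that $u$ is (i) a geodesic representative of $wa_l$, (ii) shortlex, and (iii) a suffix. Point (i) is immediate: by construction $a_l$ commutes with each of $q_{p^\ast}, \ldots, q_n$, so these letters can be commuted past $a_l$ to recover $wa_l$ as a word, and geodesic-ness transfers from the permission hypothesis. Point (iii) also follows from the permission hypothesis, since $M_{GeoSuff}(w)$ permits $a_l$ precisely when $wa_l$ admits no geodesic representative beginning with $a_i$ or $a_j$, which is a property of the group element and so passes to $u$.

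The substantive step is (ii). I would use the equivalent characterization: a geodesic word $u'$ is shortlex iff for every $i < j$ such that $u'[j]$ commutes with each of $u'[i+1], \ldots, u'[j-1]$, one has $u'[j] \geq u'[i+1]$. This equivalence follows from a trace-theoretic argument, since the first disagreement between $u'$ and any lex-earlier geodesic representative can be realized as a single long-commutation move of one letter. I would then verify this inequality for $u$ in three cases. When neither $j$ nor $i+1$ equals $p^\ast$, the commutation hypothesis for $u$ (even if the range contains $a_l$) implies the analogous commutation hypothesis for $w$ after reindexing, and the conclusion follows from $w$ being shortlex. When $j = p^\ast$, so $u[j] = a_l$, either $i+1$ lies within the scan range, in which case the leftmost-update property of $p^\ast$ forces $u[i+1] = q_{i+1} < a_l$, or $i+1$ lies strictly before the scan range, in which case the scan-terminating letter (which does not commute with $a_l$) lies between positions $i+1$ and $j$ in $u$ and makes the commutation hypothesis vacuous. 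When $i+1 = p^\ast$, so $u[i+1] = a_l$ and $u[j] = q_k$ for some $k \geq p^\ast$, the commutation hypothesis forces $q_k$ to commute with $q_{p^\ast}, \ldots, q_{k-1}$, so shortlex of $w$ gives $q_k \geq q_{p^\ast}$, and the very definition of $p^\ast$ ensures $q_{p^\ast} > a_l$.

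The main obstacle is coordinating the two roles of $p^\ast$ in the shortlex case analysis: $p^\ast$ is at once the leftmost scan index at which an update occurred (needed for the $j = p^\ast$ case, where it forces $q_k < a_l$ for every scanned $k < p^\ast$) and the position where $q_{p^\ast} > a_l$ (needed for the $i+1 = p^\ast$ case, where, combined with shortlex of $w$, it closes the argument). Once these are disentangled, the verification reduces cleanly to the shortlex property of $w$.
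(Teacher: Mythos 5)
Your algorithm is exactly the paper's: a single right-to-left scan that remembers the leftmost scanned position whose letter commutes with and alphabetically follows $a_l$, terminating at the first non-commuting letter. The correctness argument is the same in substance (the paper simply asserts that no favorable rearrangement of $a_l$ or of the untouched letters of $w$ exists), just carried out in more detail via your three-case verification.
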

	
	\begin{proof}
		
		Write $w=a_{i_1}...a_{i_n}$. We read the letters of $w$ last to first. When we read $a_{i_k}$, if \newline $a_l\in Star_<(a_{i_k})$, we remember the value $k$ as the current preferred insertion point of $a_l$. If $a_{i_k}$ does not commute with $a_l$, the algorithm terminates and outputs $a_{i_1}...a_{i_k}...a_{i_{m-1}}a_la_{i_m}...a_{i_n}$ where $a_{i_m}$ is the preferred insertion point of $a_l$.
		
		There are no favorable rearrangements of $a_l$ by assumption. Since the remaining letters $a_{i_1},...a_{i_n}$ appear in the same order as in $w$, which is shortlex, any rearrangements of these letters are necessarily unfavorable. Therefore, this algorithm outputs a shortlex word.
	\end{proof}
	
	As a result, we can generate each word of the same suffix length as $w$ that is within distance $2$ of $w$ quickly and with minimal duplication.
	
	\begin{lemma} \label{RipsEdgesSameSuffixLength}
		
		Let $w=_{Geo}w_{pref}w_{suff}$ be a word on the horosphere $b_\gamma=m$. The set of words $\{v: v_{pref}=w_{pref}, b_\gamma (w)=b_\gamma(v), \text{and } d(w,v)=2\}$ can be computed in linear time with respect to the length of $w$.
		
	\end{lemma}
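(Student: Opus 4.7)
The plan is to characterize each valid neighbor $v$ as the output of a ``delete-then-insert'' operation applied to the suffix $w_{suff}$, and then to show that enumerating such operations runs in time linear in $|w|$.

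Assume $v_{pref} = w_{pref}$ and $b_\gamma(v) = b_\gamma(w)$. Lemma \ref{CalculatingBusemannFunctions} then forces $|v_{suff}| = |w_{suff}|$, so the condition $d(w,v) = 2$ is equivalent to $|w_{suff}^{-1}v_{suff}| = 2$ in $W_\Gamma$. The relations in a RACG preserve parity of word length, and the raw length of $w_{suff}^{-1}v_{suff}$ is even, so its reduced length is $0$, $2$, or more; since $d(w,v) = 2$ it equals exactly $2$, say $w_{suff}^{-1}v_{suff} = a_k a_l$. Combined with $|v_{suff}| = |w_{suff}|$, this forces two cancellations in $w_{suff}a_ka_l$; up to swapping the roles of $a_k$ and $a_l$ (which costs nothing since we will iterate over both orderings), we may assume $a_k \in M_{Geo}(w_{suff})$ and define $w_k$ to be $w_{suff}$ with its final copy of $a_k$ deleted. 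Then $a_l$ is a letter whose concatenation onto $w_k$ neither reduces the length nor commutes past $w_k$ to deposit an $a_i$ or $a_j$ at the front. Lemma \ref{DeletionPreservesShortlexSuffixes} guarantees $w_k$ is itself a shortlex suffix, and the two conditions on $a_l$ are precisely those enforced by the machine $M_{GeoSuff}(w_k)$ obtained by intersecting $M_{Geo}$ with the suffix-preservation machine (a regular combination by Proposition \ref{CombiningFSMs}).

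The enumeration iterates over the at most $|V|$ choices of $a_k \in M_{Geo}(w_{suff})$. For each $a_k$, computing $w_k$ takes $O(|w|)$ via a right-to-left scan for the final $a_k$; running $M_{GeoSuff}$ on $w_k$ takes $O(|w|)$ further steps and returns the set of permitted $a_l$; and for each such $a_l$, Lemma \ref{ShortlexInsertion} produces the shortlex form of $v_{suff} = w_k a_l$ in $O(|w|)$ more steps. The prefix $w_{pref}$ is then prepended in $O(|w|)$ time. Because $|V|^2$ is a constant depending only on $\Gamma$, the total runtime is $O(|w|)$.

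The only mild obstacle is verifying completeness and the absence of spurious outputs. Completeness is forced by the length-two structural analysis above. Spurious outputs are ruled out because the $M_{Geo}$ and $M_{GeoSuff}$ conditions together guarantee $v_{suff}$ is a shortlex suffix of the same length as $w_{suff}$, so $v = w_{pref}v_{suff}$ lies on the correct horosphere with the correct prefix. The degenerate case $v = w$, produced when the inserted letter reverses the deletion, and any coincidental duplicates arising from different $(a_k, a_l)$ orderings, can each be detected in $O(1)$ per pair.
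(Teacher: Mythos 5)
Your proposal is correct and follows essentially the same route as the paper: iterate over last letters $a_k \in M_{Geo}(w_{suff})$, delete to obtain the shortlex suffix $w_k$ via Lemma \ref{DeletionPreservesShortlexSuffixes}, append letters permitted by $M_{GeoSuff}(w_k)$, and reorder with Lemma \ref{ShortlexInsertion}, giving a constant number of linear-time operations. The only differences are cosmetic: the paper bounds the number of deletions by $Clique(\Gamma)$ rather than $|V|$, and you spell out the completeness argument (that every neighbor arises this way) a bit more explicitly than the paper does.
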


	\begin{proof}
		
		We wish to delete a letter at the end of $w_{suff}$, for which it suffices to multiply by a letter that $w_{suff}$ could be rearranged to end with. We therefore compute $M_{Geo}(w_{suff})$. For each $a_k$ in $M_{Geo}(w_{suff})$, we obtain $w_{k_{suff}}$ with $w_k$ as in Lemma \ref{DeletionPreservesShortlexSuffixes}. These are again shortlex suffixes. We wish to enumerate every shortlex suffix $w_{k_{suff}}a_l$. We therefore take, for each $w_{k_{suff}}$, the set of successors to $w_{k_{suff}}$ in $M_{GeoSuff}$ and reorder according to Lemma \ref{ShortlexInsertion}.
		
		There are at most $Clique(\Gamma)$ words $w_k$ each with at most $|V|-1$ successors. So we have a bounded number of strings of length $|w_{suff}|-1$ to reorder, which takes $O(|w_{suff}|)=O(|w|)$ steps. 
	\end{proof}
	
	Note that for each $w_{k_{suff}}$, $w_{suff}$ is the shortlex form of $w_{k_{suff}}a_k$ so that this algorithm also returns $|M_{Geo}(w_{suff})|$ copies of the degenerate edge $(w_{suff}, w_{suff})$. For convenience, we will delete these edges, which does not change the asymptotic time complexity.
	
	\subsection{Edges between words of different suffix length}
	\label{subsec:RipsEdgesDifferentLength}
	
	We will suppose that $w$ and $v$ are words on the same horosphere, and WLOG that $v$ has the shorter suffix. If the prefix of $w$ is positive or empty, then it gets one letter longer when the suffix shortens in order to preserve the Busemann function. If the prefix of $w$ is negative, then it shortens in order to preserve the Busemann function of $w$.
	
	We describe these cases in the following lemma.
	
	\begin{lemma} \label{RipsEdgesDifferentSuffixLength}
		
		Let $w=_{Geo}w_{pref}w_{suff}$. Then the set of edges between $w$ and words $v$ such that $|v_{suff}|=|w_{suff}|-1$ can be determined in linear time with respect to the length of $w$ based only on $w_{suff}$ and $b_{\gamma}(w)$.
		
	\end{lemma}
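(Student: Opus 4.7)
My plan is to reduce the lemma to a characterization of which single-letter deletions of $w_{suff}$ yield a neighbor $v$, and then to observe that this characterization is checkable in linear time per deletion. First, from $b_\gamma(w)$ and $|w_{suff}|$ I can read off the length and sign of $w_{pref}$ via Lemma \ref{CalculatingBusemannFunctions}: positive if $b_\gamma(w) > |w_{suff}|$, negative if $b_\gamma(w) < |w_{suff}|$, and empty otherwise. The horosphere constraint combined with $|v_{suff}| = |w_{suff}| - 1$ then forces a specific change to the prefix, and I let $a \in \{a_i, a_j\}$ denote either the letter extending $w_{pref}$ by one in the alternating pattern (Case A: positive or empty prefix, where $|v| = |w|$) or the last letter of $w_{pref}$ (Case B: negative prefix, where $|v| = |w| - 2$). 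A parity argument, combined with the fact that multiplying by a generator always changes word length by $\pm 1$, then gives $d(w, v) = 2$ in both cases whenever $v \neq w$.

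Next I want to characterize which $v$'s appear. Using the involutivity of RACG generators, the product $w_{pref}^{-1} v_{pref}$ collapses to the single letter $a$ in both cases, so $w^{-1} v = w_{suff}^{-1} \cdot a \cdot v_{suff}$. I therefore seek shortlex suffixes $v_{suff}$ of length $|w_{suff}| - 1$ for which this expression reduces to a group element of length exactly $2$ in $W_\Gamma$. The claim I would prove is that such $v_{suff}$ correspond bijectively to letters $a_k$ satisfying three conditions: $a_k \in M_{Geo}(w_{suff})$ (so $a_k$ is a last letter of $w_{suff}$), $a_k \neq a$, and every other letter of $w_{suff}$ commutes with $a$. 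Under these conditions a direct calculation shows $w_{suff}^{-1} a v_{suff}$ collapses to $a_k a$ (which is geodesic of length $2$ since $a_k \neq a$), and $v_{suff}$ is a shortlex suffix by Lemma \ref{DeletionPreservesShortlexSuffixes}. For the converse, I would argue that achieving $|w_{suff}^{-1} a v_{suff}| = 2$ requires $|w_{suff}| - 1$ pair-cancellations, which forces $v_{suff}$ to differ from $w_{suff}$ by the deletion of a single letter $a_k$ of the above type.

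With this characterization the algorithm writes itself: I compute $M_{Geo}(w_{suff})$ by a single pass of $w_{suff}$ through the geodesic FSM from Proposition 3.0.1 of Antolin--Ciobanu; for each of the at most $Clique(\Gamma)$ candidate letters $a_k$ in that set, I scan $w_{suff}$ once to check the commutation condition, and if it holds, produce the corresponding $v_{suff}$ via the deletion procedure of Lemma \ref{DeletionPreservesShortlexSuffixes}. Each step costs $O(|w_{suff}|) = O(|w|)$, and there are $O(1)$ iterations (bounded by $\Gamma$ alone), giving total linear runtime. The input genuinely depends only on $w_{suff}$ and $b_\gamma(w)$, since $a$ and the prefix sign were extracted from these at the outset.

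The main obstacle I anticipate is the converse direction of the characterization, especially in Case B, where the factorization $w^{-1} v = w_{suff}^{-1} a v_{suff}$ admits two natural interpretations (cancel the trailing letter of $w_{pref}$ first, or cancel a trailing letter of $w_{suff}$ first); I must check that both interpretations produce the same family of $a_k$'s and the same $v_{suff}$. A related subtle point is the exclusion $a_k \neq a$: in Case A this is needed to avoid the degenerate coincidence $v = w$, while in Case B it holds automatically, because if $a_k = a$ were a last letter of $w_{suff}$ and $w_{suff}\setminus\{a_k\}$ commuted with $a$, then $w_{suff}$ could be rearranged to begin with $a \in \{a_i, a_j\}$, contradicting its being a suffix.
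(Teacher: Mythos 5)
Your proposal is correct and follows essentially the same route as the paper: determine the forced prefix letter $a\in\{a_i,a_j\}$ from $b_\gamma(w)$ and $|w_{suff}|$, characterize neighbors as deletions of a last letter $a_k\in M_{Geo}(w_{suff})$ such that $a$ commutes with every remaining suffix letter, and bound the runtime by one pass through $M_{Geo}$ plus $O(Clique(\Gamma))$ linear scans. Your observation that $a_k=a$ is automatically excluded (since otherwise $w_{suff}$ would rearrange to begin with a letter of $\{a_i,a_j\}$) in fact applies in both cases, not just Case B, but this is a cosmetic point.
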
 
	
	Of course, if one knows $w_{suff}$ and $b_{\gamma}(w)$, then one can reconstruct $w$. The point is to emphasize that we can write such an algorithm where we take as inputs an integer and a desired set of suffixes to work with. We do not need to store the redundant information of the prefix of each word.
	
	\begin{proof}
		
		If $b_{\gamma(w)}\ge|w_{suff}|$, then the prefix of $w$ is non-negative (beginning with $a_j$ or empty), and a letter must be added. If $b_{\gamma(w)}-|w_{suff}|$ is even, then $w_{pref}$ ends with a copy of $a_i$, so a copy of $a_j$ must be added, while if $b_{\gamma(w)}-|w_{suff}|$ is odd, then $w_{pref}$ ends with a copy of $a_j$ so $a_i$ that must be added.
		
		So, letting $k=i$ in the first case or $j$ in the second case, $v$ so that $wa_ka_l=_{Geo}v$ or $w a_l a_k=_{Geo}v$ and so that $w_{suff}a_l=_{Geo}v_{suff}$ where $a_l$ cancels with a letter in $w_{suff}$. This means that there is an $a_l$ present in $w_{suff}$, so that if $a_k$ does not commute with $a_l$, then in the expression $w a_ka_l$, $a_k$ cannot reach $w_{pref}$. So in fact we only need to consider the case $w a_l a_k=_{Geo}v$.
		
		In order for $a_l$ to cancel with a letter of $w_{suff}$, $a_l\in M_{Geo}(w_{suff})$. For each such $a_l$, we obtain a candidate suffix $v_{suff}$ by deleting the last copy of $a_l$ in $w_{suff}$. If $a_k$ commutes with each remaining letter, i.e. if $a_k\in M_{\{a_i, a_j\}}$, then $wa_la_k=_{Geo}v$ is a word of the desired form, and we keep the suffix $v_{suff}$. Otherwise we discard it. One sees immediately that this takes only linear time.
		
		The case where $b_{\gamma(w)}<|w_{suff}|$ is equivalent.
	\end{proof}
	
	Combining all of the above, we have a complete algorithm for generating the $2$-Rips graph on a subset of a horosphere.
	
	\begin{theorem} \label{RipsGraphGenerationSpeed}
		
		Let $\Gamma$ be a graph satisfying the standing assumptions. Let $a_i$ and $a_j$ two non-adjacent letters of $\Gamma$, and let $\gamma=(a_ia_j)^\infty$. Take $k$ to be an integer. Then $n$ vertices of the $2$-Rips graph on horosphere $b_\gamma^{-1}(k)$ and the edges between them can be generated by an algorithm whose runtime is $O(n\log(n))$.
		
	\end{theorem}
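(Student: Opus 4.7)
The plan is to chain a breadth-first enumeration of suffixes in $M_{Suff}$ (Lemma \ref{ShortlexSuffixMachine}) with the per-vertex edge-generation procedures of Lemmas \ref{RipsEdgesSameSuffixLength} and \ref{RipsEdgesDifferentSuffixLength}. Concretely, I would run BFS on $M_{Suff}$ starting at the empty word, enumerating shortlex suffixes $w_{1,suff},\ldots,w_{n,suff}$ in order of non-decreasing length. Each suffix determines a unique horosphere vertex of $b_\gamma^{-1}(k)$ via Lemma \ref{CalculatingBusemannFunctions} by attaching a positive or negative $a_ia_j$-alternating prefix of the appropriate length. As each suffix is discovered, I would insert it into a trie on the alphabet $V$, at cost $O(|w_{i,suff}|)$ per insertion; the trie later lets us translate neighbor suffixes into vertex identifiers at the same cost.

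Next, for each enumerated vertex I would invoke Lemma \ref{RipsEdgesSameSuffixLength} to generate all 2-Rips edges to vertices of the same suffix length, and Lemma \ref{RipsEdgesDifferentSuffixLength} to generate the remaining edges. Both procedures run in $O(|w_{i,suff}|)$ time and return a number of edges bounded by a constant $C = C(\Gamma)$ depending only on $|V|$ and $Clique(\Gamma)$. After each call, the $O(1)$ returned neighbor suffixes are output along with their trie identifiers (with a fresh identifier assigned if the suffix has not been seen), again at total cost $O(|w_{i,suff}|)$. Summing over the $n$ enumerated vertices, the total runtime is $O\!\left(\sum_{i=1}^n |w_{i,suff}|\right)$, which is at most $n\cdot\max_i |w_{i,suff}|$.

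It therefore suffices to establish $\max_i |w_{i,suff}| = O(\log n)$, or equivalently that the number of shortlex suffixes of length at most $L$ grows exponentially in $L$. This is the main obstacle. Since the standing assumptions force $W_\Gamma$ to be non-elementary hyperbolic, the full shortlex language has exponential growth, so it remains to see that intersecting with $F_{\{a_i,a_j\}}$ preserves that rate. I would argue this by exhibiting an explicit exponentially-growing regular sublanguage of $\mathscr{L}(M_{Suff})$: because $\Gamma$ is not complete and has no separating clique, one can locate enough generators outside $Star(a_i)\cup Star(a_j)$, or only partially interacting with it, to build a family of shortlex words on which no rearrangement can bring $a_i$ or $a_j$ to the front and whose count grows exponentially in the word length. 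Producing such a family uniformly over all legal $\Gamma$ requires a short case analysis on the local structure of $\Gamma$ near $a_i$ and $a_j$, but once in hand the bound $|w_{n,suff}| = O(\log n)$ is immediate and finishes the proof.
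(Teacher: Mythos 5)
Your proposal follows the paper's proof essentially verbatim: enumerate the suffixes of length at most $m$ via $M_{Suff}$, then generate the edges at each vertex with Lemmas \ref{RipsEdgesSameSuffixLength} and \ref{RipsEdgesDifferentSuffixLength}, with the $O(n\log(n))$ bound coming from the fact that the number of suffixes grows exponentially in $m$. The one step you flag as the main obstacle --- exponential growth of the suffix language --- is simply asserted in the paper's proof, and rather than your case analysis on the local structure of $\Gamma$ it follows in one line from the prefix-suffix decomposition: every group element of length at most $m$ is a prefix of length at most $m$ times a suffix of length at most $m$, there are at most $2m+1$ such prefixes, and $W_\Gamma$ (being one-ended hyperbolic) has exponential growth, so the number of suffixes of length at most $m$ is at least $\lambda^m/(2m+1)$.
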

	
	\begin{proof}
		
		Using $M_{suff}$, we generate the set of suffixes of length at most $m$. The number $n$ of these is exponential in $m$, so that this takes time asymptotic to $O(n\log(n))$. For each such vertex, it takes $O(\log(n))$ steps to generate all outgoing edges to suffixes of the same length by Lemma \ref{RipsEdgesSameSuffixLength}, and $O(\log(n))$ steps to generate the outgoing edges to suffixes with different length by Lemma \ref{RipsEdgesDifferentSuffixLength}.
	\end{proof}

	\subsection{Connectivity of the Rips graph}
	\label{subsec:RipsConnectivity}
	
	We are almost prepared to show that the 2-Rips graph is connected. First we will first need a technical lemma about the defining graph $\Gamma$. 
	
	\begin{lemma} \label{RipsGraphTraversability}
		
		Let $\Gamma$ be a graph satisfying the standing assumptions, and let $\Gamma'\subset\Gamma$ be a path component of $\Gamma\setminus\{a_i, a_j\}$. Then
		
		\begin{itemize}
			\item the subgraph $K$ of vertices in $\Gamma'$ adjacent to both $a_i$ and $a_j$ is complete.
			
			\item If $\Gamma'\ne K$ then $\Gamma'$ contains both vertices not adjacent to $a_i$ and vertices not adjacent to $a_j$, and every vertex adjacent to $a_i$ but not $a_j$ is connected to a vertex adjacent to $a_j$ but not $a_i$ (and vice versa) by a path in $\Gamma'\setminus K$.
			
		\end{itemize}
		
	\end{lemma}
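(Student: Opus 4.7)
The plan is to prove the two bullet points in sequence. For the first, I would take any two vertices $v_1, v_2 \in K$ and observe that if they fail to span an edge, then $v_1, a_i, v_2, a_j$ would form an induced $4$-cycle in $\Gamma$: all four sides are edges by the definition of $K$, while the missing diagonals $v_1v_2$ and $a_ia_j$ are absent by the contradictory hypothesis and by the standing assumption on $a_i, a_j$, respectively. This contradicts the no-induced-squares assumption, so $K$ is complete.

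For the second bullet, I would partition $\Gamma' \setminus K$ as $V_i \sqcup V_j \sqcup V_0$, where $V_i$ consists of vertices in $\Gamma'$ adjacent to $a_i$ but not $a_j$, $V_j$ of vertices adjacent to $a_j$ but not $a_i$, and $V_0$ of vertices adjacent to neither. The bullet then amounts to showing that $\Gamma' \neq K$ forces both $V_i$ and $V_j$ to be nonempty, and that every vertex of $V_i$ is connected within $V_i \cup V_j \cup V_0$ to some vertex of $V_j$.

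The heart of the argument is an application of the no-separating-clique hypothesis to $K \cup \{a_i\}$, which is a clique by the first bullet together with the fact that $a_i$ is adjacent to every element of $K$. This clique is proper (else $\Gamma$ itself would be a clique, contradicting our standing assumption), so $\Gamma \setminus (K \cup \{a_i\})$ is connected. Given $v \in V_i$, I would pick a path in this subgraph from $v$ to $a_j$ and truncate it at its first visit to $a_j$. Because $\Gamma'$ is an entire path component of $\Gamma \setminus \{a_i, a_j\}$, there are no edges from $\Gamma' \setminus K$ to any other path component, so the truncated path is forced to remain in $(\Gamma' \setminus K) \cup \{a_j\}$. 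Its penultimate vertex then lies in $\Gamma' \setminus K$ and is adjacent to $a_j$, hence in $V_j$; this both shows $V_j$ is nonempty and produces the required path. A symmetric argument, using the clique $K \cup \{a_j\}$, handles the case where one starts from a vertex of $V_j$ instead.

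Finally, I would rule out the possibility that $V_i = V_j = \emptyset$ while $V_0 \neq \emptyset$: in that case no vertex of $\Gamma'$ would be adjacent to either $a_i$ or $a_j$, so $\Gamma'$ would be separated from $\{a_i,a_j\}$ and from every other path component of $\Gamma \setminus \{a_i,a_j\}$, contradicting connectedness of $\Gamma$. The only step where one must be careful is verifying that the path constructed inside $\Gamma \setminus (K \cup \{a_i\})$ cannot escape $\Gamma' \setminus K$ except through $a_j$, but this is forced directly by the definition of $\Gamma'$ as a path component of $\Gamma \setminus \{a_i, a_j\}$.
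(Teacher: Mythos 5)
Your proof is correct and follows essentially the same route as the paper's: the induced-square argument $v_1,a_i,v_2,a_j$ for the first bullet, and for the second, applying the no-separating-clique hypothesis to the clique $K\cup\{a_i\}$ and truncating a path to $a_j$ at its first visit so that its penultimate vertex lies in $V_j$. (Your final paragraph is superfluous---a vertex of $V_0$ already witnesses both non-adjacency claims---and its reasoning overlooks that $K$ may be nonempty, but this does not affect the correctness of the proof.)
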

	
	\begin{proof}
		
		For the first part, recall that $a_i$ and $a_j$ are nonadjacent. So if $a_k$ and $a_l$ are adjacent to $a_i$ and $a_j$, the four vertices will form an induced square subgraph unless $a_k$ and $a_l$ are adjacent.
		
		For the second part, suppose $\Gamma'$ contains a vertex $a_k$ adjacent to $a_i$ but not $a_j$. By assumption, $\Gamma\setminus (K\cup\{a_i\})$ is connected, so that there is a path between $a_k$ and $a_j$ which we may take not to contain $a_j$ so that it stays in $\Gamma'$. But then the second-last vertex in this path is both connected to $a_k$ by a path in $\Gamma'\setminus K$ adjacent to $a_j$, and not adjacent to $a_i$.
	\end{proof}
	
	In order prove the connectedness of the Rips graph, it will be convenient to have another FSM prepared. When proving connectedness, we will write a geodesic suffix  $a_{i_1}a_{i_2}...$ and not care about a shortlex reordering. This is because in order to talk about paths in the Rips graph, we will need to study edge paths along which the $k^{th}$ letter changes in a prescribed way. It would be equivalent but much more painful notationally if the $k^{th}$ letter were forced to wander around the suffix according to the shortlex order.

	\begin{lemma}\label{GeoSuffMachine}
		
		There is a finite-state machine $M_{GeoSuff}$ whose accepted language consists of geodesic words $w$ not equal to any geodesic word beginning with $a_i$ or $a_j$.
		
		Moreover, this finite-state machine has no dead-end states (i.e. states in which no letter can be written).
		
	\end{lemma}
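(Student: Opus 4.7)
The plan is to build $M_{GeoSuff}$ via Proposition \ref{CombiningFSMs} as a product machine accepting $\mathscr{L}(M_{Geo}) \cap F_{\{a_i, a_j\}}$; its states can be encoded as pairs $(S, T)$ with $S = M_{Geo}(w)$ recording the possible last letters of $w$ and $T \subseteq \{a_i, a_j\}$ recording which of $a_i, a_j$ could still commute to the front if written next. This settles the first claim, because geodesicity of $w$ rules out cancellations, so $w$ lies in $F_{\{a_i, a_j\}}$ exactly when it is not equal to any geodesic word beginning with $a_i$ or $a_j$.

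The real content is the no-dead-end statement. From any accessible state corresponding to a word $w$ in the language, I would try to append some letter $a_l \in V \setminus (\{a_i, a_j\} \cup M_{Geo}(w))$. Such a choice remains in the language: since $a_l \notin M_{Geo}(w)$, the word $wa_l$ is geodesic; and since $a_l \notin \{a_i, a_j\}$, if $wa_l$ could be rearranged to begin with some $a_m \in \{a_i, a_j\}$, then the relevant copy of $a_m$ would lie inside $w$, and the chain of commutations bringing it to the front could be carried out without ever involving the terminal letter $a_l$, giving a forbidden rearrangement of $w$ itself.

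The main obstacle, then, is verifying that $V \setminus (\{a_i, a_j\} \cup M_{Geo}(w))$ is always nonempty, and this is where the standing assumptions enter. Since $M_{Geo}(w)$ is always a clique in $\Gamma$, if $V \setminus \{a_i, a_j\} \subseteq M_{Geo}(w)$ then $K := V \setminus \{a_i, a_j\}$ would itself be a clique, and $\Gamma \setminus K = \{a_i, a_j\}$ would be disconnected because $a_i$ and $a_j$ are non-adjacent, making $K$ a separating clique and contradicting Definition \ref{StandingAssumptions}. The degenerate case $V = \{a_i, a_j\}$ is likewise excluded, since $\Gamma$ would then be disconnected. I expect this to be the only subtle step; the rest is routine bookkeeping in the product FSM.
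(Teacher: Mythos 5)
Your proposal is correct and follows essentially the same route as the paper: the machine is the product of $M_{Geo}$ with the first-letter excluder for $\{a_i,a_j\}$, and the absence of dead ends comes from observing that $M_{Geo}(w)$ is a clique, so a dead end would force $V\setminus\{a_i,a_j\}$ to be a separating clique, contradicting the standing assumptions. Your version is marginally more careful in checking that appending $a_l\notin\{a_i,a_j\}\cup M_{Geo}(w)$ keeps the word in the language, but this is the same argument.
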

	
	Notice the slight difference from the statement of Lemma \ref{ShortlexSuffixMachine}.
	
	\begin{proof}
		
		We mimic the construction of $M_{Suff}$, but use $M_{Geo}$ rather than $M_{lex}$ as our starting point. The states are $M_{GeoSuff}(w)=(M_{Geo}(w),P_{Suff}(w))$, and the starting point $(\emptyset, \{a_i, a_j\})$. The proof that the machine accepts the desired language is the same as the proof of Lemma \ref{ShortlexSuffixMachine}, replacing the word ``shortlex" with ``geodesic".
		
		The set of edges exiting a given state $M_{GeoSuff}(w)$ is $\Gamma\setminus (M_{Geo}(w)\cup P_{Suff}(w))$. So there is an outgoing edge as long as $\Gamma$ is not the union of a clique and a pair of letters that are not adjacent. This is guaranteed by the standing assumption that there is no separating clique in $\Gamma$.
	\end{proof}
	
	The following tool will be used repeatedly in proving connectivity of the Rips graph.
	
	\begin{lemma} \label{RipsGraphPartialConnectivity}
		
		Suppose $w=a_{j_1}...a_{j_k}$ is a geodesic suffix so that $\Gamma\setminus M_{GeoSuff}(v)$ is connected whenever $v$ is a successor to $w$. Then there is a connected subgraph of the $2$-Rips graph on a horosphere consisting of the set of words of length $m$ with geodesic suffixes beginning $a_{j_1}...a_{j_k}$.
		
	\end{lemma}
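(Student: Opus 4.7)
The plan is to proceed by induction on the quantity $m - k$, where $k = |w|$ is the length of the fixed initial segment. The base case $m - k = 0$ is immediate: the only length-$m$ geodesic suffix that begins with $w$ is $w$ itself, so the class is a single vertex.

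For the inductive step, set $A = \Gamma \setminus M_{GeoSuff}(w)$, which is connected by hypothesis (taking $v = w$ as a trivial successor of itself), and for each $a_l \in A$ let $C_l$ denote the set of length-$m$ geodesic suffixes beginning with $w a_l$. I would apply the inductive hypothesis to each $C_l$ using the length-$(k+1)$ initial segment $w a_l$; this is legitimate because $m - (k+1) < m - k$ and because every successor of $w a_l$ in $M_{GeoSuff}$ is also a successor of $w$, so the connectivity hypothesis on $\Gamma \setminus M_{GeoSuff}(\cdot)$ is inherited. The inductive hypothesis then yields that each $C_l$ is connected in the induced subgraph of the 2-Rips graph.

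To glue the various $C_l$ into one connected set, the key observation is that whenever $a_p, a_q \in A$ are adjacent in $\Gamma$, the classes $C_p$ and $C_q$ share a vertex. Since $a_p$ and $a_q$ commute in $W_\Gamma$, any word of the form $u = w a_p a_q v$ also equals $w a_q a_p v$ as a group element, so $u$ sits in both $C_p$ and $C_q$. An extension $v$ making $u$ a length-$m$ geodesic suffix exists by the no-dead-end-states property from Lemma \ref{GeoSuffMachine}. Because $A$ is connected as a subgraph of $\Gamma$, one can chain these shared vertices along a path of edges in $A$ to link any vertex of $C_p$ to any vertex of $C_q$ through the induced 2-Rips subgraph, completing the inductive step.

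The main obstacle is verifying that $w a_p a_q$ really is a geodesic suffix, i.e. that $a_q \notin M_{GeoSuff}(w a_p)$, so that the shared vertex $u$ genuinely lies in $C_p \cap C_q$. This requires tracking both coordinates of the state $M_{GeoSuff} = (M_{Geo}, P_{Suff})$: the update rule $M_{Geo}(w a_p) = (M_{Geo}(w) \cap Star(a_p)) \cup \{a_p\}$ together with $a_q \ne a_p$ and $a_q \notin M_{Geo}(w)$ forces $a_q \notin M_{Geo}(w a_p)$, and one checks that $P_{Suff}(w a_p)$ cannot acquire $a_q$ once $a_q \notin P_{Suff}(w)$. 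The symmetric check for $w a_q a_p$ is identical. Once this suffix-tracking bookkeeping is in hand, the rest is a standard inductive gluing argument using connectivity of $A$.
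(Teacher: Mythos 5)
Your overall strategy is the same as the paper's: induct on the length of the common initial segment, get connectivity within each class $C_l$ from the inductive hypothesis, and glue classes $C_p$, $C_q$ for adjacent $a_p,a_q$ via words of the form $wa_pa_qv=wa_qa_pv$ that lie in both classes, chaining along a path in the connected graph $\Gamma\setminus M_{GeoSuff}(w)$. Your bookkeeping remarks (successors of $wa_l$ are successors of $w$; $a_q\notin M_{GeoSuff}(wa_p)$ follows from the update rules for $M_{Geo}$ and the monotonicity of the $P_{Suff}$ component) are also correct and match what the paper needs.

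However, there is a genuine gap at the bottom of your induction. With base case $m-k=0$, the first nontrivial inductive step is $m-k=1$, and there the gluing mechanism fails: each $C_l=\{wa_l\}$ is a singleton, and a shared vertex would have to be a length-$m=k+1$ word beginning $wa_pa_q$, which has length $k+2$ and so does not exist. The classes $wa_p$ and $wa_q$ are genuinely distinct group elements, so they cannot be identified; they must instead be joined by an actual edge of the $2$-Rips graph, which is available because $d(wa_p,wa_q)=2$ (this is the paper's base case $k=m-1$, via Lemma \ref{RipsEdgesSameSuffixLength}, where the whole class is a clique). The symptom of the gap is that your argument as written never invokes a single edge of the Rips graph --- the base case is a lone vertex and every gluing is by coincidence of group elements --- so it would ``prove'' connectivity even if the edge set were empty. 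The fix is exactly to move the base case up to $m-k=1$ and supply the Rips edges there; with that change the rest of your argument goes through and coincides with the paper's proof.
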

	
	The slightly artificial assumption that $\Gamma\setminus M_{GeoSuff}(v)$ always remains connected is chosen to cover two cases: one where $M_{GeoSuff}(w)$ is a clique, and one where $\Gamma\setminus (\{a_i, a_j\})\cup K'$ is connected for every $K'\subset K$. In the former case, either $a_i$ or $a_j$ cannot commute to the start of the word and will not be able to regardless of which letters continue to be written. In the latter case, $\Gamma\setminus M_{GeoSuff}(v)$ will always be connected regardless of whether it is a successor to $w$.
	
	\begin{proof}
		
		We prove this by induction on $k$. If $k=m-1$, an edge is guaranteed by Lemma \ref{RipsEdgesSameSuffixLength}. So all we need to show is that if the result is true for $k=n+1$ then it is true for $k=n$.
		
		So suppose the graph whose vertices are the words whose geodesic suffixes begin \newline $a_{j_1}...a_{j_{n+1}}$ is connected. Then from each word $a_{j_1}....a_{j_m}$ there is a path to each geodesic suffix $a_{j_1}...a_{j_{n+1}}a_{j_{n+2}'}...a_{j_{m'}}$. Choose a path from $a_{j_{n+1}}=a_{j_{n+1_0}},a_{j_{n+1_1}}...a_{j_{n+1_l}}=a_{j_{n+1'}}$ in \newline $\Gamma\setminus M_{GeoSuff}(a_{j_1}...a_{j_n})$. By assumption, we can find a path to a geodesic suffix \newline $a_{j_1}...a_{j_{n+1}}a_{j_{n+1_1}}...a_{j_m'}$ as long as there is a geodesic suffix of length $m$ beginning \newline $a_{j_1}...a_{j_{n+1}}a_{j_{n+1_1}}$. This is guaranteed by the assertion in Lemma \ref{GeoSuffMachine} that $M_{GeoSuff}$ does not have dead ends. But then since $a_{j_{n+1}}$ and $a_{j_{n+1_1}}$ commute, we can write this as the geodesic suffix $a_{j_1}...a_{j_{n+1_1}}a_{j_{n+1}}...a_{j_{m}'}$. 
		
		If we do this $l$ times, we obtain a word that with suffix begin $a_{j_1}...a_{j_n}a_{j_{n+1}'}$. Doing this one more time then gives any word whose suffix begins $a_{j_1}...a_{j_n}a_{j_{n+1}'}$. Since $a_{j_{n+1}'}$ was arbitrary, it follows that in $l+1$ steps we can get to every word beginning $a_{j_1}...a_{j_n}$ is connected.
	\end{proof}
	
	\begin{corollary} \label{RipsSubgraphDiameterBound}
		
		Consider all the graphs $\Gamma\setminus K'$ where $K'$ ranges over all complete subgraphs of $\Gamma$, and all the path components of $\Gamma\setminus (\{a_i, a_j\}\cup K')$ where $K'$ ranges over subgraphs of the clique $K$ adjacent to both $a_i$ and $a_j$. Let $D$ denote the maximum diameter of any such graph considered with its path metric, not with the induced metric of $\Gamma$. Then if $M_{GeoSuff}(a_{i_1}....a_{i_k})$ is a clique, the diameter of the set of words whose (geodesic, not necessarily shortlex) suffixes have length $m$ and begin with $a_{i_1}....a_{i_k}$ is at most $(D+1)^{m-k-1}$ (again where this set is given its intrinsic path metric).
		
	\end{corollary}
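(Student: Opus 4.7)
The plan is to induct on $m-k$, following the path-lifting construction in Lemma \ref{RipsGraphPartialConnectivity} while tracking the length of the Rips path it produces. A preliminary observation I would record is that the property ``$M_{GeoSuff}(w)$ is a clique'' is preserved when appending any allowed letter: setting $S = M_{Geo}(w)\cup P_{Suff}(w)$, one computes $M_{Geo}(wa)\cup P_{Suff}(wa) = (S\cap Star(a))\cup\{a\}$, which is again a clique because every element of $S\cap Star(a)$ is $\Gamma$-adjacent to $a$. Consequently, the inductive hypothesis remains available at every successor state we encounter. The base case $m-k=1$ is immediate: two length-$(k+1)$ suffixes beginning $a_{i_1}\cdots a_{i_k}$ differ only in their last letter, so the corresponding full words have Cayley distance at most $2$, span a single $2$-Rips edge, and give diameter at most $1 = (D+1)^0$.

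For the inductive step, consider $v_1 = w_{pref}\cdot w\cdot c_0 s$ and $v_2 = w_{pref}\cdot w\cdot c_l t$ with $w = a_{i_1}\cdots a_{i_k}$ and $c_0, c_l\in\Gamma\setminus M_{GeoSuff}(w)$. Since $M_{GeoSuff}(w)$ is a clique, the definition of $D$ furnishes a path $c_0, c_1, \ldots, c_l$ in $\Gamma\setminus M_{GeoSuff}(w)$ with $l\le D$. Consecutive $c_r, c_{r+1}$ commute (they are $\Gamma$-adjacent), and one verifies, as in Lemma \ref{RipsGraphPartialConnectivity}, that $c_{r+1}\notin M_{GeoSuff}(wc_r)$, so that a geodesic suffix of length $m$ beginning $wc_rc_{r+1}$ exists by Lemma \ref{GeoSuffMachine}. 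I then build the Rips path in $l+1$ stages: starting in the subgraph of length-$m$ suffixes beginning $wc_0$, whose $M_{GeoSuff}$-state is still a clique by the preservation observation, I invoke the inductive hypothesis to reach a word whose suffix begins $wc_0c_1$ in at most $(D+1)^{m-k-2}$ Rips steps. Commuting $c_0$ past $c_1$ reinterprets this same group element as a word in the $wc_1$-subgraph, and I iterate through $c_1, c_2, \ldots, c_l$, finishing with a last inductive jump inside the $wc_l$-subgraph to land on $v_2$. The total Rips length is at most $(l+1)(D+1)^{m-k-2}\le (D+1)(D+1)^{m-k-2} = (D+1)^{m-k-1}$, as required.

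The main obstacle I anticipate is the somewhat delicate bookkeeping at each transition, namely verifying that the intermediate word really does admit a geodesic representative with the advertised initial segment so that the inductive hypothesis is applied to the correct subgraph. This amounts to re-running parts of the justification from Lemma \ref{RipsGraphPartialConnectivity} at every iteration, which is exactly why the clique hypothesis must be carried as a persistent invariant throughout the argument.
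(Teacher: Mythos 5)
Your proposal is correct and follows essentially the same route as the paper: induct on $m-k$, use the path $c_0,\dots,c_l$ of length $l\le D$ in $\Gamma\setminus M_{GeoSuff}(w)$ from Lemma \ref{RipsGraphPartialConnectivity} to split the connection into $l+1$ subpaths, and bound each by the inductive hypothesis to get $(l+1)(D+1)^{m-k-2}\le(D+1)^{m-k-1}$. Your explicit verification that the clique property of $M_{GeoSuff}$ persists under appending letters is left implicit in the paper but is a worthwhile addition.
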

	
	The need for $D$ to dominate the diameter of any component of $\Gamma\setminus (\{a_i, a_j\}\cup K')$ will not be used in this proof. It is included only to have a single constant in certain upper bounds in the next subsection.
	
	\begin{proof}
		
		If $k=m-1$, then the subgraph is a clique by Lemma \ref{RipsEdgesSameSuffixLength}. So suppose diameter of the subgraph of words beginning $a_{i_1}...a_{i_n}b$ is at most $D^{m-n-2}$ for each choice of $b$. Then the previous proof gives a path between any two points whose suffixes begin $a_{i_1}...a_{i_n}$ with $l+1$ subpaths where $l$ is a path in a graph $\Gamma\setminus K'$. We can therefore take $l$ to be at most $D$. Each of these subpaths lies in a subgraph of words whose suffixes begin with $a_{i_1}...a_{i_{n+1_{l-1}}}$, and therefore can be taken to have at length at most $(D+1)^{m-n-2}$. Therefore, the total path length is at most $(D+1)^{m-n-1}$.		
	 \end{proof}
	
	As a consequence, we see that all the $k$-Rips graphs on the same horosphere, for any $k$, are bi-Lipschitz equivalent.
	
	\begin{proposition} \label{RipsGraphBiLipschitzEquivalence}
		
		Let $k\ge 2$. The path metrics on any horosphere induced by the $k$-Rips graph and the $2$-Rips graph are bi-Lipschitz equivalent.
		
	\end{proposition}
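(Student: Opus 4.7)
The plan is to prove the two inequalities $d_k \le d_2$ and $d_2 \le L \cdot d_k$ separately. The first is immediate for $k \ge 2$, since every $2$-Rips edge is a $k$-Rips edge. For the second, it suffices to exhibit a constant $L = L(k, \Gamma)$ such that $d_2(w, v) \le L$ whenever $w, v$ lie on the same horosphere with $d(w, v) \le k$; concatenating paths then yields $d_2 \le L \cdot d_k$ everywhere.

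To establish this local bound I would combine the $\delta$-hyperbolicity of $W_\Gamma$ with Corollary \ref{RipsSubgraphDiameterBound}. By the standard Gromov product argument in a $\delta$-hyperbolic group, $w$ and $v$ admit geodesic representatives sharing a common initial segment of length at least $\min(|w|, |v|) - O(k + \delta)$; writing $w =_{Geo} x y_w$ and $v =_{Geo} x y_v$, we have $|y_w| + |y_v| = O(k + \delta)$. Using $O(k + \delta)$ two-Rips moves to align the prefixes (each such move alters the prefix by at most one letter, via the algorithms of Lemmas \ref{RipsEdgesSameSuffixLength} and \ref{RipsEdgesDifferentSuffixLength}), I first reduce to the case $w_{pref} = v_{pref}$. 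Then the common initial segment $x$ may be taken as $w_{pref} \tilde u$, where $\tilde u$ is a common geodesic initial segment of $w_{suff}$ and $v_{suff}$; since $w_{suff}$ and $v_{suff}$ cannot be rearranged to begin with $a_i$ or $a_j$, neither can $\tilde u$, so $\tilde u$ is itself a suffix.

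If $M_{GeoSuff}(\tilde u)$ is a clique, then Corollary \ref{RipsSubgraphDiameterBound} bounds the $2$-Rips diameter of the set of vertices whose geodesic suffixes have length $|w_{suff}|$ and begin with $\tilde u$ by $(D+1)^{|w_{suff}| - |\tilde u| - 1} = (D+1)^{O(k + \delta)}$, a constant depending only on $k$ and $\Gamma$. Since both $w$ and $v$ lie in this set, the desired bound $d_2(w, v) \le L$ follows, completing the argument.

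The main obstacle will be the clique hypothesis in Corollary \ref{RipsSubgraphDiameterBound}. I expect to resolve this either by extending $\tilde u$ by a uniformly bounded number of letters along a common initial segment of $w_{suff}$ and $v_{suff}$ until the FSM state becomes a clique, leveraging the finiteness of $M_{GeoSuff}$ together with the successor rule $S \mapsto (S \cap Star(a)) \cup \{a\}$ that concentrates each reachable state around recently-written letters, or alternatively by establishing a version of Corollary \ref{RipsSubgraphDiameterBound} without the clique hypothesis via the connectivity machinery of Lemma \ref{RipsGraphPartialConnectivity}. Ensuring that the extended segment remains common to both words is the central subtlety.
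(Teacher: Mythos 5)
Your skeleton matches the paper's: reduce to bounding the $2$-Rips length of a single $k$-Rips edge, identify the common initial part of the two suffixes, and apply Corollary \ref{RipsSubgraphDiameterBound} to the set of suffixes sharing that initial part. However, two steps are genuinely incomplete. The first is the prefix-alignment step. You assert that $O(k+\delta)$ two-Rips moves align the prefixes, each move changing the prefix by one letter via Lemma \ref{RipsEdgesDifferentSuffixLength}. But that lemma only yields a prefix-changing edge at a word whose \emph{entire} remaining suffix commutes with the prefix letter being inserted; from an arbitrary $w$ no such edge need exist. One must first travel inside the constant-suffix-length subgraph to a vertex where this commutation holds -- this is exactly the paper's Case II, it costs another $(D+1)^{O(k)}$ steps, and it relies on Lemma \ref{RipsGraphTraversability} to know such a vertex is reachable and on Corollary \ref{RipsSubgraphDiameterBound} (hence again on the clique hypothesis) to bound the detour. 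As written, the alignment step is a single unsupported assertion. (A further omission: when the suffix lengths differ by $2$ or more, uncancelled copies of both $a_i$ and $a_j$ appear in $w^{-1}v$ and force both suffixes to have length at most $clique(\Gamma)+k/2$; the paper handles this case by finiteness rather than by alignment.)

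The second and central gap is the clique hypothesis, which you flag but do not close. Your first proposed fix -- extending $\tilde u$ until the state becomes a clique -- founders on precisely the subtlety you name: the extension must remain common to both words, and in general $w_{suff}$ and $v_{suff}$ diverge immediately after $\tilde u$. The paper's resolution requires no extension: $M_{Geo}(\tilde u)$ is always a set of pairwise-commuting letters, and once $|\tilde u|\ge clique(\Gamma)$ the suffix $\tilde u$ must contain a letter outside $Star(a_i)\cap Star(a_j)$ (since the vertices adjacent to both $a_i$ and $a_j$ form a clique of size at most $clique(\Gamma)-1$), so at most one of $a_i,a_j$ survives in the forbidden-first-letter component of the state, and that survivor commutes with all of $\tilde u$; hence $M_{GeoSuff}(\tilde u)$ is automatically a clique. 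The finitely many pairs with $|\tilde u|<clique(\Gamma)$ have both suffix lengths bounded by $clique(\Gamma)+k/2$ and contribute only a finite maximum. Without this dichotomy (or a clique-free version of Corollary \ref{RipsSubgraphDiameterBound}, which you gesture at but do not supply), the proposal does not yet produce the uniform constant $L$. A minor point: the Gromov-product argument only gives fellow-travelling geodesics, not a literal common initial segment; the latter follows instead from the unlinked-cancellation combinatorics of RACGs, which is how the paper extracts the common subword $a_{i_1}\dots a_{i_l}$.
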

	
	\begin{proof}
		
		One direction is clear, because points at distance at most $2$ from one another are therefore at most $k$ from one another. So the $k$-Rips graph contains every edge of the $2$-Rips graph, and therefore the distance induced from the $k$-Rips graph is bounded above by that of the $2$-Rips graph. So it remains to show that the path distance in the $2$-Rips graph is at most $L_k$ times the distance in the $k$-Rips graph for some constant $L_k$. It suffices to do this edgewise, i.e., that every edge in the $k$-Rips graph has at most distance $L_k$ in the $2$-Rips graph.
		
		We will need to consider the following cases of edges in the $k$-Rips graph: edges between words with the same suffix length, edges between words whose suffix length differs by $1$, and edges between words whose suffix length differs by at least $2$.
		
		\textbf{Case I: Same suffix length}
		
		Suppose $|w_{suff}|=|v_{suff}|$, so that $w^{-1}v=_{Geo}w_{suff}^{-1}v_{suff}$, which reduces to a word of length at most $k$. Then $w_{suff}w'=_{Geo}v_{suff}$ for some word $w'$ of length at most $k$. $w'$ then has even length, and half of its letters cancel with letters in $w_{suff}$. Write \newline $w_{suff}=_{Geo}a_{i_1}...a_{i_l}a_{i_{l+1}}...a_{i_m}$ where the letters past $a_{i_l}$ cancel with $w'$. If $l\ge clique(\Gamma)$, then $M_{GeoSuff}(a_{i_1}...a_{i_l})$ is necessarily a clique. Since $v_{suff}$ begins with $a_{i_1}...a_{i_l}$, then Corollary \ref{RipsSubgraphDiameterBound} shows that the two at distance at most $(D+1)^{m-l-1}$ in the $2$-Rips graph. Since $m-l=\frac{k}{2}$, this provides the desired bound.
		
		There are only finitely many words for which $l=m-\frac{k}{2}<clique(\Gamma)$, i.e. words whose suffix lengths are at most $clique(\Gamma)+\frac{k}{2}$. The maximum $2$-Rips graph distance between two such points whose Cayley graph distance is at most $k$ is therefore bounded.
		
		\textbf{Case II: Suffix length differing by 1}
		
		This is the hardest case. Suppose $|v_{suff}|=|w_{suff}|+1$, and WLOG let $w_{pref}a_i=v_{pref}$. Once again there is a word $w'$ of length at most $k$ so that $ww'=_{Geo}v$. There is an uncanceled prefix copy of $a_i$ in $v$, as well as at most $\frac{k}{2}$ uncanceled suffix letters in $v$ and at most $\frac{k}{2}-1$ uncanceled suffix letters of $w$. So as before we write $w_{suff}=_{Geo}a_{i_1}...a_{i_l}a_{i_{l+1}}...a_{i_m}$ where the letters after $a_{i_l}$ cancel with letters of $w'$, and the first $l$ letters match letters in $v_{suff}$ and commute with $a_i$. 
		
		As before, as long as $l>clique(\Gamma)$, $M_{GeoSuff}(a_{i_1}...a_{i_l})$ is a clique. We then apply Corollary \ref{RipsEdgesDifferentSuffixLength}  to the set of words beginning with these letters of length $m$ to find that the set has diameter at most $(D+1)^{\frac{k}{2}-2}$. In particular, after distance at most $(D+1)^{\frac{k}{2}-2}$ we can reach a new word where every suffix letter commutes with $a_i$. We can then take a single edge to a word whose suffix length matches that of $v$ by Lemma \ref{RipsEdgesDifferentSuffixLength}. Then a further application of Corollary \ref{RipsEdgesDifferentSuffixLength} to the set of suffixes of length $m+1$ beginning $a_{i_1}...a_{i_l}$ allows us to reach $v$ in another $(D+1)^{\frac{k}{2}-1}$ steps. So the total distance is bounded by $(D+1)^{\frac{k}{2}-2}+1+(D+1)^{\frac{k}{2}-1}$.
		
		Once again, the set of words $w$ for which $l=|w_{suff}|-(\frac{k}{2}-1)<clique(\Gamma)$ is finite, so we have an immediate bound on the lengths of $k$-Rips graph edges incident to these vertices.
		
		\textbf{Case III: Suffix length differing by at least 2}
		
		Suppose $w$ and $v$ are at distance at most $k$, and $|v_{suff}|-|w_{suff}| \ge 2$. Then there is an uncanceled prefix copy of both $a_i$ and $a_j$ in $w^{-1}v$. By Lemma \ref{RipsGraphTraversability}, there are at most $clique(\Gamma)$ letters from $w_{suff}$ that can cancel with letters of $v_{suff}$. Therefore, $$|v_{suff}|-|w_{suff}|+|w_{suff}|+|v_{suff}|-2clique(\Gamma)\le k.$$ Canceling, $$2|v_{suff}|\le k+2clique(\Gamma).$$ Since $v$ had the longer suffix than $w$, we conclude that there are again only finitely many such edges, and attain a maximum length of such an edge in the $2$-Rips graph. \end{proof}

	We now come to the main result of the section. We will present an alternate, more direct proof under an additional assumption after this one.
	
	\begin{proposition} \label{RipsGraphOriginConnectivity}
		
		Let $\Gamma$ be a graph satisfying the standing assumptions, and let $a_i$ and $a_j$ be non-adjacent vertices. Let $\gamma$ be the geodesic ray $(a_ia_j)^\infty$, and let  $w=_{Geo}w_{pref}w_{suff}$ be an element of $W_\Gamma$. Denote $G$ as the $2$-Rips graph on $b_{\gamma}^{-1}(b_{\gamma}(w))$, the horosphere containing $w$. Let $v$ be the word with empty suffix in $G$. Then there is a path in $G$ from $v$ to $w$.
		
	\end{proposition}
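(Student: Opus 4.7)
The plan is to induct on $|w_{suff}|$. The base case $|w_{suff}|=0$ is trivial: on each horosphere there is a unique word with empty suffix, so $w=v$ and the empty path suffices.

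For the inductive step, assume the claim for all words on the horosphere with suffix length strictly less than $m$, and let $w$ satisfy $|w_{suff}|=m\ge 1$. WLOG $w_{pref}$ is non-negative; the negative case is symmetric. Let $a_k\in\{a_i,a_j\}$ be the letter that extends $w_{pref}$ by one (determined by parity). By Lemma \ref{RipsEdgesDifferentSuffixLength}, there is a single $2$-Rips edge from $w$ to a word of suffix length $m-1$ precisely when $w_{suff}$ admits a geodesic rearrangement of the form $u\,a_l$ whose prefix $u$ consists entirely of letters in $Star(a_k)$, so that $a_k$ commutes past $u$ into $w_{pref}$. Whenever this holds I apply the lemma and conclude by induction.

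The interesting case is when no such rearrangement exists, that is, when $w_{suff}$ has two or more letters outside $Star(a_k)$. Here I would use Lemma \ref{RipsGraphPartialConnectivity} to first slide $w$ along $2$-Rips edges (preserving total length) to a word $w'$ whose suffix \emph{does} admit a reducing rearrangement. The combinatorial engine is Lemma \ref{RipsGraphTraversability}: every path component of $\Gamma\setminus\{a_i,a_j\}$ that is not already a clique contains a path in $\Gamma\setminus K$ joining a vertex adjacent to $a_i$ with a vertex adjacent to $a_j$. Such a graph-theoretic path translates, via the edge-by-edge argument in the proof of Lemma \ref{RipsGraphPartialConnectivity}, into a sequence of $2$-Rips moves on the horosphere that swap letters of $w_{suff}$ one at a time for letters with different adjacency relations to $a_k$. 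Iterating, one whittles down the count of non-$Star(a_k)$ letters in the suffix until the reducing rearrangement becomes available, at which point Lemma \ref{RipsEdgesDifferentSuffixLength} applies and the induction hypothesis finishes the job.

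The main obstacle is to confirm that these letter substitutions can be scheduled so that each intermediate word stays geodesic on the horosphere and so that no new obstructions are created as old ones are eliminated. This bookkeeping rests on the standing assumption (Definition \ref{StandingAssumptions}) that $\Gamma$ has no separating clique, which is exactly what keeps the relevant subgraphs $\Gamma\setminus M_{GeoSuff}(\cdot)$ connected throughout the procedure and allows Lemma \ref{RipsGraphPartialConnectivity} to remain applicable at every stage.
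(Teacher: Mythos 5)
Your route is genuinely different from the paper's. The paper proves this proposition in two lines from Proposition \ref{RipsGraphBiLipschitzEquivalence}: $w$ and the empty-suffix word $v$ are at finite Cayley distance $m$, hence adjacent in the $m$-Rips graph, hence at bounded distance in the $2$-Rips graph; all of the combinatorial work is pushed into the bi-Lipschitz equivalence. Your direct induction on $|w_{suff}|$ is instead close in spirit to Proposition \ref{RipsGraphNoCutPtConnectivity} and to the path constructions in the proof of Proposition \ref{HorosphericalDistortionUpperBound}, and its skeleton (slide at fixed suffix length to a word admitting a reducing edge, then descend) is viable.

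There is, however, a genuine gap in the justification of the sliding step. You assert that the no-separating-clique assumption ``keeps the relevant subgraphs $\Gamma\setminus M_{GeoSuff}(\cdot)$ connected throughout the procedure.'' This is false: for any suffix $u$ all of whose letters commute with both $a_i$ and $a_j$ (in particular the empty one), the forbidden set $M_{GeoSuff}(u)$ contains the non-adjacent pair $\{a_i,a_j\}$, so it is not a clique, and $\Gamma\setminus\{a_i,a_j\}$ may well be disconnected under the standing assumptions (e.g.\ the theta-graph of Figure \ref{fig:BranchedSurfaceRipsGraph}). This is precisely the dichotomy separating Proposition \ref{RipsGraphOriginConnectivity} from Proposition \ref{RipsGraphNoCutPtConnectivity}; as justified, your argument only covers the latter's extra hypothesis. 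The repair is to invoke Lemma \ref{RipsGraphPartialConnectivity} only for initial segments of the suffix extending through the first letter \emph{not} adjacent to both $a_i$ and $a_j$ (there $M_{GeoSuff}$ genuinely is a clique), and to change that pivotal letter one adjacency at a time along a path in its component of $\Gamma\setminus(\{a_i,a_j\}\cup K)$ supplied by Lemma \ref{RipsGraphTraversability}, as in the proof of Proposition \ref{HorosphericalDistortionUpperBound}. For your particular goal the disconnectedness of $\Gamma\setminus\{a_i,a_j\}$ is not fatal, since you only ever need to descend toward the empty-suffix word and never need to cross between components at fixed suffix length -- but verifying that, together with scheduling the substitutions and checking that a reducible target word of the required length actually exists in the reachable set, is exactly the content you have deferred. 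A smaller slip: ``no reducing rearrangement exists'' is not equivalent to ``$w_{suff}$ has two or more letters outside $Star(a_k)$''; a single such letter that is not a last letter of $w_{suff}$ already blocks the reduction.
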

	
	\begin{proof}
		
		Let $|w|=m$. Then there is an edge between $w$ and $v$ in the $m$-Rips graph, so they are at distance $1$ in that graph. Since the $m$-Rips graph and the $2$-Rips graph are bi-Lipschitz, the distance between $w$ and $v$ in the $2$-Rips graph is bounded, and therefore a path exists.
	\end{proof}
	
	Since there is a point in the horosphere to which every other point is connected by a path, we see immediately that the horosphere is connected.
	
	\begin{theorem} \label{RipsGraphConnectivity}
		
		The $2$-Rips graph on a horosphere is connected.
		
	\end{theorem}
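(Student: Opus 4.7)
The plan is to deduce this immediately from Proposition \ref{RipsGraphOriginConnectivity} via a ``connect-through-a-hub'' argument. The preceding proposition shows that on any horosphere $b_\gamma^{-1}(k)$, every vertex is joined by a $2$-Rips path to the distinguished word $v$ with empty suffix. Once that is in hand, connectivity of the whole graph is a one-line consequence: if $w_1$ and $w_2$ both lie on $b_\gamma^{-1}(k)$, then each admits a path to $v$ in the $2$-Rips graph, and concatenating these paths (reversing one of them) yields a path from $w_1$ to $w_2$.

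The only thing that truly requires justification is that such a hub vertex $v$ actually exists on the horosphere in question. This is where I would begin the proof. Using the formula of Lemma \ref{CalculatingBusemannFunctions}, a word with empty suffix has Busemann value equal to $+|w_{\mathrm{pref}}|$ if its prefix begins with $a_j$ and $-|w_{\mathrm{pref}}|$ if it begins with $a_i$. Since $a_i$ and $a_j$ do not commute, the alternating words $(a_ja_i)^{\lceil k/2 \rceil}\cdots$ of length $k$ (for $k\ge 0$) and $(a_ia_j)^{\lceil |k|/2 \rceil}\cdots$ of length $|k|$ (for $k<0$) are geodesic, have empty suffix, and land on $b_\gamma^{-1}(k)$. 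So for every integer $k$ the horosphere contains exactly one such hub vertex $v$.

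With $v$ fixed, I would then invoke Proposition \ref{RipsGraphOriginConnectivity} twice, once for each $w_i$, to produce $2$-Rips paths $w_1 \to v$ and $v \to w_2$, and conclude connectivity. I do not expect any genuine obstacle here; the hard work has already been carried out in Proposition \ref{RipsGraphBiLipschitzEquivalence} and its consequence Proposition \ref{RipsGraphOriginConnectivity}, where the bi-Lipschitz equivalence between the $m$-Rips graph and the $2$-Rips graph was used to convert the trivial observation ``$w$ and $v$ are at distance $m=|w|$ in the Cayley graph, hence joined by an edge in the $m$-Rips graph'' into a bounded $2$-Rips path. The present theorem merely packages that result into its cleanest statement.
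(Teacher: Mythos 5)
Your proposal is correct and is exactly the paper's argument: Proposition \ref{RipsGraphOriginConnectivity} already connects every vertex to the word with empty suffix, and the theorem follows by concatenating two such paths through that hub. Your extra verification that the hub vertex exists on each horosphere (the alternating prefix word of the right sign and length, via Lemma \ref{CalculatingBusemannFunctions}) is a reasonable point to make explicit, though the paper treats it as immediate.
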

	
	Since the word on the horosphere of empty suffix is connected to every other word, the horosphere is connected. However, depending on the defining graph $\Gamma$ it is sometimes possible to connect vertices without going through this word. The following proposition presents a simpler proof of connectivity under an additional assumption.
	
	\begin{proposition}\label{RipsGraphNoCutPtConnectivity}
		
		Suppose that $\Gamma\setminus (\{a_i, a_j\}\cup K')$ is connected for each $K'$ a (potentially empty) subclique of $K$ (where $K$ is again the set of letters commuting with both $a_i$ and $a_j$). Then the words on a given horosphere with suffix $a_{i_1}...a_{i_n}$ and $a_{j_1}...a_{j_m}$ are connected through a path of words that of suffix length at least $\min\{n, m\}$.
		
	\end{proposition}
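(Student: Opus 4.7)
Assume without loss of generality $n\le m$, and let $w$ and $v$ denote the words with the given suffixes. The plan is to build the required path by combining horizontal moves within each fixed suffix length with single-edge vertical transitions that increase the suffix length by one, arranging all of this to stay at suffix length between $n$ and $m$.

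For the horizontal moves, the added hypothesis is exactly what is needed to verify the hypothesis of Lemma \ref{RipsGraphPartialConnectivity} at the empty starting suffix. For any geodesic suffix $v'$, the state $M_{GeoSuff}(v')=M_{Geo}(v')\cup P_{Suff}(v')$ is the union of a clique $M_{Geo}(v')$ with a subset $P_{Suff}(v')\subseteq\{a_i,a_j\}$. If $P_{Suff}(v')=\{a_i,a_j\}$, then every letter of $v'$ commutes with both $a_i$ and $a_j$, forcing $M_{Geo}(v')\subseteq K$, so $\Gamma\setminus M_{GeoSuff}(v')=\Gamma\setminus(\{a_i,a_j\}\cup M_{Geo}(v'))$ is connected by the added hypothesis. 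Otherwise $M_{GeoSuff}(v')$ is itself a clique of $\Gamma$ (in particular because any $a\in P_{Suff}(v')$ commutes with every letter of $v'$, hence with $M_{Geo}(v')$, and cannot itself lie in $v'$), and its complement is connected by the standing no-separating-clique hypothesis. Applying Lemma \ref{RipsGraphPartialConnectivity} then shows that for every $\ell$, the set of words of suffix length exactly $\ell$ on the horosphere forms a connected subgraph of the $2$-Rips graph.

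For the vertical transitions, I would exhibit, for each $\ell\in\{n,\ldots,m-1\}$, a $2$-Rips edge between some word of suffix length $\ell$ and some word of suffix length $\ell+1$ on the horosphere. By Lemma \ref{RipsEdgesDifferentSuffixLength}, such an edge exists once there is a geodesic suffix $s$ of length $\ell+1$ with a final letter $a_l\in M_{Geo}(s)$ such that the parity-determined auxiliary letter $a_k\in\{a_i,a_j\}$ commutes with every letter of $s$ other than $a_l$. I would construct $s$ by taking its first $\ell$ letters to form a geodesic word in $Link(a_k)\setminus\{a_i,a_j\}$, with $a_l$ any last letter. Since $s$ then contains neither $a_i$ nor $a_j$ it is automatically a suffix, and every letter other than $a_l$ lies in $Link(a_k)$, satisfying the commutation condition. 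Concatenating horizontal paths within levels $n,n+1,\ldots,m$ with these vertical edges yields a $2$-Rips path from $w$ to $v$ whose vertices all have suffix length in $[n,m]$, as required.

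The main technical obstacle is the vertical construction at arbitrary lengths: one must verify that $Link(a_k)\setminus\{a_i,a_j\}$ supports geodesic words of every length $\ell+1$, which reduces to showing that this induced subgraph contains a non-commuting pair. When the relevant link is a clique, the construction above must be replaced by a substitute argument, and it is there that the full force of the added hypothesis, combined with the standing assumptions that $\Gamma$ is not complete and has no separating cliques, is used to rule out degeneracy and finish the proof.
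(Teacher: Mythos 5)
Your proposal follows the paper's proof exactly: horizontal connectivity at each fixed suffix length via Lemma \ref{RipsGraphPartialConnectivity} (whose hypothesis you verify in more detail than the paper does), spliced together with single vertical edges supplied by Lemma \ref{RipsEdgesDifferentSuffixLength}. The one loose end you flag is not actually an obstacle: since $a_i$ and $a_j$ are non-adjacent we have $Link(a_k)\setminus\{a_i,a_j\}=Link(a_k)$, and if $Link(a_k)$ were a clique it would either be a separating clique (cutting $a_k$ off from $\Gamma\setminus Star(a_k)$) or force $\Gamma$ to be complete, both of which violate the standing assumptions; hence $Link(a_k)$ always contains a non-commuting pair and your vertical construction works at every length.
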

	
	\begin{proof}
		
		By assumption, $\Gamma\setminus M_{GeoSuff}(w)$ is connected for each geodesic suffix $w$. Therefore, we will again use Lemma \ref{RipsGraphPartialConnectivity}.
		
		For the words of suffix length $m$, the empty string satisfies the assumptions of Lemma \ref{RipsGraphPartialConnectivity}, so they are all connected. WLOG suppose $m<n$. Then $n-m$ times we use Lemma \ref{RipsGraphPartialConnectivity} to reach words satisfying the assumptions of Lemma \ref{RipsEdgesDifferentSuffixLength}, followed by one additional time to reach the word $a_{i_1}...a_{i_n}$.
	\end{proof}
	
	\subsection{Distortion of the Rips Graph}
	\label{subsec:RipsDistortion}

	In this subsection, we will give an upper and lower bound on the exponential distortion of geodesic paths in the Rips graph with respect to geodesic paths in the Cayley Graph.

	\begin{theorem}\label{RipsGraphDistortion}
		
		Let $\Gamma$ be a graph satisfying the standing assumptions, and let $a_i$ and $a_j$ be non-adjacent vertices. There are constants $C_i$ for $i=1, 2, 3, 4$ depending only on $\Gamma$ so that the following holds. Let $d_H$ denote the combinatorial distance in the $2$-Rips graph on $b_\gamma^{-1}(k)$, and $d$ denote the distance in the Cayley graph. Then for any $w$, and $v$ on the $k$-horosphere, $$C_1 C_2^{d(w,v)}\le d_H(w,v)\le C_3 C_4^{d(w,v)}.$$
		
	\end{theorem}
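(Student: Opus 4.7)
The strategy is to prove the two inequalities independently. The upper bound falls out of the bi-Lipschitz comparison already in hand, while the lower bound reduces to the standard fact that $\delta$-hyperbolic graphs have exponential divergence.

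\emph{Upper bound.} If $d(w, v) = D$, then $w$ and $v$ are adjacent in the $D$-Rips graph on $b_\gamma^{-1}(k)$. Proposition \ref{RipsGraphBiLipschitzEquivalence} then places them at distance at most $L_D$ in the $2$-Rips graph, where $L_D$ is the Lipschitz constant produced in Cases I--III of that proof. Inspection of those cases (most notably Case II) shows that $L_D$ is dominated by an expression of the form $C \cdot (D+1)^{D/2}$, which in turn is dominated by $C_3 C_4^D$ for appropriate constants depending only on $\Gamma$.

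\emph{Lower bound.} Let $D = d(w, v)$ and let $\alpha$ be any $2$-Rips path from $w$ to $v$ of combinatorial length $N$. Each edge of $\alpha$ represents a pair of horosphere points at Cayley distance at most $2$, so filling in Cayley geodesics between consecutive vertices yields a continuous path $\tilde\alpha$ in the Cayley graph of length at most $2N$ that stays in the $1$-neighborhood of $H_k := b_\gamma^{-1}(k)$. Let $m$ be a midpoint of a Cayley geodesic $[w, v]$. By considering the thin triangle on $w$, $v$, $\gamma(t)$ for $t$ large and using $b_\gamma(w) = b_\gamma(v) = k$, a Gromov-product computation gives $b_\gamma(m) \le k - D/2 + O(\delta)$, whence $d(m, H_k) \ge D/2 - O(\delta)$. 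Consequently, once $D$ exceeds a constant depending only on $\Gamma$, the path $\tilde\alpha$ is forced to avoid the ball $B(m, D/4)$.

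To conclude, invoke the exponential divergence theorem for $\delta$-hyperbolic graphs (see, e.g., \cite{BridsonHaefliger} Chapter III.H): any path from $w$ to $v$ that avoids the ball of radius $r$ about the midpoint of a Cayley geodesic $[w,v]$ has length at least $A \cdot B^r$ for constants $A, B > 1$ depending only on $\delta$. Plugging in $r = D/4$ yields $2N \ge A \cdot B^{D/4}$, which is of the required form $C_1 C_2^D$. Small values of $D$ are absorbed into the multiplicative constants, and the case $w = v$ is trivial; Theorem \ref{RipsGraphConnectivity} ensures $d_H$ is always finite, so that the statement is meaningful. The main technical obstacle is making the Busemann estimate $b_\gamma(m) \le k - D/2 + O(\delta)$ rigorous: although it is geometrically transparent from the picture of the thin triangle degenerating to an ideal triangle as $t \to \infty$, pinning down the $O(\delta)$ error requires careful bookkeeping in the Gromov-product computation and in the limit defining $b_\gamma$, and it is essentially the only step where $\delta$-hyperbolicity enters in a nontrivial way.
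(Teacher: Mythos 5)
Your overall architecture matches the paper's (explicit exponential upper bound from path constructions; lower bound from exponential divergence applied to a point deep on $[w,v]$ that every Rips path must avoid), but both halves have a gap. For the upper bound, applying Proposition \ref{RipsGraphBiLipschitzEquivalence} with $k=d(w,v)$ requires knowing how the constant $L_k$ grows with $k$, and the claim that ``inspection'' gives $L_k\le C(D+1)^{k/2}$ is not true of that proof as written: in each of Cases I--III, the words whose suffixes are shorter than roughly $clique(\Gamma)+\frac{k}{2}$ are disposed of by observing that there are only finitely many of them \emph{for fixed $k$}, which yields a finite bound with no explicit dependence on $k$. Since that exceptional set grows with $k$, those cases must be re-proved with explicit estimates, which is essentially what the paper does separately in Proposition \ref{HorosphericalDistortionUpperBound}: a longer case analysis (splitting on whether $w$ and $v$ determine the same component $\Gamma_w=\Gamma_v$ and the same clique $K_w=K_v$) that builds the paths directly and counts their lengths via Corollary \ref{RipsSubgraphDiameterBound}.

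For the lower bound, the step you yourself flag as the main obstacle --- that the midpoint $m$ of $[w,v]$ satisfies $b_\gamma(m)\le k-D/2+O(\delta)$ --- is left unproved, and the Gromov-product route is more work than the setting requires. In a RACG the geodesic from $w$ to $v$ may be taken to spell $w_{suff}^{-1}w_{pref}^{-1}v_{pref}v_{suff}$, and Lemma \ref{CalculatingBusemannFunctions} shows that along this word the Busemann function first decreases monotonically and then increases, so the geodesic contains a point with $b_\gamma$ equal to exactly $k-\frac{d(w,v)}{2}$; no $\delta$-error and no limit bookkeeping enter. This is how Corollary \ref{HorosphericalDistortionLowerBound} proceeds, after which your application of the Bridson--Haefliger divergence estimate (and the $1$-Lipschitz property of $b_\gamma$, which converts the Busemann gap into a distance from the Rips path) goes through. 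As submitted, the lower bound is incomplete at precisely the point you identify, and the fix is to replace the analytic estimate by this exact combinatorial computation.
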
	
	
	The upper bound follows from the methods of the previous section.
	
	\begin{proposition} \label{HorosphericalDistortionUpperBound}
		
		Let $w$ and $v$ be shortlex words with $b_\gamma(w)=b_\gamma(v) = k$. Denote $d$ the distance in the Cayley graph and $d_H$ the path distance in the $2$-Rips graph on the horosphere $b_\gamma^{-1}(k)$. Then $d_H(w,v)\le O_\Gamma(1)(D+1)^{d(w,v)}+d(w,v)+O_\Gamma(1)$.

	\end{proposition}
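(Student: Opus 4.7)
The plan is to follow the three-case analysis used in the proof of Proposition \ref{RipsGraphBiLipschitzEquivalence}, but with the parameter $k$ there specialized to $r := d(w,v)$. Writing $u = w^{-1}v$, a word of length at most $r$, and examining how its letters can cancel against letters of $w$ controls how much of the shortlex suffix of $v$ must agree with that of $w$. The cases are indexed by $|v_{suff}| - |w_{suff}| \in \{0, \pm 1, \ge 2\}$, and in each we try to match up a common initial portion $\alpha$ of the two suffixes so that $w_{suff} =_{Geo} \alpha\beta$ and $v_{suff} =_{Geo} \alpha\gamma$ with $|\beta| + |\gamma| \le r$.

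In the ``main'' subcases where $|\alpha| \ge \text{Clique}(\Gamma)$, the set $M_{GeoSuff}(\alpha)$ is a clique, so Corollary \ref{RipsSubgraphDiameterBound} bounds the diameter of the set of horosphere points whose geodesic suffix begins with $\alpha$ and has a prescribed length by $(D+1)^{O(r)}$. When $|w_{suff}| = |v_{suff}|$ this diameter bound is applied directly, yielding $d_H(w,v) \le (D+1)^{r/2 - 1}$. When the suffix lengths differ by one, we combine two uses of Corollary \ref{RipsSubgraphDiameterBound} with a single edge furnished by Lemma \ref{RipsEdgesDifferentSuffixLength} to bridge the two suffix-length strata, producing a bound of the same order. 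These ``main'' bounds are comfortably dominated by $O_{\Gamma}(1)(D+1)^{d(w,v)}$.

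The main obstacle will be the exception subcases: $|\alpha| < \text{Clique}(\Gamma)$ in Cases I and II, together with the entirety of Case III. In all of these, the pigeonhole argument from Proposition \ref{RipsGraphBiLipschitzEquivalence} shows that $|w_{suff}|$ and $|v_{suff}|$ are both bounded above by $r/2 + \text{Clique}(\Gamma)$, so both $w$ and $v$ lie in the finite set $S_r$ of horosphere points of suffix length at most this threshold. To control $d_H(w,v)$ in this regime, I plan to iterate Lemma \ref{RipsEdgesDifferentSuffixLength} and Lemma \ref{RipsEdgesSameSuffixLength} to peel off suffix letters from both $w$ and $v$ and route the path through the empty-suffix base point furnished by Proposition \ref{RipsGraphOriginConnectivity}. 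The length of the resulting detour is at most the cardinality of $S_r$, which is exponential in $r$; after possibly enlarging the $O_\Gamma(1)$ constant this is absorbed into the $O_\Gamma(1)(D+1)^{d(w,v)}$ term. The additive $d(w,v)$ term accounts for the $O(r)$ suffix-length-changing edges needed when the two prefixes differ substantially (as in Case III), and the final $O_\Gamma(1)$ swallows any bounded contribution from small values of $r$.
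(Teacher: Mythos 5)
Your overall architecture does work, and it is organized genuinely differently from the paper's proof: the paper splits according to whether $w$ and $v$ determine the same component $\Gamma_w=\Gamma_v$ of $\Gamma\setminus(\{a_i,a_j\}\cup K)$ and the same clique subset $K_w=K_v$ (mirroring which complementary component of $\gamma(\pm\infty)$ in $\partial W_\Gamma$ each point ``lies in''), whereas you split according to whether the common cancelling part $\alpha$ has length at least $clique(\Gamma)$. Your main subcases are the paper's Case II (stay inside the stratum of suffixes beginning with $\alpha$ and apply Corollary \ref{RipsSubgraphDiameterBound}), and your exceptional subcases are the paper's Case I together with its edge cases (detour through the empty-suffix word), keyed off the same pigeonhole observation that otherwise $|w_{suff}|,|v_{suff}|\le d(w,v)/2+clique(\Gamma)$. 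So the two proofs assemble the same two path-building mechanisms, just indexed by different dichotomies; yours avoids the component bookkeeping because in your exceptional regime the suffixes are short enough that the full detour is always affordable.

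There is, however, one step that fails as written: you bound the length of the detour through the base point by $|S_r|$, the number of horosphere points of suffix length at most $r/2+clique(\Gamma)$, and assert that this is absorbed into $O_\Gamma(1)(D+1)^{d(w,v)}$ after enlarging the constant. By Coornaert's theorem (as used in Corollary \ref{RipsGraphGrowth}), $|S_r|$ grows like $\lambda^{r/2}$ where $\lambda$ is the growth rate of $W_\Gamma$, and nothing forces $\lambda\le (D+1)^2$: for square-free defining graphs of small diameter with many vertices, $D$ stays bounded while $\lambda$ is comparable to the number of vertices, so $\lambda^{r/2}$ is not $O_\Gamma(1)(D+1)^{r}$. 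An exponential with the wrong base cannot be repaired by a multiplicative constant. The fix is to count edges of the detour rather than vertices of the ambient set: descending from suffix length $s$ to $s-1$ costs at most $(D+1)^{s-1}+1$ edges by Corollary \ref{RipsSubgraphDiameterBound} together with Lemma \ref{RipsEdgesDifferentSuffixLength}, so peeling both $w$ and $v$ down to the clique-only words and converting at the bottom costs at most $\frac{2}{D}(D+1)^{r/2+clique(\Gamma)}+O(r)+O_\Gamma(1)$, which is $O_\Gamma(1)(D+1)^{d(w,v)}+d(w,v)+O_\Gamma(1)$ as required. With that substitution your argument goes through.
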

	
	Of course, the addition of the linear term $d(w,v)$ makes no difference, since both $D+1$ and the distance between points is at least $1$. So we could delete this term, but will keep it for convenience. In fact we could also subsume the constant term into the exponential if we desired.
	
	The proof of this proposition is somewhat lengthy and involves counting the steps in the proofs of Propositions \ref{RipsGraphOriginConnectivity} and \ref{RipsGraphNoCutPtConnectivity}. However, when exactly we can use which argument is somewhat subtle, and requires breaking into cases that will likely be hard to follow for the reader unfamiliar with the JSJ theory of RACGs. Therefore, some explanation is in order. We will not prove the content of this remark, since it is not used in the sequel and is meant only to aid the intuition.

    \begin{remark}
		
		In the boundary $\partial W_\Gamma$, the pair $\{\gamma(\pm\infty)\}$ is a cut pair exactly if it is the boundary of a 2-ended group $H$ so that $W_\Gamma \cong G_1*_HG_2$ for groups $G_1$ and $G_2$ properly containing $H$. $H$ can be taken to be the RACG generated by some subgraph of $\Gamma$, necessarily containing $a_i$ and $a_j$, as well as some subclique $K'$ of the clique $K$ of vertices adjacent to $a_i$ and $a_j$, such that the graph $\Gamma\setminus( \{a_i, a_j\}\cup K')$ is disconnected.
		
		The group $H_{max}=\langle \{a_i, a_j\}\cup K\rangle$ is the maximal 2-ended subgroup whose endpoints are $\{\gamma(\pm\infty)\}$, and thus the stabilizer of $\{\gamma(\pm\infty)\}$ in $W_\Gamma$. If $\Gamma_1$ is a path component of \newline $\Gamma\setminus (\{a_i, a_j\}\cup K)$, and $K_1$ is the clique of vertices in $K$ adjacent to vertices in $\Gamma_1$, then $\Gamma_1$ is also a path component of $\Gamma\setminus (\{a_i, a_j\}\cup K_1)$. By Lemma \ref{RipsGraphTraversability}, $\Gamma_1$ contains vertices adjacent to $a_i$ and $a_j$.  Denote $H=\langle \{a_i, a_j\}\cup K_1\rangle$, and $G_1=\langle \Gamma_1\cup K_1 \cup \{a_i, a_j\}\rangle$. The connectivity assumptions guarantee that $G_1$ is a 1-ended group in which $\gamma(\pm \infty)$ is not a cut pair. The vertices in $K\setminus K'$ generate a finite subgroup of $H_{max}$. In the amalgamation $H_{max}*_H G_1$, left multiplying by a product of elements in $K\setminus K_1$ fixes $\gamma(\pm\infty)$ (since these elements are in $H_{max}$, a 2-ended subgroup with $\gamma(\pm\infty)$ as its boundary), but sends $G_1$ to a coset that is not at finite distance. As a result, $\partial(H_{max}*_H G_1)$ has $2^{|K\setminus K_1|}$ copies of $\partial G_1$ glued along $\gamma(\pm\infty)$. Moreover, $\partial(H_{max}*_H G_1)$ embeds into $\partial(W_\Gamma)$. Applying the same logic to each path component $\Gamma_l$ of $\Gamma\setminus (\{a_i, a_j\}\cup K)$ shows that $\gamma(\pm\infty)$ disconnect the boundary into $\sum _l 2^{|K\setminus K_l|}$ path components. These components of $\partial W_\Gamma\setminus \gamma(\pm\infty)$ are therefore parameterized by a subgraph $\Gamma_l$ and a subset of $K_l$.	
    \end{remark}

    The point of the following proof is that Rips graph on a horosphere shows a coarse version of these same properties. The vertex $w_0$ with empty suffix is a member of the subgroup $\langle a_i, a_j\rangle$, and the collection $S$ of vertices with suffixes spelled in the letters of $K$ make up a ``coarse cut point"  in the horosphere (i.e. a connected set of bounded diameter disconnecting the horosphere into unbounded components) if and only if $\gamma(\pm\infty)$ are a cut pair in $\partial W_\Gamma$. There are again $\sum_l 2^{|K\setminus K_l|}$ different complementary components to the coarse cut point. We will show that each suffix outside the coarse cut point $S$ determines a subgraph $\Gamma_l$ and a subset of $K\setminus K_l$. Two points on the horosphere are connected by a path that does not go near $w_0$ exactly when their suffixes determine the same subgraph $\Gamma_l$ and subset of $K\setminus K_l$. Such pairs of points will admit a path like the one described in the proof of Proposition \ref{RipsGraphNoCutPtConnectivity}, while any other pair essentially requires a concatenation of two of the paths described in the proof of Proposition \ref{RipsGraphOriginConnectivity}.

    Crucially to this proof, two points on a horosphere that are close in the Cayley graph metric will necessarily admit one of the shorter paths described in Proposition \ref{RipsGraphNoCutPtConnectivity}. This will allow for the desired upper bound; it will not be possible for two points nearby in the Cayley graph and far from the word $w_0$ to be connected only by a long path that goes near $w_0$.
	
	\begin{proof} [Proof of Proposition \ref{HorosphericalDistortionUpperBound}]
		
		In this proof, all paths will go from $w$ to $v$. The words $w_1, w_2, ...$ represent specific intermediate words that the path goes through.
		
		Take $w_{suff} =_{Geo} a_{i_1} ... a_{i_k}$ and $v_{suff} =_{Geo} a_{j_1} ... a_{j_l}$. Write these letters in order such that $a_{i_1} ... a_{i_m}$ are adjacent to both $a_i$ and $a_j$ (hence all commute with one another) and no other letter commuting with both can be rearranged to the $m+1^{st}$ position of $w_{suff}$, and the same for $v$ and the letters $a_{j_1}, ... a_{j_n}$.
		
		Before getting to the main cases described in the remark above, two edge cases must be handled.
		
		If both $m=k$ and $n=l$, then $w$ and $v$ are at most distance $2clique(\Gamma)$ apart in the Cayley graph, and are connected by a path of length no more than $clique(\Gamma)$ edges long. So $d_H(w, v)\le O(1)$ and the result is immediate in this case.
		
		If $m\ne k$ but $n=l$, or vice versa, then we can essentially use a single path as described in Proposition \ref{RipsGraphOriginConnectivity}. We will make repeated use of Lemma \ref{RipsGraphTraversability} and Corollary \ref{RipsSubgraphDiameterBound}. It takes no more than $(D+1)^{k-m-2}$ steps to reach a any word beginning $a_{i_1} ... a_{i_n}a_{i_{n+1}}a'$. Taking $a'$ to commute with $a_{i_{n+1}}$, it follows that we can change the $m+1^{st}$ entry to an adjacent vertex in $(D+1)^{k-m-2}$ steps. We need to reach a vertex commuting with the last prefix letter of $v$, so that a prefix letter will commute to the end of the word and thus we will find a Rips graph edge to a shorter word. The path from $a_{i_{m+1}}$ to such an edge is a path in $\Gamma\setminus  \{a_i, a_j\}$, and therefore has length at most $D$. After one more application of Corollary \ref{RipsSubgraphDiameterBound}, we are able to delete a letter from the suffix in $(D+1)^{k-m-1}+1$ steps. Call the resulting word $w_1$.
		
		Deleting the next letter to reach $w_2$ therefore takes $(D+1)^{k-m-2}+1$ steps, and so on until we have deleted all the letters after $a_{i_m}$. In total, this takes $$k-m+\sum_{p=0}^{k-m-2} (D+1)^{p+1} \le k-m+\frac{1}{D}(D+1)^{k-m}\le d(w,v)+O_\Gamma(1)(D+1)^{d(w,v)}$$ steps to get to this word $w_{j-m}$, since $d(w,v)\ge k-m$. Finally, it takes at most \newline $clique(\Gamma) = O_\Gamma(1)$ steps to change $a_{i_1}, ... a_{i_m}$ into $a_{j_1}, ... a_{j_n}$. 
		
		We come now to the main cases, where both $w$ and $v$ contain letters outside of the clique $K$. To the word $w$ we associate the subgraph $\Gamma_w$ of $\Gamma$, which is the path component of $\Gamma\setminus(\{a_i, a_j\}\cup K)$ containing $a_{i_{m+1}}$. Similarly $\Gamma_v$ is the path component containing $a_{j_{n+1}}$. Take $K_w$ to be the letters in $a_{i_1} ... a_{i_m}$ that are not adjacent to $\Gamma_w$, and $K_v$ to be the letters in $a_{j_1}, ... a_{j_n}$ not adjacent to $\Gamma_v$.
		
		\textbf{Case I}: Either $\Gamma_w\ne \Gamma_v$ or $K_w\ne K_v$.
		
		In this case we will assume WLOG that $k-m\ge l-n$.
		
		In the first case, $a_{i_{m+1}}$ and $a_{j_{n+1}}$ do not commute, and the only letters in $w_{suff}$ and $v_{suff}$ that could commute past them to cancel with one another necessarily belong to $\{a_i, a_j\}\cup K$. There are none of these besides $a_{i_1}, ... a_{i_m}$ and $a_{j_1}, ... a_{j_n}$ by assumption. In the second case, similarly, a letter of $K_w\Delta K_v$ appears in the difference $w^{-1}v$ and prevents any other letters outside of $a_{i_1}, ... a_{i_m}$ and $a_{j_1}, ... a_{j_n}$ from canceling. As a result, in either of these cases,
		
		\begin{align*}
		d(w,v)&=|w_{suff}^{-1}w_{pref}^{-1}v_{pref}v_{suff}|\\
		&\ge|k-l|+|\{a_{i_1}, ... a_{i_m}\}\Delta \{a_{j_1}, ... a_{k_n}\}|+k-m+l-n\\
		&\ge |k-l|+|m-n| +k-m+l-n.
		\end{align*}
		
		We proceed as in the previous case. First we will describe a path from $w$ to the word $w_{k-m}=a_{i_1},... a_{i_m}$. If $m=k$, then we can skip this first step. It takes no more than \newline $(D+1)^{k-m-1}$ steps to delete the first letter, then $(D+1)^{k-m-2}$, and so on. Thus, deleting $k-m$ letters takes at most $\frac{1}{D}(D+1)^{k-m}+k-m$ steps. It them takes no more than $clique(\Gamma)$ steps to change $a_{i_1}, ... a_{i_m}$ into $a_{j_1}, ... a_{j_n}$. Reversing the roles of $v$ and $w$, it takes at most $\frac{1}{D}(D+1)^{l-n}+l-n$ more steps to reach $v$.
		
		So 
		\begin{align}
			|k-l|+|m-n| +k-m+l-n&\le d(w,v)\\
			d_H(w,v)&\le \frac{1}{D}(D+1)^{k-m}+k-m+\frac{1}{D}(D+1)^{l-n}+l-n + clique(\Gamma)\\
			&\le \frac{2}{D}(D+1)^{k-m}+d(w,v)+ clique(\Gamma),
		\end{align}
		
		where we used the facts that $l-m\ge j-n$ and $d(w,v)\ge k-m+l-n$ in the equation (3). It follows that $$d_H(w,v)\le O_\Gamma(1)(D+1)^{d(w,v)}+d(w,v)+O_\Gamma(1).$$
		
		\textbf{Case II}: $\Gamma_w=\Gamma_v$ and $K_w=K_v$
		
		In this case, we suppose WLOG that $k\ge l$, and also that the letters of $K_w=K_v$ are rearranged to be the first $|K_w|$ letters $a_{i_1}, ... a_{i_{|K_w|}}$ and $a_{j_1}, ... a_{j_{|K_v|}}$ of $w_{suff}$ and $v_{suff}$.
		
		Here the distance $d(w,v) \ge |k-l|+m-n+|k-m-(l-n)|$. However, in order for letters past $a_{i_m}$ to cancel with letters past $a_{j_n}$, they must commute with each prefix letter that does not cancel in $w_{pref}^{-1}v_{pref}$. In particular, either $k-l\le 1$, or $d(w,v) \ge k-l |m-n|+k-m+l-n$ (i.e. cancellation is only possible between prefix letters and between the $a_{i_1}, ... a_{i_m}$ and the $a_{j_1}, ... a_{j_n}$).
		
		\textbf{Case II(a)}: The only letters that cancel between $w_{suff}$ and $v_{suff}$ lie in the clique $K$.
		
		Note that, in particular, this happens when $k-l\ge 2$.
		
		Here again the first step is to change the length of suffix of $w$. As before, it takes at most \newline $(D+1)^{k-m-1}+1$ steps to reduce the length of the suffix of $w$ by $1$ and reach $w_1$, and then $(D+1)^{k-m-2}$ to reduce the length the second time to reach $w_2$, and so on. We therefore require at most $\frac{1}{D}(D+1)^{k-m}+k-l$ steps to go from $w$ to $w_{k-l}$, which has the desired suffix length.
		
		The next step is to change $a_{i_1}, ... a_{i_m}$ into $a_{j_1}, ... a_{j_n}$. Each $a_{i_*}$ that is not an $a_{j_*}$ is necessarily adjacent to vertices in $\Gamma_w=\Gamma_v$, and vice versa. Therefore, for $a_{i_m}$ through $a_{i_{|K_w|+1}}$, we repeatedly move to a vertex where the $m+1^{st}$ letter is either $a_{j_{|K_v|+1}}$ if $|K_v|+1\le n$, and then commute this letter to position $|K_v|+1$, to reach the word $w_{m-n+1}$. This takes $D(D+1)^{l-m-2}$ steps, and we do it $n-|K_v|$ times. If $n>m+1$, then eventually we move to a vertex where the $m+2^{nd}$ letter is $a_{j_{m+2}}$, then where the $m+3^{rd}$ letter is $a_{j_{m+3}}$ and so on. Each of these take less than $D(D+1)^{l-m-2}$ steps. So in at most $(n-|K_v|)D(D+1)^{l-m-2}$ steps, we reach a word $w_{k-l+n-|K_v|}$ which begins $a_{j_1}, ... a_{j_n}$.
		
		If $m>n$, it now takes $D(D+1)^{l-m-2}$ steps to replace $a_{i_{n+1}}$ with a letter in $\Gamma_w$ and reach the word $w_{k-l+n-|K_v|+1}$, followed by $D(D+1)^{l-m-1}$ to replace $a_{i_{n+2}}$ steps to replace $a_{i_{n+2}}$ with a letter in $\Gamma_w$, and so on $m-n$ times. All told, this takes at most $(D+1)^{l-n}$ steps. If $m\le n$ then we can skip this step.
		
		Finally, it takes $(D+1)^{l-n}$ more steps to reach $v$ as before.
		
		In total, it takes $\frac{1}{D}(D+1)^{k-m}+k-l$ steps to go from $w$ to $w_{k-l}$, followed by \newline $(n-|K_v|)(D)(D+1)^{l-m-2}$ to reach $w_{k-l+n-|K_v|}$, $(D+1)^{l-n}$ to reach $w_{k-l+n-|K_v|+1+m-n}$ (if applicable), and $(D+1)^{l-n}$ steps to reach $v$.
		
		Using the facts that $k\ge l$ and $|m-n|\le clique(\Gamma)$, we see that
		
		\begin{align*}
			d_H(w,v) & \le \frac{1}{D}(D+1)^{k-m} + k-l + (n-|K_v|)(D)(D+1)^{l-m-2} + (D+1)^{l-n} + (D+1)^{l-n-2}\\
			&\le \frac{1}{D}(D+1)^{k-m} + k-l + (clique(\Gamma))(D)(D+1)^{l-m-2} + (D+1)^{l-n} + (D+1)^{l-n-2}\\
			&\le k-1+ \frac{1}{D} \bigl[ (D+1)^{k-m}+clique(\Gamma)(D+1)^{l-m}+(D+1)^{l-n+1}+(D+1)^{l-n-1}\bigr]\\
			&\le k-1+ \frac{1}{D}\bigl[ (D+1)^{k-m}+clique(\Gamma)(D+1)^{k-m}+(D+1)^{k-n+1}+(D+1)^{k-n-1}\bigr]\\
			&\le k-1 + \frac{1+clique(\Gamma)+(D+1)^{clique(\Gamma)+1}+(D+1)^{clique(\Gamma)-1}}{D} (D+1)^{k-m}.
		\end{align*}
		
		Since $d(w,v)\ge k-l$ and also $d(w,v)\ge k-m$, it follows that $$d(w,v) \le O_\Gamma(1)(D+1)^{d(w,v)}+d(w,v)$$ and no constant term is needed in this case.
		
		\textbf{Case II(b)}: Letters outside of $K$ cancel.
		
		Note that this implies that $0\le k-l\le 1$. The thrust of the proof in this case is that any such cancellation shortens the path between $w$ and $v$. 
		
		Reorder $w_{suff}$ and $v_{suff}$ so that $w_{suff}=_{Geo}b_{i_1}, ... b_{i_m}, ... b_{i_k}$ and $v_{suff}=_{Geo}b_{j_1}, ... b_{j_m}, ... b_{j_l}$, where the letters $b_{i_1}, ... b_{i_m} = b_{j_1}, ... b_{j_m}$ are the suffix letters that cancel in the expression $w^{-1}v^{-1}=_{Geo}w_{suff}^{-1}w_{pref}^{-1}v_{pref}v_{suff}$ (by a slight abuse of notation, the value of $m$ may no longer be the same as in the previous cases). At least one of the $b_{i_1}, ... b_{i_m}$ is not in $K$ by assumption, so $M_{GeoSuff}(b_{i_1}, ... b_{i_m})$ is a clique. Also since each such letter cancels with the equivalent letter in $v$, if $w_{pref}\ne v_{pref}$, then each such letter commutes with the single letter in $w_{pref}^{-1}v_{pref}$. 
		
		Therefore, if $k-l=1$, then there is a word $w_1$ of length $k$ beginning $b_{i_1}, ... b_{i_m}$ adjacent to a word of length $k-1=l$ again beginning $b_{i_1}, ... b_{i_m}= b_{j_1}, ... b_{j_m}$. Therefore, applying Lemma \ref{RipsGraphPartialConnectivity}, there is a path from $w$ to $w_1$ of length at most $(D+1)^{k-m-1}$, 1 more edge is required to shorten the suffix to the same length as $v_{suff}$. Finally, another application of Lemma \ref{RipsGraphPartialConnectivity} shows that it takes no more than $(D+1)^{l-m-1}$ edges to reach $v$. In total, this path has length at most $2(D+1)^{k-m-1}+1=O(1)(D+1)^{d(w,v)}+O(1)$ since $d(w,v)\ge k-m$.
		
		If instead $k=l$, the result is a direct application of Lemma \ref{RipsGraphPartialConnectivity}, with length at most $(D+1)^{k-m-1}\le (D+1)^{d(w,v)}$.		
	\end{proof}
	
	We next give the lower bound. Somewhat irritatingly, it involves the value of $\delta$, which is often hard to compute precisely.
	
	\begin{proposition}[\cite{BridsonHaefliger}, Proposition III.H.1.6]
		
		Let $X$ be a geodesic $\delta$-hyperbolic metric space, $w$ and $v$ two points connected by a path $\sigma$, and a geodesic $[w,v]$. If $x\in[w,v]$, then 
        $$d(x, im(\sigma))\le \delta |log_{2}(\ell(\sigma))|+1,$$
        where $\ell(\sigma)$ is the length of $\sigma$.
		
	\end{proposition}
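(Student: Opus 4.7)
The plan is a dyadic-subdivision argument exploiting the thin-triangles condition. First I would pick an arclength midpoint $m_1$ of $\sigma$, writing $\sigma$ as a concatenation of two subpaths of length at most $\ell(\sigma)/2$ each. Drawing geodesics $[w,m_1]$ and $[m_1,v]$ produces a geodesic triangle with $[w,v]$ as its third side; by $\delta$-hyperbolicity, the chosen point $x$ lies within $\delta$ of some $x_1$ on one of the two new sides, say $[w,m_1]$. Crucially, the endpoints of that new geodesic are still connected by a subpath of $\sigma$ (namely the first half) of length at most $\ell(\sigma)/2$, so we are in the same situation but with a shorter path.

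I would then iterate this step: after $k$ applications one has a chain $x=x_0,x_1,\ldots,x_k$ with $d(x_{i-1},x_i)\le\delta$, where $x_k$ lives on a geodesic whose two endpoints are joined by a subpath $\sigma_k\subset\sigma$ of length at most $\ell(\sigma)/2^k$. Setting $k=\lceil\log_2\ell(\sigma)\rceil$ (when $\ell(\sigma)\ge 1$), this final subpath has length at most $1$, so its endpoints lie within distance $1$ of each other and $x_k$ is within $1$ of one of them, which is itself a point of $\sigma$. The triangle inequality then yields
\[ d(x,\operatorname{im}(\sigma))\le\sum_{i=1}^{k}d(x_{i-1},x_i)+d(x_k,\operatorname{im}(\sigma))\le k\delta+1, \]
which gives the stated bound (up to absorbing a small additive constant into the $+1$). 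The regime $\ell(\sigma)<1$ is handled trivially: $x$ already lies within $\ell(\sigma)/2<1$ of either $w$ or $v$, both of which lie on $\sigma$, and the absolute value around $\log_2\ell(\sigma)$ in the statement accommodates this case harmlessly.

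The main obstacle is the bookkeeping at each recursive step: one must track which half of $\sigma_k$ becomes $\sigma_{k+1}$ according to whether the thin-triangles condition at the $(k+1)$-st stage places $x_{k+1}$ on the left or the right side of the newly subdivided triangle. Provided one always retains the subpath whose endpoints match those of the geodesic containing $x_{k+1}$, the induction hypothesis is preserved and the argument goes through cleanly. The logarithmic factor arises entirely from halving the length at each step, and the role of $\delta$ is confined to the single application of thin triangles at each level of the recursion.
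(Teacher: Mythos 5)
This proposition is quoted directly from Bridson--Haefliger (III.H.1.6) and the paper supplies no proof of its own, so there is nothing internal to compare against; your dyadic-subdivision argument is precisely the standard proof given in that reference, and it is correct. One small point: the excess you propose to ``absorb into the $+1$'' is an additive $\delta$ rather than an absolute constant (since $k=\lceil\log_2\ell(\sigma)\rceil$ can exceed $\log_2\ell(\sigma)$ by nearly $1$), but this is repaired by noting that $x_k$ lies within \emph{half} the length of its sub-geodesic from an endpoint, so that $k=\lceil\log_2\ell(\sigma)\rceil-1$ subdivisions already leave $x_k$ within distance $1$ of $\mathrm{im}(\sigma)$ and the stated constant comes out exactly.
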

	
	Paraphrasing, if there is a geodesic between $w$ and $v$ with a point $x$ far from the path $\sigma$, then $\sigma$ must be very long.
	
	\begin{corollary} \label{HorosphericalDistortionLowerBound}
		
		Let $w$ and $v$ be on the horosphere $b_{\gamma}^{-1}(k)$ and let $d_H$ denote the distance in the $2$-Rips graph on the horosphere. Then $d_H(w, v)\ge 2^{\frac{d(w, v)-4-2\delta}{2\delta}}$.
		
	\end{corollary}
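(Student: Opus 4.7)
The plan is to apply the cited Bridson--Haefliger proposition to a Cayley-graph geodesic $[w,v]$ and to a Rips-graph path $\sigma$ realized inside the Cayley graph. The key geometric input is that the midpoint of a Cayley geodesic joining two horosphere points must descend deep into the horoball, forcing $\sigma$ to bulge far away from it, which in turn forces $\sigma$ to be very long.

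First I would lift a Rips-graph path $\sigma$ from $w$ to $v$ of combinatorial length $N = d_H(w,v)$ to a Cayley-graph path $\tilde\sigma$ of length at most $2N$, by replacing each Rips edge (a pair at Cayley distance at most $2$) with a Cayley geodesic between its endpoints. The image of $\tilde\sigma$ then stays within Cayley distance $1$ of the horosphere $H = b_\gamma^{-1}(k)$, since every point on $\tilde\sigma$ is at distance at most $1$ from a Rips-graph vertex lying on $H$.

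Next I would show that a midpoint $m$ of any Cayley geodesic $[w,v]$ satisfies $b_\gamma(m) \le k - d(w,v)/2 + 2\delta$. Apply the $\delta$-thin triangle condition to the geodesic triangle with vertices $w$, $v$, and $\gamma(t)$ for large $t$: there is a point $m'$ on $[w,\gamma(t)] \cup [v,\gamma(t)]$ with $d(m,m') \le \delta$, WLOG $m' \in [w,\gamma(t)]$. Then $d(w,m') \ge d(w,v)/2 - \delta$, and since $m'$ sits on a geodesic from $w$ to $\gamma(t)$,
$$d(m',\gamma(t)) = d(w,\gamma(t)) - d(w,m') \le (t + k + o(1)) - \bigl(d(w,v)/2 - \delta\bigr).$$
Hence $d(m,\gamma(t)) - t \le k - d(w,v)/2 + 2\delta + o(1)$, and taking $t \to \infty$ gives the claimed bound on $b_\gamma(m)$. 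Because $b_\gamma$ is $1$-Lipschitz, $m$ is at Cayley distance at least $d(w,v)/2 - 2\delta$ from $H$, and therefore at distance at least $d(w,v)/2 - 2\delta - 1$ from the image of $\tilde\sigma$.

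Finally, the preceding Bridson--Haefliger proposition applied to $[w,v]$ and $\tilde\sigma$ gives
$$d(m, \mathrm{im}(\tilde\sigma)) \le \delta\,\log_2 \ell(\tilde\sigma) + 1 \le \delta\,\log_2(2N) + 1.$$
Chaining this with the lower bound $d(m,\mathrm{im}(\tilde\sigma)) \ge d(w,v)/2 - 2\delta - 1$ and solving for $N$ produces an exponential lower bound of the form $N \ge 2^{(d(w,v) - 4 - 2\delta)/(2\delta)}$, up to bookkeeping of the additive constants. The only mildly delicate step is the thin-triangle computation of $b_\gamma(m)$; the rest is a direct application of the cited divergence inequality combined with the obvious factor-$2$ inflation when turning a $2$-Rips path into a Cayley path.
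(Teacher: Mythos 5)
Your architecture matches the paper's: realize a $2$-Rips path as a Cayley path of at most twice the length, observe that this path hugs the horosphere, find a point on the geodesic $[w,v]$ that is roughly $d(w,v)/2$ deep inside the horoball, and invoke the Bridson--Haefliger divergence estimate. The one place you genuinely diverge is in locating that deep point: you use a $\delta$-thin triangle argument on the ideal triangle $w,v,\gamma(t)$ to show the midpoint satisfies $b_\gamma(m)\le k-d(w,v)/2+2\delta$, whereas the paper uses the explicit RACG normal form $w^{-1}v=_{Geo}w_{suff}^{-1}w_{pref}^{-1}v_{pref}v_{suff}$ (via Lemma \ref{CalculatingBusemannFunctions}) to exhibit a point on a specific geodesic with $b_\gamma(x)=k-d(w,v)/2$ \emph{exactly}, with no $\delta$-loss. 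Your version is more portable --- it works in any hyperbolic group without a normal form --- but it is quantitatively weaker: carrying your constants through gives $d(w,v)/2-2\delta-2\le\delta\log_2(2N)$, hence $N\ge 2^{(d(w,v)-4-6\delta)/(2\delta)}$, not the stated $2^{(d(w,v)-4-2\delta)/(2\delta)}$; the extra $2\delta$ of slack in the depth estimate costs $4\delta$ in the numerator, a multiplicative factor of $4$. So as written your argument proves an exponential lower bound of the same shape but does not recover the exact constant in the corollary; to get that constant you need the exact Busemann computation along the normal-form geodesic (which also shows the Rips path stays at Busemann value at least $k-1$, matching your ``within distance $1$ of $H$'' observation).
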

	
	\begin{proof}
		
		A geodesic from $w$ to $v$ is given by the expression $w^{-1}v =_{Geo} w_{suff}^{-1}w_{pref}^{-1}v_{pref}v_{suff}$. Let $m$ letters of $w_{suff}$ and $n$ letters of $v_{suff}$ survive cancellation. By Lemma \ref{CalculatingBusemannFunctions}, $w_{pref}^{-1}v_{pref}$ consists of $m-n$ letters, where a negative number denotes deletions that decrease the Busemann function, while a positive number denotes deletions that increase the Busemann function.
		
		This geodesic therefore has length $2\max\{m, n\}$. Deleting a prefix letter decreases the Busemann function value. So this path begins by decreasing the Busemann function value $m$ times, followed by either decreasing it $n-m$ more times if $n>m$ or else increasing it $m-n$ times if $m>n$, and concludes by increasing the Busemann function $n$ times. So the geodesic contains a point $x$ whose Busemann function value is $k-\frac{d(w, v)}{2}$.
		
		Now let $\sigma$ be any path between $w$ and $v$ in the 2-Rips graph on $b_{\gamma}^{-1}(k)$. We can realize $\sigma$ in the Cayley graph as a sequence of edge-pairs, each of which either first increases $b_\gamma$ and then decreases it, or first decreases $b_\gamma$ and then increases it. In particular, $\sigma$ follows a curve $\sigma'$ in the Cayley graph whose maximum Busemann function value is $k-1$. The geodesic $[w,v]$ therefore avoids $\sigma'$ by at least $\frac{d(w, v)}{2}-1$ since each Busemann function is 1-Lipschitz.
		
		Applying the proposition to $\sigma'$ shows that $\ell(\sigma')\ge 2^{\frac{d(w, v)-4}{2\delta}}$ as a path in the Cayley Graph. Then as an edge path, $\ell(\sigma)\ge 2^{\frac{d(w, v)-4-2\delta}{2\delta}}$.
	\end{proof}
	
	Notice that the rate of exponential distortion is slower the larger $\delta$ is. That is, the slowest distortion corresponds to the thickest triangle. This is to be expected. See \cite{HeinzeImHof} for another circumstance where the minimal exponential distortion rate of a horosphere corresponds to the least negative curvature present.
	
	This proves Theorem \ref{RipsGraphDistortion}. As a free corollary, we obtain polynomial growth of the $2$-Rips graph. Of course, since each $k$-Rips graph is bi-Lipschitz to the $2$-Rips graph, this proves it for all of them.
	
	\begin{corollary} \label{RipsGraphGrowth}
		
		There is a polynomial $P$ depending only on $\Gamma$ so that if $B(w_0, r)$ is a ball of radius $r$ about the point with empty suffix in the $d_H$ metric on the $2$-Rips graph on $b_\gamma^{-1}(k)$, then 
        $$ |B(w_0, r)|\le P(r).$$
		
	\end{corollary}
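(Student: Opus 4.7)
The plan is to combine the exponential lower bound on distortion (Corollary \ref{HorosphericalDistortionLowerBound}) with the elementary fact that the Cayley graph $Cay(W_\Gamma, V)$ has at most exponential volume growth. The lower bound states that $d_H(w,v)\ge 2^{(d(w,v)-4-2\delta)/(2\delta)}$, which can be inverted to yield
$$d(w,v)\le 2\delta\log_2(d_H(w,v))+4+2\delta.$$
Consequently, if $w\in B_{d_H}(w_0, r)$, then $w\in B_d(w_0, 2\delta\log_2(r)+4+2\delta)$. In other words, a $d_H$-ball of radius $r$ is contained in a Cayley-graph ball of radius $C_1\log(r)+C_2$ for constants $C_1,C_2$ depending only on $\Gamma$ (via $\delta$).

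The next step is to bound the size of this Cayley-graph ball. Since $W_\Gamma$ is generated by the finite set $V$, every element of $B_d(w_0, R)$ is represented by a word of length at most $R$ in the alphabet $V$, so $|B_d(w_0, R)|\le |V|^{R+1}$, which is exponential in $R$. Substituting $R=C_1\log_2(r)+C_2$ yields
$$|B_{d_H}(w_0, r)|\le |V|^{C_1\log_2(r)+C_2+1}=|V|^{C_2+1}\cdot r^{C_1\log_2(|V|)},$$
which is a polynomial in $r$ of degree $C_1\log_2(|V|)$. Taking $P(r)$ to be this polynomial finishes the proof.

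There is no real obstacle in this argument; every ingredient is already in hand. The one point to note is that the hypothesis of connectedness of the boundary (used in Theorem \ref{RipsGraphDistortion}) is needed only for the lower bound on distortion, and since we are directly invoking Corollary \ref{HorosphericalDistortionLowerBound}, the constants $C_1, C_2$ depend only on $\delta$ and thus only on $\Gamma$, as required. The polynomial degree is proportional to $\delta\log_2(|V|)$, consistent with the remark following Corollary \ref{HorosphericalDistortionLowerBound} that the distortion becomes weaker (and the polynomial worse) as the Cayley graph becomes less hyperbolic.
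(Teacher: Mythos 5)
Your argument is correct and is essentially the paper's proof: both combine the exponential lower bound on distortion from Corollary \ref{HorosphericalDistortionLowerBound} (showing a $d_H$-ball of radius $r$ sits inside a Cayley-graph ball of radius $O(\log r)$) with the exponential volume growth of $W_\Gamma$. The only cosmetic difference is that the paper counts points via Coornaert's growth estimate $\lambda^n$ for shortlex words and then patches between the special radii $2^{n-2/\delta-1}$ by monotonicity, whereas you use the cruder bound $|V|^{R+1}$ and invert the inequality pointwise, which is if anything slightly cleaner.
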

	
	Since all the Rips graphs are bi-Lipschitz, this will also hold for any other Rips graph with a possibly-different polynomial.
	
	\begin{proof}
		
		By a theorem of Coornaert, the number of shortlex words of length at most $n$ is asymptotic to $\lambda^n$, where $\lambda$ depends only on $\Gamma$ \cite{Coornaert}. Since shortlex suffixes are a subset of shortlex words, the number of shortlex suffixes is bounded above by $O_\Gamma(1)\lambda^n$. 
        
        The set of words whose suffix has length at most $n\delta$ is precisely the $2n\delta$ ball around $w_0$ in the Cayley graph, intersected with $b_\gamma^{-1}(k)$. By Corollary \ref{HorosphericalDistortionLowerBound}, this contains a $d_H$-ball of radius at least $2^{n-\frac{2}{\delta} -1}$. Therefore, $|B(w_0, 2^{n-\frac{2}{\delta}-1})|\le O_\Gamma(1)\lambda^{2n\delta}$. So if $P(x)$ is a polynomial of order at least $2\delta\log_2(\lambda)$, then so is $P(x+\frac{2}{\delta}+1)=P_1(x)$. In particular, for sufficiently large $n$, $$|B(w_0, 2^{n-\frac{2}{\delta}-1})|\le O_\Gamma(1)P_1(2^{n-\frac{2}{\delta}-1}).$$
		
		This shows the result for an unbounded set of radii. We must now modify $P_1$ to some polynomial $P_2$ so that the result holds for every radius. To do this, note that $|B(w_0, r)|$ is monotone in $r$. For any $r$, between $r$ and $2r$ there is a number of the form $2^{n-\frac{2}{\delta}-1}$ for some $n$. For this value of $n$, $$O_\Gamma(1)P_1(2r)\ge O_\Gamma(1)P_1(2^{n-\frac{2}{\delta}-1}) \ge |B(w_0, 2^{n-\frac{2}{\delta}-1})|\ge |B(w_0, r)|.$$ So the result holds for $P_2(x)=P_1(2x)$ for sufficiently large $r$. Then we can simply add a constant large enough to account for the small values of $r$.
		\end{proof}
	
	\section{The divergence graph on a horosphere}
	\label{sec:DivergenceGraph}
	\subsection{Basic definitions}
	\label{subsec:DivergenceDefinitions}
	We next define the other graph structure we wish to study for a horosphere, which is due to \cite{CGSR}. We present a specialization of their definition to the case of horospheres in RACGs.
	
	\begin{definition} \label{DivergenceGraphDef}
		
		Let $A$ be the adjacency matrix of the shortlex FSM. This matrix is not necessarily irreducible. That is, the graph need not be connected (allowing only directed paths). The graph can however be decomposed into connected subgraphs $\Gamma_i$ so that there are edges from $\Gamma_i$ to $\Gamma_j$ only if $i<j$. 
		
		Correspondingly, the adjacency matrix can be permuted into block upper triangular form, with $\lambda_i$ the Perron-Frobenius eigenvalue of the $i^{th}$ diagonal block $A_i$. We then divide the states into three classes as follows. \textit{Large states} are those in a subgraph $\Gamma_i$ whose adjacency matrix has $|\lambda_i|$ maximal. \textit{pre-large states} are those in a subgraph $\Gamma_i$ for which $|\lambda_i|$ is not maximal, but which is connected by an oriented path to a subgraph $\Gamma_j$ for which $|\lambda_j|$ is maximal. \textit{Small states} are states that are neither large nor pre-large.
		
	\end{definition}
	
	Since the Perron-Frobenius eigenvalues of the shortlex machine correspond to the exponential growth rate of the group $G$, one should think that large and pre-large states are those whose shortlex successors grow at rates comparable to the growth of the group as a whole.
	
	\begin{definition}
		
		For any geodesic ray $\gamma$, we define the state function $S$ by $$S(w)=\lim_{n\to\infty} M_{lex}( \gamma(n)^{-1} w).$$ One may a priori need to take a subsequential limit in order for this limit to exist. If so, choose a subsequence $n_i$ so that $M_{lex}( \gamma(n_i)^{-1} w)$ terminates for every $w$ (this can always be done by a standard diagonalization trick from analysis). We will address later on why this is unnecessary in our setting.
		
		The \textit{Divergence Graph} for the horosphere $H_\gamma(n)$ is the graph whose vertex sets consists of the points on $H_\gamma(n)$ whose $S$-state is either large or pre-large. Its edge set is described as follows.
		
		We define a function $P_0$ on shortlex words so that $P_0$ applied to the empty word returns the empty word, and given any nonempty shortlex word $w$, $P_0(w)$ is $wa_i$ where $a_i$ is the last letter of $w$. That is, $P_0$ deletes the last letter of shortlex words other than the identity. We will abuse notation slightly by applying $P_0$ to non-shortlex words. By writing this, we always mean that $P_0$ is to be applied to the shortlex representative of the word.
		
		With this notation, the \textit{horocyclic predecessor function} $P$ is defined as $$\lim_{n\to\infty} (a_ia_j)^nP_0\bigl( (a_ia_j)^{-n}w\bigr).$$ This stabilizes when $n$ is sufficiently large. We write the set $P^{-m}(w)$ to be the $m$-fold pre-image $P^{-1}(P^{-1}(...(w)...))$. 
		
		Then $w$ and $v$ are connected by an edge exactly when $b_\gamma(w)=b_\gamma(v)$ and the infimum distance between $P^{-m}(w)$ and $P^{-m}(v)$ is bounded above independent of $m$. If this last condition holds (for any pair of words, regardless of whether they are on the same horosphere), we will say the words \textit{have close horocyclic successors}. Equivalently, $P^{-m}(w)$ consists of the words $ww'$ where $w'$ may be written from the state $S(w)$. So two words have close horocyclic successors when their $S$ states allow for close successors.
		
	\end{definition}
	
	By \cite{CGSR} Lemma 7.4, the divergence graph is connected for $W_\Gamma$ where $\Gamma$ satisfies the standing assumptions.
	
	This definition presents three difficulties. First of all, it is defined only on some of the points in a horosphere. Second of all, the operation $P^{-m}$ may be hard to compute for a shortlex word $w$. Third of all, it would be convenient to consider the a priori more restrictive condition that $(w,v)$ is an edge when $b_\gamma(w)=b_\gamma(v)$ and there are geodesic rays $\eta_w$ and $\eta_v$ starting at $w$ and $v$ respectively so that $\eta_w$ can be read as an infinite edge sequence from the state $S(w)$ and the same for $\eta_v$, and so that $d(\eta_w(n),\eta_v(n))$ is bounded independent of $n$. (i.e. instead of having a sequence of pairs of segments stay close for all time, we would like to have a single pair of rays that stay close for all time). We will address these difficulties in Proposition \ref{EveryStateLarge}, Remark \ref{DescribingPredecessors}, and Proposition \ref{CloseSuccessorRays} respectively.
	
	For the first difficulty, the next Proposition provides sufficient conditions on the defining graph of a RACG for there to be no small states.
	
	\begin{proposition} \label{EveryStateLarge}
		
		Let $\Gamma$ be a graph satisfying the standing assumptions, and where every vertex $v$ has a vertex $v'$ at distance at least $3$ from $v$. Then in $M_{lex}$ is the starting vertex is the only pre-large vertex, and the remainder of the graph is connected.
		
	\end{proposition}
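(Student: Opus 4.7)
The plan is to establish two facts about $M_{lex}$: the empty starting state is a singleton strongly-connected component with Perron--Frobenius eigenvalue zero, and every other state lies in a single strongly-connected component which, since it absorbs all nontrivial shortlex words, must have Perron--Frobenius eigenvalue equal to the exponential growth rate of $W_\Gamma$. Together these force the starting state to be pre-large and every other state to be large.

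The first claim is immediate: from any state $X$ of $M_{lex}$, the transition labeled by a letter $a$ lands in a state containing $Star_{\le}(a)$, hence nonempty, so the empty state has no incoming transitions.

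The main work is the second claim. The key computational tool is a \emph{clearing lemma}: if $u,u' \in V$ satisfy $d_\Gamma(u,u') \ge 3$, then $Star(u) \cap Star(u') = \emptyset$, so writing $uu'$ from any state $X$ with $u \notin X$ lands precisely at $Star_{\le}(u')$, independent of $X$. Since $Star_{\le}(u') \subseteq Star(u')$, whenever $a \in V$ also satisfies $d_\Gamma(u',a) \ge 3$ one has $Star_{\le}(u') \cap Star(a) = \emptyset$, and a routine induction on length shows that reading any shortlex word beginning with $a$ from $Star_{\le}(u')$ traces exactly the same sequence of states as reading it from the empty set, hence ends at the same target.

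To apply this, I will fix nonempty states $S$ and $T$, fix a shortlex word $w_T = a_{i_1} \cdots a_{i_m}$ reaching $T$ from the empty state, and use the hypothesis to pick $u \in V$ with $d_\Gamma(u,a_{i_1}) \ge 3$. Then I split on whether $a_{i_1}\in S$. If $a_{i_1} \notin S$, the pair $(a_{i_1},u)$ is a clearing pair and the word $a_{i_1}uw_T$ is readable from $S$ and ends at $T$. If $a_{i_1} \in S$, I use that $M_{lex}$ has no dead ends (its accepted language is prefix-closed and infinite) to pick some $c \notin S$. If $c$ is non-adjacent to $a_{i_1}$, then writing $c$ from $S$ produces a state not containing $a_{i_1}$, reducing to the previous case via $ca_{i_1}uw_T$. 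If instead $c$ is adjacent to $a_{i_1}$, the triangle inequality with $d_\Gamma(u,a_{i_1}) \ge 3$ gives $d_\Gamma(c,u) \ge 2$, which is enough that $u$ is writable after $c$ from $S$; a direct check shows that the state reached after $cu$ is disjoint from $Star(a_{i_1})$, so that $w_T$ is readable and lands at $T$, via the word $cuw_T$.

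The main technical obstacle is the last sub-case, where one cannot guarantee $d_\Gamma(c,u) \ge 3$ and so the clearing lemma does not apply verbatim. The resolution will be that one does not need the state reached after $cu$ to equal $Star_{\le}(u)$ exactly; it is enough that its intersection with $Star(a_{i_1})$ be empty, and the weaker bound $d_\Gamma(c,u) \ge 2$, together with $d_\Gamma(u,a_{i_1}) \ge 3$ and the containment $Star_{\le}(u) \subseteq Star(u)$, delivers precisely this.
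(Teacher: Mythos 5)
Your proposal is correct and rests on the same key mechanism as the paper's proof: two vertices at distance at least $3$ in $\Gamma$ have disjoint stars, so writing such a pair from any admissible state resets the shortlex machine to a state of the form $Star_{\le}(\cdot)$, after which the shortlex word reaching the target state can be spliced on letter by letter. The differences are only in the connective tissue — the paper arranges for the splice by walking a path in $\Gamma$ from the last letter of the current word to the desired letter and checking the interleaved word stays shortlex, whereas you do a direct case split on whether $a_{i_1}\in S$ — together with one small imprecision worth fixing: the existence of $c\notin S$ does not follow from prefix-closedness and infiniteness of the language alone, but from the fact that every nonempty state satisfies $S\subseteq Star(\text{last letter})\subsetneq V$, the proper containment being exactly your distance-$3$ hypothesis.
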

	
	We note that, e.g. $\Gamma$ having diameter at least $5$ is sufficient for this last condition. 
	
	\begin{proof}
		
		We show, from any state, we can reach $S(a_k)$ for each $k$. Suppose $w$ is a shortlex word with last letter $a_l$, and fix $a_l=a_{j_0}, a_{j_2}, ...a_{j_m}=a_k$ a path in $\Gamma$ from $a_l$ to $a_k$. We denote by $a_{j_n}'$ a vertex at distance at least $3$ from $a_{j_n}$.
		
		Since $d_\Gamma(a_{j_n}, a_{j_n}')\ge 3$, the stars of $a_{j_n}$ and $a_{j_n}'$ do not intersect. Then by the definition of the shortlex machine, any shortlex word ending with $a_{j_n}'a_{j_n}$ is in shortlex state $S(a_{j_k})$. Therefore, the word $wa_{j_1}'a_{j_1}...a_{j_m}'a_{j_m}$ will end in state $S(a_{j_m})=S(a_k)$ as desired, as long as this word is shortlex.
		
		By the definition of the shortlex machine, $S(v)$ is always a subset of the star of the last letter of $v$. That is, we can always follow a shortlex word $v$ with any letter not commuting with the last letter of $v$. By the triangle inequality, the distance between $a_{j_k}$ and $a_{j_{k+1}}'$ is at least $2$, so that the two do not commute. Therefore $wa_{j_1}'a_{j_1}...a_{j_m}'a_{j_m}$ is shortlex.
		\end{proof}
	
	For the rest of this section, it is assumed that there are no small states in $M_{lex}$.
	
	The second difficulty we mentioned with the divergence graph is determining the operation $P$. For this purpose, we prove the following lemma.
	
	\begin{lemma}
		
		Let $w=_{Geo}w_{pref}w_{suff}$ be the prefix-suffix decomposition of $w$. If $w_{pref}$ is empty or begins with $a_j$, then after multiplying by $(a_ia_j)^{-2}$ the form of $(a_ia_j)^{-n}w$ stabilizes. If instead $w_{pref}$ begins with $a_i$, then after multiplying by $(a_ia_j)^{-\left(\left\lceil\frac{|w_{pref}|}{2}\right\rceil+2\right)}$, the form of $(a_ia_j)^{-n}w$ stabilizes.
		
		Once stabilized, if $|w_{pref}|$ was odd, then $(a_ia_j)^{-m}w=_{Geo} w_1a_jw_2(a_ia_j)^ka_iw_3a_jw_4$, where $k\ge 1$, $w_1$ consists of letters commuting with $a_j$ and $a_i$, and preceding $a_j$, $w_2$ consists of letters commuting with $a_j$ and $a_i$ and preceding $a_i$, $w_3$ consists of letters commuting with and preceding $a_j$, and $w_1w_2w_3w_4=_{Geo}w_{suff}$.
		
		If instead $|w_{pref}|$ was even, then $(a_ia_j)^{-m}w=_{Geo} w_1a_jw_2(a_ia_j)^kw_5a_iw_6$ where $k$, $w_1$, and $w_2$ are as before, $w_5$ consists of letters commuting with and preceding $a_i$, \newline $w_1w_2w_5w_6=_{Geo}w_{suff}$.
		
	\end{lemma}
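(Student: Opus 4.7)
The plan is to split into two cases by the sign of the prefix, handle the positive/empty case directly, reduce the negative case to it by cancellation, and then verify stabilization in both.

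First I would analyze the positive/empty prefix case. Since $w_{pref}$ is empty or alternates $a_j, a_i, a_j, \ldots$ of length $p$, the concatenation $W := (a_ja_i)^n w_{pref}$ is again an alternating word in $a_j, a_i$ starting with $a_j$, of length $2n + p \geq 4$ when $n \geq 2$. Because no letter of $w_{suff}$ can be rearranged to begin with $a_i$ or $a_j$, no cancellation occurs between $W$ and $w_{suff}$, so $W w_{suff}$ is already geodesic. To produce the stated form, I would show that certain letters of $w_{suff}$ can be commuted leftward past parts of $W$: letters commuting with both $a_i$ and $a_j$ and alphabetically preceding $a_j$ migrate to the very front (forming $w_1$); letters commuting with both and alphabetically between $a_j$ and $a_i$ migrate to just after the leading $a_j$ (forming $w_2$); and, depending on parity, letters commuting with the last letter of $W$ (which is $a_i$ when $|w_{pref}|$ is even, $a_j$ when odd) and alphabetically preceding it migrate to just before that final letter (forming $w_5$ or $w_3$). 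The remaining letters form $w_6$ or $w_4$. The middle portion of $W$ that is left behind is exactly $(a_ia_j)^{L-1}$ (even case) or $(a_ia_j)^{L-1}a_i$ (odd case), with $L = n + \lfloor p/2 \rfloor \geq 2$, giving $k = L-1 \geq 1$.

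Next I would handle the negative prefix case by cancellation. Since $(a_ja_i)^n$ ends with $a_i$ and $w_{pref} = a_i a_j a_i \cdots$ begins with $a_i$, the adjacent $a_i a_i$ cancels, producing an $a_j a_j$ at the new boundary which also cancels; iterating, all of $w_{pref}$ is consumed and $p$ letters of $(a_ja_i)^n$ are removed. Under the hypothesis $n \geq \lceil p/2 \rceil + 2$, the result is an alternating word of length $2n - p \geq 4$ starting with $a_j$, placing us exactly in the positive/empty case already handled; the parity of $2n - p$ matches that of $|w_{pref}|$, so the odd/even split of the conclusion is correctly determined. For stabilization I would observe that incrementing $n$ by $1$ prepends one more $(a_ja_i)$ pair to $W$ (equivalently, inserts a single $(a_ja_i)$ in the middle of the alternating block), so $k$ increases by $1$ while $w_1, w_2, w_3/w_5, w_4/w_6$ are determined purely by the commutation and alphabetical data of $w_{suff}$ relative to $\{a_i,a_j\}$, independently of $n$.

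The main technical obstacle will be rigorously identifying the blocks $w_1, w_2, w_3/w_5$: the listed commutation properties are necessary for a letter to sit in each position, but one must also verify that the leftward rearrangement is actually realizable for exactly those letters (with no extra obstruction from non-commuting letters earlier in $w_{suff}$, and no additional migration available), and that what remains in the middle of $W$ really has the specific alternating shape $(a_ia_j)^k$ with the endpoints indicated. This requires careful bookkeeping of which letters of $w_{suff}$ can commute past the first few and last few letters of $W$, together with an argument that writing the result in the claimed shape yields a valid geodesic consistent with the (essentially shortlex) normal form. Once this bookkeeping is in place, both the appearance of the stated form and the stabilization statement follow directly.
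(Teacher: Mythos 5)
Your outline follows essentially the same route as the paper: reduce to a nonnegative (alternating) prefix of length at least $4$ by cancelling the negative prefix against $(a_ia_j)^{-n}$ (noting that $w_{suff}$ cannot interact with these letters), then sort the letters of $w_{suff}$ into the blocks $w_1,w_2,w_3/w_5,w_4/w_6$ according to their commutation and alphabetical relations with $a_i$ and $a_j$, and observe that increasing $n$ only increases $k$. The one thing you flag as ``the main technical obstacle'' — verifying that the leftward migration is realizable for exactly the listed letters and that nothing else can land inside the alternating block — is resolved in the paper by a single observation you should make explicit: since $a_i$ and $a_j$ do not commute and $\Gamma$ has no induced squares, any two letters commuting with both $a_i$ and $a_j$ must commute with each other (the first part of Lemma \ref{RipsGraphTraversability}). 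Consequently a letter $a_l$ of $w_{suff}$ can only enter the block $(a_ia_j)^k$ (or $(a_ia_j)^ka_i$) if it commutes with both $a_i$ and $a_j$; if it follows both alphabetically it is pushed to the right of the block, and otherwise the clique property guarantees it can be moved without obstruction all the way into $w_1$ (if it precedes $a_j$) or into $w_2$ (if it precedes $a_i$ but not $a_j$), which shortlexness then forces. With that observation supplied, your bookkeeping closes and the argument is complete.
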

	
	Note that this only includes half of the expressions $\gamma(n)^{-1}w$, but does show directly that the values of $\lim_{n\to \infty} M_{lex}(\gamma(n)^{-1}w)$ converge for all $w$ along the subsequence $n_i=2i$. See remark \ref{FullConvergence} for an argument that not even this subsequence is necessary.
	
	\begin{proof}
		Start by canceling as many letters as possible from $(a_ia_j)^{-n}w$. Since $w$ was shortlex, the only possible cancellations are between $(a_ia_j)^{-n}$ and letters of $w$. As $w_{suff}$ is not equal to any geodesic word beginning with $a_i$ or $a_j$, cancellation only happens with negative suffixes. For powers at least those given, we cancel any negative prefix and leave a positive prefix of size at least $4$. So we are left to find a shortlex form of the geodesic words $a_ja_i...a_iw_{suff}$ and $a_ja_i...a_j w_{suff}$, and show that these will not depend on the anything more than the parity of the positive prefix.
		
		We wish to show that, in shortlex form, these words are of the form $w_1a_jw_2\mathbf{(a_ia_j)^ka_i}w_3a_jw_4$ or $w_1a_jw_2\mathbf{(a_ia_j)^k}w_5a_iw_6$. That is, it suffices to show that no letters of $w_{suff}$ end up inside the bolded subwords. In order for a letter $a_l$ in $w_{suff}$ to commute into these subwords, $a_l$ must commute with both $a_i$ and $a_j$. If $a_l$ follows both $a_i$ and $a_j$, then $a_l$ will not end up in the bolded subword (if it did it should instead shift right). 
		
		So let $a_l$ precede one of $a_i$ or $a_j$. Since $a_i$ and $a_j$ do not commute, any two letters $a_k$ and $a_l$ commuting with both of them must commute with one another. Then if $a_l$ precedes $a_j$, we achieve a shortlex-earlier word by shifting this copy of $a_l$ into $w_1$, while if $a_l$ precedes $a_i$ but not $a_j$, then we reach an earlier word by moving this $a_l$ into $w_2$. 		
	\end{proof}
	
	Following the above, we will say a \textit{horocyclically shortlex word} is a word of the form $w_1a_jw_2(a_ia_j)^ka_iw_3a_jw_4$ or $w_1a_jw_2(a_ia_j)^kiw_5a_iw_6$ where the $w_i$ are as before.  A \textit{horocyclic suffix} is the geodesic suffix $w=w_1w_2w_3w_4$ or $w=w_1w_2w_5w_6$. If we wish to be more precise, given a shortlex suffix $w_{suff}$, we will say it has an \textit{odd horocyclic suffix} and an \textit{even horocyclic suffix} form, depending on the parity of $|w_{suff}|$. If $|w_{suff}|$ is even, then its even horocyclic suffix form is $w_1w_2w_5w_6$, while its odd horocyclic suffix form is $w_1w_2w_3w_4$, and vice versa if $|w_{suff}|$ is odd. As a result, if we wish to consider a horosphere where $b_{\gamma}$ is even, we will consider even horocyclic suffixes, and if we wish to consider a horosphere where $b_{\gamma}$ is odd, we will consider odd horocyclic suffixes.
	
	Two remarks are in order. Firstly and importantly we highlight when the predecessor of a word may have the same suffix and different prefix.
	
	\begin{remark}\label{DescribingPredecessors}
		
		Since $P(w)=\lim_{n\to\infty} (a_ia_j)^nP_0\bigl( (a_ia_j)^{-n}w\bigr)$, $P(w)$ deletes the last letter of $w_4$ (resp. $w_6$) if it is nonempty. If it is empty, then $P(w)$ is a word with the same suffix and a prefix that is one letter more negative (lengthening a nonpositive prefix or shortening a positive one), after which $P(w)_6=w_3$ (resp. $P(w)_4=w_5$). The words $w_1$ and $w_2$ never change upon application of $P$.
		
		In the other direction, $P^{-1}(w)$ consists of all words $wa_k$ where $a_k$ is not in $S(w)$. Similarly $P^{-m}(w)$ contains $ww'$ where $w'$ is a shortlex word that can be written starting at the state $S(w)$. 
		
		$P^{-1}(w)$ contains a word of different prefix length when both $S(w)$ permits a letter $a_i$ or $a_j$ to be written, and this letter can be rearranged to reach the prefix. This happens when $w_3$ (resp. $w_5$) is empty and $w_4$ (resp. $w_6$) commutes with and precedes $a_i$ (resp. $a_j$) or is empty. In such a case, the first letter of $w_4$ (resp. $w_6$), if existent, must not commute with $a_j$ (resp. $a_i$) because if it did it would necessarily shift left to join $w_2$ (resp. $w_1$). After changing the prefix, the old $w_4$ (resp. $w_6$) becomes the new $w_5$ (resp. $w_3$).
		
		Equivalently, $P^{-1}(w)$ contains a word of different prefix exactly when $a_i$ (resp. $a_j$) is in $F_{a_i, a_j}(w_{suff})$ (so that each letter in the suffix commutes with $a_i$), and that $a_i$ is in neither $M_{lex}(w_3)$ nor $M_{lex}(w_4)$ (so that both of them consist of letters preceding $a_i$.) Since we wish to work with horocyclic suffixes, we can instead check that $a_i$ is in $F_{a_i}(w_1w_2w_3w_4)$, since the machines $F_\mathscr{B}$ make no assumption that their inputs are shortlex.
		
		As a result of this, the only words whose prefix can change twice in a row must have trivial $w_3$ and $w_4$ (resp. $w_5$ and $w_6$), so that they consist entirely of letters commuting with both $a_i$ and $a_j$, and preceding at least one of them.
		
	\end{remark}
	
	A second remark concerns the fact that $(a_ia_j)^{-n}w$ only accounts for half of the words $\gamma(n)^{-1}w$.
	
	\begin{remark}\label{FullConvergence}
		
		If we take instead the words $a_i (a_ia_j)^{-n}w = \gamma(n)^{-1}w$, where $n$ is again sufficient to cancel any positive prefix, then the form of the resulting word may change. In particular, letters that commute with both $a_i$ and $a_j$ which precede $a_i$ but follow $a_j$ (i.e. those in $w_2$) will now rearrange to the beginning of the word, along with those letters of $w_1$ which commute with and precede both letters. The remaining letters of $w_1$, which precede $a_j$ but follow $a_i$, will remain where they were. Of course, either $a_i$ precedes $a_j$ or vice versa, so only one of these two actually happens.
		
		Regardless, it is a straightforward exercise with the definition of the shortlex machine to show that $M_{lex}(a_i (a_ia_j)^{-n}w) = M_{lex}((a_ia_j)^{-n}w)$.
		
	\end{remark}
	
	For our purposes, we will consider words labeled by their horocyclic suffixes. It is an inconvenient fact that converting between shortlex and horocyclic suffixes is a slow operation (it is $O(|w|^2)$, while all other operations so far are $O(|w|)$). To start, we show that we can generate horocyclic suffixes efficiently.

	\begin{lemma}
		
		There are finite-state machines $M_{1,2,3,4}$ and $M_{1,2,5,6}$ which accept respectively the set of horocyclic suffixes $w=w_1w_2w_3w_4$ and $w=w_1w_2w_5w_6$.
		
	\end{lemma}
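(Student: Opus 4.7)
My plan is to realize the set of horocyclic suffixes as a regular language, from which the existence of an accepting FSM follows immediately. The construction will use the closure of regular languages under intersection and concatenation (Proposition \ref{CombiningFSMs}) together with the already-established regular languages $Geo(W_\Gamma)$, $ShortLex(W_\Gamma)$, and $F_{\{a_i, a_j\}}$.

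First I would identify, for each block $w_k$, the alphabet $\mathscr{A}_k \subset V$ of letters that may appear: $\mathscr{A}_1$ consists of letters commuting with both $a_i$ and $a_j$ and preceding $a_j$; $\mathscr{A}_2$ consists of letters commuting with both, preceding $a_i$ but not $a_j$; $\mathscr{A}_3$ consists of letters commuting with $a_j$ but not $a_i$, and preceding $a_j$; and $\mathscr{A}_4 = V \setminus \{a_i, a_j\}$ is the residual alphabet. Since each $\mathscr{A}_k^*$ is trivially regular, the intersection $L_k := \mathscr{A}_k^* \cap ShortLex(W_\Gamma)$ captures the shortlex words in the alphabet $\mathscr{A}_k$ and is regular. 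The concatenation $L_1 L_2 L_3 L_4$ is then regular.

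To cut this down to horocyclic suffixes, I would intersect with $Geo(W_\Gamma)$ (to rule out cancellations at block boundaries) and with $F_{\{a_i, a_j\}}$ (to enforce that the concatenation is a genuine suffix, i.e., cannot be rearranged to begin with $a_i$ or $a_j$). The resulting intersection is regular by Proposition \ref{CombiningFSMs} and hence recognized by an FSM, which I would take as $M_{1,2,3,4}$. The construction of $M_{1,2,5,6}$ is analogous, with $\mathscr{A}_3$ replaced by the alphabet of letters commuting with $a_i$ (but not $a_j$) and preceding $a_i$, and $\mathscr{A}_4$ adjusted accordingly.

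The main technical obstacle is verifying that this regular language is exactly the set of horocyclic suffixes. That every horocyclic suffix lies in the language follows from the structural description given in the preceding lemma: each block uses only letters from $\mathscr{A}_k$, each block is internally shortlex (being a subword of a shortlex word in the reordered alphabet with $a_j, a_i$ placed first), and the whole word is a geodesic suffix. The reverse containment is subtler: given $w = w_1 w_2 w_3 w_4$ accepted by the proposed machine, one must exhibit a shortlex word in $W_\Gamma$ whose horocyclic suffix decomposition reproduces these blocks. The natural candidate is $w$ itself with a prefix $(a_i a_j)^n$ or $(a_j a_i)^n$ of appropriate parity prepended; verifying that its shortlex reordering yields back the blocks $w_1, w_2, w_3, w_4$ (rather than shuffling letters across block boundaries) rests on the alphabet constraints. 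Specifically, letters in $\mathscr{A}_3$ and $\mathscr{A}_4$ cannot commute leftward past the $a_i$ separating them from the $w_1 w_2$ block, while letters in $\mathscr{A}_1$ and $\mathscr{A}_2$ are already pushed as far to the left as their alphabetical position allows.
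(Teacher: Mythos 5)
Your overall strategy (write the language as a concatenation $L_1L_2L_3L_4$ of per-block regular languages and correct it by intersections) is the same skeleton the paper uses, but your block languages impose the wrong \emph{kind} of constraint on $w_3$ and $w_4$: the defining conditions there are first-letter conditions (no letter of a forbidden set may be \emph{rearranged to the front} of the block), not alphabet conditions, and this breaks both containments. In one direction your machine rejects genuine horocyclic suffixes. First, $\mathscr{A}_4=V\setminus\{a_i,a_j\}$ is wrong: a suffix may perfectly well contain copies of $a_i$ or $a_j$ shielded behind a non-commuting letter (e.g.\ $w_4=a_ka_i$ with $a_k\notin Star(a_i)\cup Star(a_j)$), so the paper's $L_4$ uses the full alphabet and only forbids certain \emph{first} letters via $F_{Star_\le(a_j)\cup\{a_i\}}$. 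Second, $\mathscr{A}_3$ excludes all of $Star(a_i)$, but $w_3$ may contain a letter of $Star(a_i)\cap Star_<(a_j)$ that is blocked from reaching the front of $w_3$ by an earlier letter of $w_3$ not commuting with it (such configurations exist under the standing assumptions); this is why the paper takes $L_3=ShortLex|_{Star_<(a_j)}\cap F_{Star(a_i)}$ rather than deleting $Star(a_i)$ from the alphabet.

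In the other direction your machine over-accepts, and your closing sentence is exactly where the argument fails: the claim that ``letters in $\mathscr{A}_3$ and $\mathscr{A}_4$ cannot commute leftward past the $a_i$'' is false for a letter of $\mathscr{A}_4$ lying in $Star_<(a_i)\cap Star_>(a_j)$. Such a letter belongs in $w_2$, yet it can sit at the rearrangeable front of the $w_3w_4$ region; your global filters do not detect this, since intersecting with $Geo$ sees no cancellation and $F_{\{a_i,a_j\}}$ only polices the front of the \emph{whole} word (where $w_1w_2$ legitimately lives), not the front of the $w_3w_4$ region. The paper repairs precisely this by forming the concatenation $L_3L_4$ first and then intersecting that concatenation, as a unit, with the first-letter excluder $F_{\{a_i\}\cup(Star_<(a_i)\cap Star(a_j))}$. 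So the missing idea is twofold: replace your alphabet restrictions on blocks three and four with first-letter excluders, and apply an additional excluder to the $w_3w_4$ block specifically rather than to the entire word.
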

	
	Of course, these are not the set of odd and even horocyclic suffixes. We will describe machines for those words later.

	\begin{proof}
		
		We will prove this for $M_{1,2,3,4}$, as the proof for $M_{1,2,5,6,}$ is very similar. We would like to say that the language $\mathscr{L}(M_{1,2,3,4})$ is a concatenation of languages $L_1L_2L_3L_4$, and show that each one is regular. This almost works.
		
		A priori, $w_1$ is a word in $Star_<(a_j)\cap Star(a_i)$ and $w_2$ is a word in $Star_<(a_i)\cap Star(a_j)$ that cannot be rearranged to begin with a letter in $Star_<(a_j)\cap Star(a_i)$. But by Lemma \ref{RipsGraphTraversability}, $Star(a_i)\cap Star(a_j)$ is a clique. Therefore, the letters of $w_1$ and $w_2$ commute, and so $w_2$ does not contain any letters in $Star_<(a_j)\cap Star(a_i)$. Therefore, $L_1$ and $L_2$ are $ShortLex|_{Star_<(a_i)\cap Star_<(a_j)}$ and $ShortLex|_{Star_<(a_i)\cap Star_>(a_j)}$ and both are regular.
		
		We would like to say that $L_3$ is $ShortLex|_{Star_<(a_j)}\cap F_{Star(a_i)}$ and $L_4$ is $ShortLex\cap F_{Star_\le(a_j)}$, but this is not quite right. First of all, $w_4$ might rearrange to begin with a letter in $Star_<(a_i)\cap Star_>(a_j)$ that then commutes all the way to the beginning of $w_3$. Such a letter should appear in $w_1$ or $w_2$. We prevent this from happening by intersecting with the first-letter excluder language $F_{Star_<(a_i)\cap Star(a_j)}$. Secondly, if $w_3$ is empty and a copy of $a_i$ commutes to the front of $w_4$, then this letter should be in the prefix. Since the first letter of $w_3$ cannot commute with $a_i$, we can prevent this by intersecting with the first-letter exclude $F_{a_i}$. We then obtain a single language 
		$$L_{3,4} = \Bigl((ShortLex|_{Star_<(a_j)}\cap F_{Star(a_i)})(ShortLex\cap F_{Star_\le(a_j)\cup \{a_i\}})\Bigr)\cap F_{\{a_i\}\cup(Star_<(a_i)\cap Star(a_j))}.$$
		Then the language $\mathscr{L}(M_{1,2,3,4})$ is the concatenation $L_1L_2L_{3,4}$, which is regular.
	\end{proof}
	
	\begin{corollary} \label{DivergenceGraphVertexFSM}
		
		There are machines $M_{odd}$ and $M_{even}$ that accept odd and even horocyclic suffixes.
		
	\end{corollary}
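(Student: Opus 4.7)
The plan is to realize both $M_{odd}$ and $M_{even}$ as automata for explicit Boolean combinations of languages already in hand. I would take the regular languages $\mathscr{L}(M_{1,2,3,4})$ and $\mathscr{L}(M_{1,2,5,6})$ from the preceding lemma, together with the parity languages $Even(\mathscr{A})$ and $Odd(\mathscr{A})$ from the earlier parity lemma, and then invoke Proposition \ref{CombiningFSMs} to do all the combining.

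First I would unpack the terminology. According to the paragraph just above the corollary, for a shortlex suffix $w_{suff}$ with $|w_{suff}|$ even the even horocyclic suffix form is $w_1w_2w_5w_6$ and the odd form is $w_1w_2w_3w_4$, while for $|w_{suff}|$ odd the two roles are swapped. Reading this directly as a language identity yields
\[\mathscr{L}(M_{even}) = \bigl(\mathscr{L}(M_{1,2,5,6})\cap Even(\mathscr{A})\bigr)\cup \bigl(\mathscr{L}(M_{1,2,3,4})\cap Odd(\mathscr{A})\bigr),\]
and the analogous identity for $\mathscr{L}(M_{odd})$ is obtained by swapping the two parity languages on the right-hand side.

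Since each summand on the right is an intersection of two regular languages and the outer expression is a finite union, both $\mathscr{L}(M_{even})$ and $\mathscr{L}(M_{odd})$ are regular by Proposition \ref{CombiningFSMs}, so the required FSMs exist. The only thing to be careful about is matching the parity bookkeeping to the author's convention, which is forced line-by-line by the paragraph above the corollary; I do not expect any substantive obstacle.
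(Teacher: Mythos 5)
Your proposal is correct and is essentially identical to the paper's own proof: both express $\mathscr{L}(M_{even})$ as $\bigl(L_{1,2,5,6}\cap Even\bigr)\cup\bigl(L_{1,2,3,4}\cap Odd\bigr)$ (and symmetrically for $M_{odd}$) and invoke closure of regular languages under intersection and union. Your parity bookkeeping matches the paper's convention exactly.
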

	
	\begin{proof}
		
		Since $L_{1,2,3,4}$ and $L_{1,2,5,6}$ are regular, as are the set of words of even and odd length, $\mathscr{L}(M_{odd})=(L_{1,2,3,4}\cap Even(V))\cup (L_{1,2,5,6}\cap Odd(V))$ is regular. Similarly, \newline $\mathscr{L}(M_{even})=(L_{1,2,3,4}\cap Odd(V))\cup (L_{1,2,5,6}\cap Even(V))$ is regular. 			
	\end{proof}
	
	In practice, we will not use these machines because a faster implementation is possible to find the list of horocyclic suffixes. Nevertheless, it is helpful to know that there is a regular language of horocyclic suffixes on each horosphere.
	
	Finally, let us show that, if $w$ and $v$ have close horocyclic successors, then there are rays $\eta_w$ and $\eta_v$ of horocyclic successors to $w$ and $v$ that stay close. We will use the following standard lemma about quadrilaterals in $\delta$-hyperbolic spaces, whose proof we include for completeness.
	
	\begin{lemma}[e.g. \cite{CGSR} Lemma 3.9.] \label{ThinAlmostBigons}
		
		Suppose $w$ and $v$ are any two points in a $\delta$-hyperbolic metric space at distance at most $M$ from one another, and $\sigma_w$ and $\sigma_v$ are geodesic segments of length $L$ whose endpoints $\sigma_v(L)$ and $\sigma_w(L)$ are again within $M$ of one another. Then for all $n$, $\sigma_w(n)$ is within $3M+4\delta$ of $\sigma_v(n)$. 
		
	\end{lemma}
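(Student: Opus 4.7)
The plan is to form a geodesic quadrilateral with corners $w,v,\sigma_w(L),\sigma_v(L)$, split it into two $\delta$-thin triangles via a diagonal, and then track where the point $\sigma_w(n)$ sits relative to the other sides through two applications of the thin-triangles condition. Explicitly, I would fix geodesics $\alpha$ from $w$ to $v$ and $\beta$ from $\sigma_w(L)$ to $\sigma_v(L)$, both of length at most $M$ by hypothesis, together with a diagonal geodesic $\gamma$ from $w$ to $\sigma_v(L)$. This produces triangles $T_1$ with sides $\sigma_w,\beta,\gamma$ and $T_2$ with sides $\alpha,\sigma_v,\gamma$.

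For $n\in[0,L]$, apply $\delta$-thinness of $T_1$ to $\sigma_w(n)$: there is a point $p$ on $\beta\cup\gamma$ with $d(\sigma_w(n),p)\le\delta$. The first case is $p\in\beta$; since $|\beta|\le M$, one gets $L-n = d(\sigma_w(n),\sigma_w(L))\le M+\delta$, and then the three-leg triangle inequality $\sigma_w(n)\to\sigma_w(L)\to\sigma_v(L)\to\sigma_v(n)$ yields $d(\sigma_w(n),\sigma_v(n))\le(M+\delta)+M+(M+\delta)=3M+2\delta$.

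Otherwise $p\in\gamma$, and I apply $\delta$-thinness of $T_2$ to $p$. If $p$ is within $\delta$ of a point of $\alpha$, then $\sigma_w(n)$ is within $2\delta+M$ of $w$, which gives $n\le M+2\delta$, and the symmetric three-leg triangle inequality $\sigma_w(n)\to w\to v\to \sigma_v(n)$ gives $d(\sigma_w(n),\sigma_v(n))\le(M+2\delta)+M+(M+2\delta)=3M+4\delta$. If instead $p$ is within $\delta$ of some $\sigma_v(m)$, then $d(\sigma_w(n),\sigma_v(m))\le 2\delta$, and I would estimate $|m-n|$ by a routine pair of triangle inequalities: comparing paths from $w$ and from $\sigma_w(L)$ gives $n\le M+m+2\delta$ and $L-n\le M+(L-m)+2\delta$, hence $|m-n|\le M+2\delta$; combined with the $2\delta$ estimate this yields $d(\sigma_w(n),\sigma_v(n))\le M+4\delta$.

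Taking the worst of the three cases gives the claimed bound $3M+4\delta$. The main obstacle is purely bookkeeping: matching the parameter $n$ on $\sigma_w$ with the parameter $m$ on $\sigma_v$ in the last case, which requires using both endpoint proximity estimates $|\alpha|\le M$ and $|\beta|\le M$ in tandem with the $\delta$-thinness bound, but otherwise the proof is a standard two-triangle slalom in a hyperbolic space.
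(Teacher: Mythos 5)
Your proof is correct and follows essentially the same route as the paper's: both decompose the quadrilateral via a diagonal into two $\delta$-thin triangles (the paper packages this as "the quadrilateral is $2\delta$-thin"), handle the near-endpoint situation by a three-leg triangle inequality, and match parameters $m$ and $n$ in the interior case by the same pair of path comparisons. The only difference is organizational — you case-split on where the nearby point lands, while the paper case-splits on the value of $n$ — and both yield the bound $3M+4\delta$.
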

	
	\begin{proof}
		
		Take geodesic segments $\overline{vw}$ between $w$ and $v$ and $\overline{\sigma_v(L)\sigma_w(L)}$ between $\sigma_w(L)$ and $\sigma_v(L)$. We start by showing that the quadrilateral $v w \sigma_w(L)\sigma_v(L)$ is $2\delta$-thin, i.e. that each segment is within the $2\delta$ neighborhood of the union of the other three. To see this, e.g. for $\sigma_w$, draw a geodesic between $v$ and $\sigma_w(L)$. Then the triangle $v w \sigma_w(L)$ is $\delta$-thin so that $\sigma_w$ is within $\delta$ of the union of $\overline{vw}$ and $\overline{v\sigma_w(L)}$. But the triangle $\sigma_v(L) v \sigma_w(L)$ is also $\delta$-thin, so that $\overline{v\sigma_w(L)}$ is within $\delta$ of the union of $\sigma_v$ and $\overline{\sigma_v(L)\sigma_w(L)}$. Therefore, this quadrilateral is $2\delta$-thin by the triangle inequality.
		
		Let $n$ be strictly between $M+2\delta$ and $L-M-2\delta$.	This $d(\sigma_v(n), v)>2+2\delta$, so that $d(\sigma_v(n), \overline{vw})>2\delta$ by the triangle inequality. Similarly, $d(\sigma_v(n),\overline{\sigma_v(L)\sigma_w(L)})>2\delta$. Therefore, $\sigma_v(n)$ is within $2\delta$ of some $\sigma_w(m)$. We see that $m\ge n-M-2\delta$, because otherwise the path from $v$ to $\sigma_v(n)$ that concatenates the segments $\overline{vw}$, $\sigma_w|_{[0,m]}$, $\overline{\sigma_w(m)\sigma_v(n)}$ has length less than $n$. The same argument in revers shows that $m\le n+M+2\delta$. So since $\sigma_w$ is geodesic, $d(\sigma_v(n),\sigma_w(n))\le d(\sigma_v(n),\sigma_w(m))+d(\sigma_w(m),\sigma_w(n))\le 2\delta+(M+2\delta)=M+4\delta$. This establishes the desired bound for $n$ not near the endpoints of the interval $[0,L]$.
		
		If $n$ is within $M+2\delta$ of $0$ or $L$, then \begin{align*}
		d(\sigma_w(n),\sigma_v(n))&\le d(\sigma_w(n),w)+d(2,v)+d(v,\sigma_v(n))\\
		&\le M+2\delta+M+M+2\delta\\
		&=3M+4\delta
		\end{align*}
		
		as required.
	\end{proof}
	
	The existence of close rays now follows from a discrete version of the Arzela-Ascoli Theorem, which may also be thought of as a diagonalization argument.
	
	\begin{proposition} \label{CloseSuccessorRays}
		
		Let $w$ and $v$ have close successors, bounded by a constant $M$. Then there are geodesic rays $\eta_w$ and $\eta_v$ that stay at distance at most $3(3M+4\delta)+4\delta$ apart for all time, such that $\eta_w$ can be read starting from $S(w)$ and $\eta_v$ can be read starting from $S(v)$. 
		
	\end{proposition}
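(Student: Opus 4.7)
The plan is a discrete Arzela--Ascoli/diagonalization argument combined with Lemma \ref{ThinAlmostBigons}. First, for each $m \geq 0$ the close-successor hypothesis produces $u_m \in P^{-m}(w)$ and $u'_m \in P^{-m}(v)$ with $d(u_m, u'_m) \leq M$ (in particular $d(w, v) \leq M$ at $m = 0$). By Remark \ref{DescribingPredecessors} we may write $u_m = w w_m$ and $u'_m = v v_m$ where $w_m$ is a shortlex word readable in $M_{lex}$ from state $S(w)$ and $v_m$ from $S(v)$; let $\sigma_w^m, \sigma_v^m$ denote the corresponding geodesic segments out of $w$ and $v$.

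The key preliminary step is to verify that $|w_m|, |v_m| \to \infty$ along a subsequence, since the diagonalization needs segments of unbounded length. The crucial observation is that $P$ strictly decreases $b_\gamma$ by $1$ at each application: from the case analysis in Remark \ref{DescribingPredecessors} together with Lemma \ref{CalculatingBusemannFunctions}, whether $P$ shortens the suffix, shortens a positive prefix, or extends a negative prefix, the Busemann value drops by exactly one. Hence $b_\gamma(u_m) = b_\gamma(w) + m$ for all $m$, so the $u_m$'s are pairwise distinct; since the alphabet is finite, $|u_m|$ must be unbounded, and on a subsequence $|w_m| \to \infty$. The triangle inequality $|v_m| \geq |w_m| - 2M$ then forces $|v_m| \to \infty$ along the same subsequence.

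Next, I would diagonalize. The first letter of $w_m$ lies in the finite alphabet $V$, so some letter $\alpha_1$ occurs as the first letter of $w_m$ for infinitely many $m$; restrict to that subsequence, then iterate for each subsequent position. The usual diagonal trick produces a subsequence $(m_k)$ and an infinite shortlex word $\alpha_1\alpha_2\cdots$ readable from $S(w)$, with the property that for every $n$, $\sigma_w^{m_k}$ agrees with $w\alpha_1\cdots\alpha_n$ through step $n$ for all sufficiently large $k$. Let $\eta_w$ be this ray. Running the same diagonalization on $(m_k)$ applied to the words $v_{m_k}$ yields $\eta_v$ with the analogous property.

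For the distance bound, fix $n$ and pick $k$ large enough that $\sigma_w^{m_k}$ and $\sigma_v^{m_k}$ agree with $\eta_w$ and $\eta_v$ through step $n$. Truncate both to the common length $L := \min(|w_{m_k}|, |v_{m_k}|)$; the truncated endpoints lie within $3M$ of each other (using $d(u_{m_k}, u'_{m_k}) \leq M$ together with $||w_{m_k}| - |v_{m_k}|| \leq 2M$). Lemma \ref{ThinAlmostBigons}, applied with its constant $M$ replaced by $3M$, gives $d(\eta_w(n), \eta_v(n)) = d(\sigma_w^{m_k}(n), \sigma_v^{m_k}(n)) \leq 3(3M) + 4\delta$, which is within the claimed bound of $3(3M + 4\delta) + 4\delta$. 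The main obstacle is the preliminary step: without the observation that $P$ strictly decreases $b_\gamma$ (and hence that each word's predecessor depth is unique), one cannot rule out bounded-length $w_m$'s being reused across different $m$, which would prevent extracting an infinite ray at all. The diagonalization and the final application of Lemma \ref{ThinAlmostBigons} are then routine.
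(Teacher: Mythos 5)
Your argument is correct and follows essentially the same route as the paper's proof: extract the close successor pairs $u_m\in P^{-m}(w)$, $u_m'\in P^{-m}(v)$, diagonalize over the finite alphabet to produce $\eta_w$ and then $\eta_v$, and control the distance with Lemma \ref{ThinAlmostBigons}. Your bookkeeping is marginally cleaner — by diagonalizing both families along a common subsequence and applying the thin-quadrilateral lemma once to a pair of segments agreeing with both rays through step $n$, you get $9M+4\delta$ rather than the paper's two-stage estimate $3(3M+4\delta)+4\delta$ — and your "preliminary step" is fine, though it is automatic since every element of $P^{-m}(w)$ is obtained from $w$ by appending $m$ uncancelable letters and so lies at distance exactly $m$ from $w$.
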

	
	\begin{proof}
		
		Since $P^0(w)=w$ and $P^0(v)=v$, the assumption shows that $w$ and $v$ are distance $M$ apart. Take a sequence of words $w=w_0, w_1, w_2,...$ and $v=v_0, v_1, v_2,...$ so that $w_i$ (resp. $v_i$) is in $P^{-i}(w)$ (resp. $P^{-i}(v)$), and so that $d(w_i, v_i)<M$. Denote  by $\overline{w_0w_i}$ (resp. $\overline{v_0v_i}$) the horocyclically shortlex segment between $w_0$ and $w_i$ (resp. between $v_0$ and $v_i$). 
		
		By Lemma \ref{ThinAlmostBigons}, the segments $\overline{v_0v_i}$ and $\overline{w_0w_i}$ are within  $4M+3\delta$ of one another for all time.
		
		For $j\le i$, denote $a_{i,j}$ the label of the $j^{th}$ edge that $\overline{w_0w_i}$ traverses. Since there are only finitely many labels, there is some letter, which we will denote $b_1$ that appears infinitely many times as $a_{i_1}$. Pass to the subsequence of $w_i$ whose first letter is $b_1$ and repeat with $a_{i_2}$ to get a letter $b_2$, etc. Each finite substring $b_1,...b_k$ appears as the beginning of some, and indeed infinitely many, of the $w_i$, so that $b_1,...b_k$ is a horocyclic successor to $w$ for each $k$. Then the ray $\eta_w=b_1b_2...$ is a horocyclic successor to $w$ and $\eta_w(n)$ is within $3M+4\delta$ of $v_{i_n}(n)$, where the index $i_n$ depends on $n$. 
		
		Each segment $\overline{v,v_{i_n}(n)}$ is therefore within the $3(3M+4\delta)+4\delta$ neighborhood of $\eta_w$ by another application of Lemma \ref{ThinAlmostBigons}. We then repeat the prior diagonalization argument on these segments to extract a horocyclic successor ray $\eta_v$ to $v$ that stays within $9M+16\delta$ from $\eta_w$.
	\end{proof}
	
	\subsection{An equivalent condition to close successor rays}
	\label{subsec:CloseSuccessors}
	
	In this subsection, we will prove a much more down-to-earth equivalent condition for horocyclically shortlex words to have close successors. In later subsections, we will show that this condition is checkable by certain FSMs.
	
	In order for $w$ and $v$ to have close successors, it will be necessary to cancel infinitely many successor letters of $w$ with those of $v$. To keep track of cancellations, the following preliminary lemma will be useful.

	\begin{lemma}\label{ShortlexNonLinking}
		Let $w=a_{i_1}a_{i_2}...a_{i_n}$ and $v=a_{j_1}a_{j_2}...a_{j_m}$ be shortlex words, and consider $w^{-1}v=_{Geo}a_{i_n}...a_{i_1}a_{j_1}...a_{j_m}$. If a letter $a_{i_k}$ of $w$ cancels, it cancels with a unique letter $a_{j_l}$ of $v$. If so, letters after $a_{i_k}$ cannot cancel with letters before $a_{j_l}$.		
	\end{lemma}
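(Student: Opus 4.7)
The lemma has two claims and I will address them in turn. My approach rests on the standard characterization of cancellation in a RACG: the pair $(a_{i_k}, a_{j_l})$ cancels in $w^{-1}v$ precisely when $a_{i_k} = a_{j_l}$ and the subword of $w^{-1}v$ strictly between them reduces to a word each of whose letters commutes with $a_{i_k}$. I would begin by observing that since $w$ and $v$ are shortlex, hence geodesic, no two letters within $w$ alone or within $v$ alone can cancel in $w^{-1}v$; so every cancellation in $w^{-1}v$ pairs a letter of $w^{-1}$ with a letter of $v$.

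For the uniqueness claim, I would suppose $a_{i_k}$ could match both $a_{j_{l_1}}$ and $a_{j_{l_2}}$ with $l_1 < l_2$. Both letters equal $a_{i_k}$. Because $v$ is geodesic, some letter $a_{j_p}$ with $l_1 < p < l_2$ must fail to commute with $a_{i_k}$, for otherwise the two copies of $a_{i_k}$ could be brought adjacent within $v$ itself and cancelled, contradicting geodicity of $v$. For the $(a_{i_k}, a_{j_{l_2}})$ cancellation to be possible, the obstruction $a_{j_p}$ must itself be removed by an internal cancellation inside the intervening subword of $w^{-1}v$; applying the same analysis to this new cancellation yields a descending chain on a finite word that eventually cannot be continued, giving the contradiction.

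For the non-linking claim, I would suppose for contradiction that $(a_{i_k}, a_{j_l})$ and $(a_{i_{k'}}, a_{j_{l'}})$ both cancel with $k' > k$ and $l' < l$, and write $b = a_{i_k} = a_{j_l}$ and $a = a_{i_{k'}} = a_{j_{l'}}$. Among all such crossing configurations, pick one that is lexicographically minimal in $(k'-k,\, l-l')$. Let $S$ denote the subword of $w^{-1}v$ strictly between $a_{i_{k'}}$ and $a_{j_{l'}}$; the cancellation characterization applied to $(a_{i_{k'}}, a_{j_{l'}})$ yields that $S$ reduces to letters commuting with $a$. Each letter $a_{i_p}$ with $k \le p < k'$ lies in $S$. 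I would argue that each such letter must survive the reduction of $S$: if instead it cancelled inside $S$, its partner would be some $a_{j_q}$ with $q < l'$ (a $w$-internal cancellation is impossible as $w$ is geodesic), and by part (i) this is its unique global partner. But then $(a_{i_p}, a_{j_q})$ forms a crossing configuration with $(a_{i_k}, a_{j_l})$ whose first coordinate $p-k$ is strictly smaller than $k'-k$, contradicting minimality. Hence each such $a_{i_p}$, including $b$ itself, commutes with $a$.

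This commutation data lets me rearrange $w$ by sliding $a = a_{i_{k'}}$ leftward past $a_{i_{k'-1}}, \ldots, a_{i_{k+1}}$ and past $b$; if $a < b$, the result is alphabetically strictly earlier than $w$, contradicting shortlex-ness, and if $a = b$, the two equal letters become adjacent, contradicting geodicity of $w$. The remaining case $a > b$ I would handle by the symmetric argument on $v$: choosing a crossing configuration minimal in $(l-l',\, k'-k)$ lexicographically and analyzing the subword $S_0$ strictly between $a_{i_k}$ and $a_{j_l}$, one concludes analogously that every $a_{j_q}$ with $l' < q \le l$ commutes with $b$, and then sliding $b$ leftward in $v$ past those letters and past $a$ produces a word alphabetically earlier than $v$, contradicting shortlex-ness. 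The main obstacle I anticipate is running the two minimality arguments cleanly and invoking part (i) precisely at the key step that pins down the global cancellation partner of each intermediate letter.
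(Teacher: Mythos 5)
Your uniqueness argument is essentially the paper's: both locate, via geodesicity of $v$, an obstruction letter between the two candidate partners that fails to commute with $a_{i_k}$, and both dispose of it by tracing where its own cancellation partner in $w$ must sit. For the non-linking claim your skeleton is also sound and close to the paper's: a minimal crossing configuration, the deletion condition applied to the subword between the two members of one cancelling pair, survival (hence commutation) of the intermediate letters, and a shortlex or geodesic contradiction after sliding a letter. Your $w$-side argument correctly kills the cases $a<b$ and $a=b$ for the $(k'-k,\,l-l')$-minimal configuration.

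The gap is in the remaining case $a>b$. That inequality is a statement about the letters of the $(k'-k,\,l-l')$-minimal configuration, but the symmetric argument you then invoke is run on a \emph{different} configuration, the one minimal for $(l-l',\,k'-k)$, and there is no reason these coincide. Your $v$-side analysis shows that for the second configuration every $a_{j_q}$ with $l'\le q<l$ commutes with $b$, so $b=a_{j_l}$ can be slid to position $l'$; but this contradicts shortlexness of $v$ only if $b<a$ holds \emph{for that configuration}. In fact, running your own $w$-side reasoning in mirror image, shortlexness of $v$ forces $b>a$ there (the slide must not produce an earlier word, and equality would contradict geodesicity), just as shortlexness of $w$ forces $a>b$ for the first configuration. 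So each minimal configuration survives its own test, and you are left needing to exclude the scenario in which the $w$-gap-minimal crossing has $a>b$ while the $v$-gap-minimal crossing has $a<b$; your argument does not address this. The paper closes the loop differently: it stays with a single minimal configuration, derives $a>b$, uses shortlexness of $v$ to produce an obstruction letter strictly between $a_{j_{l'}}$ and $a_{j_l}$ not commuting with $b$, shows that this letter must cancel with a letter of $w$ lying before $a_{i_k}$, and iterates this production of obstructions until finiteness of the words gives the contradiction. Some such iteration (or a single configuration that is simultaneously extremal for both gaps) is needed to finish your case $a>b$.
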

	
	\begin{proof}
		Let $a_{i_k}$ be the first letter of $w$ that can cancel with two letters, $a_{j_l}$ and $a_{j_{o}}$ of $v$, and let $o>l$. Then $a_{i_k}=a_{j_l}=a_{j_{o}}$ in order to cancel, and there is a letter $a_{j_{l'}}$ with $l<l'<o$ failing to commute with $a_{j_l}=a_{j_{o}}$ because no cancellation can be possible between letters of $v$. So in the product $w^{-1}v$, $a_{j_{l'}}$ must cancel with some letter of $w$ prior to $a_{i_k}$ canceling with $a_{j_{o}}$. But then the $a_{j_{l'}}$ appearing in $w$ must be before $a_{i_k}$, which prevents $a_{i_k}$ from canceling with $a_{j_l}$.
		
		So every letter of $w$ cancels with a unique letter of $v$ if it cancels at all. 
		
		Now suppose that $k<p$ such that $a_{i_k}$ and $a_{i_p}$ both cancel, with no other letters canceling in between them, and they cancel with $a_{j_l}$ and $a_{j_o}$ respectively for $l>o$. Take $k, p$ to be the first such pair with linked cancellation.  $w^{-1}v=_{Geo}a_{i_n}...a_{i_1}a_{j_1}...a_{j_m}$, so that these letters appear in order $a_{i_{p}}...a_{i_k}...a_{j_o}...a_{j_{l}}$. Therefore, to cancel, we require that $a_{i_k}$ and $a_{i_p}$ commute. Since none of the letters between $a_{i_k}$ and $a_{i_{p}}$ cancel, it follows also that $a_{i_{p}}$ commutes with the intermediate letters, so that $a_{i_k}$ is earlier in the alphabetic order than $a_{i_{p}}$. 
		
		Now in order for the letters $a_{j_l}=a_{i_{k}}$ and $a_{j_{o}}=a_{i_p}$ to appear in this order in the shortlex word $v$, we require a letter $a_{j_{l'}}$ between them not commuting with $a_{j_{l}}$. Then the cancellation between $a_{i_k}$ and $a_{j_{l}}$ requires that $a_{j_{l'}}$ cancel first with a letter before $a_{i_k}$, and that $a_{j_o}=a_{i_{p}}$ commutes with $a_{j_{l'}}$. As a consequence, the three letters $a_{j_{l'}}, a_{j_{l}}=a_{i_k}, a_{j_o}=a_{i_p}$ appear in this order in $w$ and in the order $a_{j_o}, a_{j_{l'}}, a_{j_{l}}$ in the word $v$. Now if $a_{j_{l'}}$ precedes $a_{j_o}$ in the shortlex order, then new letter $a_{j_{l''}}$ failing to commute with $a_{j_{l'}}$ must be inserted between the two in order for $v$ to be shortlex, which will require a matching copy before $a_{j_{l'}}$ in $w$. There are only finitely many letters before $a_{i_k}$, so iterating this argument finitely many times if needed, we may assume that $a_{j_l''}$ instead follows $a_{j_o}$ in the shortlex order.
		
		If instead $a_{j_{l'}}>a_{j_o}=a_{i_p}$, then we need a letter $a_{i_{p'}}$ between the two in $w$ not commuting with $a_{i_{p}}=a_{j_o}$. This new letter cannot be between $a_{i_k}$ and $a_{i_{p}}$ since by assumption no cancellation happens between the two of them, so it must occur before $a_{i_k}$. But then $a_{j_{l''}}$ and $a_{i_{p'}}$ constitute an earlier alternating pair than $a_{i_k}$ and $a_{i_{p}}$ in violation of the assumption. We thus repeatedly insert  letters between $a_{j_l}$ and $a_{j_{o}}$ and matching ones before $a_{i_k}$ so that the new letter commutes with $a_{j_l}=a_{i_{k}}$. $a_{i_{k}}=a_{j_l}$ is required to commute with this letter, so another letter is required before to preserve the shortlexness of $v$. But since there are finitely many letters between $a_{j_l}$ and $a_{j_{o}}$, this cannot continue indefinitely.
	\end{proof}
	
	Of course, we need to consider cancellation between horocyclically shortlex words, and it will be inconvenient to keep track of the prefixes. The following lemma shows that the same result extends to horocyclic suffixes. 
	
	\begin{lemma} \label{HorocyclicNonLinking}
		
		Let $p_1$, $p_2$, $w$, and $v$ be shortlex words so that $p_1w$ and $p_2v$ are (not necessarily shortlex) geodesics. Denote by $w'$ and $v'$ the order the letters of $w$ and $v$ appear in after shortlex rearrangement of $p_1w$ and $p_2v$. 
		
		Label the letters of $w'$ by $a_{i_1}...a_{i_n}$ and the letters of $v'$ by $a_{j_1}...a_{j_m}$ as before. Then in the difference $(p_1w)^{-1}p_2v$, the letters of $w'$ which cancel with letters of $v'$ do so uniquely, and if the letters $a_{i_k}$ and $a_{i_{k'}}$ with $k'>k$ cancel with the letters $a_{j_l}$ and $a_{j_{l'}}$, with $l'>'$, then $a_{i_k}$ cancels with $a_{j_l}$ and $a_{i_{k'}}$ cancels with $a_{j_{l'}}$
		
	\end{lemma}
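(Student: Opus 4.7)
The plan is to reduce to Lemma \ref{ShortlexNonLinking} by shortlex rearrangement. Let $s_1$ be the shortlex form of $p_1w$, and $s_2$ the shortlex form of $p_2v$. Because $p_1w$ and $p_2v$ are geodesic, the passages $p_1w \to s_1$ and $p_2v \to s_2$ involve only commutation moves and no cancellations; hence every letter-instance of $w$ in $p_1w$ also appears in $s_1$, and by the definition of $w'$ the letters of $w$ occur in $s_1$ in the order $w'$. Analogously the $v$-instances appear in $s_2$ in the order $v'$. Write $\alpha(k)$ for the position in $s_1$ of the $k$-th letter of $w'$ and $\beta(l)$ for the position in $s_2$ of the $l$-th letter of $v'$; both $\alpha$ and $\beta$ are strictly increasing.

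The key claim is that the cancellation pairing between $w$-instances and $v$-instances in the reduction of $(p_1w)^{-1}(p_2v)$ coincides with the pairing obtained from the reduction of $s_1^{-1}s_2$. This is because commutations within either factor move letter-instances past commuting neighbors without creating or destroying cancellation opportunities: in a right-angled Coxeter group, the matching between cancelled letter-instances is determined by the underlying group element and the multiset of letter-instances, not by a particular geodesic spelling. With this in hand, I apply Lemma \ref{ShortlexNonLinking} to the shortlex pair $s_1, s_2$, concluding that cancellations between letters of $s_1$ and letters of $s_2$ are unique and non-linking.

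To finish, restrict the cancellation pairing to those pairs in which the $s_1$-letter originates from $w$ and the $s_2$-letter originates from $v$. Uniqueness descends immediately. A would-be linked pair in the restricted pairing, with $k<k'$ but $l>l'$, would yield positions $\alpha(k)<\alpha(k')$ in $s_1$ paired with positions $\beta(l)>\beta(l')$ in $s_2$, contradicting the non-linking guaranteed by Lemma \ref{ShortlexNonLinking} on the full shortlex pair. The one step requiring care is the invariance claim in the middle paragraph --- verifying that shortlex rearrangement does not disturb which $w$-instance is paired with which $v$-instance --- but this is a standard consequence of the trace-monoid structure of reductions in right-angled Coxeter groups, and once accepted, the rest is a direct restriction argument from the shortlex setting.
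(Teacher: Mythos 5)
Your proof is correct and follows essentially the same route as the paper: both reduce to Lemma \ref{ShortlexNonLinking} applied to the shortlex rearrangements of $p_1w$ and $p_2v$, using that $w'$ and $v'$ record the induced orders so that a linked or non-unique cancellation would violate that lemma. The paper is merely less explicit about the invariance of the cancellation pairing under commutation moves (and argues uniqueness directly rather than by restriction from the shortlex case), but these are presentational differences only.
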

	
	Note that this lemma covers cases where the words $w$ and $v$ are suffixes of potentially-different lengths.
	
	\begin{proof}
		
		To see that $a_{i_k}$ cancels with a unique $a_{j_l}$, apply the previous argument. If it cancels with $a_{j_l}$ and $a_{j_{l'}}$, then there is a letter $a_{j_o}$ between the two which fails to commute with $a_{j_l}$ in order for $pv$ to be geodesic. But such a letter prevents $a_{i_k}$ from commuting to reach $a_{j_{l'}}$. 
		
		Since the order of the $a_{i_k}$ and $a_{j_l}$ is induced by the order in a shortlex word, the letters $a_{i_{k'}}$ for $k'>k$ appear later than $a_{i_k}$ in the shortlex word equivalent to $p_1w$, and the same holds respectively for letters $a_{j_{l'}}$ with $l'<l$ appear earlier in the shortlex word for $p_2v$. Thus if $a{i_k}$ and $a_{j_l}$ cancel, then a cancellation between $a{i_{k'}}$ and $a_{j_{l'}}$ with $k'>k$ and $l'<l$ violates Lemma \ref{ShortlexNonLinking} applied to the shortlex words equivalent to $p_1w$ and $p_2v$.	
	\end{proof}
	
	Colloquially, these lemmas say that cancellation between shortlex words or between horocyclic suffixes must be unlinked. We will often refer to this fact in these terms without a formal citation. As a result, we obtain a preliminary necessary condition for $w$ and $v$ to have close successors.
	
	\begin{lemma}\label{DivergenceGraphEdgeNecessaryCondition}
		
		Let $w_{horosuff}$ and $v_{horosuff}$ be horocyclic suffixes, and multiply them by sufficient prefixes so that the resulting words $w$ and $v$ are horocyclically shortlex and on the same horosphere. Write $w^{-1}v=_{Geo}w_{sub}^{-1}v_{sub}$ where $w_{sub}$ and $v_{sub}$ are the subwords of $w$ and $v$ that do not cancel. Suppose $w_{sub}^{-1}v_{sub}$ contains a pair of letters $a_k$ and $a_l$ that do not commute, and that, for any horocyclic successors $ww'$ and $vv'$ to $w$ and $v$, this pair $a_k$ and $a_l$ still cannot be canceled. Then $w_{horosuff}$ and $v_{horosuff}$ do not have close horocyclic successors.
		
	\end{lemma}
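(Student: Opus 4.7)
The plan is to argue by contradiction. Assume that $w_{horosuff}$ and $v_{horosuff}$ do have close horocyclic successors, i.e.\ there is a bound $M$ such that for every $m$ we may choose $w_m' \in P^{-m}(w)$ and $v_m'\in P^{-m}(v)$ realizing $d(ww_m',vv_m')\le M$. Since $ww_m'$ and $vv_m'$ are horocyclic successors, this distance equals the reduced length of the word $X_m := (w_m')^{-1}w^{-1}vv_m'$, and by the hypothesis the fixed occurrences of $a_k$ and $a_l$ coming from $w_{sub}^{-1}v_{sub}$ survive this reduction no matter which $w_m'$ and $v_m'$ are chosen.

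The key observation is that the persistent non-commuting pair $(a_k,a_l)$ acts as a barrier between the prefix $(w_m')^{-1}$ and the suffix $v_m'$ of $X_m$. Consider any letter $\beta$ in $(w_m')^{-1}$ that eventually cancels during reduction with some letter $\beta'$ in $v_m'$. At the moment of annihilation $\beta$ and $\beta'$ are adjacent, yet in $X_m$ they are separated by both $a_k$ and $a_l$, which remain in the word throughout the reduction. Hence the generator $\beta=\beta'$ must have been commuted past both $a_k$ and $a_l$ (whether by moving $\beta$ rightward, $\beta'$ leftward, or shifting $a_k,a_l$ out of the way), forcing $\beta\in T := Link(a_k)\cap Link(a_l)$. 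For any two $\beta,\beta'' \in T$ the four vertices $\{\beta,\beta'',a_k,a_l\}$ would form an induced 4-cycle in $\Gamma$ unless $\beta$ and $\beta''$ are adjacent, so the no-induced-4-cycle standing assumption forces $T$ to be a clique; in particular $|T|\le Clique(\Gamma)-1$.

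Now exploit the shortlex structure of $w_m'$, which has length $\Theta(m)$. A maximal run of $T$-letters in a shortlex word is strictly increasing in alphabetical order and thus of length at most $|T|$; between two such runs must sit a letter failing to commute with some element of $T$, and since $T$ is a clique such a letter cannot itself lie in $T$. Writing $n_T$ and $n_{\bar T}$ for the $T$- and non-$T$-letter counts in $w_m'$, one obtains $n_T\le (n_{\bar T}+1)|T|$ and hence $n_{\bar T}\ge (m-|T|)/(|T|+1)$. The same barrier argument shows that a non-$T$ letter of $w_m'$ cannot cancel with anything in $v_m'$, so it can only cancel with a letter of the fixed word $v$, giving at most $|v|$ such cancellations. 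Thus at least $n_{\bar T}-|v|=\Omega(m)$ letters of $w_m'$ survive in the reduced form, forcing $d(ww_m',vv_m')\to\infty$ as $m\to\infty$ and contradicting the bound $M$. The main obstacle is verifying the barrier claim carefully: using Lemma \ref{HorocyclicNonLinking} together with the step-by-step structure of RACG reduction, one must rule out that any clever reordering of intermediate cancellations creates a shortcut for $\beta$ across the persistent pair without the commutation conditions $\beta\in Link(a_k)$ and $\beta\in Link(a_l)$.
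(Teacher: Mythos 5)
Your proof is correct and follows essentially the same route as the paper's: both use the persistent non-commuting pair to force every letter cancelling between the two successor words to lie in $Link(a_k)\cap Link(a_l)$, which the square-free standing assumption makes a clique, and then conclude that only boundedly many cancellations can occur so the reduced length of $(ww')^{-1}vv'$ grows without bound. The only (harmless) difference is the final count: the paper bounds the number of cross-cancellations directly by noting that a clique letter occurring twice in the geodesic successor and commuting across would have to cancel with its own copy, whereas you bound the density of clique letters in the shortlex successor via the run-length argument; both observations are immediate and yield the same contradiction.
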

	
	\begin{proof}
		
		Suppose that $w$ and $v$ have horocyclic successors $ww'$ and $vv'$. Recall that $w'$ and $v'$ are necessarily shortlex.
		
		Consider letters that cancel between $w'$ and $v'$. Since $a_k$ and $a_l$ cannot be canceled, letters $a_m$, $a_n$ commuting across $w_{sub}^{-1}v_{sub}$ must commute with $a_k$ and $a_l$. By the first part of Lemma \ref{RipsGraphTraversability}, the two commute.
		
		Therefore, the letters of $w'$ that can cancel with those in $v'$ must all be from a single clique in $\Gamma$. No such letter $a_k$ appears more than once in $w'$ and commutes across to $v'$, because these two letters $a_k$ then would cancel with one another, so that $w'$ is not shortlex. So there is a bound on the number of cancellations possible between $w'$ and $v'$.
		
		However, since $w_{sub}^{-1}v_{sub}$ is boundedly long, only boundedly many cancellations may occur with letters of $w_{sub}^{-1}v_{sub}$. Therefore, only boundedly many letters cancel out of $w'$ and $v'$. This means that the reduced length of $(ww')^{-1}vv'$ grows without bound as the lengths of $w'$ and $v'$ grow, so that the two do not have close horocyclic successors.	
	\end{proof}
	
	\begin{proposition} \label{NonCommutingLettersInDifferentWords}
		
		Retain the notation of Lemma \ref{DivergenceGraphEdgeNecessaryCondition}. If the non-commuting letters $a_k$ and $a_l$ in $w_{sub}^{-1}v_{sub}$ come from different words, and $|w_{suff}|=|v_{suff}|$, then these letters will never cancel in any horocyclic successor.
		
	\end{proposition}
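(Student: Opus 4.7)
The assumption $|w_{suff}|=|v_{suff}|$, combined with $b_\gamma(w)=b_\gamma(v)$, forces $w_{pref}=v_{pref}$ by Lemma~\ref{CalculatingBusemannFunctions}, so $w^{-1}v=w_{suff}^{-1}v_{suff}$ and the prescribed letter $a_k\in w_{sub}$ is a genuine letter of $w$, while $a_l\in v_{sub}$ is a genuine letter of $v$. Fix horocyclic successors $ww'$ and $vv'$, and analyze the product $(ww')^{-1}(vv')=w'^{-1}w^{-1}vv'$; since $ww'$ and $vv'$ are each geodesic, no cancellation occurs inside $(ww')^{-1}$ or inside $vv'$, so every canceling pair in the reduction joins one letter of $(ww')^{-1}$ to one letter of $vv'$. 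This is the setup in which I will show that neither $a_k$ nor $a_l$ appears in a canceling pair, for any choice of $w'$ and $v'$.

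The main tool is the non-linking property of these cancellation pairs, which extends the reasoning in Lemmas~\ref{ShortlexNonLinking}--\ref{HorocyclicNonLinking}: any two canceling pairs are either nested or disjoint, never interleaved. The consequence I need is that if $(A,B)$ is a canceling pair, then every letter whose position lies strictly between $A$ and $B$ is either unpaired (and hence must commute with the letter canceling at $A$ and $B$) or paired with another letter whose position also lies strictly between $A$ and $B$.

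With this in hand I rule out every possible partner for $a_k$. If $a_k$ pairs with a letter of $v$, then the pair together with every nested subpair lies inside $w^{-1}v$, and the cancellation restricts to a legitimate cancellation of $a_k$ in $w^{-1}v$ alone, contradicting $a_k\in w_{sub}$. If instead $a_k$ pairs with a letter of $v'$, then $a_l$ lies strictly between $a_k$ and its partner; since $a_k$ and $a_l$ do not commute, $a_l$ itself must be paired, with its partner nested inside $a_k$'s pair. A partner in $w'$ would lie at a position to the left of $a_k$'s own position, violating the nesting condition, so $a_l$'s partner must be in $w$; but then the same restriction argument produces a cancellation of $a_l$ in $w^{-1}v$, contradicting $a_l\in v_{sub}$. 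Hence $a_k$ is never paired. A symmetric case analysis, swapping the roles of $w$ and $v$ and of $a_k$ and $a_l$, shows that $a_l$ is never paired either.

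The one delicate point, and where I expect the main technical obstacle to lie, is the \emph{restriction} step: one must verify that a candidate pair in $(ww')^{-1}(vv')$ whose endpoints and whose interior subpairs all lie in $w^{-1}v$ really does descend to a valid reduction of $w^{-1}v$ as a standalone product. This should be handled by induction on the nesting depth of the pattern of pairs using non-linking; since every in-between letter that is paired at all is paired strictly inside the same range, restricting the pairing matrix to the subword $w^{-1}v$ produces a consistent matching whose associated commutations still go through, and the restricted cancellation contradicts the definition of $w_{sub}$ or $v_{sub}$ exactly as claimed.
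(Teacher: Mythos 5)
Your proof is correct and is essentially the paper's argument in expanded form: the paper also reduces to the observation that, since $a_k$ and $a_l$ do not commute and sit in different words, the cancellation of either in a successor would first require the cancellation of the other, producing linked cancellation in violation of Lemma~\ref{HorocyclicNonLinking}. Your extra care with the nesting/restriction step (ruling out a partner for $a_k$ inside $v$ itself) fills in details the paper leaves implicit, but it is the same approach.
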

	
	\begin{proof}
		
		Suppose that $a_k$ is a letter of $w_{sub}^{-1}$ and $a_l$ is a letter of $v_{sub}$. At most one of the two is a prefix letter. If one is, say $a_k$, then by Remark \ref{DescribingPredecessors}, the only way to change $v$'s prefix in a successor is if $v_{suff}$ commutes entirely with the prefix letter to be written. So $a_k$ can only be canceled if $v_{suff}$ commutes entirely with $a_k$, which it does not by assumption.
		
		Instead let both be suffix letters. Since the two do not commute, cancellation of either one would first require the other to cancel, which would create linked cancellation.	
	\end{proof}
	
	\begin{corollary} \label{NonCommutingLettersCor}
		
		Retain the notation of Proposition \ref{NonCommutingLettersInDifferentWords}. If $(w,v)$ is an edge in the divergence graph, and $w_{sub}^{-1}v_{sub}$ contains a pair of non-commuting letters $a_k$ and $a_l$, then both of these letters appear in either the subword $w_{sub}^{-1}$ or $v_{sub}$, and the letters of the other subword all commute with one another and with $a_k$ and $a_l$.
		
	\end{corollary}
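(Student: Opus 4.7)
The plan is to chain the two preceding results---Lemma \ref{DivergenceGraphEdgeNecessaryCondition} and Proposition \ref{NonCommutingLettersInDifferentWords}---and then invoke the standing assumption that $\Gamma$ has no induced $4$-cycles. Since $(w,v)$ is an edge of the divergence graph, $w$ and $v$ have close horocyclic successors, so by the contrapositive of Lemma \ref{DivergenceGraphEdgeNecessaryCondition}, any non-commuting pair in $w_{sub}^{-1}v_{sub}$ must become cancelable in some horocyclic successor of the pair.

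First I would establish that $a_k$ and $a_l$ both lie in the same one of $w_{sub}^{-1}$ and $v_{sub}$. If instead they were split between the two subwords, Proposition \ref{NonCommutingLettersInDifferentWords} would assert that this pair cannot be canceled in any horocyclic successor, contradicting the observation above. Without loss of generality, assume both $a_k$ and $a_l$ lie in $w_{sub}^{-1}$. Next I would show that every letter $a_m$ appearing in $v_{sub}$ commutes with both $a_k$ and $a_l$: otherwise (say $a_m$ fails to commute with $a_k$) the pair $\{a_k, a_m\}$ would itself be a cross-word non-commuting pair in $w_{sub}^{-1}v_{sub}$, and a second application of Proposition \ref{NonCommutingLettersInDifferentWords} would again be violated. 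Finally, suppose two letters $a_m, a_n$ of $v_{sub}$ failed to commute. Then the vertices $\{a_k, a_m, a_l, a_n\}$ span an induced $4$-cycle in $\Gamma$: the edges $a_ka_m$, $a_ma_l$, $a_la_n$, $a_na_k$ are all present by the previous step, while $a_ka_l$ is absent by hypothesis on $a_k, a_l$ and $a_ma_n$ is absent by the supposition. This directly contradicts the standing assumption that $\Gamma$ has no induced square subgraphs, forcing $a_m$ and $a_n$ to commute.

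The heavy lifting has already been done upstream in Proposition \ref{NonCommutingLettersInDifferentWords}; the only genuinely new ingredient is the small graph-theoretic step that upgrades commutation of each letter of $v_{sub}$ with $a_k$ and $a_l$ to mutual commutation among the letters of $v_{sub}$, via the no-induced-square hypothesis. I do not anticipate any significant obstacle beyond verifying that the suffix-length condition imposed in Proposition \ref{NonCommutingLettersInDifferentWords} does not secretly restrict the corollary; on inspection that condition enters only to rule out cancellation involving a prefix letter, and a non-commuting pair spanning the prefix never cancels in a successor regardless of suffix lengths, so the reduction goes through.
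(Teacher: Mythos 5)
Your proof is correct and takes essentially the same route as the paper's: the first claim is the contrapositive of Proposition \ref{NonCommutingLettersInDifferentWords}, and your explicit induced-square argument for mutual commutation in the other subword is exactly the content the paper invokes via the first part of Lemma \ref{RipsGraphTraversability}.
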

	
	\begin{proof}
		The first part of the conclusion is the contrapositive of Proposition \ref{NonCommutingLettersInDifferentWords}. Then the second part follows from Lemma \ref{RipsGraphTraversability}.
	\end{proof}
	
	Besides the uncancelable clique, every other letter can be canceled by some successor. In fact, there is a single successor in which every potentially-cancelable letter does cancel.
	
	\begin{lemma} \label{MaximalCancellation}
		
		Suppose $w$ and $v$ are horocyclically shortlex words for which the clique $K(v,w)$ of letters in $w_{sub}^{-1}v_{sub}$ is uncancelable. Then $K(v,w)$ includes either all of $w_{sub}^{-1}$ or all of $v_{sub}$ (potentially both). Moreover, if $K(v,w)$ contains all of $w_{sub}^{-1}$ but not all of $v_{sub}$ (resp. all of $v_{sub}$ but not all of $w_{sub}^{-1}$), then there is a horocyclic successor $w'$ to $w$ (resp. a horocyclic successor $v'$ to $v$) such that $(ww')^{-1}v$ (resp. $w^{-1}vv'$) contains only the uncancelable letters in $K(v,w)$.
		
	\end{lemma}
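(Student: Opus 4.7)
The plan is to combine the non-linking property of cancellation (Lemma \ref{HorocyclicNonLinking}) with a structural observation constraining which letters of $w_{sub}^{-1}v_{sub}$ can be cancelled.

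\textbf{Key preliminary.} If $a \in w_{sub}^{-1}$ is cancellable, then every letter of $v_{sub}$ commutes with $a$; symmetrically for cancellable $b \in v_{sub}$. Indeed, the matching copy that cancels $a$ must be added inside some horocyclic successor $v'$ of $v$ (it cannot lie in $w'$, which is on the same side, nor already in $v_{sub}$, since such a letter would have cancelled already in the reduction $w^{-1}v = w_{sub}^{-1}v_{sub}$), and this new copy must commute leftward through all of $v_{sub}$ to meet $a$, forcing $v_{sub}$ into the centralizer of $a$.

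\textbf{Part 1 (by contradiction).} Assume $a \in w_{sub}^{-1} \setminus K(v,w)$ and $b \in v_{sub} \setminus K(v,w)$ both exist. Choose horocyclic successor pairs $(ww_a, vv_a)$ and $(ww_b, vv_b)$ witnessing the individual cancellations, and consider the combined pair $(ww_b, vv_a)$. Its product reduces to $w_b^{-1}\, w_{sub}^{-1}\, v_{sub}\, v_a$, in which the matching $b$'s lie in $w_b^{-1}$ and $v_{sub}$ and the matching $a$'s lie in $w_{sub}^{-1}$ and $v_a$. By the key preliminary each cancellation is individually feasible; moreover, the commuting structure of $v_{sub}$ relative to $a$ and of $w_{sub}^{-1}$ relative to $b$ is preserved after removing either pair, so performing one cancellation leaves the other still feasible. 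By confluence of Coxeter reduction, both cancellations must occur in the unique reduced form, producing a linked pattern of cancellation pairs and contradicting Lemma \ref{HorocyclicNonLinking}.

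\textbf{Part 2 (construction).} Suppose $K(v,w) \supseteq w_{sub}^{-1}$ and let $\{b_1,\dots,b_k\} = v_{sub}\setminus K(v,w)$. By the key preliminary each $b_i$ commutes with all of $w_{sub}^{-1}$; since $K(v,w)$ is a clique, any non-commuting pair in $w_{sub}^{-1}v_{sub}$ is confined to one side, and the analysis underlying Corollary \ref{NonCommutingLettersCor} forces the $b_i$ to commute pairwise and with the letters of $K(v,w)\cap v_{sub}$. Using the cancellability of each $b_i$ to show it may be appended to the evolving shortlex state of the extension, I would build $w' = b_{i_1}\cdots b_{i_k}$ in an order compatible with the shortlex FSM starting from $S(w)$. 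In the resulting product $(ww')^{-1}v = w'^{-1} w_{sub}^{-1} v_{sub}$, each new copy of $b_i$ in $w'^{-1}$ commutes rightward through $w_{sub}^{-1}$ and through the portion of $v_{sub}$ preceding the old $b_i$, so each $b_i$ cancels, leaving exactly the letters of $K(v,w)$.

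The subtlest step is the feasibility claim in Part 1: verifying that performing one cancellation in the combined pair leaves the other feasible, so that the unique reduction really exhibits the linked pattern. This reduces to the observation that the commuting data identified by the key preliminary is unaffected by removing the letters of the other cancelled pair. In Part 2 the main technical issue is the FSM compatibility: we must produce an ordering of the $b_i$ valid under the shortlex machine state of each partial extension, which follows from the pairwise commutation of the $b_i$ together with the cancellability of each.
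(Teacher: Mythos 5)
Your overall architecture (a commutation preliminary, a crossing-cancellation contradiction for the dichotomy, an explicit successor for the construction) parallels the paper's, but two of your supporting claims do not hold as stated, and the decisive one is in Part 2. You assert that the cancelable letters $b_1,\dots,b_k$ of $v_{sub}$ commute pairwise, citing the analysis behind Corollary \ref{NonCommutingLettersCor}. That corollary (via Proposition \ref{NonCommutingLettersInDifferentWords}) only constrains non-commuting pairs split \emph{between} $w_{sub}^{-1}$ and $v_{sub}$; it says nothing about two letters inside the same subword. Indeed two cancelable letters $b_1, b_2$ of $v_{sub}$ with $b_1$ preceding $b_2$ need not commute: their partners sit in $w'^{-1}$ at the far left of the product, so the pairs $(b_1^*,b_1)$ and $(b_2^*,b_2)$ can be nested rather than crossing, and $b_1$ may cancel first, clearing the way for $b_2$. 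With pairwise commutation gone, ``the $b_i$ in some FSM-compatible order'' is not a construction. The paper instead appends the cancelable letters in their order of occurrence and runs an inductive case analysis on the evolving state $S(va_{k_0}\cdots a_{k_n})$, showing that if the next cancelable letter were forbidden it could not in fact cancel (or the source word would fail to be shortlex). That induction is the actual content of Part 2 and is missing from your sketch.

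Your ``key preliminary'' is true, but the one-line justification is incomplete: a letter $d$ of $v_{sub}$ lying between $a$ and its partner $a^*$ need not commute with $a$ a priori, since it could in principle be removed by an earlier cancellation. Ruling that out requires Lemma \ref{HorocyclicNonLinking}: any partner for $d$ would either create a crossing pair with $(a,a^*)$ or force a cancellation between $w_{sub}^{-1}$ and $v_{sub}$, which is impossible by the definition of those subwords. The same non-linking argument, rather than the commuting data you invoke, is what makes your Part 1 combination $(ww_b, vv_a)$ legitimate: non-linking forces each cancellation pair to be local to the interval between its two letters, so replacing $v_b$ by $v_a$ cannot disturb the $b$-cancellation. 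With that repair, Part 1 is a workable alternative to the paper's route, which instead extends by the single first cancelable letters $a_{l_0}$ and $a_{k_0}$ (after checking they are permitted by the opposite states) and exhibits the crossing directly.
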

	
	\begin{proof}
		
		Let $a_{k_0}$ and $a_{l_0}$ be the first cancelable letters of $w$ and $v$ respectively, if they exist. If $S(v)$ does not permit $a_{k_0}$, then  $a_{k_0}$ can only be written into a successor of $v$ after another letter $a_{k_0}'$ not commuting with it. But this would mean $a_{k_0}$ cannot be canceled because no letter before $a_k$ can, so that there is no letter to cancel with $a_{k_0}'$. Analogously, $S(w)$ must permit $a_{l_0}$. But then $wa_{l_0}$ and $va_{k_0}$ have linked cancellation, since writing a letter in $w$ after $a_{k_0}$ cannot prevent $a_{k_0}$ from canceling. This proves the first assertion.
		
		WLOG, then let $w$ contain cancelable letters. Denote them $a_{k_0}, ... a_{k_m}$. The first such, $a_k$ is permitted by $S(v)$. The result will then hold by repeated application of this same argument, provided that $S(va_{k_0} ... a_{k_n})$ always permits $a_{k_{n+1}}$. If this does not happen then either $a_{k_{n+1}}$ is forbidden by $S(v)$ and each $a_{k_0}, ... a_{k_n}$ commutes with $a_{k_{n+1}}$, or $a_{k_{n+1}}$ commutes with and precedes some earlier letter $a_{k_o}$ and commutes with $a_{k_{o+1}}, ... a_{k_n}$. In the first case, $a_{k_{n+1}}$ can only be written after $v$ if it is first preceded by an $a_{k_{n+1}}'$ not commuting with it. But this letter is therefore not $a_{k_0}, ... a_{k_n}$ and therefore does not cancel with any letter in $w$, so that $a_{k_{n+1}}$ is not cancelable. In the second case, the letters $a_{k_0}, ... a_{k_{n+1}}$ appear in order in $w$, with additional letters interspersed. Between $a_{k_{o}}$ and $a_{k_{n+1}}$ there appear only uncancelable letters of $w$, and the letters $a_{k_{o+1}}, ... a_{k_n}$ (the other option would be to have some letters in between that cancel with letters of $v$. But by unlinked cancellation this would make $a_{k_o}$ uncancelable). By assumption, $a_{k_{n+1}}$ commutes with each such letter, so that $w$ is not shortlex.
	\end{proof}
	
	As a consequence of this lemma, we can always find a single successor that achieves the maximal amount of cancellation. We next show that determining whether two words have close successors is easy in this setting. When two words $w$ and $v$ with close successors differ by an uncancelable clique, we can find successor rays $\eta_w$ and $\eta_v$ that are close for all time in the simplest way possible.
	
	\begin{lemma}\label{CliqueDifferenceAdjacency}
		
		Suppose $w^{-1}v=_{Geo}w_{sub}^{-1}v_{sub}$ where $w_{sub}^{-1}v_{sub}$ consists of a collection of commuting letters, with none canceling in any successor. Call this clique $K(v,w)$. Then $w$ and $v$ have close successors if and only if there are two nonadjacent letters $a_k$ and $a_l$ commuting with each letter of $K(v,w)$, so that $w(a_ka_l)^n$ and $v(a_ka_l)^n$ are horocyclic successors of $w$ and $v$ respectively.
		
	\end{lemma}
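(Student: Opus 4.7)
The $(\Leftarrow)$ direction is a direct computation: since $a_k$ and $a_l$ commute with every letter of $K(v,w)$, one has
\[(w(a_ka_l)^n)^{-1} v(a_ka_l)^n = (a_la_k)^n K(v,w) (a_ka_l)^n = K(v,w),\]
using $(a_la_k)^n(a_ka_l)^n = e$ and commuting $K(v,w)$ past $(a_ka_l)^n$. Since $|K(v,w)|$ is independent of $n$, the infimum distance between $P^{-2n}(w)$ and $P^{-2n}(v)$ is uniformly bounded, witnessing close successors.

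For $(\Rightarrow)$, apply Proposition \ref{CloseSuccessorRays} to obtain horocyclic successor rays $\eta_w$ from state $S(w)$ and $\eta_v$ from $S(v)$ that stay within a uniform constant $M$ for all time. Because $v = wK(v,w)$, the product $(w\eta_w|_{[0,n]})^{-1} v\eta_v|_{[0,n]}$ reduces to $\eta_w|_{[0,n]}^{-1} K(v,w) \eta_v|_{[0,n]}$, which has length at most $M$ for every $n$. Applying the unlinked-cancellation Lemma \ref{HorocyclicNonLinking} together with the hypothesis that no letter of $K(v,w)$ cancels in any successor, every letter of $\eta_w$ or $\eta_v$ that participates in a cancellation must commute past the wall of $K(v,w)$-letters between the two rays, and therefore commutes with every element of $K(v,w)$. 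Since only boundedly many letters may remain uncanceled, all but finitely many letters of $\eta_w$ and $\eta_v$ lie in the subgraph $\Gamma' \subseteq \Gamma$ consisting of vertices adjacent to every element of $K(v,w)$.

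Now the tail of $\eta_w$ is an infinite shortlex geodesic word in the subgroup $\langle \Gamma' \rangle$, so $\Gamma'$ cannot be a clique (otherwise $\langle \Gamma' \rangle \cong (\Z/2)^{|\Gamma'|}$ would be finite). Moreover, a shortlex word in a pairwise commuting alphabet is strictly alphabetically increasing and hence bounded, so there must be a position $i$ in the $\Gamma'$-tail of $\eta_w$ where $\eta_w(i)$ and $\eta_w(i+1)$ do not commute; I take these as $a_k, a_l$. The main obstacle is the final step: checking that $(a_ka_l)^n$ is writable directly from state $S(w)$, rather than merely from $S(w\eta_w|_{[0,i-1]})$. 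I would address this by combining Lemma \ref{MaximalCancellation}, which selects a successor realizing the maximal possible cancellation, with the state-evolution description in Remark \ref{DescribingPredecessors}: the letters of $\eta_w|_{[0,i-1]}$ either cancel against $\eta_v$ (and, by the argument above, commute with every element of $K(v,w)$) or survive and are absorbed into the bounded residue $K(v,w)$; in either case the commutation relations allow one to reroute past them and verify that $(a_ka_l)^n$ is a valid shortlex continuation from $S(w)$, and symmetrically from $S(v)$.
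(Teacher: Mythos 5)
Your $(\Leftarrow)$ direction is fine and matches the paper's (one-line) treatment. The problem is in $(\Rightarrow)$, and it sits exactly where you flag it: having located a non-commuting pair $a_k, a_l$ deep in the tail of $\eta_w$, you still owe the reader a proof that \emph{some} such pair is writable from $S(w)$ \emph{and} from $S(v)$, and "the commutation relations allow one to reroute past them" is not an argument. The danger is concrete: $a_k$ may be forbidden by $S(v)$ (it commutes with and alphabetically precedes the relevant last letter of $v$), in which case $a_k$ can only appear in a successor of $v$ after first writing some $a_k'$ not commuting with it; that $a_k'$ must then cancel against $\eta_w$, which by Lemma \ref{HorocyclicNonLinking} it cannot do without creating linked cancellation — so $a_k$ is in fact uncancelable and the residual clique between the rays is strictly larger than $K(v,w)$. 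Your sketch gives no mechanism for terminating this cascade. (A smaller, repairable slip: consecutive letters commuting does not imply the tail lies in a pairwise-commuting alphabet, so your derivation of a \emph{consecutive} non-commuting pair is not justified as written; but since the lemma does not require the pair to occur in $\eta_w$, only that $\Gamma'$ is not a clique, this part is cosmetic.)

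The paper closes exactly this gap with a global extremal argument that your proof has no substitute for: it argues by contradiction, choosing a counterexample $(w,v)$ for which the uncancelable clique $K(v,w)$ is \emph{maximal}, then runs a trichotomy on the first letter $a_k$ of a successor through which infinitely many close successors pass. If $a_k$ fails to commute with $K(v,w)$, Lemma \ref{DivergenceGraphEdgeNecessaryCondition} kills closeness; if $a_k$ is forbidden by $S(v)$, then $\{a_k\}\cup K(v,w)$ is a strictly larger uncancelable clique, contradicting maximality; and if $a_k$ commutes with everything adjacent to $K(v,w)$, only boundedly many such letters can be written before falling into one of the first two cases. To salvage your approach you would need to import this maximality bookkeeping (or prove an analogue of Proposition \ref{DivergenceEdgeCriterion} first) so that the "rerouting" step actually terminates.
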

	
	We remark that since $a_k$ and $a_l$ commute with $K(v,w)$ and not with one another (or else $(a_ka_l)$ geodesic), then we only need to check whether $a_k$ is in $S(w)$  (resp. $S(v)$) in order to determine if $w(a_ka_l)^n$ succeeds $w$, (resp. $v(a_ka_l)^n$ succeeds $v$).
	
	\begin{proof}
		
		Sufficiency is clear.
		
		To see necessity, suppose to the contrary that there is no such pair of letters for $K(v,w)$, but that $P^{-*}(w)$ and $P^{-*}(v)$ have close elements for all time. Choose $w$ and $v$ so that $K(v,w)$ is maximal among counterexamples.
		
		There is a letter $a_k$ so that $P(wa_k)=w$ and $P^{-*}(wa_k)$ is bounded infimum distance from $P^{-*}(v)$ (we need infinitely many horocyclic successors to $w$, and there are only finitely many words in $P^{-1}(w)$, so there must be at least one word $wa_k$ so that infinitely many of the chosen horocyclic successors go through $wa_k$). By assumption, at least one of the following holds of $a_k$: 
		
		\begin{enumerate}
			
			\item $a_k$ is not adjacent to $K(v,w)$.
			
			\item $a_k\in S(v)$.
			
			\item For each $a_l$ adjacent to $K(v,w)$, $a_k$ is adjacent to $a_l$.
			
		\end{enumerate}
		
		In case (1), writing $a_k$ following $w$ creates an uncancelable pair. Since $a_k$ and some letter $a_m$ in $K(v,w)$ do not commute, $a_k$ can only be canceled by a letter following $v$, after $a_m$ is canceled, and $a_m$ is assumed uncancelable. Hence by Lemma \ref{DivergenceGraphEdgeNecessaryCondition}, $wa_k$ and $v$ do not have close successors.
		
		In case (2) $a_k\cup K(v,w)$ is a clique. $a_k$ can be written after $v$ only after writing a letter $a_k'$ not commuting with $a_k$. Such a letter cannot cancel with any letters following $wa_k$ because this would violate Lemma \ref{HorocyclicNonLinking}. As a result, $a_k$ cannot cancel. But now $a_k\cup K(v,w)$ is a strictly larger uncancelable clique providing a counterexample, violating the assumed maximality.
		
		In case (3), note that there are finitely many such letters and they all commute with one another by assumption. So we can write only words of length at most $clique(\Gamma)$ in these letters to follow $w$ before we are forced to write a letter that falls into cases (1) or (2).
	\end{proof}
	
	In fact, it suffices to apply Lemma \ref{CliqueDifferenceAdjacency} to the special successors described in Lemma \ref{MaximalCancellation}.
	
	\begin{proposition}\label{DivergenceEdgeCriterion}
		
		Let $w$ and $v$ be two horocyclically shortlex words and suppose there is a horocyclic successor $v'$ to $v$ so that $w^{-1}v'$ consists of the uncancelable clique $K'(v,w)$. Then $w$ and $v$ have close successors if and only if $w$ and $v'$ do.
		
	\end{proposition}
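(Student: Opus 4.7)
The plan is to prove each direction of the biconditional separately, in both cases working with horocyclic successor rays as guaranteed by Proposition \ref{CloseSuccessorRays}, and exploiting the fact that $v$ and $v'$ differ by a bounded-length horocyclic path $v''$ with $v' = vv''$.

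For the $(\Leftarrow)$ direction, suppose $w$ and $v'$ have close horocyclic successors. Apply Proposition \ref{CloseSuccessorRays} to obtain rays $\eta_w$ from $w$ and $\eta_{v'}$ from $v'$ with $d(\eta_w(n), \eta_{v'}(n)) \leq C$ for all $n$. Let $k = |v''|$. Since $v''$ is a horocyclic successor word readable from $S(v)$ and $\eta_{v'}$ is readable from $S(v')$, the concatenation $v''\eta_{v'}$ is readable from $S(v)$. I would define a horocyclic successor ray $\eta_v$ from $v$ by traversing $v''$ letter by letter and then continuing along $\eta_{v'}$. For $n \geq k$, the triangle inequality gives $d(\eta_w(n), \eta_v(n)) \leq d(\eta_w(n), \eta_w(n-k)) + d(\eta_w(n-k), \eta_{v'}(n-k)) \leq k + C$; for $n < k$, the triangle inequality yields $d(\eta_w(n), \eta_v(n)) \leq 2k + d(w,v)$. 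So $w$ and $v$ have close successors.

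For the $(\Rightarrow)$ direction, suppose $w$ and $v$ have close successor rays $\eta_w$ and $\eta_v$. My strategy is to replace $\eta_v$ with a close-successor ray $\tilde{\eta}_v$ from $v$ that passes through $v'$ at time $k$; the tail $\tilde{\eta}_v|_{[k,\infty)}$ then gives a horocyclic successor ray from $v'$ which, after a bounded time-shift, stays close to $\eta_w$, yielding close successors of $w$ and $v'$. To construct $\tilde{\eta}_v$, I would analyze which letters are written by $\eta_v$. Since $v''$ consists precisely of the cancelable letters of $w_{sub}^{-1}$ in an order compatible with $S(v)$ (by the case hypothesis of Lemma \ref{MaximalCancellation}), and since Lemma \ref{HorocyclicNonLinking} constrains cancellations to be unlinked, I would argue that the cancelation steps within $\eta_v$ can be commuted to the initial segment so that $\tilde{\eta}_v$ writes the letters of $v''$ first and matches $\eta_v$ from time $k$ onward, at bounded perturbation.

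The main obstacle will be rigorously justifying this commutation: I must verify that rearranging $\eta_v$ to write $v''$ first still produces a valid horocyclic successor ray compatible with the state machine $M_{lex}$, and that the resulting ray remains within bounded distance of $\eta_w$. The bound $|K'(v,w)| \leq Clique(\Gamma)$ controls the size of $v''$, and the fact that all cancelable letters come from $w_{sub}^{-1}$ (rather than from $v_{sub}$) ensures that commuting them to the start does not introduce new uncancelable obstructions in the difference $\eta_w(n)^{-1}\tilde{\eta}_v(n)$. Once these combinatorial checks are in place, the tail-shift argument completes the proof.
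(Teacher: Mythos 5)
Your $(\Leftarrow)$ direction is correct but more elaborate than it needs to be: since every horocyclic successor of $v'$ is a horocyclic successor of $v$, one has $P^{-m}(v')\subseteq P^{-(m+k)}(v)$ where $k$ is the length of the segment from $v$ to $v'$, so the boundedness of the infimum distances transfers immediately; this is exactly the paper's one-line direction, and your ray construction, while valid, is not required.

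The $(\Rightarrow)$ direction has a genuine gap. Your plan is to take close rays $\eta_w$, $\eta_v$ and ``commute the cancellation steps of $\eta_v$ to the front'' so that the modified ray passes through $v'$. But the letters of the segment from $v$ to $v'$ need not appear anywhere in $\eta_v$: a close-successor ray from $v$ may deviate from that segment at its very first divergence point, writing a letter $a_{i_{l+1}}\neq a_{j_{l+1}}$. In that case there is nothing to commute --- by unlinked cancellation (Lemma \ref{HorocyclicNonLinking}) one of $a_{i_{l+1}}$ or $a_{j_{l+1}}$ then never cancels, and the rays remain close only because the permanently uncanceled letters form a strictly larger clique $K'\supsetneq K(v,w)$. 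This deviating case is the entire content of the proposition, and the paper's proof is built around it: it applies Lemma \ref{CliqueDifferenceAdjacency} to $K'$ at points far along the two rays to extract a non-commuting pair $a_{m}, a_{n}$ commuting with $K'$ (hence with $K(v,w)$), and then runs a finite backtracking argument to show that such a pair is already permitted by $S(w)$ and $S(v')$, invoking Lemma \ref{MaximalCancellation} at the last step to rule out the remaining obstruction. Your proposal has no mechanism for producing the alternating pair at $w$ and $v'$ when $\eta_v$ deviates, and the ``bounded perturbation'' you hope for is precisely what fails there; the combinatorial checks you defer are not routine but constitute the core of the argument.
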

	
	\begin{proof}
		
		Since $v'$ is a successor to $v$, $P^{-*}(v')$ is contained in $P^{-*}(v)$.  Therefore, one implication is immediate. We must show that if $P^{-*}(v)$ is within finite distance of $P^{-*}(w)$, then so is $P^{-*}(v')$.
		
		Suppose that the horocyclically shortlex rays $\eta_w$ and $\eta_v$ start at $w$ and $v$ and stay close forever, but that $\eta_v$ does not pass through $v'$. Write the edges that $\eta_v$ traverses $a_{i_1},a_{i_2},...$. Suppose $a_{j_1}...a_{j_k}$ is the horocyclically shortlex segment from $v$ to $v'$, and let $a_{i_1}, ... a_{i_l}=a_{j_1}, ... a_{j_l}$ but $a_{i_{l+1}}\ne a_{j_{l+1}}$, for some $l<k$. This makes either $a_{i_{l+1}}$ or the copy of $a_{j_{l+1}}$ in $w$ not cancel, since both canceling would create linked cancellation.
		
		Since $\eta_w$ and $\eta_v$ stay close for all time, there is a last uncanceled letter. Let $w''$ and $v''$ be words on $\eta_w$ and $\eta_v$ so that $w''^{-1}v''$ is exactly the clique of letters that will not cancel along the rays $\eta_w$ and $\eta_v$. Call this clique $K'$. $K$ is a proper subclique of $K'$ because it contains either $a_{i_{l+1}}$ or $a_{j_{l+1}}$. By Lemma \ref{CliqueDifferenceAdjacency}, there are letters $a_{m_1}$ and $a_{n_1}$ that do not commute with one another, commute with $K'$ (hence with $K(v,w)$) and that one of which (WLOG $a_{m_1}$) is permitted by $S(v'')$ and $S(w'')$. If $a_{m_1}$ is permitted by $S(v')$ and $S(w)$, then the proof is complete.
		
		For $a_{m_1}$ to be written after $w''$, it must either be permitted by $S(w)$ or be preceded by some $a_{m_2}$ that it does not commute with and which cancels a corresponding copy in $\eta_v$. In such a case, set $a_{n_2}=a_{m_1}$ and repeat with the pair $a_{m_2}$ and $a_{n_2}$. Note that $a_{m_2}$, while it does not necessarily commute with $K(v'',w'')$, must still commute with $K(v,w)$ in order for the pair to cancel. So we repeat the process with $a_{m_2}$ and $a_{n_2}$. After finitely many steps we get a new pair of successor rays $\eta_w'$ and $\eta_v'$ which end with an alternating non-commuting pair $a_{m'}$ and $a_{n'}$, such that both letters commute with $K(v,w)$ and such that $a_{m'}$ is permitted by $S(w)$ and commutes with the letters between itself and $w$. If $a_{m'}$ is permitted by $S(v')$, then we will be done.
		
		The end of the word $w$ consists only of uncancelable letters plus $a_{j_1}, ... a_{j_k}$, but no letters canceling with those in $v$, and $a_{m'}$ commutes with each uncancelable letter by assumption. Therefore, $a_{m'}$ is permitted by $S(a_{j_1}...a_{j_k})$. Since $v'=va_{j_1}...a_{j_k}$, it follows that $a_{m'}$ is forbidden in $S(v')$ only if it is forbidden in $S(v)$ and $a_{j_1}, ... a_{j_k}$ consists entirely of letters commuting with $a_{m'}$. Then $a_{m'}$ being permitted by $S(a_{j_1}, ... a_{j_k})$ shows that $a_{m'}$ must follow each of $a_{j_1}, ... a_{j_k}$. But $a_{m'}$ appears in $\eta_v'$, so this would require $a_{m'}$ to be preceded after $v$ by a letter $a_o$ that it does not commute with. Such a letter must cancel with a letter in $\eta_w'$. However, we cannot put $a_o$ between $w$ and $a_{m'}$, since by assumption $a_{m'}$ commutes with the letters between itself and $w$. Therefore, $a_o$ is a letter of $w$. Moreover, $a_o$ cannot be one of the $a_{j_1} ... a_{j_k}$ as all of these letters commute with $a_{m'}$. We therefore have found a cancelable letter outside of $a_{j_1} ... a_{j_k}$ in violation of Lemma \ref{MaximalCancellation}.
        
	\end{proof}
	
	As a result, given horocyclic words $w$ and $v$ on the same horosphere, such that $w$ has cancelable letters, we can determine whether the two have close successors based only on $S(w)$, $S(v')$, and $K(v,w)$. $S$-state can be computed for any horocyclic suffix by simply pre-pending a small positive prefix to $w_{horosuff}$ and $v'_{horosuff}$ and running the word through the shortlex machine. We also have FSMs that generate the language of horocyclic suffixes. So if $K(v,w)$ and $v'$ can be computed efficiently from horocyclic suffixes, we will have the necessary ingredients to draw the divergence graph.
	
	\subsection{Edges between words of the same suffix length}
	\label{subsec:DivergenceEdgesSameLength}
	
	In this subsection, we lay out an FSM that will calculate $K(v,w)$ and the cancelable word under the assumption that $|w_{suff}|=|v_{suff}|$. A variant in the case where $|w_{suff}|>|v_{suff}|$ will be presented in the next subsection.

	\begin{lemma} \label{MatchedCancellation}
		
		Let $w$ and $v$ be words so that $|w_{suff}|$ differs from $|v_{suff}|$ by at most $1$, and so that $|w_{suff}|\le |v_{suff}|$. Let $w_{HoroSuff}$ and $v_{HoroSuff}$ be the horocyclic suffixes, and let \newline $w_{HoroSuff}=w_1w_2w_3w_4$ (resp. $w_1w_2w_5w_6$).
		
		Then letters of $w_1$ only cancel with letters of $v_1$, and letters of $w_2$ only cancel with letters of $v_2$. If $|w_{suff}|=|v_{suff}|$, then the same holds for the pairs $(w_3, v_3)$ and $(w_4, v_4)$ (resp. for $(w_5, v_5)$ and $(w_6, v_6)$). If $|w_{suff}|=|v_{suff}|-1$, then letters in $w_4$ only cancel with letters in $v_5a_iv_6$ (resp. letters in $w_6$ only cancel with letters in $v_3a_jv_4$).
		
	\end{lemma}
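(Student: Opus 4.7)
The plan is to combine the horocyclic suffix representations of $w$ and $v$ with the unlinked-cancellation principle of Lemma \ref{HorocyclicNonLinking}. First I would use the hypothesis that $w$ and $v$ lie on the same horosphere together with Lemma \ref{CalculatingBusemannFunctions} to pin down the prefixes: when $|w_{suff}|=|v_{suff}|$ the prefixes agree exactly, and when $|w_{suff}|=|v_{suff}|-1$ they differ by a single trailing letter ($a_i$ or $a_j$, according to parity). Writing $w=w_{pref}\cdot w_1w_2w_3w_4$ and $v=v_{pref}\cdot v_1v_2v_3v_4$ geodesically, the product $w^{-1}v$ takes the explicit form
$$w^{-1}v = w_4^{-1}w_3^{-1}w_2^{-1}w_1^{-1}\cdot(w_{pref}^{-1}v_{pref})\cdot v_1v_2v_3v_4$$
(and analogously for the $w_5w_6$/$v_5v_6$ variants). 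Prepending the same sufficiently long positive prefix $P$ to $w_{suff}$ and $v_{suff}$ makes both into horocyclically shortlex words; applying Lemma \ref{HorocyclicNonLinking} with $p_1=p_2=P$ then shows that the cancellations between suffix letters in $w^{-1}v$ are unlinked with respect to the horocyclic orderings on each side.

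To handle the $w_1$-$v_1$ and $w_2$-$v_2$ claims, which apply in both length cases, I would invoke Lemma \ref{RipsGraphTraversability}: every letter in $w_1\cup w_2\cup v_1\cup v_2$ commutes with both $a_i$ and $a_j$. The shortlex constraints on the horocyclic shortlex form place $w_1,v_1$ letters strictly before $a_j$ alphabetically while $w_2,v_2$ letters lie between $a_j$ and $a_i$, so the two alphabets are disjoint and neither a $w_1$-$v_2$ nor a $w_2$-$v_1$ match is possible. For a cancellation that would span further blocks, I would examine the unlinked matching of the $a_j$ and $a_i$ letters of the horocyclic shortlex forms $W$ and $V$ (via Lemma \ref{ShortlexNonLinking} applied to $W^{-1}V$): the $a_j$ immediately right of $w_1$ in $W$ is forced to match the corresponding $a_j$ right of $v_1$ in $V$, and a $w_1$-letter match with any later $v$-block would need to link across this already-matched $a_j$, which is forbidden.

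For the equal-length claims about $(w_3,v_3)$ and $(w_4,v_4)$, the decisive structural fact is that letters of $w_3$ and $v_3$ commute with $a_j$ but not with $a_i$. In the shortlex form of $W^{-1}V$, after the $(a_ja_i)^k(a_ia_j)^k$ middle cancels and the innermost $a_j$'s pair off, the remaining unmatched $a_i$ letters in the two $a_iw_3a_j$ blocks must also pair up by the unlinked-matching principle, and a $w_3$-letter attempting to match into $v_4$ would have to commute past one of these unmatched $a_i$'s, which its definition forbids. By symmetry $w_4$ letters cannot match into $v_3$, and unlinkedness then forces the remaining $w_4$-$v_4$ pairing. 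When instead $|w_{suff}|=|v_{suff}|-1$, the extra letter $w_{pref}^{-1}v_{pref}$ sitting in the middle of $w^{-1}v$ is exactly one $a_i$ (or $a_j$) and shifts the relevant blocks by one position, so the same barrier argument confines $w_4$-letter matches to the window $v_5a_iv_6$ (respectively $w_6$-letter matches to $v_3a_jv_4$); here the intermediate $a_i$ in the window is not a barrier but a potential match itself.

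The main obstacle is that the horocyclic decomposition $w_1w_2w_3w_4$ is \emph{not} purely a function of the intrinsic commutation type of each individual letter; it depends on the shortlex rearrangement and hence on the neighboring letters (for example, a letter commuting with $a_i$ and $a_j$ that in isolation belongs in $w_1$ can be pushed into $w_4$ when blocked by a non-commuting neighbor in $w_{suff}$). Consequently the argument cannot proceed letter-by-letter purely from intrinsic properties, and one cannot simply identify where the matching partner ``should'' live. What rescues the proof is that we do not need to place each letter in isolation, only to rule out specific cancellation patterns using the unlinked matching combined with the presence of non-commuting barriers ($a_i$ letters for $w_3,v_3$ and $a_j$ letters for $w_5,v_5$) that the horocyclic shortlex form exposes; carefully bookkeeping which $a_i$ and $a_j$ matches are forced by Lemma \ref{ShortlexNonLinking}, and then showing that any cross-block suffix match would have to link across one of these, is the bulk of the technical content.
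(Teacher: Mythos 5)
Your proposal is correct and follows essentially the same route as the paper: the paper's proof is a direct application of Lemma \ref{ShortlexNonLinking} to the horocyclically shortlex forms, observing that the interspersed prefix copies of $a_j$ and $a_i$ (after $w_1$, after $w_2$, and separating $w_3$ from $w_4$, resp.\ $w_5$ from $w_6$) must cancel with their counterparts and hence, by unlinkedness, confine each block's cancellations to the matching block — exactly your ``barrier'' argument. The extra observations you add (disjoint alphabets for $w_1$ versus $w_2$, the commutation obstructions from $w_3$/$v_3$ letters) are sound but not needed beyond what the forced pairing of the separator letters already gives.
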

	
	\begin{proof}
		
		This is a direct application of Lemma \ref{ShortlexNonLinking} to the horocyclically shortlex forms of $w$ and $v$. The prefix copy of $a_j$ following $w_1$ cancels its counterpart following $v_1$, and the same for the copies of $a_i$ following $w_2$ and $v_2$. 
		
		If $|w_{HoroSuff}|=|v_{HoroSuff}|$, then the same will be true of the final copies of $a_j$ separating $w_3$ from $w_4$ and $v_3$ from $v_4$ (resp. copies of $a_i$ separating $w_5$ from $w_6$ and $v_5$ from $v_6$). If instead $|w_{HoroSuff}|=|v_{HoroSuff}|-1$, then when written in horocyclically shortlex forms, there is one more prefix letter in $v$ than in $w$. This means the second-last prefix letter in $v$ cancels $a_j$ following $w_3$ (resp. the $a_i$ following $w_5$).
	\end{proof}
	
	We will describe the algorithm for how to compute $K(v,w)$ and the canceling word in terms of states and transitions. In the following proposition, there may a priori be infinitely many states.
	
	\begin{proposition}\label{DivergenceGraphEdgeAlgorithm}
		
		Suppose $w_{horosuff}$ and $v_{horosuff}$ be horocyclic suffixes of equal length. There is an algorithm whose runtime is linear in $|w_{horosuff}|$ which either certifies that $w$ and $v$ do not have close successors, or whose final state computes $K(v,w)$ as well as the word of cancelable letters and whether those cancelable letters appear in $w$ or $v$.
		
	\end{proposition}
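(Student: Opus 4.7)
The plan is to design a streaming algorithm that reads $w_{horosuff}$ and $v_{horosuff}$ in parallel and drives its transitions off the horocyclic suffix FSMs $M_{1,2,3,4}$ and $M_{1,2,5,6}$ of Corollary~\ref{DivergenceGraphVertexFSM}, so that at every position we know which of the four pieces $w_k, v_k$ we are reading. Because $|w_{horosuff}|=|v_{horosuff}|$, Lemma~\ref{MatchedCancellation} guarantees that cancellation in $w^{-1}v$ respects the four-piece structure, so it suffices to compute the reduced form of $w_k^{-1}v_k$ one paired piece at a time and then aggregate.

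Within each paired piece, $w_k$ and $v_k$ are shortlex words in a fixed restricted sub-alphabet, and the reduced form of $w_k^{-1}v_k$ can be computed in linear time by a greedy pass: we maintain a stack of pending letters of $w_k^{-1}$ with a short commutation window, and for each new letter of $v_k$ we either cancel against the topmost matching letter it can commute past, or push the letter on the stack. Lemma~\ref{ShortlexNonLinking} (unlinked cancellation) certifies that this greedy choice is correct: any other legal cancellation would create a forbidden crossing. At the end of each piece, the residual stack contents, tagged by side, are appended to a running buffer that progressively spells out $w_{sub}^{-1}v_{sub}$.

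A second pass converts the buffer to the desired split $(K(v,w), C, \mathrm{side}(C))$. Maintaining a current clique $K$, a cancelable word $C$, and a side flag $\sigma \in \{w, v, \varnothing\}$ for $C$, we inspect each buffered letter: when the next letter $a$ lies on the side opposite to $\sigma$, Proposition~\ref{NonCommutingLettersInDifferentWords} forces it into $K$, which requires $a$ to commute with everything already in $K \cup C$; otherwise we append $a$ to $C$ and possibly set $\sigma$. Corollary~\ref{NonCommutingLettersCor} and Lemma~\ref{MaximalCancellation} imply that any violation of these invariants---$K$ ceasing to be a clique, or $C$ failing to sit on a single side with its letters commuting across to $K$---certifies that $w$ and $v$ have no close successors, in which case we abort with the negative output.

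The main obstacle is correctness: verifying that the block-wise greedy reduction genuinely reproduces the globally reduced $w_{sub}^{-1}v_{sub}$, and that the split into $K$ and $C$ matches the intrinsic definition of uncancelable letters. The first point rests on Lemma~\ref{MatchedCancellation}, which rules out cross-piece cancellation, so that our local processing cannot miss a global match. The second rests on Lemma~\ref{MaximalCancellation}, which delivers a single successor simultaneously canceling every cancelable letter. Linearity is then transparent: each letter of the two horocyclic suffixes is read once, each cancellation or buffer operation is $O(1)$, and the auxiliary data (clique $K$, commutation window, side flag) have size bounded by $\mathrm{Clique}(\Gamma) + O(1)$.
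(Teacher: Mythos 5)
Your overall architecture — read the two suffixes in parallel, use the four-subword structure via Lemma \ref{MatchedCancellation} and unlinked cancellation via Lemma \ref{ShortlexNonLinking}, and maintain an uncancelable clique $K$ plus a one-sided cancelable word with a side bit — is the same as the paper's. But there are two genuine gaps in how you fill this in.

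The main one is in your second pass: you classify every surviving letter on the $\sigma$ side as cancelable, and only letters on the opposite side (or commutation failures) go into $K$. That conflates ``survives the free reduction of $w^{-1}v$ on the $C$ side'' with ``cancelable by a horocyclic successor,'' which are not the same. A letter $a$ of $w_{sub}^{-1}$ is only cancelable if it can actually be written as a successor of $v$ at the appropriate moment; if $a$ is forbidden by the relevant shortlex state (it commutes with and precedes the letter just read from $v$, or is blocked by $S(v)$ together with the earlier cancelable letters), then writing it requires first writing a letter not commuting with it, which by unlinkedness kills the cancellation — so $a$ is uncancelable even though it sits on the $\sigma$ side and commutes with everything so far. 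The paper's algorithm handles exactly this in its processing of the ``canceling letter'': when an incoming letter fails to cancel, every letter it forbids from being written next is demoted from the cancelable buffer into $K$, and only then is the clique condition re-checked. Without this state-tracking your output $(K,C)$ can have $C$ too large and $K$ too small, which corrupts the downstream edge test (the successor $v'$ built from $C$ in Theorem \ref{DivergenceEdgeSameLengthEdgeChecker} would not be a legal horocyclic successor, and the adjacency check against $K(v,w)$ would use the wrong clique). Relatedly, your rule freezes $\sigma$ once set, whereas the side of the cancelable letters can legitimately flip mid-stream (the paper's ``bit flip'' substeps), e.g.\ when the current buffer is absorbed into $K$ and the roles of adding and canceling word reverse.

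The second gap is linearity of your first pass. Because you defer the clique/abort check to the second pass, the stack of pending uncancelled letters in the first pass has no a priori bound, and the operation ``cancel against the topmost matching letter it can commute past'' requires checking commutation against all intervening stack entries; on an unbounded stack this is not $O(1)$ per letter, so the first pass is not obviously linear. The fix is the one the paper uses: interleave the abort condition, observing that every time the pending buffer grows without a cancellation, at least one new letter enters $K$, which must remain a clique commuting with the buffer or we halt — hence the buffer never exceeds $Clique(\Gamma)$ before termination, every per-letter operation is $O(1)$, and (as in Corollary \ref{DivergenceGraphEqualLengthEdgeFSM}) the whole procedure is realized by a finite-state machine.
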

	
	\begin{proof}
		Note that the form of a horocyclic suffix is determined by its length and the value of the Busemann function, so $|w_{suff}|=|v_{suff}|$ and $b_\gamma(w)=b_\gamma(v)$ implies that they are of the same form.
		
		The states of this algorithm will consist of quadruples of words $u_1, u_2, u_3, u_4$ or $u_1, u_2, u_5, u_6$, which contain the potentially-cancelable letters, a set $K$ of uncancelable letters, a binary bit $b$ valued in $\{w, v\}$ indicating which word the cancelable letters belong to, and two integers $n_w$ and $n_v$ indicating which subword the words $w$ and $v$ have progressed to. $b'$ will denote the complementary value to $b$. For readability, we will use function notation to describe the values of these variables after each step. So $u_i(j)$ denotes the value of $u_i$ after the $j^{th}$ step, and similarly for the other values. By a slight abuse of notation, on step $j+1$, $u_i(j)$ denotes the value at the end of step $j$, while $u_i(j+1)$ denotes whatever the current value of $u_i$ is, although it is possible for it to change again before the end of step $j+1$. The same goes for the other variables. We will explicitly say when this can happen each time as a reminder.
		
		The initial state will consist of 4 empty strings, the empty set, $n_w=n_v=1$, and an arbitrary value of $b$. On step $j+1$, we will assume that the input pair is $(a_k, a_l)$, and that these two letters commute with $K(j)$. We must show how to update the clique $K(j)$, the words $u_i(j)$, the integers $n_{w}(j)$ and $n_v(j)$, and the bit $b(j)$. We will refer to the input from word $b(j)$ as the $j^{th}$\textit{ adding letter}, and the input from word $b(j)'$ as the $j^{th}$ \textit{canceling letter}. A step in which we compute these updates is described below in substeps.
		
		\textbf{Substep 1: Update $n_w(j)$ and $n_v(j)$.}
		
		The update rule for the $n_v(j)$ and $n_w(j)$ is straightforward and does not depend on any of the other pieces. Whenever a pair $(a_k, a_l)$ is read, $n_w(j+1)=n_w(j)$ (resp. $n_v(j+1)=n_v(j)$) if $a_k$ (resp. $a_l$) was an allowed letter in subword $n_w(j)$ (resp $n_v(j)$). Otherwise, $n_w(j+1)$ (resp. $n_v(j+1)$) is the smallest integer so that $n$ $a_k$ (resp. $a_l$) is a legal first letter of subword $n$. Note that this means this algorithm will be different depending on whether $w$ (and therefore $v$) is of form $w_1w_2w_3w_4$ or $w_1w_2w_5w_6$.
		
		If $n_{b(j)'}(j+1)>n_{b(j)'}(j)$, then the canceling word has moved on to a new subword, and any remaining letters in subword $u_{n_{b(j)'}(j)}(j)$, as well as any other subwords prior to $u_{n_{b(j')}(j+1)}(j)$, are no longer cancelable by Lemma \ref{MatchedCancellation}. Therefore, add each such letter to $K(j)$ and call the result $K(j+1)$ for now. It may change again in Substeps 2, 4, or 5. By assumption, the letters of $K(j)$ all commute with the letters of each subword $u_{*}(j)$. So check that the new letters of $K(j+1)$ pairwise commute, commute with the subwords $u_{n_{b(j')}(j+1)}(j)$ and later, and commute with the adding and canceling letters. If any of these fail, then the algorithm certifies the existence of an uncancelable pair. If all of these hold, then keep going.
		
		\textbf{Substep 2: Check for a bit flip.}
		
		If $n_{b(j)'}(j+1)>n_{b(j)}(j+1)$, this means that the word $b(j)'$ has moved onto a later subword than $b(j)$ is on. We will therefore have already added each nonempty $u_i(j)$ into the clique at the end of the previous step. By Lemma \ref{MatchedCancellation} the new letter of $b(j)'$ may cancel with first letter of subword ${b(j)}_{n_{b(j)'}(j+1)}$ once that subword starts. Therefore, set $b(j+1)=b(j)'$. Add the $j+1^{st}$ ``canceling letter", (which has nothing to cancel with because it is in too early a subword) into the clique to obtain $K(j+1)$. Set $u_{n_{b(j+1)}(j+1)}(j+1)$ to the $j+1^{st}$ adding letter. Check whether $K(j+1)$ is a clique that commutes with the $j+1^{st}$ canceling letter. If not, then $w$ and $v$ differ by a pair of uncancelable letters. If so, then we have computed the $j+1^{st}$ state and may continue. Skip all the remaining steps.
		
		If $n_{b(j)'}(j+1)\le n_{b(j)}(j+1)$, then set $b(j+1)=b(j)$ for now. It may change later in Substep 5.
		
		\textbf{Substep 3: Add the adding letter.}
		
		Append the $j+1^{st}$ adding letter to word $u_{n_{b(j+1)}(j)}(j)$. Call the resulting words (only one of which has changed) $u_{n}(j+1)$ for now. One of them may change again in Substep 4, or they may all be updated in Substep 5.
		
		\textbf{Substep 4: Process the canceling letter.}
		
		Determine if the canceling letter is a geodesic first letter of the word $u_{n_{b(j+1)'}(j+1)}(j+1)$. If so, cancel this letter, and move all prior letters in $u_{n_{b(j+1)'}(j+1)}(j+1)$ into $K(j+1)$. They have become uncancelable by Lemma \ref{HorocyclicNonLinking}. Check that $K(j+1)$ remains a clique that commutes with each letter in the $u_{*}(j+1)$. If so, then there is non-uncancelable pair. We have computed the $j+1^{st}$ state and may continue. Skip the remaining substep.
		
		If the canceling letter does not cancel, we must again check whether the bit needs to flip. For clarity, we describe this check in a subsequent step, although in practice it must be checked here. For now, assume that the bit does not need to flip. The canceling letter immediately becomes uncancelable and is added to $K(j+1)$. Additionally, every letter $a_m$ that the canceling letter forbids to be written next (i.e. those that commute with and precede it) can next be written after writing a letter $a_m'$ not commuting with it. So if any such letter is a geodesic first letter of $u_{n_{b(j+1)'}(j+1)}(j+1)$, it can no longer be canceled. Delete each such letter from $u_{n_{b(j+1)'}(j+1)}(j+1)$ and add them instead into the uncancelable clique $K(j+1)$. Check as usual that $K(j+1)$ is a clique commuting with each remaining letter of a $u_{*}(j+1)$. If not, then we have created an uncancelable pair. If so, then we have computed the $j+1^{st}$ state and may continue.
		
		\textbf{Step 5: Check for a bit flip.}
		
		If $n_{b(j+1)}(j+1) = n_{b(j+1)'}(j+1)$, i.e. the adding and canceling words are both on the same subword, and if additionally the canceling letter commutes with and follows each letter of $u_{n_{b(j+1)}(j+1)}(j+1)$, then no letter of this word can be written until first writing a letter not commuting with it, so the whole word becomes uncancelable. Moreover, a copy of this canceling letter, if next written onto the $j+1^{st}$ adding word, would cancel, so that the ``canceling letter" is in fact cancelable.
		
		Therefore, as in Step 2, we add each letter of $u_{n_{b(j+1)}(j+1)}(j+1)$ into $K(j+1)$, and set $u_{n_{b(j+1)}(j+1)}(j+1)$ to be equal to the $j+1^{st}$ ``canceling letter". We flip the value of the bit. We then check as usual that $K(j+1)$ is a clique. By assumption at this point it commutes with the $j+1^{st}$ ``canceling letter". If it is not a clique, then we have an uncancelable pair. If it is, then we have arrived at the $j+1^{st}$ state and can continue. 
		
		\textbf{Concluding the algorithm.}
		
		After reading in each input pair $(a_k, a_l)$ from $(w_{horosuff}, v_{horosuff})$, denote the outputs $K_{fin}$, $u_{*, fin}$, etc. If $n_{b_{fin}', fin}$ is not either $4$ or $6$ (depending on the form of the words), then the cancellation of the prefix copy of either $a_j$ or $a_i$, depending on the form of the horocyclic suffixes, will render all remaining letters of subwords $1, 2$ and either $3$ or $5$ uncancelable. We therefore delete each of these $3$ subwords, moving any remaining letters to $K_{fin}$. We check whether $K_{fin}$ is a clique commuting with each letter in the final value $u_{4, fin}$ or $u_{6, fin}$. If not, then we have an uncancelable pair. If so, then $w$ and $v$ do not differ by a uncancelable pair, the uncancelable clique is $K_{fin}$, and the word $u_{4, fin}$ or $u_{6, fin}$ is the word of cancelable letters, which must be written onto the word $b_{fin}'$.
	\end{proof}
	
	\begin{corollary} \label{DivergenceGraphEqualLengthEdgeFSM}
		If $|w_{suff}|=|v_{suff}|$, then the above algorithm is implemented by an FSM $M_{K}$.
	\end{corollary}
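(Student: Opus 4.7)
The plan is to verify that the state space of the algorithm in Proposition \ref{DivergenceGraphEdgeAlgorithm} is finite, so that the update rules of Substeps 1--5 descend to a transition function on a finite set of states and thereby define an FSM $M_K$ with alphabet $V \times V$. An abstract state is a tuple $(K, u_1, u_2, u_3, u_4, b, n_w, n_v)$ (with the analogous variant for the $w_1 w_2 w_5 w_6$ form). The bit $b$ takes two values, the indices $n_w, n_v$ range over a set of size at most four, and $K$ is constrained to be a clique in $\Gamma$, hence $|K| \le Clique(\Gamma)$ and $K$ takes finitely many values. The only component whose finiteness is not immediate is the tuple $(u_1, u_2, u_3, u_4)$.

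The key step is therefore to bound $\sum_i |u_i|$ uniformly in terms of $\Gamma$. I would track how $\sum_i |u_i|$ changes through each substep. Substep 3 contributes $+1$ to $\sum_i |u_i|$; the subsequent Substep 4 either cancels a letter from $u_i$ (net change $\le 0$) or appends the canceling letter to $K$, in which case $|K|$ grows by one (the degenerate case where the canceling letter is already present in $K$ should, I expect, force an eventual commutation failure). Substeps 1, 2, and 5 only transfer letters out of the $u_i$ into $K$, possibly also resetting some $u_j$ to a single letter, but never at a net cost that exceeds the compensating growth of $|K|$. Thus every net increase in $\sum_i |u_i|$ is paired with a fresh letter entering $K$, so the total cumulative increase along any successful run is at most $Clique(\Gamma)$. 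Starting from zero, $\sum_i |u_i|$ stays bounded by $Clique(\Gamma)$ throughout, and in particular each $u_i$ has only finitely many possible values.

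With the state space shown finite, the update prescribed by Substeps 1--5 gives a well-defined transition function on states for each input pair $(a_k, a_l) \in V \times V$. Collapsing every failure outcome into a single non-accept sink state (with all outgoing transitions looping back to itself) and marking every other reachable state as accept produces the FSM $M_K$. The hard part will be rigorously justifying the bound $\sum_i |u_i| \le Clique(\Gamma)$: the substeps in which letters move between $u_i$ and $K$ in batches must be accounted for carefully, and the edge case where the canceling letter in Substep 4 is already present in $K$ must be shown to force the algorithm to abort on a clique violation. Once this bookkeeping is done, the remainder is routine translation of the imperative algorithm into an FSM transition table.
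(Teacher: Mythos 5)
Your proposal is correct and follows essentially the same argument as the paper: the total length of the $u_*$ words increases only when the canceling letter fails to cancel, each such event adds a fresh letter to the clique $K$, and since $|K|\le Clique(\Gamma)$ the state space is finite. The one point you gloss over is the concluding step of the algorithm (the final transfer of leftover letters from subwords $1$, $2$, and $3$ or $5$ into $K_{fin}$ and the final clique check); simply accepting every non-failure state does not implement this, and the paper handles it by padding the input with a blank character $(*,*)$ and designating only the post-padding states as accept states.
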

	
	\begin{proof}
		To see that an FSM suffices, note that if, after taking two simultaneous inputs, the total length of the words $u_{*, i+1}$ is larger than the corresponding sum of the lengths of the $u_{*, i}$ only if the second letter did not cancel. Any time this happens, the clique $K_{n+1}$ increases by at least one letter. Therefore, no state is required with longer total word length than the largest clique in the defining graph of $W_\Gamma$ (and in fact 1 letter shorter even suffices). There are therefore finitely many states. The concluding step of the algorithm is accomplished by padding each input with a blank character $*$. When the pair $(*, *)$ is read, the state updates according to that rule. Only such states are allowed to be final states of the resulting FSM.
	\end{proof}
	
	\begin{remark}
		
		A few notes are merited here about the implementation of this algorithm in the code. The reader who is uninterested in how the code works can skip this remark.
		
		First of all, for performance reasons, it is faster to include some additional data in each of the states. Specifically, to avoid having to re-compute at each step which letters do or do not cancel, we add to each state the (geodesic) first letters of of $u_{j,i}$ and of each truncated word consisting of the last $k$ letters of $u_{j,i}$ for each $k$. Since this information can be computed from $u_{j,i}$, this does not change the number of states.
		
		Second of all, the algorithm described does not guarantee that its input consists of a pair of horocyclic suffixes, or even a pair of geodesics. We could do this without trouble using Proposition \ref{CombiningFSMs}, at the cost of additional memory usage. To keep the states (slightly) understandable, we do not do this, and instead are careful only to pass arguments to the FSM that are of the desired form. This means that there are some nonsense transitions in the FSM, but does not effect the correctness of the FSM's output if passed an input of the correct form.
		
	\end{remark}
	
	Using the output of this algorithm, we can cancel every cancelable letter. That is, a priori each letter $a_k$ might cancel in a different pair of horocyclic successors $w'_k$ and $v'_k$ to $w$ and $v$. Here we show that a single successor suffices.
	
	As a result, we obtain the following algorithm.
	
	\begin{theorem}\label{DivergenceEdgeSameLengthEdgeChecker}
		
		Let $w$ and $v$ be words so that $b_{\gamma}(w)=b_{\gamma}(v)$ and $|w_{suff}|=|v_{suff}|$. Then we can determine whether $w$ and $v$ span an edge in the divergence graph in linear time with respect to $|w|$.
		
	\end{theorem}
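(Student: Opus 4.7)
The plan is to assemble the tools developed over the previous several subsections into a pipeline: run the FSM $M_K$ on the pair $(w_{horosuff}, v_{horosuff})$, use its output to construct a maximally cancelled successor, and then reduce the edge test to a constant-size combinatorial check on $\Gamma$.

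First, I would feed the pair of horocyclic suffixes into $M_K$ from Corollary \ref{DivergenceGraphEqualLengthEdgeFSM}. By Proposition \ref{DivergenceGraphEdgeAlgorithm}, after $O(|w|)$ steps this either certifies the existence of an uncancelable non-commuting pair of letters in $w_{sub}^{-1}v_{sub}$, in which case Lemma \ref{DivergenceGraphEdgeNecessaryCondition} rules out close successors and so $(w,v)$ is not an edge, or it returns the uncancelable clique $K(v,w)$ together with a shortlex word $u$ of cancelable letters and a bit $b\in\{w,v\}$ indicating the word onto which $u$ must be written. By Lemma \ref{MatchedCancellation} and Lemma \ref{MaximalCancellation}, writing $u$ onto the appropriate word produces a single horocyclic successor $v'$ (or $w'$) so that the difference with the other word consists of exactly the letters of $K(v,w)$. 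Building $v'$ from $v_{horosuff}$ and $u$ is clearly linear in $|w|$.

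Next, I would compute $S(w)$ and $S(v')$. Each is obtained by pre-pending a positive prefix of length at most four (enough to stabilize the form under Remark \ref{DescribingPredecessors}) and running the resulting horocyclically shortlex word through $M_{lex}$; this takes $O(|w|)$ time. By Proposition \ref{DivergenceEdgeCriterion}, $(w,v)$ is an edge in the divergence graph if and only if $w$ and $v'$ have close successors, and since $w$ and $v'$ now differ precisely by the clique $K(v,w)$, Lemma \ref{CliqueDifferenceAdjacency} applies: close successors exist if and only if there is a pair of non-adjacent generators $a_k, a_l$, each commuting with every letter of $K(v,w)$, so that $w(a_ka_l)^n$ and $v'(a_ka_l)^n$ are horocyclic successor sequences. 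By the remark following that lemma, this in turn reduces to checking that at least one of $a_k$ or $a_l$ lies in both $S(w)$ and $S(v')$.

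The final test iterates over the pairs $\{a_k, a_l\}$ of non-adjacent vertices of $\Gamma$ whose common neighborhood contains $K(v,w)$ and asks whether $a_k\in S(w)\cap S(v')$ (or $a_l\in S(w)\cap S(v')$). The number of such pairs is bounded by $|V(\Gamma)|^2$, a constant depending only on $\Gamma$, so this step is $O(1)$. Summing: running $M_K$ is $O(|w|)$, constructing $v'$ is $O(|w|)$, computing the two $S$-states is $O(|w|)$, and the final search is $O(1)$. I expect the main subtlety to be bookkeeping rather than mathematics: namely, verifying that the word $v'$ produced by appending the $M_K$-output $u$ really is horocyclically shortlex and does realize the maximal cancellation of Lemma \ref{MaximalCancellation}, so that Proposition \ref{DivergenceEdgeCriterion} applies without needing any further successor construction. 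Once that is in hand, the theorem follows by concatenating the linear-time stages.
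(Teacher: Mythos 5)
Your proposal follows the paper's proof essentially step for step: run $M_K$ on the pair, use its output together with Lemma \ref{MaximalCancellation} to build the maximally cancelled successor $v'$ (or $w'$), compute the two $S$-states in linear time, and then apply Propositions \ref{DivergenceEdgeCriterion} and \ref{CliqueDifferenceAdjacency} to reduce the edge test to a constant-size search over non-adjacent pairs commuting with $K(v,w)$. One small correction: since $S(w)$ is by definition the set of letters \emph{forbidden} after $w$ in the shortlex machine, the final membership test should ask that at least one of $a_k, a_l$ lie in $S(w)^c\cap S(v')^c$ rather than in $S(w)\cap S(v')$ --- your conceptual condition (that $w(a_ka_l)^n$ and $v'(a_ka_l)^n$ be genuine successor sequences) is the right one, but the set-membership translation as written has the polarity reversed.
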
 
	
	\begin{proof}
		
            First we input $(w,v)$ into $M_k$ as described in Corollary \ref{DivergenceGraphEqualLengthEdgeFSM}. If the pair is accepted, then we use the output state to find a successor $v'$ or $w'$ to $v$ or $w$ as in Corollary \ref{MaximalCancellation}, which takes at most as many steps as the clique size of $\Gamma$. We compute $S(v)$ and $S(w')$ or $S(w)$ and $S(v')$, which takes linear time in the lengths of the inputs, and check whether there are any pair of vertices adjacent to $K(v,w)$ that do not commute and that at least one of which is in $S(v)^c\cap S(w')^c$ or $S(v')^c\cap S(w)^c$. The vertices $w$ and $v$ span a divergence graph edge if and only if there is such a pair, by Propositions \ref{DivergenceEdgeCriterion} and \ref{CliqueDifferenceAdjacency}.
	\end{proof}
	
	\begin{remark}
		
		We could in fact perform all of these checks as part of the final step of the algorithm, and obtain a regular language of edges between horocyclic suffixes of the same length. We do not do this because it would further complicate the algorithm and require tracking even more data in each state. It is also not clear that this would speed up the computations, because the language of adjacent horocyclic suffixes of equal length is very far from prefix-closed. It is somewhat hard to bound the inefficiency that arises from computing may suffix pairs that do not have close successors (in the $P^{-*}$ sense) but which have successors (in the FSM sense) who do. Instead, we take a backtracking approach similar to what we used for the Rips graph.
	\end{remark}
	
	When we have many vertices, checking each pair against one another is impractically slow, but an improvement is possible. 
	
	\begin{lemma}\label{MaximumDistanceForDivergenceEdge}
		
		If $w$ and $v$ are on the same horosphere and have close successors, then they are at distance at most $2n-2$ from one another, where $n$ is the clique dimension of $\Gamma$.
		
	\end{lemma}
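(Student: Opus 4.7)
The plan is to expand $w^{-1}v$ in reduced form as $w_{sub}^{-1}v_{sub}$ so that $d(w,v) = |w_{sub}| + |v_{sub}|$, and then bound each side via the clique structure imposed by having close successors. By Corollary \ref{NonCommutingLettersCor}, either (Case A) $w_{sub}^{-1}v_{sub}$ is a single pairwise-commuting clique, or, WLOG swapping the roles of $w$ and $v$, (Case B) every non-commuting pair of letters lies inside $w_{sub}^{-1}$, while $v_{sub}$ is a clique commuting entirely with $w_{sub}^{-1}$.

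In Case A, geodesicity forbids any letter from being repeated among pairwise commuting letters, so $d(w,v) \le n$; since $n \ge 2$ under the standing assumptions, this already gives $d(w,v) \le 2n - 2$. In Case B, joining any non-commuting letter of $w_{sub}^{-1}$ to the clique $v_{sub}$ produces a clique of size $|v_{sub}|+1 \le n$, whence $|v_{sub}| \le n - 1$. The remaining task is to bound $|w_{sub}|$.

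For the $w_{sub}$ bound, I apply Proposition \ref{DivergenceEdgeCriterion} to produce a horocyclic successor $v' = v \cdot u$ for which $w^{-1}v'$ reduces to the uncancelable clique $K = K(v,w)$. Lemma \ref{CliqueDifferenceAdjacency} applied to the pair $(w,v')$ produces a non-commuting pair both commuting with $K$, forcing $|K| \le n-1$. Lemma \ref{MaximalCancellation} places all of $v_{sub}$ inside $K$, so the letters of $w_{sub}$ split into the uncancelable subclique $K \cap w_{sub}$ (disjoint from $v_{sub}$ inside $K$, giving $|K \cap w_{sub}| \le n - 1 - |v_{sub}|$) and the cancelable set $C_w$. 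The sequential cancellation procedure in Lemma \ref{MaximalCancellation} forces each letter of $C_w$ to commute with $v_{sub}$ and with all earlier uncancelable letters of $w_{sub}$; by carefully tracking the cancellation order and the shortlex constraint on $u$, I expect to establish that $C_w \cup v_{sub}$ is itself a clique, so that $|C_w| \le n - |v_{sub}|$. Summing gives
\[
d(w,v) = |K \cap w_{sub}| + |C_w| + |v_{sub}| \le (n-1-|v_{sub}|) + (n - |v_{sub}|) + |v_{sub}| = 2n - 1 - |v_{sub}| \le 2n - 1.
\]

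The hardest step is the clique claim for $C_w \cup v_{sub}$: showing that distinct cancelable letters pairwise commute requires examining the shortlex obligations on the canceling successor $u$ and the geodesicity of $w_{sub}$, and this is where the bulk of the bookkeeping lives. Finally, to sharpen the odd bound $2n-1$ to the desired $2n-2$, I use Lemma \ref{CalculatingBusemannFunctions}: the formula $b_\gamma(w) = \pm|w_{pref}| + |w_{suff}|$ gives $b_\gamma(w) \equiv |w| \pmod 2$, so any two elements on a common horosphere have word lengths of equal parity. Since the generators are involutions the Cayley graph is bipartite with parity class $|\cdot| \bmod 2$, so $d(w,v) = |w^{-1}v|$ is even; an even integer at most $2n-1$ is at most $2n-2$, completing the proof.
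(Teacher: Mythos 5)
Your overall architecture (split off the uncancelable clique, bound each of $|w_{sub}|$ and $|v_{sub}|$ separately) contains a genuine gap at exactly the step you flag as ``the hardest'': the claim that $C_w \cup v_{sub}$ is a clique. Nothing in Lemma \ref{MaximalCancellation} forces distinct cancelable letters of $w_{sub}$ to commute with one another --- they are simply written in succession after $v$ as a shortlex successor, and consecutive cancelable letters may well fail to commute (indeed the proof of Lemma \ref{MaximalCancellation} explicitly treats the case where $S(va_{k_0}\cdots a_{k_n})$ permits $a_{k_{n+1}}$ precisely because the two do \emph{not} commute). So the inequality $|C_w| \le n - |v_{sub}|$ is unsupported, and with it the whole Case B bound. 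Your Case A argument, the bound $|v_{sub}| \le n-1$, the containment $v_{sub} \subseteq K(v,w)$, and the closing parity argument are all fine, but they do not rescue the missing step.

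The paper's proof avoids bounding $|w_{sub}|$ by any clique consideration at all. The key fact you are missing is that being on the same horosphere forces $|w_{sub}| = |v_{sub}| = m$ where $d(w,v) = 2m$: the geodesic $w_{sub}^{-1}v_{sub}$ first descends $m$ Busemann levels and then ascends $m$ (this is the same bookkeeping as in Corollary \ref{HorosphericalDistortionLowerBound}, and is also where the evenness of $d(w,v)$ comes from). By Lemma \ref{MaximalCancellation} the uncancelable clique $K(v,w)$ contains all of $w_{sub}^{-1}$ or all of $v_{sub}$, hence $|K(v,w)| \ge m$; and by Lemma \ref{CliqueDifferenceAdjacency} close successors require a letter commuting with every letter of $K(v,w)$, so $m + 1 \le n$ and $d(w,v) = 2m \le 2n-2$. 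If you want to salvage your approach, replace the unproven clique claim for $C_w$ with the observation $|w_{sub}| = |v_{sub}|$, after which the bound on $|v_{sub}|$ alone finishes the proof.
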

	
	We note that a similar upper bound appears in Lemma 7.3 of \cite{CGSR} with no assumptions other than that the group in question is hyperbolic. However, their bound requires computing a value of $\delta$ for the group, which may be impractical.
	
	\begin{proof}
		
		Suppose $d(v,w)=2m$ (recalling that the distances between points on the same horosphere is always even). Then we have $m$ uncanceled letters in $v$ and $m$ uncanceled letters in $w$. As a result, if the pair $(v,w)$ is accepted my the FSM $M_K$, it outputs an uncancelable clique of size at least $m$. In order for there to be any letters at all that commute with the entirety of $K(v,w)$, $m$ must be less than the maximal clique size in $\Gamma$.
	\end{proof}
	
	We can do even better than check the word $w$ against $ww'$ where $w'$ ranges across words of length $2n-2$, because most of these $ww'$ are not on the correct horosphere. As before, we will delete $n-1$ letters, followed by adding $n-1$ letters in. First we need a lemma to show that a version of the backtracking process in Lemma \ref{DeletionPreservesShortlexSuffixes} will work for horocyclic suffixes.
	
	\begin{lemma}\label{DeletionPreservesHorocyclicSuffixes}
		
		If $w$ is a horocyclic suffix that can be rearranged to end with $a_l$, then deleting the last copy of $a_l$ from $w$ results in a word that is again a horocyclic suffix.
		
	\end{lemma}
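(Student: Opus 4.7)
The plan is to mimic the proof of Lemma~\ref{DeletionPreservesShortlexSuffixes}, but now working with the regular-expression description of horocyclic suffixes assembled before Corollary~\ref{DivergenceGraphVertexFSM}. Write $w$ in horocyclic form; without loss of generality take $w = w_1 w_2 w_3 w_4$ (the $w_1 w_2 w_5 w_6$ case is symmetric). Since $a_l$ can be rearranged to the end of $w$, the rightmost occurrence $a_l^\star$ of $a_l$ in the string $w_1 w_2 w_3 w_4$ commutes with every letter appearing to its right. Let $w_p$ (with $p \in \{1,2,3,4\}$) be the subword containing $a_l^\star$, and let $w_p'$ denote $w_p$ with that copy removed; the candidate is $w' := w_1 \cdots w_p' \cdots w_4$.

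The subword-level constraints are preserved individually. Each $w_k$ lies in a shortlex language over a restricted alphabet together with some first-letter-excluder $F_{\mathscr{B}_k}$, and applying Lemma~\ref{DeletionPreservesShortlexSuffixes} inside the restricted alphabet of $w_p$ shows that $w_p'$ is shortlex, still lies in the correct alphabet, and still satisfies $F_{\mathscr{B}_p}$: if $w_p'$ could be rearranged to begin with a letter of $\mathscr{B}_p$, then re-inserting $a_l^\star$ (which commutes past the tail of $w_p$) would produce such a rearrangement of $w_p$, contradicting the hypothesis on $w_p$.

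The main obstacle is showing that the global excluders—the condition $F_{\{a_i\} \cup (Star_<(a_i) \cap Star(a_j))}$ on the concatenation $w_3 w_4$ appearing in $L_{3,4}$, and the overall suffix condition that no rearrangement of the full word begins with $a_i$ or $a_j$—also survive. These are again first-letter-excluder properties on concatenations, and the same insertion trick applies: any rearrangement of $w_3 w_4'$ (or of the full word $w'$) beginning with a forbidden letter extends, by re-inserting $a_l^\star$ at the commuted-to-end location, to such a rearrangement of $w_3 w_4$ (respectively of $w$). Since $w \in \mathscr{L}(M_{1,2,3,4})$, no such rearrangement of $w$ exists, so none exists for $w'$ either. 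Putting these checks together, $w' \in \mathscr{L}(M_{1,2,3,4})$, and hence $w'$ is a horocyclic suffix, as required.
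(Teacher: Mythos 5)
Your proposal is correct and follows essentially the same route as the paper: locate the subword containing the last copy of $a_l$, invoke Lemma~\ref{DeletionPreservesShortlexSuffixes} for the shortlex/restricted-alphabet conditions, and argue that the first-letter-excluder conditions survive. The only cosmetic difference is in that last step: the paper argues that any newly front-reachable forbidden letter $a_k$ would have to be blocked by $a_l$ itself (so $a_k$ follows and fails to commute with $a_l$, contradicting that $a_l$ is a last letter), whereas you phrase the contrapositive as a re-insertion of $a_l$ at the end lifting a forbidden rearrangement of $w'$ to one of $w$ — the same observation viewed from the other direction.
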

	
	\begin{proof}
		
		If this last copy of $a_l$ is in $w_1$ and $w_2$, this is a direct application of Lemma \ref{DeletionPreservesShortlexSuffixes}.
		
		If the deleted copy of $a_l$ is in $w_3w_4$, then by Lemma \ref{DeletionPreservesShortlexSuffixes} the resulting word is still a concatenation of a word in $ShortLex|_{Star_<(a_j)}$ with one in $ShortLex$. So the only remaining case is that the letter $a_l$, once deleted, allows a letter in $Star(a_i)$ to reach the start of $w_3$, or a letter in $Star_\le (a_j)\cup\{a_i\}$ to reach the beginning of $w_4$, or a letter of $Star_<(a_i)\cap Star(a_j)$ to reach the beginning of $w_3w_4$. Call this other letter $a_k$. In each case $a_k$ must follow $a_l$ and not commute with it, so that $a_l$ is not a last letter of $w$.
	\end{proof}
	
	As in the Rips graph case, we again need a geodesic horocyclic suffix machine which will describe the letters that we can insert into a word.
	
	\begin{lemma}\label{GeoHoroSuffMachine}
		
		There are finite-state machine $M_{Geo1234}$ and $M_{Geo1256}$ whose accepted language consists of word $w=w_1w_2w_3w_4$ and $w_1w_2w_5w_6$, where each $w_i$ is a geodesic word and which satisfies the same restricted alphabet and rearrangement rules as the languages of $M_{1234}$ and $M_{1256}$, (i.e. $w_2$ consists of letters commuting with $a_i$ and $a_j$, and preceding $a_i$ but not $a_j$, etc.)
		
	\end{lemma}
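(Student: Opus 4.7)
The plan is to adapt the construction of $M_{1,2,3,4}$ and $M_{1,2,5,6}$ by replacing the language $ShortLex(W_\Gamma)$ (accepted by $M_{lex}$) with the language $Geo(W_\Gamma)$ (accepted by $M_{Geo}$) throughout. That is, I would define $M_{Geo1234}$ to recognize the concatenation $L_1 L_2 L_{3,4}$ where
\begin{align*}
L_1 &= Geo|_{Star_<(a_i)\cap Star_<(a_j)},\\
L_2 &= Geo|_{Star_<(a_i)\cap Star_>(a_j)},\\
L_{3,4} &= \Bigl((Geo|_{Star_<(a_j)}\cap F_{Star(a_i)})(Geo\cap F_{Star_\le(a_j)\cup \{a_i\}})\Bigr)\cap F_{\{a_i\}\cup(Star_<(a_i)\cap Star(a_j))},
\end{align*}
and define $M_{Geo1256}$ by the analogous construction with the roles of $a_i$ and $a_j$ exchanged in the appropriate places. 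Each of these languages is built using the operations of intersection, restriction to a sub-alphabet, and concatenation on regular languages, all of which preserve regularity by Proposition \ref{CombiningFSMs}. The languages $Geo|_{\mathscr B}$ are obtained by intersecting the language of $M_{Geo}$ with the regular language of words in the alphabet $\mathscr{B}$.

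The main verification to carry out is that this language equals the set of decompositions $w = w_1 w_2 w_3 w_4$ satisfying the prescribed alphabet and rearrangement rules with each $w_i$ geodesic. In one direction, $Geo|_{\mathscr{B}}$ forces the correct alphabet on each factor, and the first-letter excluder machines $F_{\mathscr{B}}$ forbid precisely the rearrangements that would cause letters to migrate to an earlier factor than they should belong to. In the other direction, any word satisfying the listed constraints has its successive factors lying in $L_1$, $L_2$, $L_{3,4}$ by definition. The observation from the proof for $M_{1,2,3,4}$ that $Star(a_i)\cap Star(a_j)$ is a clique (by Lemma \ref{RipsGraphTraversability}) still applies and shows that no further constraint is needed between $L_1$ and $L_2$, since letters in $L_1$ automatically commute with those in $L_2$.

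The one subtle point mirrors exactly the issue handled in the shortlex case: the outer intersection with $F_{\{a_i\}\cup(Star_<(a_i)\cap Star(a_j))}$ in $L_{3,4}$ rules out two ways a putative decomposition could fail to be canonical, namely (i) a letter in $Star_<(a_i)\cap Star(a_j)$ rearranging to the front of $w_3 w_4$ (which should then belong to $w_2$), and (ii) an $a_i$ commuting from $w_4$ through an empty $w_3$ into the prefix. Because the machines $F_{\mathscr{B}}$ check rearrangement purely combinatorially and make no shortlex assumption on their input, the same justification given in the original proof applies verbatim. I expect no substantive obstacle; the only effect of weakening ``shortlex'' to ``geodesic'' is that within each $w_i$ we allow all geodesic orderings of the constituent letters rather than only the alphabetically-first one, and this is exactly the modification carried out in the definitions above.
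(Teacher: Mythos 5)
Your proposal is correct and is exactly the paper's argument: the paper's proof is the single sentence ``construct $M_{1234}$ and $M_{1256}$, but replace each use of the shortlex machine $M_{lex}$ with the geodesic machine $M_{Geo}$,'' which is precisely the substitution you carry out, just written out in full. Your additional remarks on why the first-letter excluders $F_{\mathscr{B}}$ still do their job without the shortlex assumption are a sound (and welcome) elaboration of what the paper leaves implicit.
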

	
	\begin{proof}
		
		Construct the machines $M_{1234}$ and $M_{1256}$, but replace each use of the shortlex machine $M_{lex}$ with the geodesic machine $M_{Geo}$ instead.
	\end{proof}
	
	Again as in the Rips graph case, insertion is an efficient operation.
	
	\begin{lemma}\label{HorocyclicInsertion}
		
		Let $w=w_1w_2w_3w_4$ (resp. $w=w_1w_2w_5w_6)$ be a horocyclic suffix and $a_l$ be permitted by $M_{Geo1234}(w)$ (resp. $M_{Geo1256}(w)$. The horocyclic suffix equivalent to $wa_l$ can be computed in $O(|w|)$ steps.
		
	\end{lemma}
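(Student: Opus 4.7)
The plan is to mimic the right-to-left scanning procedure from Lemma \ref{ShortlexInsertion}, adapted to respect the four-subword structure of a horocyclic suffix. Given $w = w_1 w_2 w_3 w_4$ and a letter $a_l$ permitted by $M_{Geo1234}(w)$, I would first determine the unique subword $w_r$ in which $a_l$ must land. This is a constant-time computation that depends only on the commutation class of $a_l$ relative to $\{a_i, a_j\}$ and its alphabetic position: $r=1$ if $a_l \in Star_<(a_i) \cap Star_<(a_j)$; $r=2$ if $a_l$ commutes with both $a_i, a_j$ but fails the $w_1$ alphabet condition; $r=3$ if $a_l \in Star_<(a_j)$ but fails the $w_1$ and $w_2$ conditions; and $r=4$ otherwise.

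Once $w_r$ is identified, I would run the insertion of Lemma \ref{ShortlexInsertion} inside $w_r$ only: scan $w_r$ from right to left, tracking the earliest shortlex-favorable insertion point as $a_l$ passes commuting letters, and halt at the first letter not commuting with $a_l$ (or when $w_r$ is exhausted), inserting $a_l$ at the tracked position. Locating the start of $w_r$ requires one linear traversal of $w$, and the scan inside $w_r$ is $O(|w_r|)$, so the total runtime is $O(|w|)$ as required.

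The main obstacle will be verifying correctness, specifically that $a_l$ can in fact commute backward across $w_{r+1} \cdots w_4$ to reach $w_r$, and that the output still respects the first-letter exclusion properties separating the four subwords. The first point should follow from $a_l$ being permitted by $M_{Geo1234}(w)$: if $a_l$ failed to commute with some letter in a later subword, then $wa_l$ would not admit a decomposition of the form demanded by $M_{Geo1234}$ with $a_l$ in position $w_r$, contradicting permission. The second point is essentially the reverse reading of Lemma \ref{DeletionPreservesHorocyclicSuffixes}, combined with the clique structure of $Star(a_i) \cap Star(a_j)$ from Lemma \ref{RipsGraphTraversability}, which forces all letters of $w_1 w_2$ to commute pairwise and so prevents any hidden rearrangement across the $w_1$--$w_2$ boundary. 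The $w_1 w_2 w_5 w_6$ case is handled symmetrically by swapping the roles of $a_i$ and $a_j$.
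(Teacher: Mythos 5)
There is a genuine gap, and it sits exactly at the step you flag as "the main obstacle." You propose to determine the target subword $w_r$ in constant time from the commutation/alphabet class of $a_l$ alone, and then scan only inside $w_r$. But the subword in which $a_l$ actually lands is not determined by its type; it depends on the letters of $w_{r+1}\cdots w_4$ that $a_l$ would have to commute past. Concretely, take $a_l\in Star_<(a_i)\cap Star_<(a_j)$ (your class $r=1$) and suppose $w_4$ contains a letter $a_m$ not commuting with $a_l$. Then $wa_l$ is still accepted by $M_{Geo1234}$: the new letter simply sits inside the last block, since the first-letter excluders in $L_{3,4}$ only forbid $w_3w_4$ from being \emph{rearrangeable to begin} with such a letter, not from containing one. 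Your algorithm would nevertheless insert $a_l$ into $w_1$, moving it past $a_m$, which changes the group element; the output is not equivalent to $wa_l$. The justification you offer — that permission by $M_{Geo1234}(w)$ forces $a_l$ to commute across the later subwords — is circular: acceptance of $wa_l$ only says \emph{some} valid decomposition of $wa_l$ exists, not that it places $a_l$ in the block matching its alphabet class.

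The paper's proof avoids this by scanning the \emph{entire} word from right to left (as in Lemma \ref{ShortlexInsertion}), tracking the preferred insertion point, and stopping either at the first letter failing to commute with $a_l$ or upon reaching the beginning of the earliest subword whose language admits $a_l$. The scan itself discovers where $a_l$ gets stuck, so no a priori determination of the target block is needed, and the second stopping condition is what enforces the subword structure (e.g.\ it prevents pushing a letter into a block whose alphabet excludes it even when all intervening letters commute). This is still a single linear pass, so the $O(|w|)$ bound is unaffected. Your proposal can be repaired by replacing the constant-time block determination with this full right-to-left scan and the two-part stopping rule.
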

	
	\begin{proof}
		
		We read $w$ from right to left, remembering the last time we read a letter commuting with and following $a_l$. The first time we read a letter failing to commute with $a_l$, or when we reach the beginning of the first word that allows $a_l$ in its language, we stop. We then insert $a_l$ into the last memorized location.	
	\end{proof}
	
	\begin{proposition} \label{DivergenceGraphSameSuffixLengthEdgeGenerator}
		
		Let $w_{horosuff}$ be a horocyclic suffix of length $m$, and let $n$ be the largest size of a clique in $\Gamma$ as before. Fix a value $k$ for the Busemann function compatible with the parity of $|w_{horosuff}|$ (i.e. if $k$ is even, $w_{horosuff}$ should be an even horocyclic suffix). The set of horocyclic suffixes $v_{horosuff}$ of words $v$ such that $b_\gamma(v)=b_\gamma(w)=k$ and $w$ and $v$ have close horocyclic successors can be generated by an algorithm whose runtime is linear in $m$ and exponential in $n$.
	\end{proposition}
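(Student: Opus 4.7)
The plan is to leverage the distance bound in Lemma \ref{MaximumDistanceForDivergenceEdge}: any $v$ on the same horosphere as $w$ with close horocyclic successors satisfies $d(w,v) \le 2n-2$. Since $|w_{suff}| = |v_{suff}|$, the geodesic reduction of $w^{-1}v$ consists of at most $n-1$ uncanceled letters of $w_{suff}$ and at most $n-1$ uncanceled letters of $v_{suff}$. Hence $v_{horosuff}$ is reachable from $w_{horosuff}$ by deleting some $j \le n-1$ letters and then inserting $j$ letters back, and I will enumerate all such delete-then-insert sequences.

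For each $j$ from $0$ to $n-1$, I would first enumerate ordered $j$-letter deletions from $w_{horosuff}$. At each intermediate stage, Lemma \ref{DeletionPreservesHorocyclicSuffixes} says deleting a current geodesic last letter preserves the horocyclic suffix property, and the available last letters at any stage form a clique in $\Gamma$, so there are at most a function of $n$ many ordered deletion sequences, each executed in $O(m)$ time. Then, from each resulting shortened horocyclic suffix, I would enumerate $j$-letter insertions using the geodesic horocyclic suffix machine $M_{Geo1234}$ or $M_{Geo1256}$ from Lemma \ref{GeoHoroSuffMachine} to list legal next letters, performing each insertion in $O(m)$ time via Lemma \ref{HorocyclicInsertion}. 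The number of insertion sequences is bounded by $|V|^n$. Finally, for each candidate $v_{horosuff}$ produced, I would run the edge-checking procedure of Theorem \ref{DivergenceEdgeSameLengthEdgeChecker} in $O(m)$ time and retain only those pairs $(w,v)$ that actually have close successors. The total runtime is thus $O(f(n) \cdot m)$ with $f$ exponential in $n$, matching the claim.

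The main obstacle is exhaustiveness: showing that every $v_{horosuff}$ with close successors to $w_{horosuff}$ is produced by some delete-insert sequence in the enumeration. Let $v$ have close successors to $w$ and write $w^{-1}v =_{Geo} w_{sub}^{-1} v_{sub}$. By Lemma \ref{HorocyclicNonLinking}, the canceled letters of $w_{suff}$ pair uniquely and without linking with canceled letters of $v_{suff}$. The canceled letters of $w_{suff}$ can therefore be commuted successively to the end of $w_{horosuff}$ and removed one at a time, with each intermediate word still a horocyclic suffix by Lemma \ref{DeletionPreservesHorocyclicSuffixes}. Symmetrically, the matching canceled letters of $v_{suff}$ can then be inserted into the shortened horocyclic suffix, one at a time, to reconstruct $v_{horosuff}$, with each intermediate insertion legal for the relevant $M_{Geo1234}$ or $M_{Geo1256}$ (since the reconstructed word is horocyclically shortlex at every stage). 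Since the number of cancellations is at most $n-1$, this specific delete-insert sequence falls within the enumeration.

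Duplicate enumeration of the same $v_{horosuff}$ from different sequences may inflate the runtime, but only by a factor depending on $n$, so the asymptotic bound is unaffected. Correctness of the output set is then guaranteed by the final filtering step using Theorem \ref{DivergenceEdgeSameLengthEdgeChecker}.
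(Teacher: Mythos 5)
Your algorithm is the same as the paper's: backtrack at most $n-1$ letters via Lemma \ref{DeletionPreservesHorocyclicSuffixes}, re-insert at most $n-1$ letters via $M_{Geo1234}$/$M_{Geo1256}$ and Lemma \ref{HorocyclicInsertion}, and filter the exponentially-in-$n$ many candidates with Theorem \ref{DivergenceEdgeSameLengthEdgeChecker}; you are in fact more explicit than the paper about exhaustiveness. One correction to that exhaustiveness paragraph: you have ``canceled'' and ``uncanceled'' swapped --- the letters that must be deleted from $w_{horosuff}$ are the \emph{uncanceled} letters (those of $w_{sub}$), and the letters inserted are the uncanceled letters of $v_{sub}$; as written, deleting the canceled letters and reinserting their canceling partners reproduces $w$, and the count of canceled letters is not bounded by $n-1$. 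The intended argument goes through for the uncanceled letters, since unlinked cancellation forces each uncanceled letter of $w_{suff}$ to commute with every canceled letter appearing after it, so the uncanceled letters can be commuted to the end and deleted one at a time.
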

	
	\begin{proof}
		
		We will mimic the proof of Lemma \ref{RipsEdgesSameSuffixLength} to generate a list of candidate words $v$ of size exponential in $n$, and then check them all using the algorithm described in Theorem \ref{DivergenceEdgeSameLengthEdgeChecker}.
		
		Therefore, we will use an iterated version of the process described in subsection \ref{subsec:RipsEdgesSameLength}. We first backtrack from $n-1$ steps to get a set of horocyclic suffixes of length $m-n+1$. If $m<n-1$, then we backtrack to the empty suffix. We then use $M_{Geo1234}$ or $M_{Geo1256}$ to choose a permitted letter to insert into the backtracked word, which takes linear time by Lemma \ref{HorocyclicInsertion}. We do this $n-1$ or $m$ times, whichever is less, to obtain the list of all horocyclic suffixes of the same length and the same form, at distance at most $2n-2$. We therefore obtain a set of candidate words $v$ whose size is exponential in $n$, and we check them with Theorem \ref{DivergenceEdgeSameLengthEdgeChecker}, which takes linear time in their length.
	\end{proof}

	\subsection{Edges between words of different suffix length}
	\label{subsec:DivergenceEdgesDifferenLength}
	
	In this section we will describe edges between words $w$ and $v$  where $|w_{suff}|>|v_{suff}|$. This will turn out to break down into two cases: one where $|w_{suff}|=|v_{suff}|+1$, and one where $|w_{suff}|>|v_{suff}|+1$. The reason for this is that if the prefix lengths of $w$ and $v$ differ by at least $2$, then there is automatically a pair of non-commuting prefix letters early in the difference $w^{-1}v$, which will make it very hard for the two to have close successors. We will address the case where the difference is at least $2$ first. 
	
	To put a word in horocyclically shortlex form, we first multiply it by a sufficiently high power of $a_ja_i$ so that its prefix becomes positive. If we multiply both $w$ and $v$ by $(a_ja_i)^k$ so that the become horocyclically shortlex, and $b_\gamma(w)=b_\gamma(v)$, then the shorter suffix on $v$ corresponds to a longer prefix. That is, once put in horocyclically shortlex form, the two are the same length.
	
	We first give a lemma to check whether the word $w$ has close successors with any word of shorter suffix.
	
	\begin{lemma}
		
		Let $w_{horosuff} = w_1w_2w_3w_4$ (resp. $w_1w_2w_5w_6$). If $w_3$ (resp. $w_5$) is nonempty, or if $w_4$ (resp. $w_6$) contains a letter not commuting with $a_i$ (resp. $a_j)$, then $w$ does not have close successors with any word $v$ with shorter suffix.
		
	\end{lemma}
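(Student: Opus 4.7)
The plan is to apply Lemma \ref{DivergenceGraphEdgeNecessaryCondition} by exhibiting, for every $v$ on the same horosphere as $w$ with $|v_{suff}| < |w_{suff}|$, a non-commuting pair in the reduction of $w^{-1}v$ that cannot be canceled by any horocyclic successors. I treat the form $w_{horosuff} = w_1 w_2 w_3 w_4$; the form $w_1 w_2 w_5 w_6$ follows by the symmetric argument with the roles of $a_i, a_j$ and $w_3, w_4$ versus $w_5, w_6$ interchanged.

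First I would pick a letter $a_m$ of $w_{suff}$ that fails to commute with $a_i$: if $w_3 \ne \emptyset$, take its first letter; otherwise take the first letter of $w_4$ that fails to commute with $a_i$. In either subcase $a_m$ does not commute with $a_i$, since any letter commuting with both $a_i$ and $a_j$ would have been placed into $w_1$ or $w_2$. Next, since $b_\gamma(w) = b_\gamma(v)$ we have $|v_{pref}| - |w_{pref}| = |w_{suff}| - |v_{suff}| \ge 1$ in the positive-prefix case (the negative-prefix case is analogous). Because $w$ is in form $w_1 w_2 w_3 w_4$, corresponding to odd $|w_{pref}|$, a direct computation of $p_w^{-1}p_v$ shows that the reduced residual prefix is a nonempty alternating word in $\{a_i, a_j\}$ beginning with $a_i$; in particular it contains at least one copy of $a_i$.

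The main work is to show that after fully reducing $w^{-1}v$ both $a_m$ (from the $w_3 \cup w_4$ portion of $w$) and an $a_i$ from the residual prefix survive, and end up adjacent in the reduced word. I would invoke Lemma \ref{HorocyclicNonLinking} to enforce unlinked cancellation and Lemma \ref{MatchedCancellation} to pin down the only positions in $v$'s horocyclic-suffix form where letters of $w_3$ or $w_4$ could find canceling partners. When $|v_{pref}| = |w_{pref}| + 1$, $v$ lies in the opposite horocyclic-suffix form and has no $v_3$ block to absorb $a_m$; when the prefix gap is larger, the alternation of the residue already supplies a non-commuting $(a_i, a_j)$ pair, but keeping $a_m$ in the picture is what pins down the position of a surviving $a_i$. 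I expect this bookkeeping of survival and adjacency to be the main obstacle, since one must verify that letters of $w_1, w_2, v_1, v_2$, and other letters commuting with both $a_i$ and $a_j$, cannot slip between $a_m$ and the residual $a_i$ in the reduced word.

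Once the non-commuting pair $(a_m, a_i)$ is shown to be adjacent in the reduction of $w^{-1}v$, the conclusion follows from Lemma \ref{DivergenceGraphEdgeNecessaryCondition}. For any successor extension $(ww')^{-1}(vv') = w'^{-1}(w^{-1}v)v'$, a cancellation of the surviving $a_m$ would require a matching $a_m$ in $v$ or $v'$ to commute leftward past the residual $a_i$ (impossible as $a_m$ does not commute with $a_i$), or a matching $a_m$ from $w'$ to commute rightward across $a_i$ (again impossible); cancellation of the residual $a_i$ is blocked symmetrically by $a_m$. The non-commuting pair therefore persists in every successor extension, so $w$ and $v$ do not have close horocyclic successors.
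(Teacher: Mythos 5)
Your approach is the same as the paper's: exhibit an uncanceled copy of $a_i$ coming from the longer (more positive) prefix of $v$, pair it with a letter $a_m$ of $w_3$ or $w_4$ that fails to commute with $a_i$, and invoke Lemma \ref{DivergenceGraphEdgeNecessaryCondition}. The one place you go astray is in manufacturing a "main obstacle" that is not there: Lemma \ref{DivergenceGraphEdgeNecessaryCondition} only asks for a non-commuting pair in $w_{sub}^{-1}v_{sub}$ that no successor can cancel; it does not require the two letters to be \emph{adjacent} in the reduced word, so all the bookkeeping about letters of $w_1,w_2,v_1,v_2$ "slipping between" them can be dropped. Survival of the pair is also immediate rather than delicate: the residual prefix letters cannot cancel into either suffix (a suffix cannot be rearranged to begin with $a_i$ or $a_j$), and $a_m$ cannot reach $v_{suff}$ because it would have to commute past the residual $a_i$. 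With that, your final paragraph (no successor of $v$ can absorb $a_m$ without crossing $a_i$, and no successor of $w$ can supply a prefix letter because, by Remark \ref{DescribingPredecessors}, that would force all of $w_{suff}$ to commute with $a_i$, contradicting the existence of $a_m$) is exactly the paper's two-sentence argument, so the proof closes once you discard the adjacency requirement.
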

	
	\begin{proof}
		
		If $w$ is of form $w_1w_2w_3w_4$ (resp. $w_1w_2w_5w_6$), then such a word $v$ has a longer prefix containing at least one uncanceled copy of $a_i$ (resp. $a_j$). The first letter of $w_3$ (resp. $w_5$) is assumed not to commute with $a_i$ (resp. $a_j$), so if it is present, there is an uncancelable pair. Similarly, such a letter in $w_4$ (resp. $w_6$) again creates an uncancelable pair. 	
	\end{proof}
	
	If the difference is at least two letters, then we have a further restriction.
	
	\begin{lemma}
		
		Suppose $w$ and $v$ are words on the same horosphere, and \newline $|w_{suff}|\ge |v_{suff}|-2$. Then $w$ and $v$ have close successors only if $w_{HoroSuff}=w_1w_2$.
		
	\end{lemma}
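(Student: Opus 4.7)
The plan is to argue by contradiction: treat the case $w_{HoroSuff} = w_1w_2w_3w_4$ (the $w_1w_2w_5w_6$ case being symmetric after swapping the roles of $a_i$ and $a_j$). The preceding lemma already forces $w_3 = \emptyset$ and forces every letter of $w_4$ to commute with $a_i$, so what remains is to rule out $w_4 \ne \emptyset$. Assume for contradiction that $w_4 \ne \emptyset$, and let $a_l$ be its first letter: by the defining constraint on the language of $w_4$, $a_l$ fails to commute with $a_j$, while it commutes with $a_i$ by the previous lemma.

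Next I would use the Busemann function formula (Lemma \ref{CalculatingBusemannFunctions}) applied to the horocyclically shortlex forms to conclude $|v_{pref}|-|w_{pref}| = |w_{suff}|-|v_{suff}| \ge 2$. Since both prefixes are alternating sequences beginning $a_ja_ia_j\cdots$, after reducing $w^{-1}v$ by canceling the matched initial stretches of the prefixes as well as the matched clique portions ($w_1$ with $v_1$ and $w_2$ with $v_2$; unmatched clique letters only add more uncanceled letters that commute with everything and therefore strengthen the argument), the reduced expression carries at least one uncanceled $a_j$ originating from $v$'s prefix. The letter $a_l$ sits at the right end of $w_4^{-1}$, hence at the left end of the reduced word; since $a_l$ commutes with $a_i$ but not with $a_j$, it is blocked from moving past the first uncanceled $a_j$ to its right. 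This exhibits the non-commuting pair $(a_l, a_j)$ in the reduced $w^{-1}v$, with $a_l$ coming from $w$'s suffix and $a_j$ from $v$'s prefix.

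The final step is to show this pair is uncancelable by any horocyclic successor $ww'$, $vv'$, so that Lemma \ref{DivergenceGraphEdgeNecessaryCondition} completes the contradiction. For $a_j$: since the first letter of $v_4$ does not commute with $a_j$, no $a_j$ from $v'$ can reach it from the right; and from the left, an $a_j$ from $w'^{-1}$ would have to commute rightward past $a_l$ (which it cannot) and then past the intervening $a_i$'s. For $a_l$: horocyclic shortlexness of $ww'$ together with Lemma \ref{HorocyclicNonLinking} rules out cancellation with anything in $w'^{-1}$, while commutation rightward to meet a canceling letter in $v$ or $v'$ is blocked by the same uncanceled $a_j$. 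The main obstacle is handling the tightest case $|v_{pref}|-|w_{pref}|=2$: here the only uncanceled prefix letter from $v$ immediately neighboring $a_l$ is an $a_i$, and the $a_j$ one must use is the one separating $v_3$ from $v_4$ in $v$'s horocyclically shortlex form; verifying that $a_l$ still cannot commute all the way past $v_3$ to cancel it, and that this $a_j$ is still inaccessible to new copies from successors, requires carefully invoking the structural first-letter constraints on $v_3$ and $v_4$ together with Remark \ref{DescribingPredecessors} (to see that $v$'s prefix cannot shorten along any successor).
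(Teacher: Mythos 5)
Your overall strategy is the same as the paper's: reduce to the form $w_{HoroSuff}=w_1w_2w_3w_4$ with $w_3=\emptyset$, locate the first letter $a_l$ of a putative nonempty $w_4$, pair it with an uncanceled prefix copy of $a_j$ contributed by $v$, and invoke Lemma \ref{DivergenceGraphEdgeNecessaryCondition}. Your closing discussion of the tight case $|v_{pref}|-|w_{pref}|=2$ is more careful than the paper's three-sentence argument, and is in the right spirit.

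There is, however, a genuine gap at the step ``by the defining constraint on the language of $w_4$, $a_l$ fails to commute with $a_j$.'' The language $L_{3,4}$ only forbids the first letter of $w_4$ from lying in $Star_\le(a_j)\cup\{a_i\}$ and (when $w_3$ is empty) in $Star_<(a_i)\cap Star(a_j)$, and the preceding lemma only gives that every letter of $w_4$ \emph{commutes with} $a_i$. This still permits $a_l\in Star_>(a_i)\cap Star_>(a_j)$: a common neighbor of $a_i$ and $a_j$ that follows both in the alphabetical order is a legal first letter of $w_4$ (by the stabilization lemma such letters land in $w_4$, e.g.\ $w_{suff}=a_l$), and for such an $a_l$ your non-commuting pair $(a_l,a_j)$ simply does not exist, so the contradiction collapses in that sub-case. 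The paper closes exactly this hole by routing through Remark \ref{DescribingPredecessors}: for the (at least two) uncanceled prefix letters of $v$ ever to cancel, some successor of $w$ must lengthen $w$'s prefix, and this requires $w_4$ to consist of letters commuting with \emph{and preceding} $a_i$ --- a letter of $w_4$ commuting with but following $a_i$ makes $a_i$ shortlex-forbidden after $w$ and after every successor in which $a_i$ could still reach the prefix, so the prefix never grows. Only once ``preceding $a_i$'' is in hand does the exclusion of $Star_<(a_i)\cap Star(a_j)$ force $a_l\notin Star(a_j)$, after which your argument goes through. You need to supply that ``preceding'' half of the condition, with its own justification, before the non-commuting-pair step.
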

	
	\begin{proof}
		
		We have at least one uncanceled copy of both $a_i$ and $a_j$, so we need to be able to write the first one after $w$. As in Remark \ref{DescribingPredecessors}, the only way for this to happen is for $w_3$ to be empty and $w_4$ to consist of letters commuting with and preceding $a_i$.
		
		The first letter $a_k$ of $w_4$ (if existent) therefore cannot commute with $a_j$, because if so that letter must be in $w_1$ or $w_2$ instead. But then this copy of $a_k$ and the remaining uncanceled copy of $a_j$ in $(a_ja_i)^mv$ would be a non-commuting pair. So $w_4$ is empty.
	\end{proof}
	
	As a result, we obtain a linear-time check for each horocyclic suffix that tells us whether it is adjacent to words with shorter suffixes, and a constant-time check that tells us whether it is adjacent to words with suffixes more than $1$ character shorter. The machinery of Lemmas \ref{DeletionPreservesHorocyclicSuffixes} and \ref{HorocyclicInsertion} will allow us to find the list of shorter nearby horocyclic suffixes efficiently. So we need a version of the machine $M_K$ that works for horocyclic suffixes of different lengths, and a corollary that bounds how far apart two such words can be and still have close successors.
	
	\begin{lemma} \label{DivergenceGraphDifferentLengthEdgeFSM}
		Let $|w_{suff}|>|v_{suff}|$, and let $b_\gamma(w)=b_\gamma(v)$. Suppose $v_{HoroSuff}=v_1v_2v_3v_4$ (resp. $v_{HoroSuff}=v_1v_2v_5v_6$). If so, we can either certify that the two do not have close successors or compute $K(v,w)$ and the word of cancelable letters using an FSM.
	\end{lemma}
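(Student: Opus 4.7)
The plan is to adapt the finite-state machine $M_K$ of Corollary \ref{DivergenceGraphEqualLengthEdgeFSM} by changing the subword-pairing used to read inputs. Exchanging the roles of $w$ and $v$ in Lemma \ref{MatchedCancellation}, the case $|w_{suff}|=|v_{suff}|+1$ puts $w$ and $v$ into opposite horocyclic forms, and cancellation is only possible between $w_1$ and $v_1$, between $w_2$ and $v_2$, and then between $w_5 a_i w_6$ and $v_4$ (resp. between $w_3 a_j w_4$ and $v_6$) depending on which form $v$ takes. The lemma immediately preceding forces $w_{HoroSuff}=w_1 w_2$ in the case $|w_{suff}|\ge |v_{suff}|+2$, so any surviving cancellation is only possible against $v_1$ and $v_2$.

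First I would handle $|w_{suff}|=|v_{suff}|+1$. Modify the input alphabet of $M_K$ so that each pair $(a_k, a_l)$ matches a letter of a $w$-subword with the corresponding letter of the $v$-subword under the pairing above; the trackers $n_w,n_v$ now index these paired subwords separately. The five substeps of Proposition \ref{DivergenceGraphEdgeAlgorithm} then go through unchanged, since they depend only on the clique-based uncancelability criteria, which remain valid by Lemma \ref{MatchedCancellation} and Proposition \ref{NonCommutingLettersInDifferentWords}. The one new ingredient appears at the concluding step: the lone uncanceled prefix copy of $a_i$ (resp. $a_j$) separating $w_2$ from $w_5$ (resp. $w_3$) in the difference $w^{-1}v$ must be adjoined to $K(v,w)$ as a forced uncancelable letter, and a final clique-closure check against it either certifies failure or yields $K(v,w)$ and the word of cancelable letters.

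Second I would treat $|w_{suff}|\ge |v_{suff}|+2$. Since $w_{HoroSuff}=w_1 w_2$, only an initial portion of the input to $M_K$ consists of paired reads; the remaining letters of $v$ past $v_2$, together with the two surviving uncanceled prefix copies of $a_i$ and $a_j$ in the difference, are fed as unpaired inputs whose partner is the blank symbol $*$. Each such single-sided read forces its letter into $K(v,w)$, and the clique-closure is checked as usual; any violation certifies that $w$ and $v$ do not have close successors.

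Finiteness of the state space in both subcases follows exactly as in Corollary \ref{DivergenceGraphEqualLengthEdgeFSM}: whenever the total length $\sum_i |u_i|$ of the cancelable-letter words grows, $K(v,w)$ grows by at least one letter, and the requirement that $K(v,w)$ remain a clique in $\Gamma$ bounds $\sum_i |u_i|$ by $Clique(\Gamma)$. The main obstacle is bookkeeping rather than conceptual: the input pairing now jumps between subwords of different labels across the two words, and the concluding step must be modified with care so that the forced uncancelable prefix letters are correctly incorporated into the final clique check before declaring an accept or reject.
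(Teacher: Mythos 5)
Your overall architecture matches the paper's: the paper also proves this by modifying the equal-length machine of Corollary \ref{DivergenceGraphEqualLengthEdgeFSM}, feeding the surviving prefix letters into the input stream and changing only the subword-recognition rule (the update of $n_w$ and $n_v$), with finiteness of the state space following from the same clique bound. Your subword pairing in the difference-one case is also consistent with the role-swapped form of Lemma \ref{MatchedCancellation}.

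The genuine gap is your treatment of the surviving prefix letters. You adjoin the lone uncanceled prefix copy of $a_i$ or $a_j$ to $K(v,w)$ as a ``forced uncancelable letter,'' and in the case $|w_{suff}|\ge|v_{suff}|+2$ you force both surviving copies of $a_i$ and $a_j$ into $K(v,w)$. Neither is correct. These surviving letters belong to $v$, the word with the shorter suffix and hence the longer positive prefix in horocyclically shortlex form (not to $w$, as your phrase ``separating $w_2$ from $w_5$'' suggests), and they are cancelable whenever a horocyclic successor of $w$ can write $a_j$ (resp.\ $a_i$) and have it commute into $w$'s prefix. By Remark \ref{DescribingPredecessors} this is exactly the situation isolated by the two lemmas preceding this one ($w_3$, resp.\ $w_5$, empty and $w_4$, resp.\ $w_6$, commuting with the relevant generator) --- that is, exactly the cases in which this FSM is ever invoked. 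Placing a cancelable letter into $K(v,w)$ corrupts the adjacency test of Lemma \ref{CliqueDifferenceAdjacency}, which searches only for non-commuting pairs adjacent to all of $K(v,w)$, so your machine would reject genuine edges; and in the $\ge 2$ case your $K(v,w)$ contains the non-commuting pair $\{a_i,a_j\}$ and therefore fails the clique check on every input, even though such edges exist and are precisely what Proposition \ref{DivergenceGraphDifferentSuffixLengthEdgeGenerator} searches for when $w_{HoroSuff}=w_1w_2$. The paper instead folds the surviving prefix letters into the input as ordinary letters subject to the same sequential cancellation bookkeeping (this is the point of redefining the third subword to be the tail of the surviving prefix and the fourth subword to be a concatenation such as $v_3a_jv_4$ that contains a prefix letter), so that the machine itself decides, consistently with unlinked cancellation, whether each surviving prefix letter cancels against a prefix-extending successor or joins $K(v,w)$.
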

	
	\begin{proof}
		
		We use a modification of the FSM described in Corollary \ref{DivergenceGraphEqualLengthEdgeFSM}. Specifically, we will include the uncanceled prefix letters in the input. Therefore, the steps of calculating a new state will be precisely the same. The only difference is that the machine will recognize slightly different subwords. The ``third subword" will consist of all but the first uncanceled prefix letter, while the ``fourth subword" will be the concatenation $v_5a_iv_6$ or $v_3a_jv_4$, since these are the letters that $w_4$ or $w_6$ (if present) can cancel with. That is, the only change is the update rule for the integers $n_{w,i}$ and $n_{v, i}$. 
	\end{proof}
	
	As in the equal-length case, each time the total cancelable letters increase, there is a new uncanceled letter, so we obtain Corollary \ref{MaximumDistanceForDivergenceEdge} in this case as well. Then, similarly to Proposition \ref{DivergenceGraphSameSuffixLengthEdgeGenerator}, we can enumerate all such edges exiting a word $w$.
	
	\begin{proposition}\label{DivergenceGraphDifferentSuffixLengthEdgeGenerator}
		
		Let $w_{HoroSuff}$ be a horocyclic suffix of a word $w$ on the horosphere $b_{\gamma}^{-1}(k)$ such that $w_3$ or (resp. $w_5$) is empty, and $w_4$ or (resp. $w_6$) consists entirely of words commuting with $a_i$ (resp. $a_j$). We can enumerate the horocyclic suffixes $v_{HoroSuff}$ of words $v$ on the horosphere $b_{\gamma}^{-1}(k)$ such that $w$ and $v$ have close successors by an algorithm whose time in linear in $|w_{HoroSuff}|$ and exponential in $Clique(\Gamma)$.
		
	\end{proposition}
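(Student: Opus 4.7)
The plan is to adapt Proposition \ref{DivergenceGraphSameSuffixLengthEdgeGenerator} to the unequal-suffix-length setting by replacing $M_K$ with the FSM of Lemma \ref{DivergenceGraphDifferentLengthEdgeFSM}. The first observation is that the bound of Lemma \ref{MaximumDistanceForDivergenceEdge} extends verbatim to this asymmetric machine: each time the cancelable subwords in a state grow by one letter, at least one letter is simultaneously added to $K(v,w)$, so $|K(v,w)| \le Clique(\Gamma)$ forces $d(w,v) \le 2\,Clique(\Gamma) - 2$. Because $b_\gamma(w) = b_\gamma(v)$, this also bounds $|w_{HoroSuff}| - |v_{HoroSuff}| \le 2\,Clique(\Gamma) - 2$, so only finitely many shorter suffix lengths need to be considered.

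Given this bound, I enumerate candidate horocyclic suffixes $v_{HoroSuff}$ via a backtrack-and-insert tree rooted at $w_{HoroSuff}$. I apply up to $2\,Clique(\Gamma) - 2$ successive last-letter deletions using Lemma \ref{DeletionPreservesHorocyclicSuffixes}, branching over all geodesically-last letters at each step. Then, for each target length, I re-insert the appropriate number of letters by consulting $M_{Geo1234}$ or $M_{Geo1256}$ for permitted letters and invoking Lemma \ref{HorocyclicInsertion} to perform each insertion in linear time. The resulting tree has depth at most $4\,Clique(\Gamma) - 4$ and branching at most $|V|$, hence size exponential in $Clique(\Gamma)$, and each leaf takes $O(|w_{HoroSuff}|)$ time to produce. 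For each candidate pair $(w_{HoroSuff}, v_{HoroSuff})$, I prepend the horocyclically shortlex prefixes determined by $k$ and the respective suffix lengths, feed the pair simultaneously into the FSM of Lemma \ref{DivergenceGraphDifferentLengthEdgeFSM}, and either reject the pair or read off $K(v,w)$, the cancelable word, and the bit identifying its host. I then build the maximally-cancelled successor per Lemma \ref{MaximalCancellation}, compute the relevant $S$-states at both endpoints, and invoke Propositions \ref{CliqueDifferenceAdjacency} and \ref{DivergenceEdgeCriterion} to decide whether close successors exist.

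The main obstacle is verifying that the backtrack-and-insert tree actually covers every $v$ on the horosphere with close successors to $w$. This reduces to showing, via Lemma \ref{HorocyclicNonLinking} applied to the horocyclically shortlex forms of $w$ and $v$, that each letter of $w_{HoroSuff}$ surviving cancellation in $w^{-1}v$ corresponds to one deletion step while each surviving letter of $v_{HoroSuff}$ corresponds to one insertion step, so the total depth of the tree needed is exactly $d(w,v)$. The hypothesis on $w_3, w_4$ (resp.\ $w_5, w_6$) is precisely what the two preceding lemmas of the subsection identified as necessary for $w$ to have any close-successor neighbor with a strictly shorter suffix, guaranteeing that the root of the tree is compatible with at least one valid branch and that no spurious prefix-incompatibility arises along the way. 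With coverage established, the runtime bound follows by multiplying tree size (exponential in $Clique(\Gamma)$) by the per-leaf cost (linear in $|w_{HoroSuff}|$).
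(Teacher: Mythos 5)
Your proposal is correct and takes essentially the same route as the paper: backtrack via Lemma \ref{DeletionPreservesHorocyclicSuffixes}, re-insert via $M_{Geo1234}$/$M_{Geo1256}$ and Lemma \ref{HorocyclicInsertion}, run the pair through the FSM of Lemma \ref{DivergenceGraphDifferentLengthEdgeFSM}, and finish with Lemma \ref{MaximalCancellation} and Propositions \ref{CliqueDifferenceAdjacency} and \ref{DivergenceEdgeCriterion}. The only (cosmetic) difference is that the paper splits into cases according to whether $w_4$ (resp.\ $w_6$) is empty --- using that a suffix-length gap of at least $2$ forces $w_{HoroSuff}=w_1w_2$, hence shorter than $Clique(\Gamma)$, so one can simply enumerate all short horocyclic suffixes --- whereas you treat both situations uniformly with a deeper backtrack-and-insert tree, which only enlarges the candidate set by a factor that is still exponential in $Clique(\Gamma)$ and so does not affect the stated complexity.
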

	
	\begin{proof}
		
		It takes constant time to check whether $w_4$ (resp. $w_6$) is empty. If so, then $w_{HoroSuff}=w_1w_2$ is of length shorter than the largest clique in $\Gamma$. As such, we take the set of horocyclic suffixes of length at most $n-2$, where $n=clique(\Gamma)$, and compare them to $w$ using the FSM in Lemma \ref{DivergenceGraphDifferentLengthEdgeFSM}. We then check the output using Proposition \ref{DivergenceEdgeCriterion}. All of these operations take linear time in $|w|$, and the list of candidates is exponential in $n$.
		
		If $w_4$ (resp. $w_6$) is not empty, then we only need to check suffixes 1 letter shorter. We apply Lemma \ref{DeletionPreservesHorocyclicSuffixes} to delete $n-1$ or $|w_{suff}|$ letters, whichever is lesser, and then use $M_{Geo1234}$ or $M_{Geo1256}$ and Lemma \ref{HorocyclicInsertion} to insert $n$ more characters. This creates a list of candidates whose length is exponential in $n$. We check these candidates as before, which takes linear time in their length.
	\end{proof}
	
	Combining everything we conclude the following theorem.
	
	\begin{theorem} \label{DivergenceGraphGenerationSpeed}
		
		Let $\Gamma$ be a graph satisfying the standing assumptions together with the condition of Proposition \ref{EveryStateLarge}. Let $a_i$ and $a_j$ two non-adjacent letters of $\Gamma$, and let \newline $\gamma=(a_ia_j)^\infty$. Take $k$ to be an integer. Then $n$ vertices of the divergence graph on horosphere $b_\gamma^{-1}(k)$ and the edges between them can be generated by an algorithm whose runtime is $O(n\log(n))$.
	\end{theorem}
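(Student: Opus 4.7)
The plan is to mirror the proof of Theorem \ref{RipsGraphGenerationSpeed}, since essentially all of the substantive technical work has already been done by the propositions in this section. The overall structure is: enumerate the $n$ vertices on the horosphere $b_\gamma^{-1}(k)$ in total time $O(n\log n)$, then for each listed vertex produce every incident edge in time $O(\log n)$, yielding the overall $O(n\log n)$ bound. The assumption imported from Proposition \ref{EveryStateLarge} is what guarantees that every horocyclic suffix we enumerate actually corresponds to a vertex of the divergence graph, rather than one accepted only in some subsequential limit of the state function $S$.

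For vertex enumeration, I would run a breadth- or depth-first traversal of $M_{even}$ or $M_{odd}$ (as provided by Corollary \ref{DivergenceGraphVertexFSM}, and with the choice dictated by the parity of $k$), listing horocyclic suffixes of length at most $m$. Because these machines have bounded out-degree, the number $n$ of accepted suffixes of length at most $m$ grows at most exponentially in $m$, so $m = O(\log n)$ and the total work to generate and write down these suffixes is $O(n\log n)$. Converting to horocyclically shortlex words by prepending the appropriate prefix adds at most $O(m)$ work per vertex, which is absorbed.

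For each listed vertex $w$, I would then invoke Proposition \ref{DivergenceGraphSameSuffixLengthEdgeGenerator} to enumerate every divergence edge from $w$ to a word with a suffix of the same length, and Proposition \ref{DivergenceGraphDifferentSuffixLengthEdgeGenerator} to enumerate every edge from $w$ to a word with a strictly shorter suffix. Each routine runs in time linear in $|w_{horosuff}| = O(\log n)$, with an exponential dependence on $\mathrm{Clique}(\Gamma)$ that is absorbed into the implicit constant depending on $\Gamma$. Edges to words with a strictly longer suffix are recovered automatically as the reverse of the shorter-suffix edges from those longer words, so no third enumeration routine is needed. Each vertex therefore contributes $O(\log n)$ work, for a total of $O(n \log n)$.

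The main obstacle, and really the only thing requiring care, is deduplication and bookkeeping: neighbor horocyclic suffixes produced by the two subroutines must be matched against the existing vertex list so that each edge is emitted once rather than twice. I would do this with a hash table keyed on horocyclic suffixes, whose insert and lookup cost is linear in suffix length and hence $O(\log n)$ per operation; the total $O(n\log n)$ budget is preserved. Optimality follows exactly as in the Rips graph case: the $n$ labels being written already include a positive proportion of length $\Omega(\log n)$, forcing $\Omega(n\log n)$ time for any algorithm that outputs the graph in any reasonable format.
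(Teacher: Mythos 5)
Your proposal is correct and follows essentially the same route as the paper: generate the horocyclic suffixes of length at most $m$ via the machines of Corollary \ref{DivergenceGraphVertexFSM} in $O(n\log n)$ total time, then apply Propositions \ref{DivergenceGraphSameSuffixLengthEdgeGenerator} and \ref{DivergenceGraphDifferentSuffixLengthEdgeGenerator} to each vertex in $O(\log n)$ time. The extra remarks on deduplication and on why the hypothesis of Proposition \ref{EveryStateLarge} makes the enumerated suffixes coincide with the divergence graph's vertex set are correct refinements that the paper leaves implicit.
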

	
	\begin{proof}
		
		First we use Proposition \ref{DivergenceGraphVertexFSM} to generate the list of horocyclic suffixes of length at most $m$. The number $n$ of these grows exponentially in $m$, so that this requires $n$ operations, each with approximately $\log(n)$ steps. We take these to be the vertices of the graph.
		
		For each such vertex, the edges to horocyclic suffixes of the same length can be enumerated using the algorithm in Proposition \ref {DivergenceGraphSameSuffixLengthEdgeGenerator} in $O(\log(n))$ steps, and the edges to horocyclic suffixes that are shorter can be generated by Proposition \ref{DivergenceGraphDifferentSuffixLengthEdgeGenerator} in $O(\log(n))$ steps. 
	\end{proof}
	
	\subsection{Geometry of divergence graphs}
	\label{subsec:DivergenceGraphGeometry}
	
	In this subsection we describe the geometric corollaries of the last two sections. First of all, as mentioned in the introduction, divergence graphs inherit some distortion properties from the related Rips graphs.
	
	\begin{proposition}
		
		Let $w$ and $v$ be on the horosphere $b_{\gamma}^{-1}(k)$ and let $d_H$ denote the distance in the divergence graph on the horosphere. Then $d_H(w, v)\ge \frac{1}{O_\Gamma(1)}2^{\frac{d(w, v)}{2\delta}}$.
		
	\end{proposition}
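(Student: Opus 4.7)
The plan is to mimic the proof of Corollary \ref{HorosphericalDistortionLowerBound} (the Rips graph lower bound) essentially verbatim, replacing the bound "each Rips edge is realized by a Cayley path that dips by at most $1$" with the analogous statement for divergence edges. Let $n = Clique(\Gamma)$ and let $d_H$ denote the divergence graph metric on $b_\gamma^{-1}(k)$.

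The key observation is Lemma \ref{MaximumDistanceForDivergenceEdge}: if $(w',v')$ is any edge in the divergence graph, then $d(w',v') \le 2n-2$. Hence each divergence edge can be realized by a Cayley geodesic segment of length at most $2n-2$. Since the endpoints of such a segment both lie on $b_\gamma^{-1}(k)$ and the Busemann function is $1$-Lipschitz, the positive and negative variation along this segment each sum to at most $(n-1)$, so the Busemann function takes values in $[k-(n-1),k+(n-1)]$ along the segment and in particular is always $\ge k-(n-1)$.

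Now let $\sigma$ be any path in the divergence graph from $w$ to $v$, and let $\sigma'$ be the concatenation of the Cayley-geodesic realizations of its edges. Then $\ell(\sigma') \le (2n-2)\ell(\sigma)$, and $b_\gamma(\sigma'(t)) \ge k-(n-1)$ for all $t$. Exactly as in the proof of Corollary \ref{HorosphericalDistortionLowerBound}, a Cayley geodesic $[w,v]$ contains a point $x$ with $b_\gamma(x) = k - d(w,v)/2$. By $1$-Lipschitzness of $b_\gamma$,
$$d(x, \mathrm{im}(\sigma')) \ge (k-(n-1)) - (k - d(w,v)/2) = d(w,v)/2 - (n-1).$$
Applying Proposition III.H.1.6 of \cite{BridsonHaefliger} to $\sigma'$ then yields $\ell(\sigma') \ge 2^{(d(w,v) - 2n)/(2\delta)}$, and dividing by $2n-2$ gives
$$d_H(w,v) \ge \frac{1}{(2n-2)\,2^{n/\delta}}\,2^{d(w,v)/(2\delta)},$$
which is of the required form since the prefactor depends only on $\Gamma$ (through $n$ and $\delta$).

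There is essentially no obstacle here beyond ensuring that the realization step is valid: once Lemma \ref{MaximumDistanceForDivergenceEdge} is in hand, the entire argument is a bounded-constant rescaling of the Rips graph proof. The one point deserving brief attention is that concatenating Cayley geodesics across successive divergence edges may produce backtracking, but this only makes $\sigma'$ longer, strengthening the upper bound $\ell(\sigma') \le (2n-2)\ell(\sigma)$, and does not affect the lower bound on the minimum Busemann value along $\sigma'$.
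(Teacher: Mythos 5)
Your proof is correct, and it rests on the same key ingredient as the paper's --- Lemma \ref{MaximumDistanceForDivergenceEdge}, which bounds the Cayley-graph length of any divergence edge by $2n-2$ --- but the reduction you perform afterwards is genuinely different. The paper observes that this lemma makes the divergence graph a subgraph of the $(2n-2)$-Rips graph, so divergence-graph distance dominates $(2n-2)$-Rips distance, and then invokes the bi-Lipschitz equivalence of all $k$-Rips graphs (Proposition \ref{RipsGraphBiLipschitzEquivalence}) together with the already-proved lower bound for the $2$-Rips graph (Corollary \ref{HorosphericalDistortionLowerBound}). You instead re-run the quasi-geodesic divergence argument directly: the concatenated Cayley realization $\sigma'$ of a divergence-graph path stays at Busemann value at least $k-(n-1)$ (rather than $k-1$ as in the $2$-Rips case), and the Bridson--Haefliger estimate then applies verbatim with the constant shifted by $n$. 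Your route has the advantage of bypassing Proposition \ref{RipsGraphBiLipschitzEquivalence}, whose proof is a fairly lengthy case analysis, and it yields an explicit prefactor $\bigl((2n-2)\,2^{n/\delta}\bigr)^{-1}$; the paper's route is shorter to state given that the Rips-graph machinery is already in place, and the subgraph observation is reused for the growth corollary that follows. One cosmetic point: your justification via ``positive and negative variation'' is a slightly roundabout way to see that $b_\gamma \ge k-(n-1)$ along each edge realization; the direct argument is that reaching value $k-m$ and returning to $k$ costs $2m$ steps of a $1$-Lipschitz function, so $2m \le 2n-2$. Neither phrasing affects correctness.
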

	
	Note that the proof of this proposition makes no use of the assumption that the divergence graph is defined on the whole horosphere other than for $w$ and $v$ to be vertices of the divergence graph. So if the assumption in Proposition \ref{EveryStateLarge} does not hold, but $w$ and $v$ are large or pre-large states, the conclusion will still be true.
	
	\begin{proof}
		
		By Lemma \ref{MaximumDistanceForDivergenceEdge}, any two vertices that span an edge of the divergence graph are at distance at most $2n-2$, where $n$ is the clique dimension of the graph $\Gamma$. Therefore, the divergence graph is a subgraph of the $2n-2$-Rips graph. Hence any path in the divergence graph is at least the minimum length of a path in the $2n-2$-Rips graph. The result then follows from Proposition \ref{HorosphericalDistortionLowerBound} together with Proposition \ref{RipsGraphBiLipschitzEquivalence}.
	\end{proof}
	
	Similarly, comparison to the $2n-2$-Rips graph gives a polynomial growth statement without any need for the assumption in Proposition \ref{EveryStateLarge}.
	
	\begin{corollary}\label{DivergenceGrowth}
		
		Let $w_0$ be the word of empty suffix on the horosphere $H_k$ and $P$ the polynomial such that $|B_{\text{Rips}}(w_0, r)|\le P(r)$ where the ball is taken with respect to the metric on the $2n-2$-Rips graph. Then the same holds for $B_{\text{div}}(w_0, r)$, the ball with respect to the divergence graph's path metric.
		
	\end{corollary}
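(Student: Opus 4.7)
The plan is to observe that the divergence graph embeds as a subgraph of the $(2n-2)$-Rips graph on the same horosphere, so balls in the divergence graph inject into balls of the same radius in the Rips graph, and then invoke Corollary \ref{RipsGraphGrowth} together with Proposition \ref{RipsGraphBiLipschitzEquivalence}.

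More concretely, I would first recall that by Lemma \ref{MaximumDistanceForDivergenceEdge}, every pair of vertices spanning an edge of the divergence graph lies at Cayley distance at most $2n-2$, where $n=Clique(\Gamma)$. Since both endpoints of such an edge are in any case vertices of the $(2n-2)$-Rips graph (the Rips graph has all horosphere points as vertices), every divergence edge is in particular a $(2n-2)$-Rips edge between the same two vertices. Consequently any edge-path in the divergence graph from $w_0$ to some vertex $v$ is simultaneously an edge-path of the same length in the $(2n-2)$-Rips graph from $w_0$ to $v$. Taking infima over paths gives $d_{\text{Rips}}(w_0,v)\le d_{\text{div}}(w_0,v)$ for every vertex $v$ of the divergence graph.

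From this inequality it follows immediately that $B_{\text{div}}(w_0,r)\subseteq B_{\text{Rips}}(w_0,r)$ as subsets of the horosphere, and hence $|B_{\text{div}}(w_0,r)|\le |B_{\text{Rips}}(w_0,r)|$. Applying Corollary \ref{RipsGraphGrowth} (for the $2$-Rips graph) together with Proposition \ref{RipsGraphBiLipschitzEquivalence} (which lets us pass to the $(2n-2)$-Rips graph at the cost of modifying the polynomial) yields a polynomial $P$ depending only on $\Gamma$ such that $|B_{\text{Rips}}(w_0,r)|\le P(r)$, and therefore $|B_{\text{div}}(w_0,r)|\le P(r)$.

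The main conceptual point (and essentially the only nontrivial step) is the subgraph containment, which rests entirely on Lemma \ref{MaximumDistanceForDivergenceEdge}; once that is in hand the growth bound is a purely formal consequence, and no analysis of the divergence graph's combinatorics is required. In particular, as noted after the previous proposition, this argument does not use the hypothesis of Proposition \ref{EveryStateLarge}: it goes through verbatim whenever $w_0$ and the vertices under consideration are in large or pre-large states.
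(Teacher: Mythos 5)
Your argument is correct and is essentially the paper's own proof: the divergence graph is a subgraph of the $(2n-2)$-Rips graph by Lemma \ref{MaximumDistanceForDivergenceEdge}, so its path metric dominates the Rips path metric, balls inject, and the polynomial bound transfers. The only difference is that you spell out the derivation of the polynomial $P$ via Corollary \ref{RipsGraphGrowth} and Proposition \ref{RipsGraphBiLipschitzEquivalence}, which the corollary's statement simply takes as a hypothesis.
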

	
	\begin{proof}
		
		The vertex set of the divergence graph is a subset of that of any Rips graph, and the metric on the divergence graph dominates the metric on the $2n-2$-Rips graph. Therefore, $B_{\text{div}}(w_0, r)$ is a subset of $B_{\text{Rips}}(w_0, r)$.
	\end{proof}
	
	Finally, we address an upper bound for the distortion of the divergence graph. While it is not clear how to obtain such a bound in general, we provide a sufficient condition for the distance in the divergence graph to be no more than a fixed multiple of the distance in the $2$-Rips graph.
	
	\begin{proposition}\label{DivergenceDistortionUpperBound}
		
		Let $\Gamma$ be a graph that is not a clique, without induced squares or separating cliques, such that every vertex has another vertex at distance at least $3$ away, and such that for any pair of adjacent vertices in $a_{k}$ and $a_l$ in $\Gamma$, and any $a_m$ commuting with both of them, there is a maximal clique $K$ containing $a_k$, and $a_l$ but not $a_m$. Then there is a constant $L$ such that for any two words $w$ and $v$ on the same horosphere such that $d(w,v)=2$, there is a path of length at most $L$ between $w$ and $v$. 
		
	\end{proposition}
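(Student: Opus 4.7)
The plan is to enumerate the finitely many local configurations of pairs $(w,v)$ on the same horosphere with $d(w,v)=2$ and produce, in each one, a divergence-graph path of length bounded independently of $w$ and $v$.

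First, I would set up a case analysis driven by the prefix-suffix decomposition. Since $b_\gamma(w)=b_\gamma(v)$, Lemma \ref{CalculatingBusemannFunctions} forces $|v_{pref}|-|w_{pref}|\in\{-2,-1,0,1,2\}$, with a compensating change in suffix length. The reduced form of $w^{-1}v$ has length $2$, so feeding the horocyclic suffix pair into the FSMs of Corollary \ref{DivergenceGraphEqualLengthEdgeFSM} and Lemma \ref{DivergenceGraphDifferentLengthEdgeFSM} returns an uncancelable clique $K(v,w)$ of size at most $2$, together with at most one cancelable letter; by Lemma \ref{MaximalCancellation} that cancelable letter can be absorbed into a one-step successor $w'$ of $w$ or $v'$ of $v$, so that the residual difference is exactly $K(v,w)$.

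Second, I would apply the edge criterion of Proposition \ref{DivergenceEdgeCriterion} combined with Lemma \ref{CliqueDifferenceAdjacency}. This asks for two non-commuting vertices $a_p, a_q$ both adjacent to every letter of $K(v,w)$, with at least one of them permitted by both $S(w)$ and $S(v')$ (or $S(w')$ and $S(v)$). The clique hypothesis on $\Gamma$ is used precisely here: when $K(v,w)$ contains an edge $(a_k,a_l)$ and some commuting vertex $a_m$ is the obstruction, the hypothesis yields a maximal clique $K\supseteq\{a_k,a_l\}$ excluding $a_m$, and since $\Gamma$ is not a clique and has no separating cliques, the complement of $K$ supplies a non-commuting pair in the joint link of $K(v,w)$. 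The cases $|K(v,w)|\in\{0,1\}$ are handled by simpler applications of the standing assumptions on $\Gamma$ together with Lemma \ref{RipsGraphTraversability}.

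Third, when even after these manipulations the $S$-states of $w$ and $v$ forbid every candidate pair, I would exhibit an intermediate vertex $z$ on the same horosphere at Cayley distance $O(1)$ from $w$ and $v$, chosen so that $S(z)$ admits the required non-commuting pair and so that $(w,z)$ and $(z,v)$ are both divergence edges. The construction multiplies $w$ on the right by a bounded-length horocyclic successor drawn from the maximal clique guaranteed by the hypothesis; this shifts the $S$-state in a controlled way, while $d(w,z)$ stays bounded by $\mathrm{Clique}(\Gamma)+O(1)$. Since there are only finitely many local configurations and each detour has uniformly bounded length, a single constant $L$ works for all pairs.

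The main obstacle will be the third step: verifying that the clique hypothesis is strong enough to produce, in every remaining failure configuration, an intermediate $z$ for which both $(w,z)$ and $(z,v)$ pass the criterion of Proposition \ref{DivergenceEdgeCriterion}. This amounts to applying the hypothesis twice in a consistent way and exhausting the finite case list inherited from the FSM outputs --- bookkeeping-intensive but tractable, and the maximality built into the hypothesis is what prevents the $S$-state of $z$ from reintroducing the same obstruction that ruled out the direct edge.
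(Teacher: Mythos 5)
Your proposal correctly identifies the finite-case flavor of the problem and the role of the edge criterion (Proposition \ref{DivergenceEdgeCriterion} with Lemma \ref{CliqueDifferenceAdjacency}), but it is missing the two ideas that actually make the paper's proof work, and your step three as stated would fail. The paper first proves an auxiliary lemma: two words on the same horosphere whose reduced difference is a pair of \emph{commuting} letters are always divergence-adjacent. This is where the maximal-clique hypothesis is genuinely used --- in the case where neither letter of the difference is a last letter, both words must end in a common letter $a_m$ commuting with the pair, and the hypothesis guarantees the link of the pair is not coned off by $a_m$, which produces the non-commuting pair $a_m, a_o$ needed for the alternating close successor rays $a_ma_oa_ma_o\ldots$. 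Your sketch invokes the hypothesis only to feed Lemma \ref{CliqueDifferenceAdjacency} for the direct edge $(w,v)$, but when $w=w'a_k$ and $v=w'a_l$ with $a_k$ and $a_l$ non-commuting, Proposition \ref{NonCommutingLettersInDifferentWords} shows $w$ and $v$ are \emph{never} divergence-adjacent, so the direct edge is simply unavailable and the commuting-difference lemma is the only building block you have.

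The second missing idea is how to bridge the non-commuting case: the paper takes $K$ to be the clique of letters $a_m$ for which $w'a_m$ leaves the horosphere, uses the no-separating-cliques hypothesis to get a path $a_k=b_0,b_1,\dots,b_n=a_l$ in $\Gamma\setminus K$ of length at most $D$, and then strings together the words $w'b_0,w'b_1,\dots,w'b_n$, consecutive ones differing by a commuting pair and hence adjacent by the lemma. The resulting divergence path has length up to $D$, not $2$. Your proposed single intermediate vertex $z$ cannot in general exist when $a_k$ and $a_l$ are far apart in $\Gamma\setminus K$, and the construction you offer for $z$ --- multiplying $w$ on the right by a horocyclic successor --- moves $z$ to a different horosphere (a successor lengthens the suffix and changes the Busemann value), so $z$ is not even a vertex of the graph in question. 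The ``finitely many local configurations'' framing does not rescue this, because the existence of a bounded path within each configuration is precisely what must be constructed, and without the commuting-difference lemma and the walk in $\Gamma\setminus K$ you have no mechanism to produce it.
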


	Note that this somewhat artificial condition on adjacent vertices will be satisfied, e.g., for the 1-skeleton of any manifold. 
	
	The proof will rely on the following lemma.
	
	\begin{lemma}
		
		Let $\Gamma$ be a graph satisfying these conditions, and let $w$ and $v$ be two words on the same horosphere differing by a pair of commuting letters. Then $w$ and $v$ are adjacent in the divergence graph.
		
	\end{lemma}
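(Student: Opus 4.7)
The plan is to verify the edge criterion by combining Proposition \ref{DivergenceEdgeCriterion} with Lemma \ref{CliqueDifferenceAdjacency}. First I would analyze the prefix--suffix structure of the pair $(w,v)$. Since $b_\gamma(w)=b_\gamma(v)$ and $d(w,v)=2$, any discrepancy between $|w_{pref}|$ and $|v_{pref}|$ forces the reduced difference to contain uncancelled $a_i,a_j$ prefix letters, which do not commute. Hence $w_{pref}=v_{pref}$ and $|w_{suff}|=|v_{suff}|$, and by Lemma \ref{MatchedCancellation} the two commuting letters $a_k\in w_{suff}$ and $a_l\in v_{suff}$ must sit at matched suffix positions.

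Next I would feed $(w,v)$ through the FSM $M_K$ of Corollary \ref{DivergenceGraphEqualLengthEdgeFSM}. Since the reduced difference is a commuting pair, the algorithm never detects an uncancelable non-commuting pair, so it accepts, and its output uncancelable clique $K(v,w)$ is a subclique of $\{a_k,a_l\}$. Invoking Lemma \ref{MaximalCancellation}, I pass to a horocyclic successor $v'$ of $v$ realizing maximal cancellation, so that $w^{-1}v'$ spells out precisely $K(v,w)$. By Proposition \ref{DivergenceEdgeCriterion}, it suffices to show that $w$ and $v'$ have close horocyclic successors. If $K(v,w)=\emptyset$ then $v'=w$ and there is nothing to prove, so I assume $K(v,w)$ is a non-empty clique of size one or two.

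By Lemma \ref{CliqueDifferenceAdjacency}, the remaining task is to produce a pair of non-commuting letters $a_p,a_q$, each commuting with every element of $K(v,w)$, such that the alternating word $(a_pa_q)^n$ is a horocyclic successor of both $w$ and $v'$. The graph hypothesis on $\Gamma$ is built precisely for this: given any candidate $a_m$ commuting with each letter of $K(v,w)$ (which exists because $K(v,w)$ lies inside some edge $\{a_k,a_l\}$ of $\Gamma$, and when $K(v,w)$ is a singleton I first enlarge it to an edge using a neighbor of $a_k$), the hypothesis yields a maximal clique $K'$ with $K(v,w)\subseteq K'$ and $a_m\notin K'$; by maximality, some $a_n\in K'$ fails to commute with $a_m$, and $a_n\in K'$ commutes with all of $K(v,w)$. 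Thus $(a_m,a_n)$ is the desired non-commuting pair commuting with $K(v,w)$.

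The main obstacle is the last step: verifying that one of the two chosen letters is admissible to write immediately after both $w$ and $v'$, i.e.\ not forbidden by $S(w)$ or $S(v')$. I would handle this by exploiting the freedom in choosing $a_m$ (which ranges over all of $Link(K(v,w))$): the $S$-state of a horocyclically shortlex word is determined by the last letter together with the letters in its commutation past, and because the elements of $K(v,w)$ appear as uncancelable final letters in the horocyclically shortlex forms of $w$ and $v'$, the $S$-state constraints at $w$ and $v'$ only rule out letters alphabetically below or equal to the final letter of each word, together with commutation-companions. Since the hypothesis guarantees multiple maximal cliques containing any edge, I expect to be able to iterate the construction (replacing a forbidden $a_m$ by an $a_n$ obtained from a different maximal clique) until a viable admissible pair is found; making this iteration terminate cleanly, and ruling out pathological cases where every pair from $Link(K(v,w))$ lies in $S(w)\cap S(v')$, is the technical heart of the argument.
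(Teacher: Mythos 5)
There are two genuine gaps here. First, your opening reduction is false: from $b_\gamma(w)=b_\gamma(v)$ and $d(w,v)=2$ you cannot conclude $w_{pref}=v_{pref}$. If the prefix lengths differ by exactly one, the reduced difference contains only a single uncancelled prefix letter ($a_i$ or $a_j$, not both), and the other letter of the commuting pair is a suffix letter that commutes with it; for instance $v$ may have a prefix one letter more positive and a suffix one letter shorter than $w$. This is precisely Case II of the paper's proof and it must be handled separately (the paper reduces it to the equal-suffix-length arguments after observing that $w_{suff}$ then consists entirely of letters commuting with the extra prefix letter).

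Second, and more seriously, you have not closed the argument: the admissibility of the alternating pair after both $w$ and $v'$ --- which you yourself flag as ``the technical heart'' --- is left as an expectation that an iteration will terminate, with no termination argument. The paper avoids any such iteration by first extracting more structure from the unlinked-cancellation lemmas. When neither $a_k$ nor $a_l$ is a last letter (so $K(v,w)=\{a_k,a_l\}$), non-linking forces the last letters of $w_{horosuff}$ and $v_{horosuff}$ to be the \emph{same} letter $a_m$ commuting with both $a_k$ and $a_l$; since $S(w)$ and $S(v)$ forbid only subsets of $Star(a_m)$, and the maximal-clique hypothesis guarantees $a_m$ is not adjacent to every vertex of $Link(a_k)\cap Link(a_l)$, an admissible partner $a_o$ in that link exists outright. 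When one of $a_k,a_l$ is a last letter, the paper shows full cancellation is actually achievable (a direct shortlex argument shows the cancelling letter is permitted by the other word's state), after which any neighbour of $a_l$ not adjacent to $a_k$ --- which exists because $\Gamma$ has no separating cliques --- completes the alternating pair. Without identifying this structure, your plan of cycling through $Link(K(v,w))$ until something is permitted does not constitute a proof.
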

	
	\begin{remark}
		
		One can prove more generally that if any sub-maximal clique $K$ in $\Gamma$ and vertex $a_m$ adjacent to each vertex of $K$, there is a maximal clique $K'$ containing $K$ and not $a_m$, then two words on a horosphere are adjacent if and only if their difference is a clique. We will not present a proof here because we do not use it for sub-maximal cliques of size greater than 2. However, the proof follows along the same lines.
		
	\end{remark}
	
	\begin{proof}
		
		We will divide into cases based on the lengths of the suffixes. \newline WLOG $|w_{suff}|\ge |v_{suff}|$.
		
		\textbf{Case I}: $w$ and $v$ have equal-length suffixes.
		
		Let $w^{-1}v=_{Geo}w_{horosuff}^{-1}v_{horosuff}=_{Geo}a_ka_l$, where $a_k$ comes from $w$ and $a_l$ from $v$, and $a_k$ and $a_l$ commute.
		
		\textbf{Case I(a)}: Neither $a_k$ nor $a_l$ is the last letter of $w$ or $v$.
		
		In this case, both letters are uncancelable. In order to avoid linked cancellation, the last letters of $w_{horosuff}$ and $v_{horosuff}$ must be the same letter $a_m$ and commute with both $a_k$ and $a_l$.
		
		By assumption, the set $S$ of vertices of $\Gamma$ adjacent to $\{a_k, a_l\}$ contains no vertex adjacent to each other vertex of $S$. In particular, $m$ is not adjacent to all of $S$. Now, the state of the words $w$ and $v$ forbids a subset of the letters adjacent to $a_m$, in particular therefore not all of $S$. So if $a_o$ is some letter of $S$ not forbidden in either state, then $wa_ma_oa_ma_o...$ and $va_ma_oa_ma_o...$ stay close for all time.
		
		\textbf{Case I(b)}: One of $a_k$ or $a_l$, but not both, is the last letter of $w_{horosuff}$ or $v_{horosuff}$.
		
		WLOG suppose that $a_k$ is at the end of $w_{horosuff}$. The letter before $a_k$ must cancel with some letter in $v$, and by non-linking, it must cancel with the last letter. The same holds for the each other letter preceding $a_k$. So there are words $w'$ and $w''$ so that $w''$ commutes with $a_l$ and such that $w_{horosuff} = w'w''a_k$ while $v_{horosuff}=w'a_lw''$. Since $a_k$ is written after $w'w''$, it follows that the state of $w'a_lw''$ forbids writing $a_k$ only if $a_l$ forbids $a_k$ (i.e. $a_l$ commutes with and follows $a_k$) and if $a_k$ commutes with (and necessarily follows) each letter of $w''$. But since $w''$ commutes past $a_l$ to cancel, the first letter of $w''$ follows $a_l$ in the shortlex order. As a result, the letter $a_k$ is permitted after $w'a_lw''$, achieving full cancellation.
		
		From here, one takes any letter $a_m$ adjacent to $a_l$ but not $a_k$. One exists by the assumption that there are no separating cliques, so that the set of neighbors of any point cannot be a clique. Then $w'w''a_ka_ma_ka_m...$ and $w'a_lw''a_ka_ma_ka_m...$ stay close for all time.
		
		\textbf{Case I(c)}: Both $a_k$ and $a_l$ are the last letters of the $w_{horosuff}$ and $v_{horosuff}$. 
		
		In this case, WLOG suppose that $a_k$ follows $a_l$ in the shortlex order. Then we apply the same argument as in case I(b).		
		
		\textbf{Case II}: $|w_{suff}|=|v_{suff}|+1$
		
		In this case, in order to be on the same horosphere, $v_{pref}$ is 1 more positive than $w_{pref}$. WLOG suppose $va_ia_k=_{Geo}$, where $a_i$ commutes to the prefix. Then $w_{suff}$ consists of letters commuting with $a_i$. We can therefore apply the same arguments as in cases I(a) and I(b) as follows.
		
		If neither $a_i$ nor $a_k$ is the final letter of $v$, then as in I(a), the final letter $a_l$ of $v$ cancels with the final letter of $w$, so that $a_l$ commutes with $a_i$ and $a_k$, rendering $\{a_i, a_k\}$ a sub-maximal clique. If instead either $a_i$ or $a_k$ is the final letter of $v$, one argues as in case I(b) that that letter must be permitted by the state of $w$. The argument in this case is slightly easier, because, since $a_k$ and $a_i$ commute by assumption, any favorable rearrangement of $wa_k$ or $wa_i$ would have to be available in $v$. 
	\end{proof}
	
	With this Lemma, we can now prove Proposition \ref{DivergenceDistortionUpperBound}.
	
	\begin{proof}
		
		Let $wa_ka_l=_{Geo}v$, where $a_k$ and $a_l$ are not assumed to commute, and where $a_k$ cancels with a letter in $w$. We must bound the distance between $w$ and $v$ in the divergence graph.
		
		Assume that, after reduction, $w_{horosuff}a_k$ contains a letter which fails to commute either with $a_i$ or $a_j$. This covers all but finitely-many pairs $(w,v)$, so if a bound on the distance between $w$ and $v$ in the divergence graph is found in this case, it will imply a bound overall\footnote{Although it is not necessary, we can give an explicit upper bound in the other case as well. If $|w_{horosuff}|=|v_{horosuff}|$, then there is a path from $a_k$ and $a_l$ to a letter adjacent to either $a_i$ or $a_j$ due to Lemma \ref{RipsGraphTraversability}. Choosing whichever letter will allow us to shorten the suffix then provides a path of length at most $2D+2$. If instead the two have different length suffixes (and WLOG $|w_{suff}|=|v_{suff}|+1$), then $a_l$ is either $a_i$ or $a_j$. In this case there is a path of length at most $D+1$ that first moves $a_k$ to a letter $a_m$ allowing the prefix to be lengthened by $a_l$, and then performs that lengthening by deleting $a_m$}. In such a case, the set of letters $a_m$ such that $wa_ka_m$ is not on the correct horosphere is a clique $K$ in $\Gamma$. By assumption $\Gamma\setminus K$ is connected with diameter at most $D$. So we choose a path of length at most $D$ in this graph between $a_k$ and $a_l$,  and, using the previous Lemma, use an edge in the divergence graph for each step along this path.		
	\end{proof}
	
	\begin{corollary}
		
		If $\Gamma$ is as in Proposition \ref{DivergenceDistortionUpperBound}, $d_H$ denotes the distance in the divergence graph on a horosphere, and $d$ denotes the distance in the Cayley graph, then $d_H(w,v)\le O_\Gamma(1)(D+1)^{d(w,v)}$.
		
	\end{corollary}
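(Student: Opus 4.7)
The strategy is to upgrade Proposition \ref{DivergenceDistortionUpperBound}, which only handles pairs at Cayley distance $2$, to arbitrary pairs by threading it through the exponential upper bound on $2$-Rips distortion from Proposition \ref{HorosphericalDistortionUpperBound}. Since the hypotheses on $\Gamma$ in Proposition \ref{DivergenceDistortionUpperBound} imply those of Proposition \ref{EveryStateLarge}, the divergence graph has all of the horosphere as its vertex set, so there is no issue of intermediate vertices failing to be vertices of the divergence graph.

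First I would fix $w$ and $v$ on the horosphere $b_\gamma^{-1}(k)$ and choose a geodesic $w = u_0, u_1, \ldots, u_N = v$ in the $2$-Rips graph on this horosphere, where $N$ denotes the $2$-Rips path distance between $w$ and $v$. Let $L$ denote the constant produced by Proposition \ref{DivergenceDistortionUpperBound}. For each consecutive pair $(u_i, u_{i+1})$ we have $d(u_i, u_{i+1}) \le 2$ with both vertices on the same horosphere, so Proposition \ref{DivergenceDistortionUpperBound} supplies a divergence-graph path of length at most $L$ between them. Concatenating these $N$ short paths yields a divergence-graph path from $w$ to $v$ of length at most $LN$.

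Next I would invoke Proposition \ref{HorosphericalDistortionUpperBound} to bound $N$, obtaining
\[
N \;\le\; O_\Gamma(1)(D+1)^{d(w,v)} + d(w,v) + O_\Gamma(1).
\]
Since $D+1 \ge 2$, the linear and constant terms are dominated by the exponential term, and absorbing $L$ into the implicit constant yields $d_H(w,v) \le O_\Gamma(1)(D+1)^{d(w,v)}$.

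I do not anticipate any genuine obstacle: the hard work has been done by the two earlier propositions, and this corollary is simply the composition of their two conclusions. The only subtle point is confirming that the concatenation of the short divergence-graph paths is admissible, i.e.\ that every intermediate $u_i$ is actually a vertex of the divergence graph, which is automatic under the standing hypothesis on $\Gamma$ here.
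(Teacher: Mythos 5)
Your proposal is correct and is essentially the argument the paper intends (the corollary is stated without proof, but the preceding discussion makes clear that Proposition \ref{DivergenceDistortionUpperBound} is designed to bound divergence-graph distance by a fixed multiple of $2$-Rips distance, which is then fed into Proposition \ref{HorosphericalDistortionUpperBound}). Your attention to the two minor points — that the hypotheses subsume those of Proposition \ref{EveryStateLarge} so every intermediate vertex lies in the divergence graph, and that distinct horosphere points joined by a $2$-Rips edge are at Cayley distance exactly $2$ — is exactly what is needed to make the concatenation legitimate.
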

	
	\section{Examples}
	\label{sec:Pictures}
	
	Here we present various outputs of the above algorithms. These images come from converting these graphs to graphml structure, followed by rendering in Mathematica. All of these graphs but the last one can be computed in reasonable time on a personal laptop.
	
	\begin{figure} [H]
		
		\centering
		
		\includegraphics[scale=0.3]{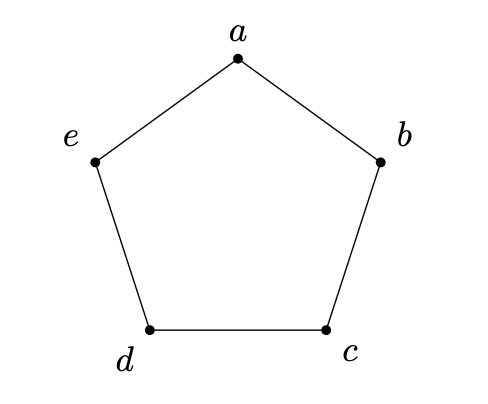}
		
		\includegraphics[scale=0.7]{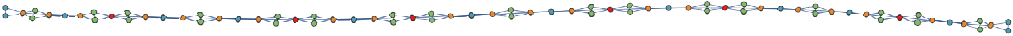}
		
		\caption{Above: the defining graph for a RACG that is virtually a surface group. Below: the 2-Rips graph on words of suffix length at most 4 in this group. The defining ray is $(ac)^\infty$.}
        \label{fig:SurfaceRipsGraph}
		
	\end{figure}
	
	In Figure \ref{fig:SurfaceRipsGraph} we see a portion of a horosphere in a virtual surface group. As expected, it looks like a fattened line. Every point is coarsely a cut point, which matches ones intuition for Fuchsian groups. The growth rate of this graph appears linear. One can show by hand that the graph will remain a fattened line as we generate more of it.
	
	\begin{figure} [H]
		
		\centering
		
		\includegraphics[scale=0.7]{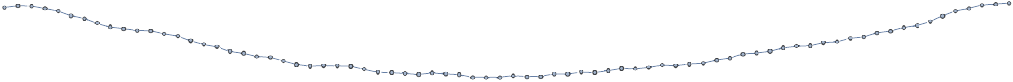}
		
		\caption{The divergence graph on the same vertex set.}
            \label{fig:SurfaceDivergenceGraph}
	\end{figure}
	
	In Figure \ref{fig:SurfaceDivergenceGraph} we see an example where the divergence graph provides a cleaner picture than the related Rips graph. Rather than a fattened line, we get a line on the nose. Again one can prove by hand that this will continue forever.
	
	\begin{figure} [H]
		\centering
		
		\includegraphics[scale=0.2]{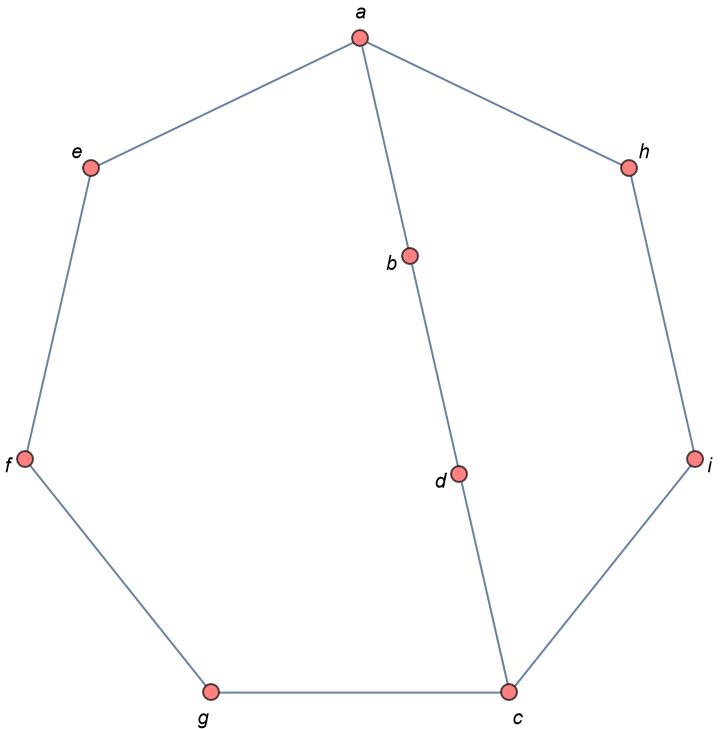}		\includegraphics[scale=0.6]{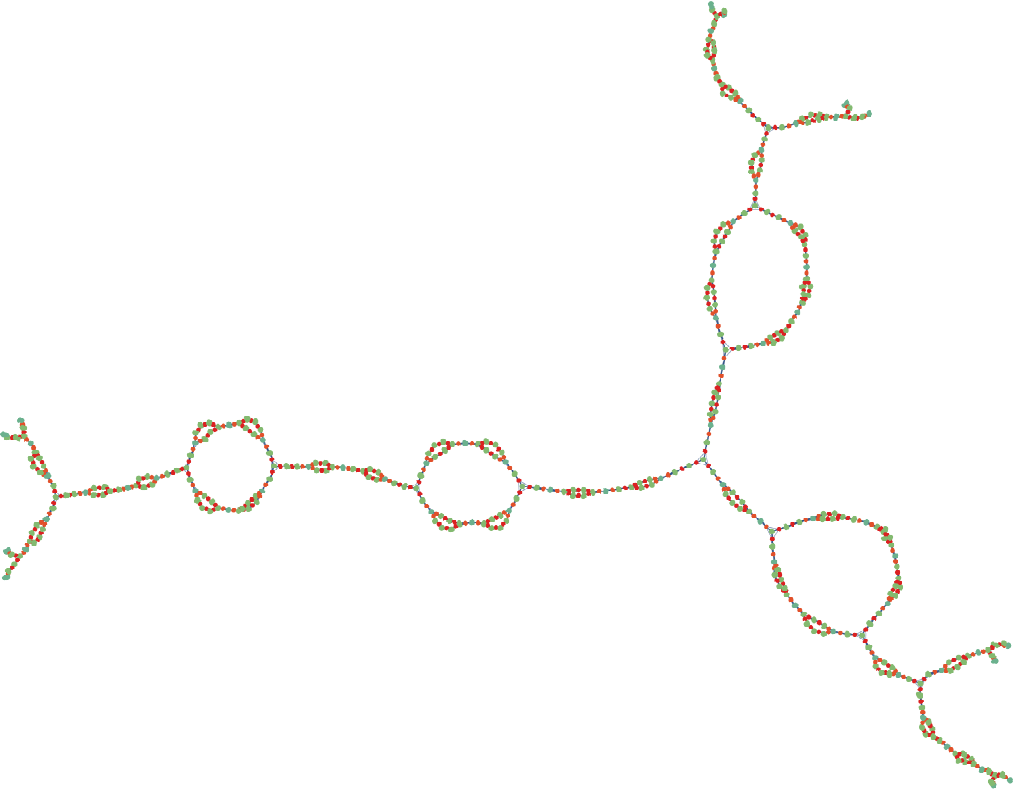}
		
		\caption{Above: the defining graph for a RACG that is virtually a surface amalgam. Below: the 2-Rips graph on words of suffix length at most 4 in this group. The defining ray is $(ac)^\infty$.}
        \label{fig:BranchedSurfaceRipsGraph}
		
	\end{figure}
	
	In Figure \ref{fig:BranchedSurfaceRipsGraph}, the group $W_\Gamma$ is virtually the fundamental group of a surface amalgam in the sense of \cite{Stark}. This class of groups was studied further in \cite{ThetaGraphs}. One sees again the indication of linear growth, which can the be proven by hand. Notice the nested branching structure. Every point is again coarsely a cut point, but how large a neighborhood needs to be taken in order for the complement to be disconnected depends on how deep into a branch the point is. The center point of the threefold branch is the word with empty suffix. As discussed in Proposition \ref{HorosphericalDistortionUpperBound}, this matches the boundary, in which the endpoints of the line $...acaca...$ disconnect $\partial W_\Gamma$ into 3 pieces. 
	
	\begin{figure} [H]
		
		\centering
		
		\scalebox{-1}[1]{\includegraphics[scale=0.7]{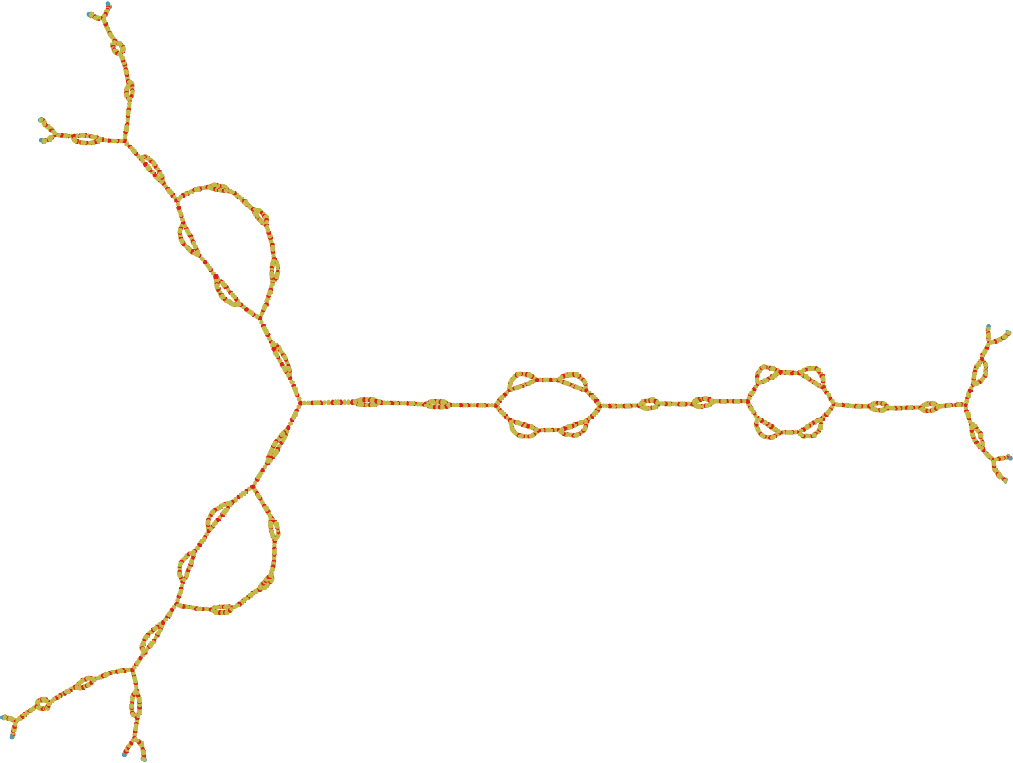}}
		
		\caption{The divergence graph on the same vertex set.}
            \label{fig:BranchedSurfaceDivergenceGraph}
		
	\end{figure}
	
	In Figure \ref{fig:BranchedSurfaceDivergenceGraph} we see again a cleaner divergence graph. What were complete graphs have again been replaced with paths.

    \begin{figure} [H]
        \centering
        \includegraphics[scale=0.3]{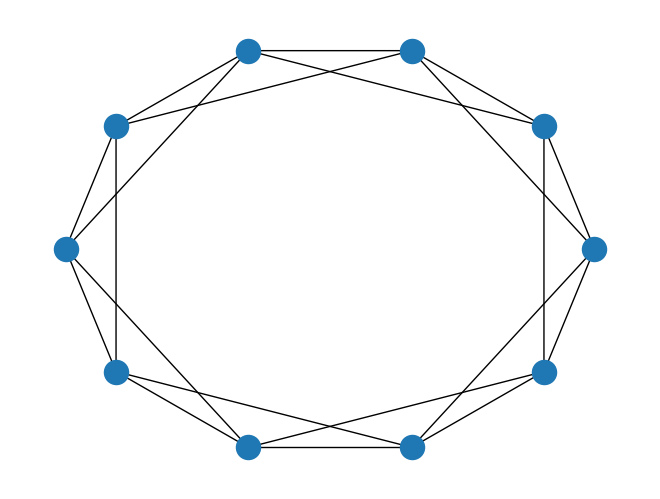}
        
        \includegraphics[scale=0.7]{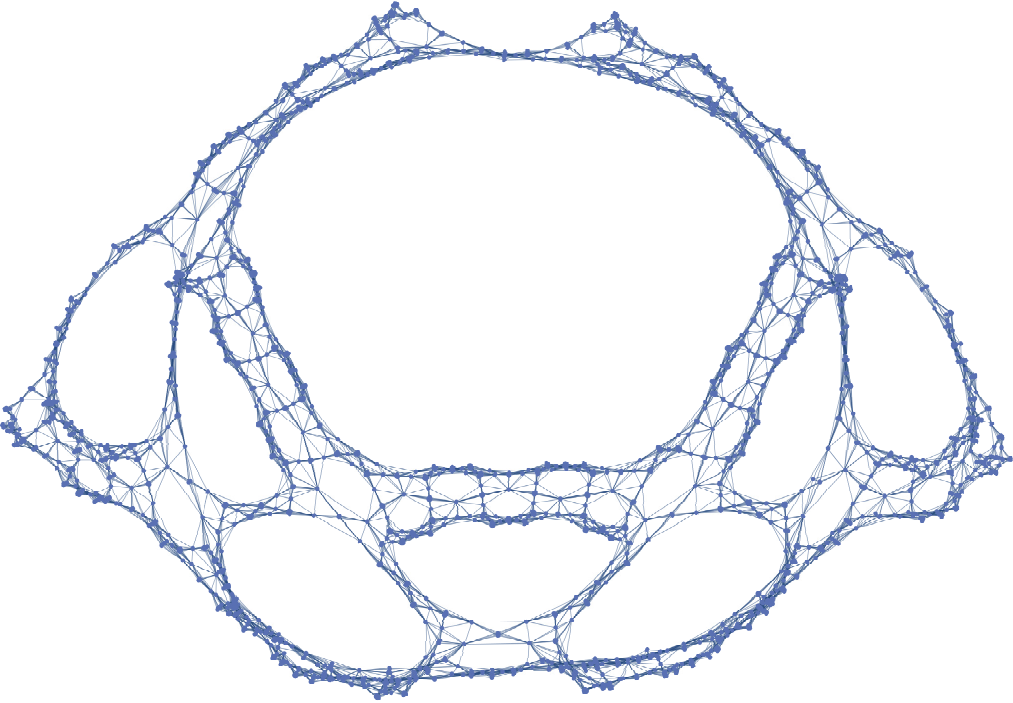}
        \caption{Above: the defining graph for a RACG that is virtually the fundamental group of a hyperbolic 3-manifold with boundary. Below: the Rips graph on words of suffix length at most 4 in this group.}
        \label{fig:SierpinskiCarpetRipsGraph}
    \end{figure}

    Figures \ref{fig:SierpinskiCarpetRipsGraph} and \ref{fig:SierpinskiCarpetDivergenceGraph} concern a group that is (virtually) the fundamental group of a hyperbolic 3-manifold with totally-geodesic boundary \cite{KapovichKleiner}. Its boundary is the Sierpinski Carpet \cite{SwiatkowskiSierpinskiCarpet}. We therefore expect to see fractally-nested holes. In Figure 5 we observe holes of different sizes, but there is some visual clutter. In Figure 6, we see a cleaner divergence graph in which more smaller holes are visible.

    \begin{figure}
        \centering 
        \scalebox{1}[-1]{\includegraphics[scale=0.7]{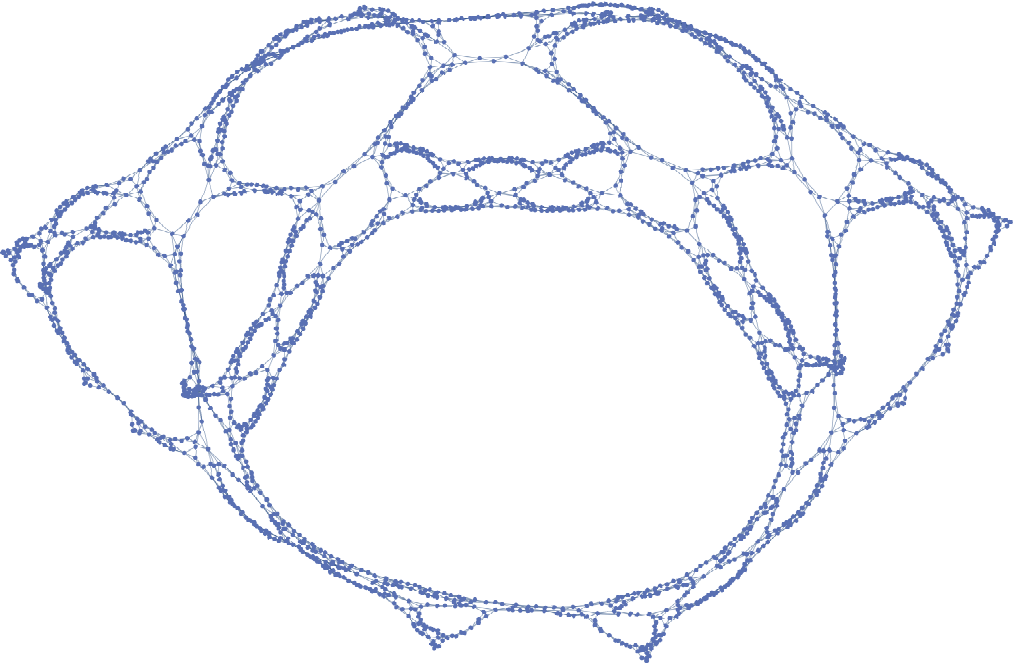}}
        \caption{The divergence graph on the same vertex set. Note the holes of 4 different scales that are visible.}
        \label{fig:SierpinskiCarpetDivergenceGraph}
    \end{figure}

    The horosphere shown in Figure \ref{fig:PontryaginSphereRipsGraph} should approximate the Pontryagin Sphere, which is a tree of manifolds in the sense of \cite{Swiatkowski}. This space arises by starting with a torus, taking a countably infinite connect sum with other tori over a dense family of discs, and then repeating the process infinitely on the newly-added tori. This graph is visually very hard to parse, but one sees the indication, as expected, of no coarse cut points. It remains to be investigated what the numerical properties of this graph may tell us about the geometry of the boundary, but with the algorithms described in this paper, many such numerical properties should be straightforward to calculate.

	\begin{figure}[H]
		
		\centering
		
		\includegraphics[scale=0.25]{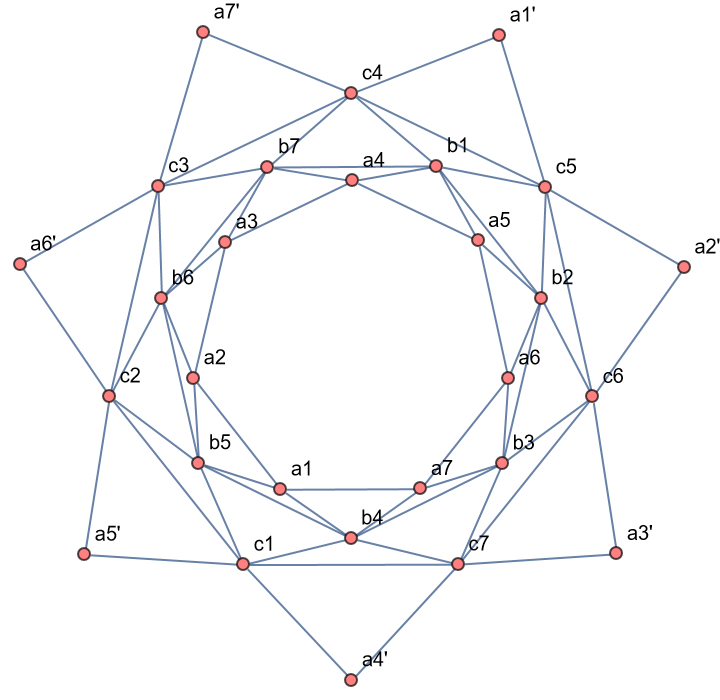} \includegraphics[scale=0.65]{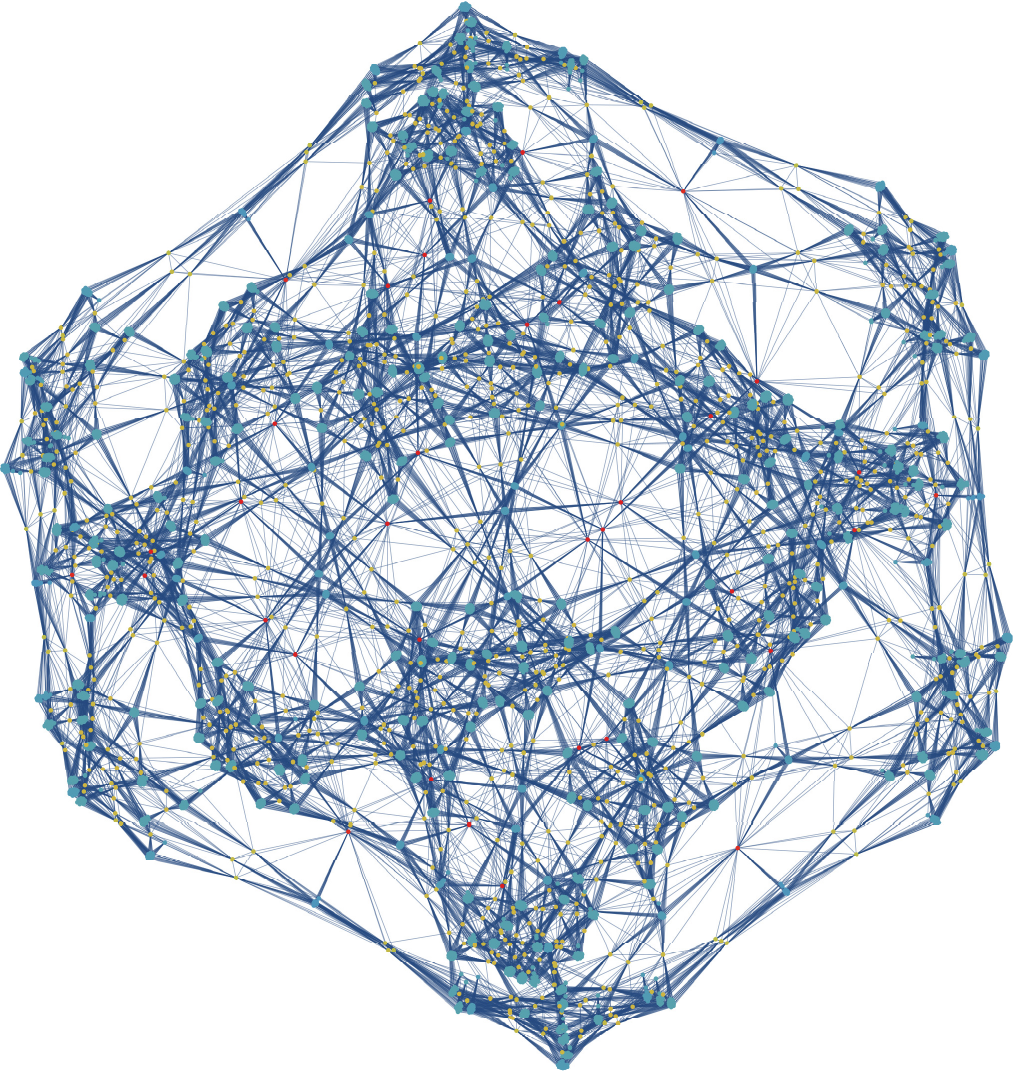}
		
		\caption{Above: the defining graph for a hyperbolic pseudomanifold group. The inner and outer letters should be glued together, so that the graph is a flag triangulation of a torus. Below: the 2-Rips graph on words of suffix length at most 3 in this group.}

        \label{fig:PontryaginSphereRipsGraph}
		
	\end{figure}

	\begin{figure}
		
		\centering
		
		\includegraphics[scale=0.7]{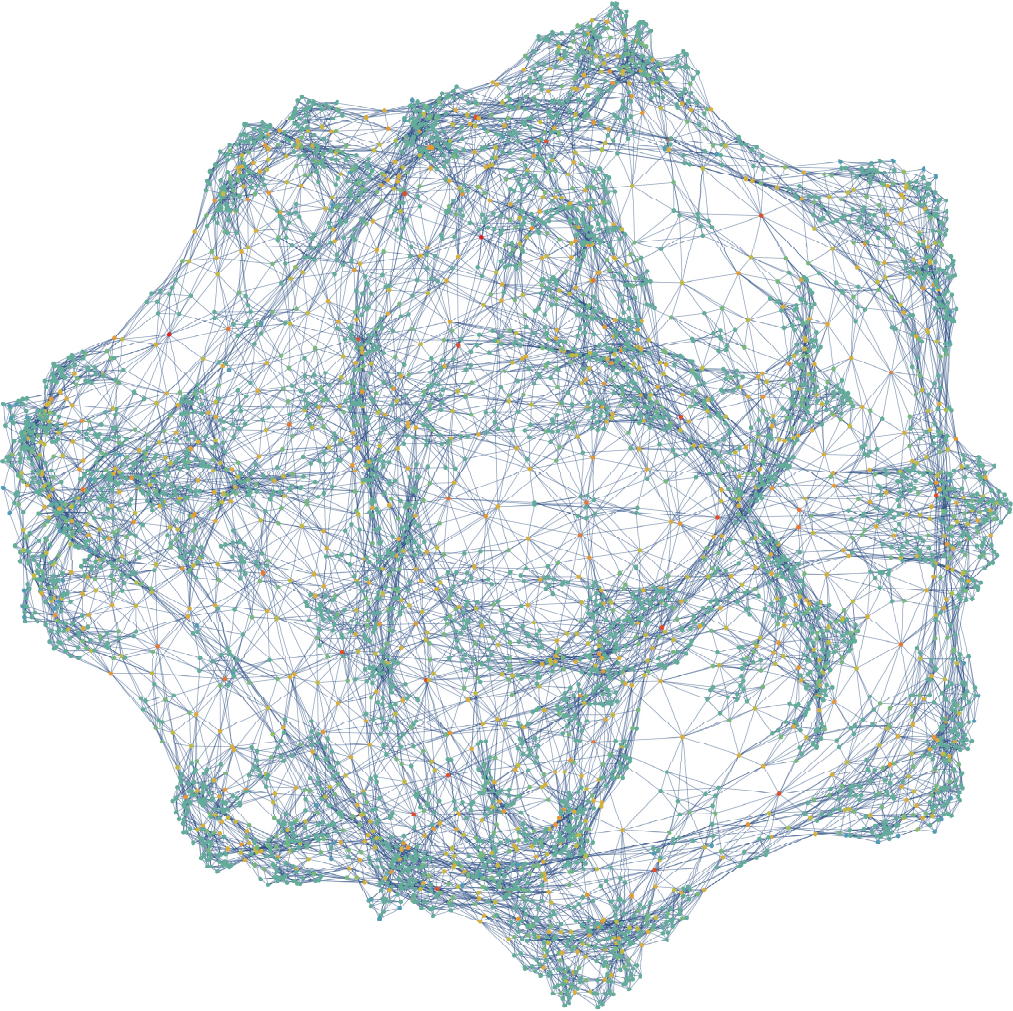}
		
		\caption{The divergence graph on the same vertex set.}

            \label{fig:PontryaginSphereDivergenceGraph}
		
	\end{figure}
	
	In Figure \ref{fig:PontryaginSphereDivergenceGraph}, we see again a cleaner version of Figure \ref{fig:PontryaginSphereRipsGraph}. However, the graph is still too complicated to extract any obvious information, other than to remark that there is some indication of surfaces branching off of one another in a tree-like manner.
	
	\textbf{acknowledgements}
		Daniel Levitin would like to thank Mark Pengitore for first introducing them the study of horospheres in hyperbolic groups, and Corey Bregman for suggesting the study of RACGs with Pontryagin Surface boundaries. They would also like to thank Enrico Le Donne for an interesting conversations about distortions of horospheres. 

            The authors would like to thank the anonymous referee for several comments that improved the exposition and for pointing us to a paper discussing the geodesic machine.
		
		The authors would also like to thank the Madison Experimental Mathematics lab, and its director \c{C}a\u{g}lar Uyanik and associate director Grace Work, for facilitating this project. Daniel Levitin would like to thank them especially for matching them with inquisitive and motivated undergraduates to work with. The whole group would also like to thank this project's faculty mentor, Tullia Dymarz, for her guidance.
	
	\bibliographystyle{plain}
	\bibliography{Horospheres_in_RACGs_Paper.bib}

\end{document}